\renewcommand{\setminus}{{\smallsetminus}}
\newcommand{\cp}[1]{\vcenter{\hbox{#1}}}
\newtheorem{theorem}{Theorem}[section]
\newtheorem{lemma}[theorem]{Lemma}
\newtheorem{proposition}[theorem]{Proposition}
\newtheorem{definition}[theorem]{Definition}
\newtheorem{corollary}[theorem]{Corollary}
\newtheorem{conjecture}[theorem]{Conjecture}
\theoremstyle{remark}
\newtheorem{remark}[theorem]{Remark}
\theoremstyle{remark}
\numberwithin{equation}{section}
\begin{document}
\title{\bf Relative Reshetikhin-Turaev invariants, hyperbolic cone metrics and discrete Fourier transforms I}

\author{Ka Ho Wong and Tian Yang}

\date{}

\maketitle

\begin{abstract}  We propose the Volume Conjecture for the relative  Reshetikhin-Turaev invariants of a closed oriented $3$-manifold with a colored framed link inside it whose asymptotic behavior is related to the volume and the Chern-Simons invariant of the hyperbolic cone metric on the manifold with singular locus the link and cone angles determined by the coloring. We prove the conjecture in the case that the ambient $3$-manifold is obtained by doing an integral surgery along some components of a fundamental shadow link and the complement of the link in the ambient manifold is homeomorphic to the fundamental shadow link complement, for sufficiently small cone angles. Together with Costantino and Thurston's result that all compact oriented $3$-manifolds with toroidal or empty boundary can be obtained by doing an integral surgery along some components of a suitable fundamental shadow link, this provides a possible approach of solving Chen-Yang's Volume Conjecture for the Reshetikhin-Turaev invariants of closed oriented hyperbolic $3$-manifolds. We also introduce a family of topological operations (the change-of-pair operations) that connect all pairs of a closed oriented $3$-manifold and a framed link inside it that have homeomorphic complements, which correspond to doing the partial discrete Fourier transforms to the corresponding relative Reshetikhin-Turaev invariants. As an application, we find a Poisson Summation Formula for the discrete Fourier transforms. 
\end{abstract}

\section{Introduction}

We propose the Volume Conjecture for the relative  Reshetikhin-Turaev invariants of a closed oriented $3$-manifold with a colored framed link inside it  whose asymptotic behavior is related to the volume and the Chern-Simons invariant of the hyperbolic cone metric on the manifold with singular locus the link and cone angles determined by the coloring. See Conjecture \ref{VC}, and Sections \ref{RRT} and \ref{CCS} for a review of the relative Reshetikhin-Turaev invariants and the hyperbolic cone manifolds.

We prove the conjecture in the case that the ambient $3$-manifold is obtained by doing an integral surgery along some components of a fundamental shadow link and the complement of the link in the ambient manifold is homeomorphic to the fundamental shadow link complement, for sufficiently small cone angles. See Theorem \ref{main}, and Section \ref{fsl} for a review of the fundamental shadow links. A result of Costantino and Thurston\,\cite{CT} shows that all compact oriented $3$-manifolds with toroidal or empty boundary can be obtained by doing an integral surgery along some components of a suitable fundamental shadow link. On the other hand, it is expected that hyperbolic cone metrics interpolate the complete cusped hyperbolic metric on the $3$-manifold with toroidal boundary and the smooth hyperbolic metric on the Dehn-filled $3$-manifold, corresponding to the colors running from $\frac{r-1}{2}$ to $0$ or $r-2.$ Therefore, if one can push the cone angles in Theorem \ref{main} from sufficiently small all the way up to $2\pi,$ then one proves the Volume Conjecture for the Reshetikhin-Turaev invariants of closed oriented hyperbolic $3$-manifolds proposed by Chen and the second author\,\cite{CY}. 

This thus suggests a possible approach of solving Chen-Yang's Volume Conjecture.  In \cite{WY2}, we prove Conjecture \ref{VC} for all pairs $(M,K)$ such that $M\setminus K$ is homeomorphic to the figure-$8$ knot complement in $S^3$ with all possible cone angles, showing the plausibility of this new approach.

We also introduce a family of the change-of-pair operations (see Section \ref{COP}) that connect all pairs of a closed oriented $3$-manifold and a framed link inside it that have homeomorphic complements, which correspond to doing the partial discrete Fourier transforms (see Section \ref{DDFT}) to the corresponding relative Reshetikhin-Turaev invariants. As an application, we find a Poisson Summation formula for the discrete Fourier transforms (see Formula (\ref{PSF})). 


\subsection{Volume Conjecture for the relative Reshetikhin-Turaev invariants}

\begin{conjecture}\label{VC} Let $M$ be a closed oriented $3$-manifold and let $L$ be a framed hyperbolic link in $M$ with $n$ components. For an odd integer $r\geqslant 3,$ let  $\mathbf m=(m_1,\dots,m_n)$ and let $\mathrm{RT}_r(M,L,\mathbf m)$ be the $r$-th relative Reshetikhin-Turaev invariant of $M$ with $L$ colored by $\mathbf m$ and evaluated at the root of unity $q=e^{\frac{2\pi\sqrt{-1}}{r}}.$ 
For a sequence $\mathbf m^{(r)}=(m^{(r)}_1,\dots,m^{(r)}_n),$ let 
$$\theta_k=\bigg|2\pi- \lim_{r\to\infty}\frac{4\pi m^{(r)}_k}{r}\bigg|,$$
 and let $\boldsymbol\theta=(\theta_1,\dots, \theta_n).$
If $M_{L_{\boldsymbol\theta}}$ is a hyperbolic cone manifold consisting of $M$ and a hyperbolic cone metric on $M$ with singular locus $L$ and cone angles $\boldsymbol\theta,$ then 
$$\lim_{r\to\infty} \frac{4\pi }{r}\log \mathrm{RT}_r(M,L,\mathbf m^{(r)})=\mathrm{Vol}(M_{L_{\boldsymbol\theta}})+\sqrt{-1}\mathrm{CS}(M_{L_{\boldsymbol\theta}})\quad\quad\text{mod } \sqrt{-1}\pi^2\mathbb Z,$$
where $r$ varies over all positive odd integers.
\end{conjecture}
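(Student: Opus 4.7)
The plan is to prove the conjecture in the case that the paper addresses, namely when $(M,L)$ arises from an integral surgery along some components of a fundamental shadow link $L_{FSL}\subset \#^{c+1}(S^2\times S^1)$ in such a way that $M\smallsetminus L$ is homeomorphic to the fundamental shadow link complement. First I would write $\mathrm{RT}_r(M,L,\mathbf m^{(r)})$ as an explicit state sum. Using the surgery formula, the RT invariant of the surgered manifold is a weighted sum of the relative RT invariant of $(\#^{c+1}(S^2\times S^1), L_{FSL})$ over colorings of the surgery components, weighted by the Kirby color (essentially the quantum integers $[m]^2$ coming from the $S$-matrix and a framing correction involving the linking matrix). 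The relative RT invariant of $L_{FSL}$ admits Costantino's closed form as a product of quantum $6j$-symbols, one for each ideal octahedron in the canonical shadow decomposition. Collecting these ingredients, $\mathrm{RT}_r(M,L,\mathbf m^{(r)})$ becomes a multiple sum over the surgery colors of an expression of the form $\prod (6j\text{-symbols})\cdot \prod[m_k]^2 \cdot e^{2\pi\sqrt{-1}(\text{framing phase})/r}$.

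Next I would perform a saddle point / stationary phase analysis. The asymptotic expansion of a quantum $6j$-symbol at $q=e^{2\pi\sqrt{-1}/r}$ with colors scaling linearly in $r$ is governed by the Lobachevsky (Bloch-Wigner type) dilogarithm and equals, to leading exponential order, the complex volume of a truncated hyperideal tetrahedron whose dihedral angles are determined by the limits $\theta_k$. I would therefore rewrite the summand as $e^{\frac{r}{4\pi\sqrt{-1}}\,\Phi_r(\mathbf x)}$ for a potential function $\Phi_r$ in continuous variables $\mathbf x$ corresponding to the rescaled surgery colors, expand $\Phi_r=\Phi+O(1/r)$, and apply a quantitative saddle point estimate (of Fedoriuk / Ohtsuki type) to the resulting exponential sum. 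The critical point equations $\partial \Phi/\partial x_j=0$ should match, via a standard unpacking of the Neumann-Zagier symplectic data of the shadow link complement, the generalized gluing and completeness equations for a hyperbolic cone structure on $M$ with singular locus $L$ and cone angles $\theta$; then at the critical point one reads off $\Phi = 2\sqrt{-1}(\mathrm{Vol}+\sqrt{-1}\mathrm{CS})$ modulo the standard $\pi^2$ ambiguity coming from branches of the dilogarithm.

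For the final identification, I would invoke Schl\"afli's formula to check that the derivative of $\Phi$ at the critical point with respect to the cone angles matches the derivative of the complex volume, so that it suffices to verify the equality at one convenient reference point, e.g. the complete cusped structure at $\theta=0$. Here one needs to pin down the framing/Chern-Simons contribution coming from the surgery phase; this should contribute the secondary term that converts hyperbolic volume into complex volume and accounts for the $\sqrt{-1}\pi^2\mathbb{Z}$ indeterminacy.

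The main obstacle, and the reason the result is stated only for sufficiently small cone angles, is twofold. First, one needs the existence, uniqueness, and non-degeneracy of the saddle point in the interior of the appropriate integration region, together with the estimate that the contribution of the boundary and of other critical points is subdominant; as $\theta$ grows, saddle points can collide or migrate to the boundary, destroying the estimate. Second, the asymptotic formula for the quantum $6j$-symbol with large colors is only rigorously known in angle ranges that correspond to hyperideal tetrahedra with dihedral angles in a controlled regime, and outside this regime one encounters degeneracies such as vanishing shape parameters. Extending the argument past these obstructions all the way to cone angle $2\pi$ is exactly what would upgrade the result to a proof of Chen-Yang's Volume Conjecture, and I expect this is where essentially all of the remaining difficulty lies.
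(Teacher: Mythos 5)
Your proposal follows essentially the same route as the paper's proof of its main theorem (the only case in which the conjecture is established): Costantino's product-of-quantum-$6j$-symbols formula for fundamental shadow links, a potential function built from Faddeev's quantum dilogarithm, an Ohtsuki-style Poisson summation plus saddle point analysis, identification of the critical value with the Neumann--Zagier potential (hence with $\mathrm{Vol}+\sqrt{-1}\,\mathrm{CS}$ via Schl\"afli and the complete structure as reference point), and the restriction to sufficiently small cone angles entering exactly where you locate it, in the control of the saddle point and of the $6j$-symbol asymptotics. Keep in mind that the statement itself is a conjecture that remains open in general; both your outline and the paper address only the fundamental-shadow-link / change-of-pair case with small cone angles.
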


We note that if $M=S^3,$ then Conjecture \ref{VC} covers Kashaev's Volume Conjecture for the colored Jones polynomials of hyperbolic links\,\cite{Ka2, MM, MMOTY, DKY} and its generalization\,\cite{MY}, at the root of unity $q=e^{\frac{2\pi\sqrt{-1}}{r}}.$ See also \cite{G} and \cite[Section 4.2]{C1} for a discussion of the values at the root $q=e^{\frac{\pi\sqrt{-1}}{r}}.$ If the framed link $L=\emptyset $ or the coloring $\mathbf m=\mathbf 0$ or $\mathbf{r-2},$ then Conjecture \ref{VC} covers  Chen-Yang's Volume Conjecture for the Reshetikhin-Turaev invariants of closed oriented hyperbolic $3$-manifolds.

The main result of this paper is the following Theorem \ref{main} (see also Theorem \ref{main2} for a more precise statement), where the change-of-pair operation is described in the next section. 

\begin{theorem}\label{main} Conjecture \ref{VC} is true for all pairs $(M,L)$ obtained by doing a change-of-pair operation from the pair $(M_c, L_{\text{FSL}})$ with sufficiently small cone angles, where $M_c=\#^{c+1}(S^2\times S^1)$ and $L_{\text{FSL}}$ is a fundamental shadow link in $M_c.$
 \end{theorem}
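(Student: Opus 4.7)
My plan is to reduce everything to an asymptotic analysis of the state-sum formula for the relative Reshetikhin-Turaev invariant of the fundamental shadow link, and then transport the result to $(M,L)$ through the change-of-pair operation. By the definition of a change-of-pair, $\mathrm{RT}_r(M,L,\mathbf m^{(r)})$ can be written as a partial discrete Fourier transform of $\mathrm{RT}_r(M_c, L_{\text{FSL}}, \cdot)$: sum over colors assigned to those components of $L_{\text{FSL}}$ that become surgery components, weighted by the appropriate Kirby/DFT kernel, with the colors $\mathbf m^{(r)}$ placed on the remaining components. On the fundamental shadow link side, there is a completely explicit state-sum formula, because $M_c\setminus L_{\text{FSL}}$ decomposes into regular ideal octahedra and $\mathrm{RT}_r(M_c, L_{\text{FSL}}, \cdot)$ is, up to framing factors, a product of quantum $6j$-symbols, one for each pair of ideal tetrahedra forming an octahedron.

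Next I would apply the known asymptotic expansion of the quantum $6j$-symbol at $q=e^{2\pi\sqrt{-1}/r}$ with colors of the form $\frac{r-a_k}{2}$ (due to Costantino and its various refinements), which yields leading behavior of the form $\exp\bigl(\tfrac{r}{4\pi\sqrt{-1}} W(\mathbf a)\bigr)$, where $W(\mathbf a)$ is the complex volume of a truncated hyperideal tetrahedron whose dihedral angles are determined by $\mathbf a$. Assembling these factors, the summand from the state sum combined with the DFT kernel takes the shape $\exp\bigl(\tfrac{r}{4\pi\sqrt{-1}}\Phi_{\theta}(\mathbf z)\bigr)$ times a slowly varying prefactor, for a potential function $\Phi_{\theta}$ on an open subset of $\mathbb C^N$. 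I would then convert the remaining partial sums into oscillatory integrals via Poisson summation and extract the leading behavior by the saddle-point method. The critical point equations of $\Phi_{\theta}$ should recover the shape-parameter (gluing) equations for the octahedra together with the generalized Dehn-filling equations on the surgery components with cone angles $\theta$, and by the Schl\"afli formula for hyperbolic cone manifolds the critical value should equal $\mathrm{Vol}(M_{L_\theta})+\sqrt{-1}\,\mathrm{CS}(M_{L_\theta})$ modulo $\sqrt{-1}\pi^2\mathbb Z$, which is exactly the statement of Conjecture \ref{VC}.

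The hard part will be making the saddle-point analysis rigorous: one needs existence and uniqueness of a geometric critical point in the correct region of $\mathbb C^N$, non-degeneracy of the Hessian of $\Phi_\theta$ there, and a deformation of the integration contour that passes through this critical point while keeping the integrand uniformly controlled off of it. This is precisely where the hypothesis of sufficiently small cone angles enters. For $\theta=\mathbf 0$, the fundamental shadow link complement carries a complete hyperbolic structure realized by regular ideal octahedra, the geometric critical point of $\Phi_{\mathbf 0}$ is the associated combinatorial solution, and its Hessian can be computed explicitly to be non-degenerate. Hodgson-Kerckhoff rigidity and the Neumann-Zagier potential then supply a smooth family of non-degenerate critical points, with smoothly varying complex volume, for all sufficiently small $\theta$, which is exactly the regime asserted in Theorem \ref{main}. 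Extending the argument all the way to $\theta = 2\pi$, so as to recover Chen-Yang's conjecture via the route sketched in the introduction, would require tracking the critical point through the degeneration to the smooth Dehn-filled hyperbolic structure, and this is the essential difficulty left open here and addressed only in special cases such as the figure-$8$ knot complement treated in \cite{WY2}.
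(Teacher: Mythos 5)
Your proposal follows essentially the same route as the paper: write $\mathrm{RT}_r(M,L,\cdot)$ as a partial discrete Fourier transform of the fundamental shadow link invariant (a product of quantum $6j$-symbols), express the summand via the quantum dilogarithm as $e^{\frac{r}{4\pi\sqrt{-1}}\mathcal W}$, apply Poisson summation, identify the critical point/value of the potential with the Neumann--Zagier potential and hence with $\mathrm{Vol}+\sqrt{-1}\,\mathrm{CS}$ of the cone manifold, and run the saddle-point method near the complete structure, which is exactly where the small-cone-angle hypothesis is used. This matches the paper's Sections 3--5 (Propositions \ref{computation}, \ref{Poisson}, \ref{NeuZ}, \ref{crit}, \ref{critical} and the estimates in Sections \ref{ot}--\ref{ee}) in both structure and substance.
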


As a consequence of Theorem \ref{main}, we prove the Generalized Volume Conjecture\,\cite{MY, G} for the colored Jones polynomials of the universal families of links respectively considered by Purcell\,\cite{P}, van der Veen\,\cite{V} and Kumar\,\cite{Ku}. See Proposition \ref{TOFAL} and Theorems \ref{Cor} and \ref{Cor2} for more details.

\subsection{The change-of-pair operations}\label{COP}

Let $M$ be a closed oriented $3$-manifold and let $L$ be a framed link in $M.$ In this section, we introduce a topological operation that  changes the pair $(M,L)$ without changing the complement $M\setminus L,$ and show that these operations connect all such pairs that have homeomorphic complements.  

Suppose $L=L_1\cup\dots\cup L_n.$ For each component  $L_i:[0,1]\times S^1\to M$ of $L,$ we call $L_i(\{0\}\times S^1)\subset M$ the core curve of $L_i$ and $L_i(\{1\}\times S^1)\subset M$ the parallel copy. Let $(I,J)$ be a partition of $\{1,\dots,n\},$ $L_I=\cup_{i\in I}L_i$ and $L_J=\cup_{j\in J}L_j.$ For each $i\in I,$ let $L_i^*$ be the framed knot in $M\setminus L$ whose core curve is isotopic to the meridian of the tubular neighborhood of $L_i,$ and let $L^*_I=\cup_{i\in I}L^*_i.$ Let $M^*$ be the closed $3$-manifold obtained from $M$ by doing the surgery along $L_I$ and let $L^*$ be the framed link obtained from $L$ by replacing $L_I$ by $L^*_I,$ ie., $M^*=M_{L_I}$ and $L^*=L_I^*\cup L_J.$  The \emph{change-of-pair operation} $T_{(L_I;L^*_I)}$  is defined by sending $(M,L)$ to  $(M^*,L^*).$ See Figure \ref{TFT}.

\begin{figure}[htbp]
\centering
\includegraphics[scale=0.3]{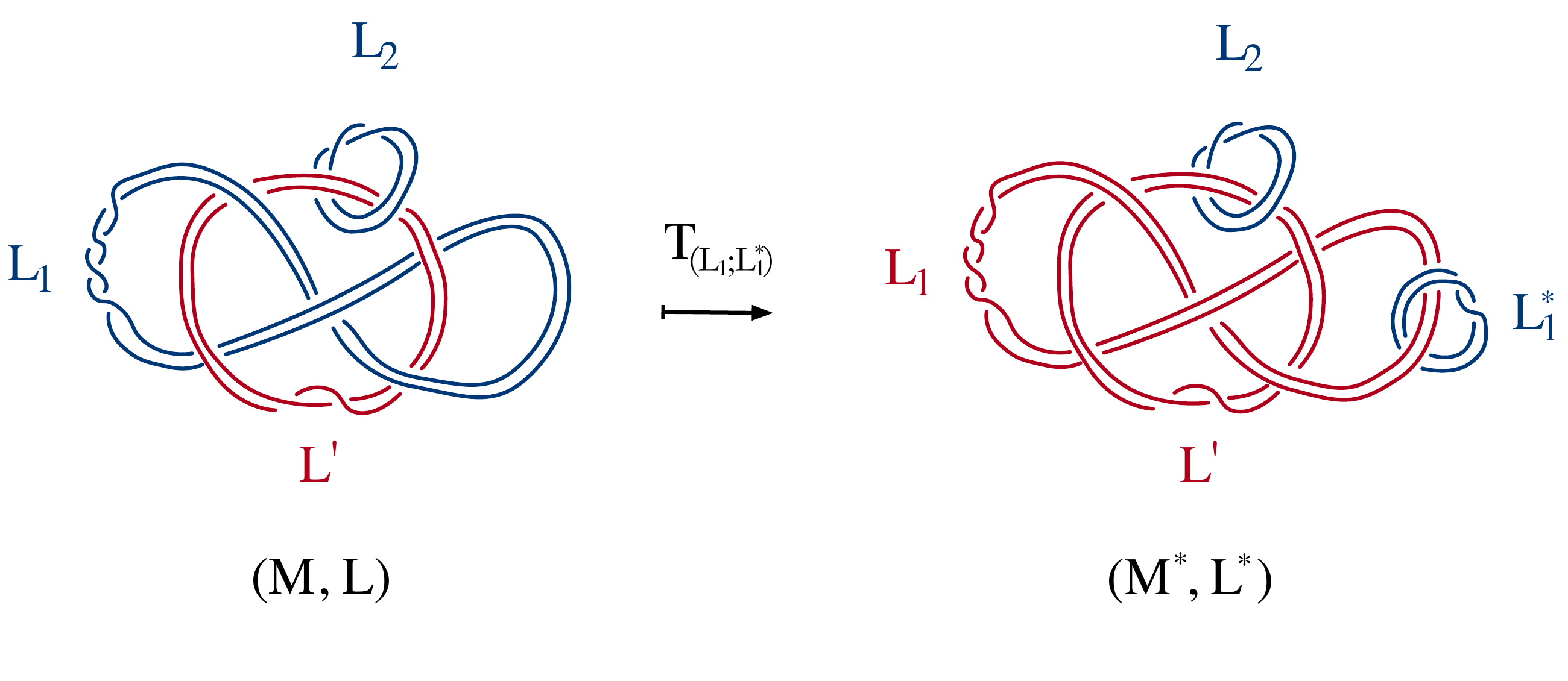}
\caption{$M$ is obtained from $S^3$ by the surgery along $L',$ and $L=L_1\cup L_2.$ $(I,J)=(\{1\}, \{2\}).$  $M^*$ is obtained from $M$ by doing the surgery along $L_1,$ hence is obtained from $S^3$ by doing the surgery along $L'\cup L_1;$ and $L^*=L^*_1\cup L_2.$ }
\label{TFT}
\end{figure}

By the way it is chosen, the core curve of each $L^*_i$ is isotopic to a curve on the tubular neighborhood of $L_i$ that intersects the parallel copy of $L_i$ once, hence in $M^*$ the core curve of each $L^*_i$ is isotopic to the core of the filled in solid torus of the surgery along $L_i.$ As a consequence, we have 

\begin{proposition}\label{equal} If $(M^*,L^*)$ is obtained form $(M,L)$ by doing a change-of-pair operation, then $M^*\setminus L^*$ is homeomorphic to $M\setminus L.$
\end{proposition}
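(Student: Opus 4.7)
My plan is to exhibit an explicit homeomorphism by tracking what happens to tubular neighborhoods under the surgery. The essential content of the proposition is already packaged in the sentence immediately preceding it, namely that in $M^*$ the core curve of each $L^*_i$ is isotopic to the core of the solid torus glued in by the surgery along $L_i$; what remains is to organize this into a clean identification of complements.

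First I would decompose $M = (M \setminus \mathrm{int}\,N(L_I)) \cup_{\partial N(L_I)} N(L_I)$, where $N(L_I) = \bigsqcup_{i \in I} N(L_i)$ is a disjoint union of tubular neighborhoods. By the definition of $M^* = M_{L_I}$, the manifold $M^*$ is obtained by gluing in a new collection of solid tori $V_I = \bigsqcup_{i \in I} V_i$ in place of $N(L_I)$, via some homeomorphism of $\partial N(L_I) = \partial V_I$ determined by the integral framing. In particular, the submanifold $M \setminus \mathrm{int}\,N(L_I)$ sits naturally inside $M^*$ as the complement of $\mathrm{int}\,V_I$.

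Next I would invoke the observation cited in the paper: since $L^*_i$ was chosen so that its core is a meridian of $L_i$ intersecting the parallel copy once, after surgery the core of $L^*_i$ is isotopic to the core of the new solid torus $V_i$. Consequently a tubular neighborhood $N(L^*_i) \subset M^*$ can be taken to be exactly $V_i$ (up to ambient isotopy). Therefore
\[
M^* \setminus L^*_I \;\cong\; M^* \setminus \mathrm{int}\,V_I \;=\; M \setminus \mathrm{int}\,N(L_I) \;\cong\; M \setminus L_I,
\]
and this homeomorphism is the identity on $M \setminus \mathrm{int}\,N(L_I)$. Since $L_J$ lies in $M \setminus N(L_I)$ and is left untouched by both the surgery and the replacement $L_I \rightsquigarrow L^*_I$, removing $L_J$ from both sides yields a homeomorphism $M^* \setminus L^* \cong M \setminus L$.

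The only genuine subtlety is verifying that one can promote the isotopy between the core of $L^*_i$ and the core of $V_i$ to an ambient isotopy matching the tubular neighborhoods; this is a standard application of the uniqueness of tubular neighborhoods of knots in a $3$-manifold, so I do not expect a real obstacle. If a reader wants additional rigor, one can observe instead that $L^*_i$ was already defined as a framed knot in $M \setminus L$, which embeds in $M^* \setminus L^*_I$ as the core of $V_i$ by construction, and then the homeomorphism above is visibly the identity outside the surgery region.
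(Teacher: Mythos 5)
Your proof is correct and follows essentially the same route as the paper: the paper derives the proposition directly from the observation that the core of each $L^*_i$ is isotopic to the core of the filled-in solid torus of the surgery along $L_i,$ which is exactly the key step you use. Your write-up simply makes explicit the decomposition into tubular neighborhoods and the uniqueness-of-tubular-neighborhoods point that the paper leaves implicit.
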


Conversely, if $(M^*,L^*)$ is a pair such that $M^*\setminus L^*$ is homeomorphic to $M\setminus L,$ then $M^*$ is obtained from $M$ by doing a rational Dehn-surgery along some components $L_I$ of $L.$
By e.g. \cite[p. 273]{R},  $M^*$  can be obtained from $M$  by doing an integral surgery along a framed link $L'$ obtained from $L_I$ by iteratively linking in framed unknots, corresponding to doing a sequence of the change-of-pair operations.  As a consequence, we have 

\begin{proposition}\label{connect} Every two pairs $(M,L)$ and $(M^*,L^*)$ such that $M\setminus L$ is homeomorphic to $M^*\setminus L^*$ are related by a sequence of the change-of-pair operations.
\end{proposition}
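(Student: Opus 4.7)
\emph{Proof plan.} The plan is to extend the given complement-homeomorphism $\phi\co M\setminus L\to M^*\setminus L^*$ to the peripheral tori, identify its failure to carry meridians to meridians as a rational Dehn surgery on a subset of components of $L$, and then decompose that rational surgery into integer steps, each implemented by a change-of-pair operation. After re-indexing the components, $\phi$ sends the peripheral torus of $L_i$ to that of $L^*_i$, and writing $\phi(\mu_i)=p_i\mu^*_i+q_i\lambda^*_i$ in the basis $(\mu^*_i,\lambda^*_i)$ of the target torus, $\phi$ extends to a homeomorphism of pairs exactly when $(p_i,q_i)=(\pm 1,0)$ for every $i$. Let $I$ index the components where this condition fails; re-gluing the solid tori via $\phi$ then exhibits $(M^*,L^*)$ as the result of a rational $p_i/q_i$-Dehn surgery on $L_i$ for each $i\in I$, leaving $L_J$ unchanged.

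Next I would invoke classical Kirby calculus (\cite[p. 273]{R}): the continued fraction expansion $p_i/q_i=[a_0;a_1,\dots,a_{m_i}]$ rewrites the $p_i/q_i$-surgery as an integer surgery on a Rolfsen chain consisting of $L_i$ together with $m_i$ auxiliary framed unknots linked sequentially. The key observation is that this chain can be realized one component at a time as a sequence of single-component change-of-pair operations: the first does integer $a_0$-surgery on $L_i$ and replaces it with its meridian $L_i^*$, the second does integer $a_1$-surgery on $L_i^*$ and replaces it with the meridian of $L_i^*$, and so on. The successive meridians created by these operations play exactly the role of the auxiliary unknots in the Rolfsen chain, so after $m_i+1$ operations the ambient manifold has undergone the prescribed $p_i/q_i$-surgery on $L_i$ and the evolving $i$-th component of the current link is isotopic, in the new manifold, to $L^*_i$. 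Iterating this construction over all $i\in I$ produces a sequence of change-of-pair operations taking $(M,L)$ to $(M^*,L^*)$.

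The main obstacle is the careful bookkeeping of framings and peripheral bases. Each single-component change-of-pair acts on the corresponding peripheral torus by a matrix of shape $\left(\begin{smallmatrix}a & 1\\ 1 & 0\end{smallmatrix}\right)\in GL(2,\mathbb Z)$, where $a$ is the integer framing used in that step, and one must verify that the composition of such matrices, as dictated by the continued fraction, produces the intended slope change $p_i/q_i$. One must also confirm that the intermediate framings on $L_i^*$, $(L_i^*)^*$, and so on are consistent with being the framing used for the next integer surgery, and that the terminal link component coincides with $L^*_i$ as a framed knot, and not merely up to an additional Dehn surgery on itself. This amounts to a direct transcription of the standard Rolfsen continued-fraction calculus into the language of the change-of-pair operations, and it is what guarantees that the endpoint of the sequence is the pair $(M^*,L^*)$ itself rather than only some pair with homeomorphic complement.
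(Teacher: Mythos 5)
Your proposal follows essentially the same route as the paper: identify the discrepancy between the two pairs as a rational Dehn surgery on a sublink $L_I$, then use the Rolfsen continued-fraction trick (\cite[p.~273]{R}) to decompose it into integral surgeries along a chain of iteratively linked unknots, each realized by a single change-of-pair operation. The paper states this in one short paragraph without the framing/peripheral-basis bookkeeping you flag, so your write-up is if anything more careful than the original.
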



\subsection{Relationship with discrete Fourier transform}\label{DDFT}

In the computation of the relative Reshetikhin-Turaev invariants, the change-of-pair operation corresponds to replacing the coloring $\mathbf m_I$ on $L_I$ by the Kirby colorings $\Omega_r$ and cabling $L_I^*$ by the Chebyshev polynomials corresponding to the new coloring $\mathbf n_I,$ sending $\mathrm{RT}_r(M,L,\mathbf m)$ to $\mathrm{RT}_r(M^*,L^*,(\mathbf n_I, \mathbf m_J)),$ where $\mathbf m_J$ is the coloring on $L_J.$ See Section \ref{RRT} for a review of the relative Reshetikhin-Turaev invariants and Figure \ref{DFT} for an example.

\begin{figure}[htbp]
\centering
\includegraphics[scale=0.3]{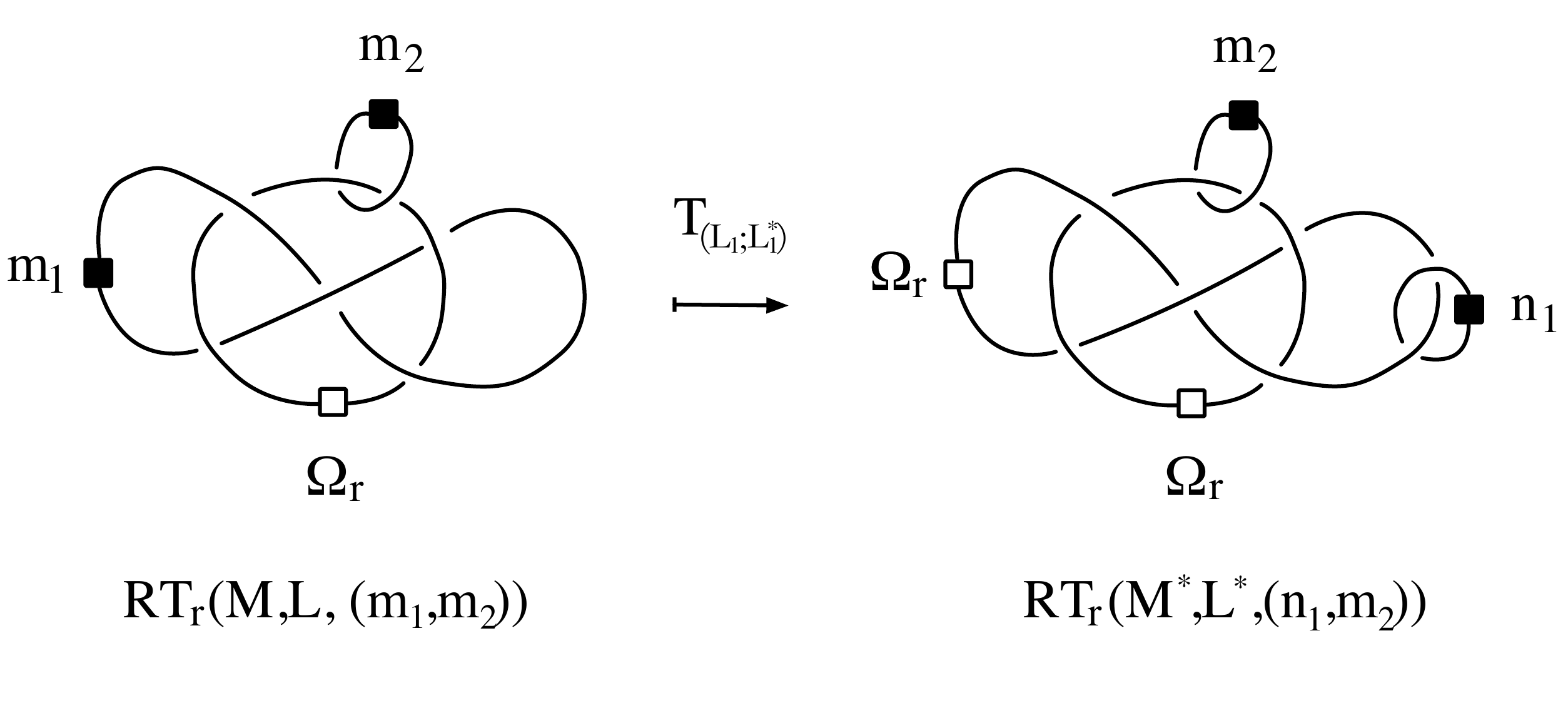}
\caption{For the computation of $\mathrm{RT}_r(M,L,(m_1,m_2))$ on the left, $L'$ is cabled by the Kirby coloring $\Omega_r$ and $L_1$ and $L_2$ are respectively cabled by the Chebyshev polynomials $e_{m_1}$ and $e_{m_2}.$ For $\mathrm{RT}_r(M^*,L^*,(n_1,m_2))$ on the right, $L'$ and $L_1$ are cabled by $\Omega_r,$ $L^*_1$ is cabled by $e_{n_1}$ and $L_2$ is cabled by $e_{m_2}.$   The framings are omitted in the Figure.}
\label{DFT}
\end{figure}

These operations pictorially represent the discrete Fourier transforms. See \cite{Ba} for the original definition and \cite{B} for an exposition in the language of skein theory. They were also shown to be a particular case of a general construction for modular tensor categories (see  \cite[Section 1]{Liu} and
references therein). To be more precise, let $\mathrm{I}_r=\{1,2,\dots, r-2\}$ and $\mu_r=\frac{\sqrt{2}\sin\frac{\pi}{r}}{\sqrt r}$  in the $SU(2)$ theory and at $q=e^{\frac{\pi \sqrt{-1}}{r}},$ and let $\mathrm{I}_r=\{0,2,\dots, r-3\}$ and $\mu_r=\frac{2\sin\frac{2\pi}{r}}{\sqrt r}$ in the $SO(3)$ theory and at $q=e^{\frac{2\pi \sqrt{-1}}{r}}.$ Let 
$$f:\mathrm{I}_r^n\to \mathbb C$$ 
be a complex valued function on $\mathrm{I}_r^n$ for some positive integer $n.$ Let $(I,J)$ be a partition of $\{1,\dots, n\}$ and let $\mathbf n_I=(n_i)_{i\in I}$  be a $|I|$-tuple of elements of $\mathrm{I}_r.$ Then the $\mathbf n_I$-th partial discrete Fourier coefficient of $f$ is the function 
$$\widehat f(\mathbf n_I):\mathrm{I}_r^J\to\mathbb C$$
defined for all $\mathbf m_J$ in $\mathrm{I}_r^J$ by
$$\widehat f(\mathbf n_I)(\mathbf m_J)=\mu_r^{|I|}\sum_{\mathbf m_I}\prod_{i\in I}\mathrm H(m_i,n_i)f(\mathbf m_I,\mathbf m_J),$$
 where the sum is over all $|I|$-tuples $\mathbf m_I=(m_i)_{i\in I}$ of elements of $\mathrm{I}_r,$ and 
$$\mathrm H(m,n)=(-1)^{m+n}\frac{q^{(m+1)(n+1)}-q^{-(m+1)(n+1)}}{q-q^{-1}}.$$

Since the coefficients $\mathrm H(m_i,n_i)$ above are exactly the coefficients of the following skein-theoretical computation 
$$\Bigg\langle \cp{\includegraphics[width=1.6cm]{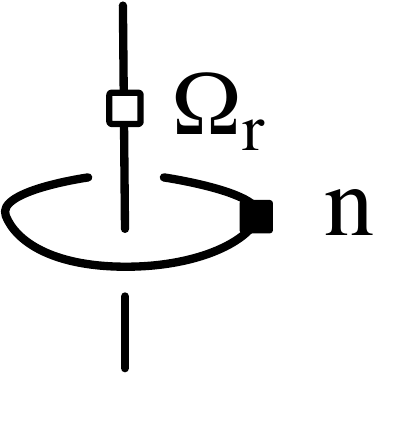}}\Bigg\rangle=\mu_r\sum_{m\in \mathrm{I}_r}\mathrm H(m,n)\Bigg\langle \cp{\includegraphics[width=0.7cm]{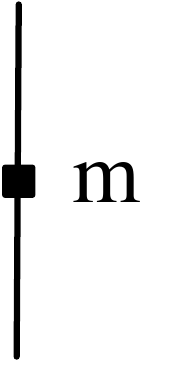}}\Bigg\rangle,$$
 the relative Reshetikhin-Turaev invariants $\mathrm{RT}_r(M^*,L^*,(\mathbf n_I, \mathbf m_J))$ of the pair $(M^*,L^*)$ obtained from $(M,L)$ by a change-of-pair operation $T_{(L_I;L^*_I)}$ is up to scalar the value at $\mathbf m_J$ of the $\mathbf n_I$-th partial discrete Fourier coefficient of the function $\mathrm{RT}_r(M,L,\_\ ):\mathrm{I}_r^n\to\mathbb C.$ The scalar is a power of $q$ depending on the framings of $L^*_I.$

It is proved in \cite{DKY, BDKY} that Turaev-Viro invariant of the complement $M\setminus L$ can be computed by the relative Reshetikhin-Turaev invariants of the pair $(M,L).$

\begin{proposition}[\cite{DKY, BDKY}]\label{TVRT} 
$$\mathrm{TV}_r(M\setminus L)=c \sum_{\mathbf m} \big|\mathrm{RT}_r(M,L,\mathbf m)\big|^2,$$
where the sum is over all multi-elements $\mathbf m$ of $\mathrm{I}_r,$ and the constant $c$ equals $1$ in the $SU(2)$ theory and equals $2^{\text{rank} H_2(M\setminus L,\mathbb Z_2)}$ in the $SO(3)$ theory. 
\end{proposition}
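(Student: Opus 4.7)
The strategy is to use the TQFT interpretation of the Turaev-Viro invariant, which for a compact oriented $3$-manifold with boundary expresses $\mathrm{TV}_r$ as the Hermitian square-norm of the Reshetikhin-Turaev vector, up to a global normalization. Applied to the complement $N := M \setminus L$, whose boundary is an $n$-fold disjoint union of tori, this gives
\[ \mathrm{TV}_r(N) \;=\; c \cdot \|Z_{RT}(N)\|^2, \]
where $Z_{RT}(N) \in V(T^2)^{\otimes n}$ is the vector associated to $N$ by the RT TQFT functor and $c$ is a global constant depending on the choice of theory.

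The next step is to expand $Z_{RT}(N)$ in the orthonormal basis of $V(T^2)^{\otimes n}$ furnished by the colors. Concretely, $V(T^2)$ has an orthonormal basis $\{e_m\}_{m \in \mathrm{I}_r}$, where $e_m$ is the vector produced by the solid torus whose core is colored by $m$, and where the meridian/longitude parametrization of $T^2 = \partial(D^2 \times S^1)$ is determined by the framing of the corresponding component of $L$. Gluing the basis vector $e_{m_i}$ back to the $i$-th boundary torus of $N$ for each $i$ reconstructs the closed pair $(M, L)$ with $L$ colored by $\mathbf m = (m_1, \dots, m_n)$; accordingly the TQFT pairing is
\[ \langle e_{\mathbf m},\, Z_{RT}(N)\rangle \;=\; \mathrm{RT}_r(M, L, \mathbf m). \]
Parseval's identity then gives $\|Z_{RT}(N)\|^2 = \sum_{\mathbf m} |\mathrm{RT}_r(M, L, \mathbf m)|^2$, which combined with the first display yields the proposition.

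The main subtlety, and the step I expect to require the most care, is pinning down the constant $c$. In the $SU(2)$ setting the classical Turaev-Walker-Roberts theorem gives $c = 1$. In the $SO(3)$ setting the RT theory naturally admits a refinement indexed by classes in $H^1(\,\cdot\,, \mathbb Z_2)$, and the unrefined $\mathrm{TV}_r$ state sum on a triangulation of $N$ effectively averages over this refinement; tracking the normalization carefully and applying Poincar\'e-Lefschetz duality $H^1(N, \mathbb Z_2) \cong H_2(N, \mathbb Z_2)$ then produces the factor $c = 2^{\mathrm{rank}\, H_2(N, \mathbb Z_2)}$. The careful bookkeeping in each of the two theories is precisely what is carried out in \cite{DKY} and \cite{BDKY}, respectively, so beyond invoking the TQFT gluing formula and the basis identification above, no new ingredient is needed.
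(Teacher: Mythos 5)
The paper offers no proof of this proposition—it is quoted directly from \cite{DKY, BDKY}—and your outline is precisely the argument given there: the Turaev--Walker--Roberts identity $\mathrm{TV}_r(N)=c\,\|Z_{RT}(N)\|^2$ for the bordered manifold $N=M\setminus L$, expansion of $Z_{RT}(N)$ in the basis of $V(T^2)^{\otimes n}$ given by colored solid tori (whose pairings with $Z_{RT}(N)$ are by definition the relative invariants $\mathrm{RT}_r(M,L,\mathbf m)$), and Parseval, with the $SO(3)$ constant $2^{\mathrm{rank}\,H_2(N,\mathbb Z_2)}$ coming from the $\mathbb Z_2$-refinement bookkeeping carried out in \cite{BDKY}. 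The proposal is correct and takes essentially the same route as the cited sources.
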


By Proposition \ref{equal}, if $(M^*,L^*)$ and $(M,L)$ differ by a change-of-pair operation, then $M\setminus L$ and $M^*\setminus L^*$ are homeomorphic to each other . As a consequence, we have

\begin{proposition}\label{sum}
$$\sum_{\mathbf m}\big|\mathrm{RT}_r(M,L,\mathbf m)\big|^2=\sum_{\mathbf n}\big|\mathrm{RT}_r(M^*,L^*,\mathbf n)\big|^2,$$
where the sums are over all multi-elements $\mathbf m$ and $\mathbf n$ of $\mathrm{I}_r.$ 
\end{proposition}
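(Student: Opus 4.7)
The proof is essentially a two-line consequence of the preceding results, so the plan is to assemble them carefully rather than to produce new machinery.

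My plan is to apply Proposition \ref{TVRT} separately to the pair $(M,L)$ and to the pair $(M^*,L^*)$. This rewrites each side of the claimed equality as a scalar multiple of a Turaev--Viro invariant, namely
$$\sum_{\mathbf m}\big|\mathrm{RT}_r(M,L,\mathbf m)\big|^2=\frac{1}{c_1}\mathrm{TV}_r(M\setminus L)\quad\text{and}\quad\sum_{\mathbf n}\big|\mathrm{RT}_r(M^*,L^*,\mathbf n)\big|^2=\frac{1}{c_2}\mathrm{TV}_r(M^*\setminus L^*),$$
where $c_1$ and $c_2$ are the constants appearing in Proposition \ref{TVRT} for each pair. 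Next, I invoke Proposition \ref{equal}, which guarantees that a change-of-pair operation relating $(M,L)$ and $(M^*,L^*)$ induces a homeomorphism between the link complements $M\setminus L$ and $M^*\setminus L^*$. Since the Turaev--Viro invariant is a topological invariant of the underlying $3$-manifold, this immediately gives $\mathrm{TV}_r(M\setminus L)=\mathrm{TV}_r(M^*\setminus L^*)$.

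It then remains to check that the prefactors $c_1$ and $c_2$ agree. In the $SU(2)$ theory this is automatic since $c_1=c_2=1$. In the $SO(3)$ theory, $c_i=2^{\text{rank}\,H_2(\,\cdot\,,\mathbb Z_2)}$ of the respective complement, and the homeomorphism from Proposition \ref{equal} identifies $H_2(M\setminus L,\mathbb Z_2)$ with $H_2(M^*\setminus L^*,\mathbb Z_2)$, so $c_1=c_2$. Cancelling this common constant yields the claimed equality of the two sums.

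The only conceivable obstacle is confirming that the constant $c$ from Proposition \ref{TVRT} depends only on the homeomorphism type of the complement and not on how the pair $(M,L)$ is presented; but this is immediate from its definition in terms of a topological invariant of $M\setminus L$. Consequently, no computation with the skein-theoretic coefficients $\mathrm H(m,n)$ or with the partial discrete Fourier transform is needed here; the Plancherel-type identity expressed by Proposition \ref{sum} is extracted directly from the topological invariance of $\mathrm{TV}_r$ combined with Propositions \ref{equal} and \ref{TVRT}.
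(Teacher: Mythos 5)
Your proof is correct and is essentially the paper's own argument: Proposition \ref{sum} is stated there as an immediate consequence of Propositions \ref{TVRT} and \ref{equal} together with the topological invariance of $\mathrm{TV}_r$. Your additional check that the constant $c$ agrees on both sides in the $SO(3)$ theory (since it is determined by $H_2$ of the homeomorphic complements) is a worthwhile detail that the paper leaves implicit.
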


Propositions \ref{equal}, \ref{connect}, \ref{TVRT} and \ref{sum} together provide infinitely many different ways to compute the Turaev-Viro invariants of $M\setminus L,$ all of which are up to scalar related by a sequence of partial discrete Fourier transforms. It is hopeful that among these different expressions, some are suitable for the purpose of solving the Volume Conjecture for the Turaev-Viro invariants\,\cite{CY}.

Finally, as a special case, let $f=\mathrm{RT}_r(M,L, \_\ ):\mathrm{I}_r^n\to \mathbb C$ for a pair $(M,L),$ and suppose that $I=\{1,\dots,n\}$ and all the framings of $L^*_I$ are zero and $(M^*,L^*)$ is obtained from $(M,L)$ by $T_{(L_I,L^*_I)}.$ Then $\mathrm{RT}_r(M^*,L^*,\mathbf n)=\widehat f(\mathbf n)$ is the $\mathbf n$-th (full) discrete Fourier coefficient of $f.$ As a consequence of Proposition \ref{sum}, we have 
\begin{equation}\label{PSF}
\sum_{\mathbf m}|f(\mathbf m)|^2=\sum_{\mathbf n}|\widehat f(\mathbf n)|^2,
\end{equation}
where $\mathbf m$ and $\mathbf n$ are over all multi-elements of $\mathrm{I}_r.$ This could be considered as a Poisson Summation Formula for the discrete Fourier transforms. (See also \cite{B} for an asymptotic version of the Poisson Summation Formula in the setting of Yokota invariants for colored  planar graphs.)

\subsection{Outline of the proof of Theorem \ref{main}} 
We follow the guideline of Ohtsuki's method. In Proposition \ref{computation}, we compute the relative Reshetikhin-Turaev invariants of $(M,L)$ writing them as a sum of values of a holomorphic function $f_r$ at integer points. The function $f_r$ comes from Faddeev's quantum dilogarithm function. Using Poisson Summation Formula, we in Proposition \ref{Poisson} write the invariants as a sum of the Fourier coefficients of $f_r$ computed  in Propositions \ref{4.2}.
In Proposition \ref{crit} we show that the critical value of the functions in the leading Fourier coefficients has real part the volume and imaginary part the Chern-Simons invariant. The key observation there is a relationship between the asymptotics of quantum $6j$-symbols and the Neumann-Zagier potential function  (Proposition \ref{NeuZ}), which is of interest in its own right. Then we estimate the leading Fourier coefficients in Sections \ref{leading} using the Saddle Point Method (Proposition \ref{saddle}). Finally, we estimate the non-leading Fourier coefficients and the error term respectively in Sections \ref{ot} and \ref{ee} showing that they are neglectable, and prove Theorem \ref{main2}, which is a refined version of Theorem \ref{main}, in Section \ref{pf}.
\\

\noindent\textbf{Acknowledgments.}  The authors would like to thank Giulio Belletti, Francis Bonahon, Qingtao Chen, Effie Kalfagianni, Sanjay Kumar, Zhengwei Liu, Feng Luo, Hongbin Sun and Roland van der Veen for helpful discussions. They are also grateful for the referees'  invaluable suggestions and comments. The first author would like to thank the organizers of the ``Nearly Carbon Neutral Geometric Topology Conference 2020'' for the invitation and for providing a friendly environment for inspiring discussions. The second author is supported by NSF Grants DMS-1812008 and DMS--2203334.


\section{Preliminaries}

\subsection{Relative Reshetikhin-Turaev invariants}\label{RRT}

In this article we will follow the skein theoretical approach of the relative Reshetikhin-Turaev invariants\,\cite{BHMV, Li} and focus on the $SO(3)$-theory and the values at the root of unity  $q=e^{\frac{2\pi\sqrt{-1}}{r}}$ for odd integers $r\geqslant 3.$ To be more precise, we will follow the normalization of the invariants as described in \cite{Li}.

A framed link in an oriented $3$-manifold $M$ is a smooth embedding $L$ of a disjoint union of finitely many thickened circles $\mathrm S^1\times [0,\epsilon],$ for some $\epsilon>0,$ into $M.$ The Kauffman bracket skein module $\mathrm K_r(M)$ of $M$ is the $\mathbb C$-module generated by the isotopic classes of framed links in $M$  modulo the follow two relations: 

\begin{enumerate}[(1)]
\item  \emph{Kauffman Bracket Skein Relation:} \ $\cp{\includegraphics[width=1cm]{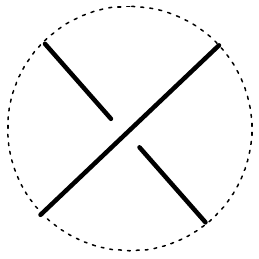}}\ =\ e^{\frac{\pi\sqrt{-1}}{r}}\ \cp{\includegraphics[width=1cm]{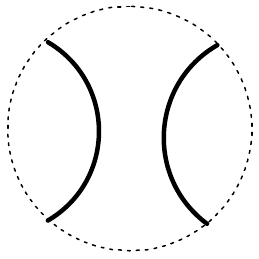}}\  +\ e^{-\frac{\pi\sqrt{-1}}{r}}\ \cp{\includegraphics[width=1cm]{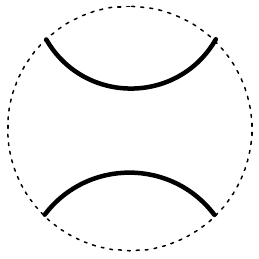}}.$ 

\item \emph{Framing Relation:} \ $L \cup \cp{\includegraphics[width=0.8cm]{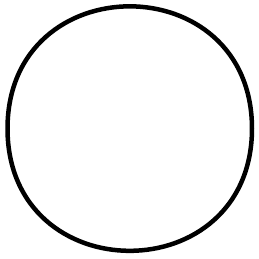}}=(-e^{\frac{2\pi\sqrt{-1}}{r}}-e^{-\frac{2\pi\sqrt{-1}}{r}})\ L.$ 
\end{enumerate}

There is a canonical isomorphism 
$$\langle\ \rangle:\mathrm K_r(\mathrm S^3)\to\mathbb C$$
defined by sending the empty link to $1.$ The image $\langle L\rangle$ of the framed link $L$ is called the Kauffman bracket of $L.$

Let $\mathrm K_r(A\times [0,1])$ be the Kauffman bracket skein module of the product of an annulus $A$ with a closed interval. For any link diagram $D$ in $\mathbb R^2$ with $k$ ordered components and $b_1, \dots, b_k\in \mathrm K_r(A\times [0,1]),$ let 
$$\langle b_1,\dots, b_k\rangle_D$$
be the complex number obtained by cabling $b_1,\dots, b_k$ along the components of $D$ considered as a element of $K_r(\mathrm S^3)$ then taking the Kauffman bracket $\langle\ \rangle.$

On $\mathrm K_r(A\times [0,1])$ there is a commutative multiplication induced by the juxtaposition of annuli, making it a $\mathbb C$-algebra; and as a $\mathbb C$-algebra $\mathrm K_r(A\times [0,1])  \cong \mathbb C[z],$ where $z$ is the core curve of $A.$ For an integer $n\geqslant 0,$ let $e_n(z)$ be the $n$-th Chebyshev polynomial defined recursively by
$e_0(z)=1,$ $e_1(z)=z$ and $e_n(z)=ze_{n-1}(z)-e_{n-2}(z).$ Let $\mathrm{I}_r=\{0,2,\dots,r-3\}$ be the set of even integers in between $0$ and $r-2.$ Then the Kirby coloring $\Omega_r\in\mathrm K_r(A\times [0,1])$ is defined by 
$$\Omega_r=\mu_r\sum_{n\in \mathrm{I}_r}[n+1]e_{n},$$
where
 $$\mu_r=\frac{2\sin\frac{2\pi}{r}}{\sqrt r}$$ 
 and $[n]$ is the quantum integer defined by
$$[n]=\frac{e^{\frac{2n\pi\sqrt{-1}}{r}}-e^{-\frac{2n\pi\sqrt{-1}}{r}}}{e^{\frac{2\pi\sqrt{-1}}{r}}-e^{-\frac{2\pi\sqrt{-1}}{r}}}.$$

Let $M$ be a closed oriented $3$-manifold and let $L$ be a framed link in $M$ with $n$ components. Suppose $M$ is obtained from $S^3$ by doing a surgery along a framed link $L',$ $D(L')$ is a standard diagram of $L'$ (ie, the blackboard framing of $D(L')$ coincides with the framing of $L'$). Then $L$ adds extra components to $D(L')$ forming a linking diagram $D(L\cup L')$ with $D(L)$ and $D(L')$ linking in possibly a complicated way. Let
$U_+$ be the diagram of the unknot with framing $1,$ $\sigma(L')$ be the signature of the linking matrix of $L'$ and $\mathbf m=(m_1,\dots,m_n)$ be a multi-elements of $I_r.$ Then the $r$-th \emph{relative Reshetikhin-Turaev invariant of $M$ with $L$ colored by $\mathbf m$} is defined as
\begin{equation}\label{RT}
\mathrm{RT}_r(M,L,\mathbf m)=\mu_r \langle e_{m_1},\dots,e_{m_n}, \Omega_r, \dots, \Omega_r\rangle_{D(L\cup L')}\langle \Omega_r\rangle _{U_+}^{-\sigma(L')}.
\end{equation}

Note that if $L=\emptyset$ or $m_1=\dots=m_n=0,$ then $\mathrm{RT}_r(M,L,\mathbf m)=\mathrm{RT}_r(M),$ the $r$-th Reshetikhin-Turaev invariant of $M;$ and if $M=S^3,$ then $\mathrm{RT}_r(M,L,\mathbf m)=\mu_r\mathrm J_{\mathbf m, L}\big(e^{\frac{4\pi\sqrt{-1}}{r}}\big),$ the value of the $\mathbf m$-th unnormalized colored Jones polynomial of $L$ at $t=e^{\frac{4\pi\sqrt{-1}}{r}}.$

\subsection{Hyperbolic cone manifolds}\label{CCS}

According to \cite{CHK}, a $3$-dimensional \emph{hyperbolic cone-manifold} is a $3$-manifold $M,$ which can be triangulated so that the link of each simplex is piecewise linear homeomorphic to a standard sphere and $M$ is equipped with a complete path metric such that the restriction of the metric to each simplex is isometric to a hyperbolic geodesic simplex. The \emph{singular locus} $L$ of a cone-manifold $M$ consists of the points with no neighborhood isometric to a ball in a Riemannian manifold. It follows that
\begin{enumerate}[(1)]
\item $L$ is a link in $M$ such that each component is a closed geodesic.

\item At each point of $L$ there is a \emph{cone angle} $\theta$ which is the sum of dihedral angles of $3$-simplices containing the point.

\item The restriction of the metric on $M\setminus L$ is a smooth hyperbolic metric, but is incomplete if $L\neq \emptyset.$
\end{enumerate}

Hodgson-Kerckhoff\,\cite{HK} proved that hyperbolic cone metrics on $M$ with singular locus $L$ are locally parametrized by the cone angles provided all the cone angles are less than or equal to $2\pi,$ and Kojima\,\cite{Ko} proved that hyperbolic cone manifolds $(M,L)$ are globally rigid provided all the cone angles are less than or equal to $\pi.$ It is expected to be  globally rigid if all the cone angles are less than or equal to $2\pi.$

Given a $3$-manifold $N$ with boundary a union of tori $T_1,\dots,T_n,$ a choice of generators $(u_i,v_i)$ for each $\pi_1(T_i)$ and pairs of relatively prime integers $(p_i, q_i),$ one can do the $(\frac{p_1}{q_1},\dots,\frac{p_n}{q_n})$-Dehn filling on $N$ by attaching a solid torus to each $T_i$ so that $p_iu_i+q_iv_i$ bounds a disk.  If $\mathrm H(u_i)$ and  $\mathrm H(v_i)$ are respectively the logarithmic holonomy for $u_i$ and $v_i,$ then a solution to  
\begin{equation}\label{DF}
p_i\mathrm H(u_i)+q_i\mathrm H(v_i)=\sqrt{-1}\theta_i
\end{equation}
 near the complete structure gives a cone-manifold structure on the resulting manifold $M$ with the cone angle $\theta_i$ along the core curve $L_i$ of the solid torus attached to $T_i;$ it is a smooth structure if $\theta_1=\dots=\theta_n= 2\pi.$

In this setting, the Chern-Simons invariant for a hyperbolic cone manifold $(M,L)$ can be defined by using the Neumann-Zagier potential function\,\cite{NZ}. To do this, we need a framing on each component, namely, a choice of a curve $\gamma_i$ on $T_i$ that is isotopic to the core curve $L_i$ of the solid torus attached to $T_i.$ We choose the orientation of $\gamma_i$ so that $(p_iu_i+q_iv_i)\cdot \gamma_i=1.$ Then we consider the following function 
$$\frac{\Phi(\mathrm{H}(u_1),\dots,\mathrm{H}(u_n))}{\sqrt{-1}}-\sum_{i=1}^n\frac{\mathrm H(u_i)\mathrm H(v_i)}{4\sqrt{-1}}+\sum_{i=1}^n\frac{\theta_i\mathrm H(\gamma_i)}{4},$$
where $\Phi$ is the Neumann-Zagier potential function (see \cite{NZ}) defined on the deformation space of hyperbolic structures on $M\setminus L$ parametrized by the holonomy of the meridians $\{\mathrm H(u_i)\},$ characterized by 
\begin{equation}\label{char}
\left \{\begin{array}{l}
\frac{\partial \Phi(\mathrm{H}(u_1),\dots,\mathrm{H}(u_n))}{\partial \mathrm{H}(u_i)}=\frac{\mathrm H(v_i)}{2},\\
\\
\Phi(0,\dots,0)=\sqrt{-1}\bigg(\mathrm{Vol}(M\setminus L)+\sqrt{-1}\mathrm{CS}(M\setminus L)\bigg)\quad\quad\mathrm{mod}\ \pi^2\mathbb Z,
\end{array}\right.
\end{equation} 
where $M\setminus L$ is with the complete hyperbolic metric. Another important feature of $\Phi$ is that it is even in each of its variables $\mathrm H(u_i).$

Following the argument in \cite[Sections 4 \& 5]{NZ}, one can prove that if  the cone angles of components of $L$ are $\theta_1,\dots,\theta_n,$ then
\begin{equation}\label{VOL}
\mathrm{Vol}(M_{L_{\boldsymbol\theta}})=\mathrm{Re}\bigg(\frac{\Phi(\mathrm{H}(u_1),\dots,\mathrm{H}(u_n))}{\sqrt{-1}}-\sum_{i=1}^n\frac{\mathrm H(u_i)\mathrm H(v_i)}{4\sqrt{-1}}+\sum_{i=1}^n\frac{\theta_i\mathrm H(\gamma_i)}{4}\bigg).
\end{equation}
Indeed, in this case, one can replace the $2\pi$ in Equations (33) (34) and (35) of \cite{NZ} by $\theta_i,$ and as a consequence can replace the $\frac{\pi}{2}$ in Equations (45), (46) and (48) by $\frac{\theta_i}{4},$ proving the result.

In \cite{Y}, Yoshida proved that when $\theta_1=\dots=\theta_n=2\pi,$
$$\mathrm{Vol}(M)+\sqrt{-1}\mathrm{CS}(M)=\frac{\Phi(\mathrm{H}(u_1),\dots,\mathrm{H}(u_n))}{\sqrt{-1}}-\sum_{i=1}^n\frac{\mathrm H(u_i)\mathrm H(v_i)}{4\sqrt{-1}}+\sum_{i=1}^n\frac{\theta_i\mathrm H(\gamma_i)}{4}\quad\quad\text{mod }\sqrt{-1}\pi^2\mathbb Z.$$

Therefore, we can make the following 

\begin{definition}\label{CS} The \emph{Chern-Simons invariant} of a hyperbolic cone manifold $M_{L_{\boldsymbol\theta}}$ with a choice of the framing $(\gamma_1,\dots,\gamma_n)$ 
is defined as
$$\mathrm{CS}(M_{L_{\boldsymbol\theta}})=\mathrm{Im}\bigg(\frac{\Phi(\mathrm{H}(u_1),\dots,\mathrm{H}(u_n))}{\sqrt{-1}}-\sum_{i=1}^n\frac{\mathrm H(u_i)\mathrm H(v_i)}{4\sqrt{-1}}+\sum_{i=1}^n\frac{\theta_i\mathrm H(\gamma_i)}{4}\bigg)\quad\quad\text{mod }\pi^2\mathbb Z.$$
\end{definition}

Then together with (\ref{VOL}), we have
\begin{equation}\label{VCS}
\mathrm{Vol}(M_{L_{\boldsymbol\theta}})+\sqrt{-1}\mathrm{CS}(M_{L_{\boldsymbol\theta}})=\frac{\Phi(\mathrm{H}(u_1),\dots,\mathrm{H}(u_n))}{\sqrt{-1}}-\sum_{i=1}^n\frac{\mathrm H(u_i)\mathrm H(v_i)}{4\sqrt{-1}}+\sum_{i=1}^n\frac{\theta_i\mathrm H(\gamma_i)}{4}\quad\quad\text{mod }\sqrt{-1}\pi^2\mathbb Z.
\end{equation}

\begin{remark} It is an interesting question to find a direct geometric definition of the Chern-Simons invariants for hyperbolic cone manifolds.
\end{remark}

\subsection{Quantum $6j$-symbols} 

A triple $(m_1,m_2,m_3)$ of even integers in $\{0,2,\dots,r-3\}$ is \emph{$r$-admissible} if 
\begin{enumerate}[(1)]
\item  $m_i+m_j-m_k\geqslant 0$ for $\{i,j,k\}=\{1,2,3\},$ and
\item $m_1+m_2+m_3\leqslant 2(r-2).$
\end{enumerate}

For an $r$-admissible triple $(m_1,m_2,m_3),$ define 
$$\Delta(m_1,m_2,m_3)=\sqrt{\frac{[\frac{m_1+m_2-m_3}{2}]![\frac{m_2+m_3-m_1}{2}]![\frac{m_3+m_1-m_2}{2}]!}{[\frac{m_1+m_2+m_3}{2}+1]!}}$$
with the convention that $\sqrt{x}=\sqrt{|x|}\sqrt{-1}$ when the real number $x$ is negative, and recall that the quantum factorial  is defined as 
$$[n]!=\prod_{k=1}^n[k]$$
with the convention that  $[0]!=1.$

A  6-tuple $(m_1,\dots,m_6)$  is \emph{$r$-admissible} if the triples $(m_1,m_2,m_3),$ $(m_1,m_5,m_6),$ $(m_2,m_4,m_6)$ and $(m_3,m_4,m_5)$ are $r$-admissible

\begin{definition}
The \emph{quantum $6j$-symbol} of an $r$-admissible 6-tuple $(m_1,\dots,m_6)$ is 
\begin{multline*}
\bigg|\begin{matrix} m_1 & m_2 & m_3 \\ m_4 & m_5 & m_6 \end{matrix} \bigg|
= \sqrt{-1}^{-\sum_{i=1}^6m_i}\Delta(m_1,m_2,m_3)\Delta(m_1,m_5,m_6)\Delta(m_2,m_4,m_6)\Delta(m_3,m_4,m_5)\\
\sum_{k=\max \{T_1, T_2, T_3, T_4\}}^{\min\{ Q_1,Q_2,Q_3\}}\frac{(-1)^k[k+1]!}{[k-T_1]![k-T_2]![k-T_3]![k-T_4]![Q_1-k]![Q_2-k]![Q_3-k]!},
\end{multline*}
where $T_1=\frac{m_1+m_2+m_3}{2},$ $T_2=\frac{m_1+m_5+m_6}{2},$ $T_3=\frac{m_2+m_4+m_6}{2}$ and $T_4=\frac{m_3+m_4+m_5}{2},$ $Q_1=\frac{m_1+m_2+m_4+m_5}{2},$ $Q_2=\frac{m_1+m_3+m_4+m_6}{2}$ and $Q_3=\frac{m_2+m_3+m_5+m_6}{2}.$
\end{definition}

Here we recall a classical result of Costantino \cite{C1} which was originally stated at the root of unity $q=e^{\frac{\pi \sqrt{-1}}{r}}.$ At the root of unity $q=e^{\frac{2\pi\sqrt{-1}}{r}},$ see \cite[Appendix]{BDKY} for a detailed proof.

\begin{theorem}[\cite{C1}]\label{Vol}
Let $\{(m_1^{(r)},\dots,m_6^{(r)})\}$ be a sequence of $r$-admissible
$6$-tuples, and
let 
$$\theta_i=\Big|\pi-\lim_{r\rightarrow\infty}\frac{2\pi m_i^{(r)}}{r}\Big|.$$
If $\theta_1,\dots,\theta_6$ are the dihedral angles of a truncated hyperideal tetrahedron $\Delta,$ then
as $r$ varies over all the odd integers 
$$\lim_{r\to\infty}\frac{2\pi}{r}\log \bigg|\begin{array}{ccc}m_1^{(r)} & m_2^{(r)} & m_3^{(r)} \\m_4^{(r)} & m_5^{(r)} & m_6^{(r)} \\\end{array} \bigg|_{q=e^{\frac{2\pi \sqrt{-1}}{r}}}=Vol(\Delta).$$
\end{theorem}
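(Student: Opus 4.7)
My plan is to follow Costantino's scheme from \cite{C1}, adapted to the root of unity $q = e^{2\pi\sqrt{-1}/r}$ as carried out in the appendix of \cite{BDKY}. The argument proceeds in three stages: a Riemann-sum approximation of the quantum factorials by the Lobachevsky function, identification of the resulting potential with the volume formula of Murakami-Yano, and a Laplace-type extraction of the leading asymptotic from the sum defining the $6j$-symbol.

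First I would establish the uniform asymptotic
$$\frac{2\pi}{r}\log[n]! = -\Lambda\!\left(\frac{2\pi n}{r}\right) + \frac{2\pi n}{r}\log\!\left(2\sin\frac{2\pi}{r}\right) + O\!\left(\frac{\log r}{r}\right),$$
where $\Lambda(x) = -\int_0^x\log|2\sin t|\,dt$ is the Lobachevsky function, with suitable conventions for imaginary contributions when the argument passes $\pi$. Substituting into the explicit formula for the $6j$-symbol, the inner sum over $k$ takes the form
$$\sum_k R_r(k)\,\exp\!\left(\frac{r}{2\pi}\,V_r\!\left(\frac{2\pi k}{r};\mathbf{a}^{(r)}\right)\right),$$
where $\mathbf{a}^{(r)}_i = 2\pi m^{(r)}_i/r \to \pi\pm\theta_i$, the function $V_r$ converges uniformly on compact subsets of the interior of the admissible range to a limit $V(\tau;\theta_1,\dots,\theta_6)$ built from Lobachevsky functions, and $R_r(k)$ contributes at most polynomially in $r$.

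Second, I would verify that when $\theta_1,\dots,\theta_6$ are the dihedral angles of a truncated hyperideal tetrahedron $\Delta$, the equation $\partial V/\partial\tau = 0$ has a unique critical point $\tau^{*}$ in the interior, whose value satisfies $\mathrm{Re}\,V(\tau^{*}) = \mathrm{Vol}(\Delta)$. This is the content of the Murakami-Yano formula: the volume of a hyperideal tetrahedron is expressible as the imaginary part of a combination of dilogarithms evaluated at a specific algebraic point, and the critical-point equation matches the Gram matrix / law of cosines relation governing the tetrahedron's geometry.

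Third, a standard Laplace / dominated Riemann-sum argument then yields
$$\lim_{r\to\infty}\frac{2\pi}{r}\log\Bigg|\sum_k R_r(k)\,e^{\frac{r}{2\pi}V_r(2\pi k/r)}\Bigg| = \sup_\tau \mathrm{Re}\,V(\tau) = \mathrm{Vol}(\Delta),$$
and the remaining multiplicative prefactors in the $6j$-symbol (the $\Delta(\cdot,\cdot,\cdot)$ square roots and the sign $\sqrt{-1}^{-\sum m_i}$) contribute at most polynomially in $r$, hence disappear after taking $\tfrac{2\pi}{r}\log|\cdot|$. The main obstacle is the behavior near the boundary of the summation range: when $\max_i T_i$ and $\min_j Q_j$ are close, corresponding to a nearly-degenerate tetrahedron, the factorial approximation loses uniformity and some quantum integers in the denominator may vanish, so one must excise a neighborhood of the boundary, check that the excised terms decay subexponentially, and verify that $\tau^{*}$ stays uniformly in the interior under the non-degeneracy assumption on $\Delta$. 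A secondary technical point is tracking the imaginary corrections to $\Lambda$ when its argument exceeds $\pi$, which are needed to identify $V$ with its dilogarithmic form but affect only a phase and so drop out after taking absolute values.
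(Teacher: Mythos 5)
Your overall strategy (Lobachevsky approximation of quantum factorials, identification of the critical value of the resulting potential with $\mathrm{Vol}(\Delta)$ via Murakami--Yano/Ushijima, Laplace-type extraction with excision near the boundary of the summation range) is exactly the route of Costantino's proof and of its adaptation to $q=e^{2\pi\sqrt{-1}/r}$ in the appendix of \cite{BDKY}, which is what the paper cites; the same potential reappears in the paper as the function $V$ of equation (\ref{V}) and in Lemma \ref{absm}. However, there is one step in your write-up that is genuinely wrong as stated: you claim that the prefactors $\Delta(m_i,m_j,m_k)$ ``contribute at most polynomially in $r$.'' They do not. Each $\Delta(m_1,m_2,m_3)$ is the square root of a ratio of quantum factorials whose logarithm is of order $r$; by the very same asymptotic you use in your first step, $\tfrac{2\pi}{r}\log|\Delta(m_1,m_2,m_3)|$ converges to
$$\delta(x,y,z)=-\tfrac{1}{2}\Lambda\Big(\tfrac{x+y-z}{2}\Big)-\tfrac{1}{2}\Lambda\Big(\tfrac{y+z-x}{2}\Big)-\tfrac{1}{2}\Lambda\Big(\tfrac{z+x-y}{2}\Big)+\tfrac{1}{2}\Lambda\Big(\tfrac{x+y+z}{2}\Big),$$
which is generically nonzero (it vanishes at $x=y=z=\pi$ but not, e.g., at $x=y=z=2\pi/3$). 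These four $\delta$-terms are precisely the first line of the potential $V$ in equation (\ref{V}), and they are indispensable: the Murakami--Yano identity gives $\mathrm{Vol}(\Delta)$ as the critical value of the \emph{full} potential $\sum\delta-\Lambda(\xi)+\sum_i\Lambda(\xi-\tau_i)+\sum_j\Lambda(\eta_j-\xi)$, not of the $k$-sum part alone. Since the $\delta$-terms are independent of $\xi$ they do not move the critical point, but they shift the critical value, so discarding them yields the wrong limit. The fix is simply to fold the $\Delta$-prefactors into the exponential weight before applying the Laplace argument, as the paper does in Proposition \ref{6jqd} and equation (\ref{V}).

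A smaller point: your factorial asymptotic should be checked for sign and normalization. The paper's Lemma \ref{est} is stated for $\{n\}!=\prod_k(q^k-q^{-k})$ rather than $[n]!$ precisely because the $n$ powers of $2\sin\frac{2\pi}{r}$ separating the two are not individually negligible (each contributes $-\log r+O(1)$ to the logarithm); one must verify that these powers balance across numerators and denominators of the $6j$-symbol so that their total contribution to $\log|\cdot|$ is $O(\log r)$. Your handling of the boundary excision and of the branch corrections to $\Lambda$ past $\pi$ is consistent with what is done in \cite{BDKY}.
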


Closely related, a triple $(\alpha_1,\alpha_2,\alpha_3)\in [0,2\pi]^3$ is \emph{admissible} if 
\begin{enumerate}[(1)]
\item $\alpha_i+\alpha_j-\alpha_k\geqslant 0$ for $\{i,j,k\}=\{1,2,3\},$ and
\item $\alpha_i+\alpha_j+\alpha_k\leqslant 4\pi.$
\end{enumerate}
A $6$-tuple $(\alpha_1,\dots,\alpha_6)\in [0,2\pi]^6$ is \emph{admissible} if the triples $(\alpha_1,\alpha_2,\alpha_3),$ $(\alpha_1,\alpha_5,\alpha_6),$ $(\alpha_2,\alpha_4,\alpha_6)$ and $(\alpha_3,\alpha_4,\alpha_5)$ are admissible.


\subsection{Fundamental shadow links}\label{fsl}

In this section we recall the construction and basic properties of the fundamental shadow links. The building blocks for the fundamental shadow links are truncated tetrahedra as in the left of Figure \ref{bb}. If we take $c$ building blocks $\Delta_1,\dots, \Delta_c$ and glue them together along the triangles of truncation, we obtain a (possibly non-orientable) handlebody of genus $c+1$ with a link in its boundary consisting of the edges of the building blocks, such as in the right of Figure \ref{bb}. By taking the orientable double (the orientable double
covering with the boundary quotient out by the deck involution) of this handlebody, we obtain a link $L_{\text{FSL}}$ inside $M_c=\#^{c+1}(S^2\times S^1).$ We call a link obtained this way a \emph{fundamental shadow link}, and its complement in $M_c$ a \emph{fundamental shadow link complement}.  Alternatively, to construct a fundamental shadow link complement, we can also take the double of each tetrahedron first along the hexagonal faces and then glue the resulting pieces together along homeomorphisms between the $3$-puncture spheres coming from the double of the triangles of truncation.

\begin{figure}[htbp]
\centering
\includegraphics[scale=0.25]{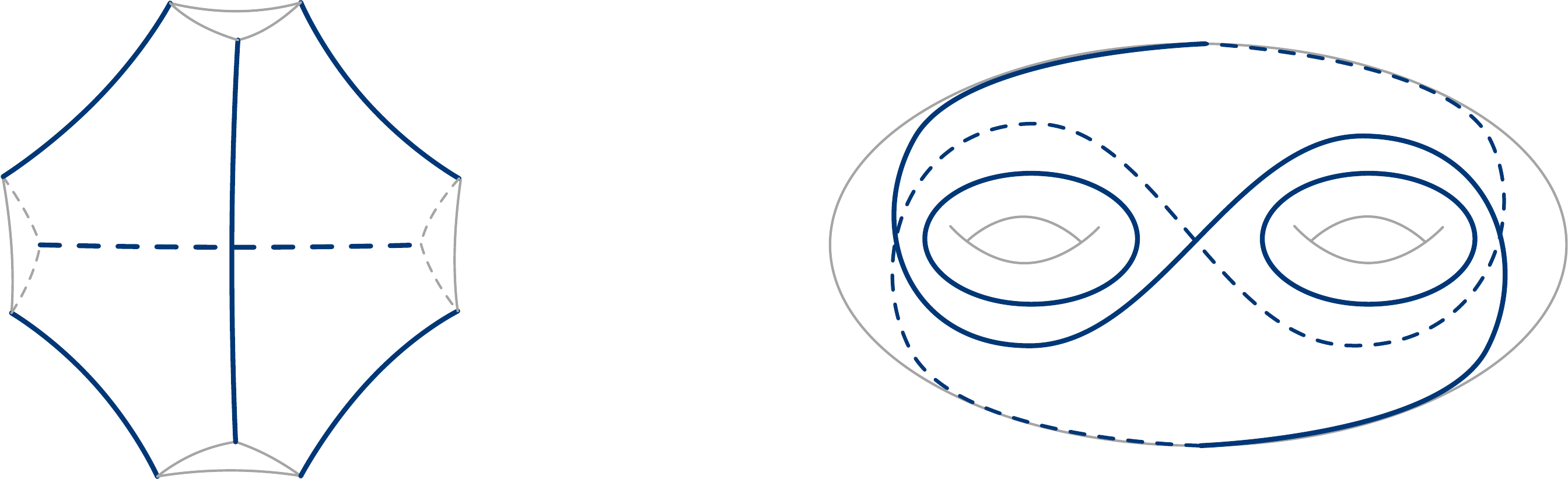}
\caption{The handlebody on the right is obtained from the truncated tetrahedron on the left by identifying the triangles on the top and the bottom by a horizontal reflection and the triangles on the left and the right by a vertical reflection.}
\label{bb}
\end{figure}

The fundamental importance of the family of the fundamental shadow links is the following.

\begin{theorem}[\cite{CT}]\label{CT} Any compact oriented $3$-manifold with toroidal or empty boundary can be obtained from a suitable fundamental shadow link complement by doing an integral Dehn-filling to some of the boundary components.
\end{theorem}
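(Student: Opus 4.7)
The plan is to combine the Lickorish--Wallace surgery presentation of $3$-manifolds with Turaev's shadow calculus, following Costantino and Thurston. By Lickorish--Wallace every closed oriented $3$-manifold arises from $S^3$ by integer Dehn surgery on a framed link, and any compact oriented $3$-manifold with toroidal boundary is the complement of some further components of such a link. It therefore suffices to show that for every link $L \subset S^3$, the pair $(S^3, L)$ is an integer Dehn filling of some fundamental shadow link complement in which the meridian tori of $L$ are left unfilled; composing with the Lickorish--Wallace surgery then expresses $M$ as an integer Dehn filling of a fundamental shadow link complement.

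To produce such a presentation, I would start with a generic planar diagram $D$ of $L$ with $c$ crossings and build from it a simple $2$-polyhedron $X$ (a Turaev shadow): the $2$-strata are the disk regions of $D$, the singular graph is the underlying $4$-valent graph, and each $2$-stratum carries a half-integer gleam determined by the writhes at the adjacent crossings. A regular $4$-dimensional neighborhood $W$ of $X$ has $\partial W$ containing $S^3$, and opening $X$ near each crossing into a truncated tetrahedron and doubling along the triangles of truncation, exactly as in Section \ref{fsl}, exhibits the complement of the dual link in $\partial W$ as a fundamental shadow link complement built from $c$ blocks, sitting inside $M_c = \#^{c+1}(S^2\times S^1)$. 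The extra components of $L_{\text{FSL}}$ carry framings read off from the gleams; filling them by the corresponding integer slopes recovers $(S^3, L)$, as desired.

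The hard part will be verifying that the polyhedron built from $D$ genuinely yields a fundamental shadow link complement in $M_c$ with \emph{integer} framings on the auxiliary components, rather than merely half-integer ones, and with the correct ambient connected-sum manifold. This is exactly the content of Turaev's shadow calculus: any two shadows of the same $4$-dimensional germ are related by a finite sequence of local moves that modify gleams in a controlled integer way, so a generic diagram can be reduced, at the cost of introducing extra blocks, to one whose shadow is built entirely from truncated-tetrahedron blocks with integer gleams. Carrying out this reduction while tracking orientations, framings, and the resulting $H_1$-class of the auxiliary components is the technical heart of the argument, and is where I expect the main obstacle to lie.
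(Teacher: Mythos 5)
The paper does not prove this statement: Theorem \ref{CT} is quoted verbatim from Costantino--Thurston \cite{CT}, so there is no in-paper argument to compare yours against. Your outline does follow the route of the cited source --- reduce via Lickorish--Wallace to presenting $(S^3,L)$ as a filling of a fundamental shadow link complement, build a shadow from a link diagram whose vertices are the crossings, thicken to a $4$-manifold, and recognize the boundary of a neighborhood of the singular graph as $M_c\setminus L_{\text{FSL}}$ with the removed regions reattached by Dehn fillings governed by the gleams. That is the correct skeleton.

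However, the proposal has a genuine gap, and it sits exactly where you place the ``technical heart.'' The passage from the half-integer-gleamed shadow of a generic diagram to a presentation of $(S^3,L)$ as an \emph{integral} filling of a fundamental shadow link complement is the entire content of the theorem, and the tool you invoke to close it --- Turaev's calculus of shadow moves --- cannot do the job as stated. Those moves relate shadows of the \emph{same} $4$-dimensional thickening, whereas what is needed here is a construction that changes the $4$-manifold (adding blocks, i.e.\ vertices and new regions) while preserving only the relevant $3$-dimensional boundary data, together with a verification that the excess gleams can be pushed onto newly created annular regions whose reattachment is an integral filling in the meridian--longitude basis that the fundamental shadow link structure provides. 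In particular, the half-integer gleams on the small regions at the crossings do not directly translate into surgery coefficients at all; resolving this requires the specific bookkeeping of \cite{CT} (the choice of longitudes encoded here by the integers $\iota_i$ of Section \ref{fsl}), not a general appeal to shadow equivalence. A secondary imprecision: for the neighborhood of the singular graph to decompose into the $c$ standard blocks glued along \emph{all} their truncation triangles (which is what forces the ambient manifold to be $\#^{c+1}(S^2\times S^1)$), you must first arrange that the diagram is connected with no crossingless components and that every region of $S^2\setminus D$ is a disk; this is easy but should be said. As written, then, the proposal is a correct plan with the decisive step asserted rather than proved.
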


A hyperbolic cone metric on $M_c$ with singular locus $L_{\text{FSL}}$ and with sufficiently small cone angles $\theta_1,\dots,\theta_n$ can be constructed as follows. For each $s\in \{1,\dots, c\},$ let $e_{s_1},\dots,e_{s_6}$ be the edges of the building block $\Delta_s,$ and let $\theta_{s_j}$ be the cone angle of the component of $L$ containing $e_{s_j}.$ If $\theta_i$'s are sufficiently small, then $\big\{\frac{\theta_{s_1}}{2},\dots,\frac{\theta_{s_6}}{2}\big\}$ form the set of dihedral angles of a truncated hyperideal tetrahedron, by abuse of notation still denoted by $\Delta_s.$ Then the hyperbolic cone manifold $M_c$ with singular locus $L_{\text{FSL}}$ and cone angles $\theta_1,\dots,\theta_n$ is obtained by glueing $\Delta_s$'s together along isometries of the triangles of truncation, and taking the double. In this metric, the logarithmic holonomy of the meridian $u_i$ of the tubular neighborhood $N(L_i)$ of $L_i$ satisfies 
\begin{equation}\label{m}
\mathrm{H}(u_i)=\sqrt{-1}\theta_i.
\end{equation}
A preferred longitude $v_i$ on the boundary of $N(L_i)$ can be chosen as follows. Recall that a fundamental shadow link is obtained from the double of a set of truncated tetrahedra (along the hexagonal faces) glued together by orientation preserving homeomorphisms between the trice-punctured spheres coming from the double of the triangles of truncation, and recall also that 
 the mapping class group of trice-punctured sphere is generated by mutations, which could be represented by the four $3$-braids in Figure \ref{mutation}. For each mutation, we assign an integer $\pm 1$ to each component of the braid as in Figure \ref{mutation}; and for a composition of a sequence of mutations, we assign the sum of the  $\pm 1$ assigned by the mutations to each component of the $3$-braid.

 \begin{figure}[htbp]
\centering
\includegraphics[scale=0.5]{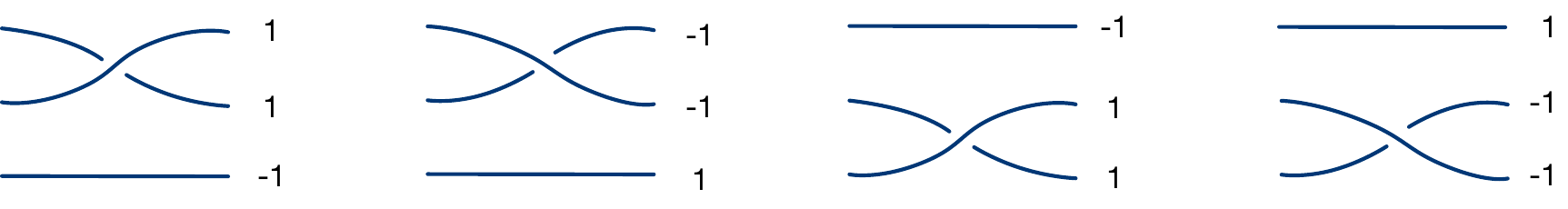}
\caption{ }
\label{mutation}
\end{figure}

  In this way, each  orientation preserving homeomorphisms between the trice-punctured spheres assigns three integers to three of the components of $L_{\text{FSL}},$ one for each. For each $i\in \{1,\dots, n\},$ let $\iota_i$ be the sum of all the integers on $L_i$ assigned by the homeomorphisms between the trice-punctured spheres. Then we can choose a preferred longitude $v_i$ such that $u_i\cdot v_i=1$ and the logarithmic holomony satisfies
\begin{equation}\label{l}
\mathrm{H}(v_i)=-l_i+\frac{\iota_i\sqrt{-1}\theta_i}{2},
\end{equation}
where $l_i$ is the length of the closed geodesic $L_i.$ In this way, a framing on $L_i$ gives an integer $p_i$ in the way that the parallel copy of $L_i$ on $N(L_i)$ is isotopic to the curve representing $p_iu_i+v_i.$

\begin{proposition}[\cite{C1,C2}]\label{FSL} If $L_{\text{FSL}}=L_1\cup\dots\cup L_n\subset M_c$ is a framed fundamental shadow link with framing $p_i$ on $L_i,$ and $\mathbf m=(m_1,\dots,m_n)$ is a coloring of its components with even integers in $\{0,2,\dots,r-3\},$ then
$$\mathrm{RT_r}(M_c,L_{\text{FSL}},\mathbf m)=\Bigg(\frac{2\sin\frac{2\pi}{r}}{\sqrt{r}}\Bigg)^{-c}\prod_{i=1}^n(-1)^{\frac{\iota_im_i}{2}}q^{(p_i+\frac{\iota_i}{2})\frac{m_i(m_i+2)}{2}}\prod_{s=1}^c\bigg|\begin{matrix}
        m_{s_1} & m_{s_2} & m_{s_3} \\
        m_{s_4} & m_{s_5} & m_{s_6} 
      \end{matrix} \bigg|,$$
where $m_{s_1},\dots,m_{s_6}$ are the colors of the edges of the building block $\Delta_s$ inherited  from the color $\mathbf m$ on $L_{\text{FSL}}.$    
\end{proposition}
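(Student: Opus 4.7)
The plan is to apply Turaev's shadow state sum formula for the Reshetikhin-Turaev invariants to the natural shadow of the pair $(M_c,L_{\text{FSL}})$, and then install the framing and mutation corrections separately.

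First, I would describe the shadow. By construction $M_c\setminus L_{\text{FSL}}$ decomposes canonically into the $2c$ truncated hyperideal tetrahedra obtained as the doubles of the building blocks $\Delta_1,\dots,\Delta_c$. Collapsing this ideal triangulation to a spine produces a simple $2$-polyhedron $X\subset M_c$ whose vertices are in bijection with the blocks $\Delta_s$ and whose boundary circles are precisely the components $L_1,\dots,L_n$ of $L_{\text{FSL}}$. Coloring each $L_i$ by $m_i$ determines the six edge-colors $m_{s_1},\dots,m_{s_6}$ of each block.

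In the base case in which every $p_i=0$ and every gluing homeomorphism between thrice-punctured spheres is the identity (so every $\iota_i=0$), the shadow state sum has no internal variables to sum over, because every edge already carries one of the boundary colors. Each vertex of $X$ then contributes a single quantum $6j$-symbol, and together with the Euler-characteristic normalization $\mu_r^{\chi(X)}=\mu_r^{-c}$ of Turaev's formula this yields
$$\mathrm{RT}_r(M_c,L_{\text{FSL}},\mathbf m)=\mu_r^{-c}\prod_{s=1}^c\bigg|\begin{matrix} m_{s_1} & m_{s_2} & m_{s_3} \\ m_{s_4} & m_{s_5} & m_{s_6}\end{matrix}\bigg|,$$
with $\mu_r=\frac{2\sin(2\pi/r)}{\sqrt r}$; this is the base identity to beat.

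Next I would install the two corrections. A $+1$ change in the framing of $L_i$ inserts a positive kink on a strand cabled by $e_{m_i}$ and hence multiplies the bracket by the ribbon eigenvalue $\omega_{m_i}=q^{m_i(m_i+2)/2}$ (the sign $(-1)^{m_i}$ is trivial because $m_i$ is even), so framings $p_1,\dots,p_n$ contribute the factor $\prod_{i=1}^n q^{p_i m_i(m_i+2)/2}$. For the mutation correction, one checks that each of the four local moves in Figure \ref{mutation} is, up to a permutation of the three punctures, a signed half-twist of the thrice-punctured sphere; pushing such a half-twist past a strand colored $m_i$ multiplies the invariant by the square root of the ribbon eigenvalue, namely $(\sqrt{-1})^{m_i}q^{m_i(m_i+2)/4}=(-1)^{m_i/2}q^{m_i(m_i+2)/4}$. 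Summing the signed contributions of Figure \ref{mutation} along $L_i$ gives $\iota_i$, so the mutation correction contributes $(-1)^{\iota_i m_i/2}q^{\iota_i m_i(m_i+2)/4}$ on each component. Combining this with the framing correction produces the exponent $(p_i+\iota_i/2)\cdot m_i(m_i+2)/2$ and sign $(-1)^{\iota_i m_i/2}$ of the statement.

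The main obstacle is making step three rigorous: one must verify that the $\pm 1$ labels of Figure \ref{mutation}, and the prescription for adding them along a composition of mutations, exactly match the signed count of half-twists imparted to each strand, and that the interaction of half-twists on two or three strands of the same thrice-punctured sphere introduces no additional phase that would spoil the clean product form. Once this bookkeeping is settled, the three contributions multiply together independently to give the claimed formula.
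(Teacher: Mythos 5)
The paper does not prove Proposition \ref{FSL}; it is quoted directly from Costantino's work \cite{C1,C2}, and your shadow state-sum argument --- a shadow with one vertex per building block and no internal summation variables, followed by the ribbon-eigenvalue corrections $q^{p_i m_i(m_i+2)/2}$ for the framings and the half-twist (gleam $\iota_i/2$) corrections $(-1)^{\iota_i m_i/2}q^{\iota_i m_i(m_i+2)/4}$ for the mutations --- is essentially the proof given there. The one step you rightly flag as needing care, matching the $\pm1$ labels of Figure \ref{mutation} with signed half-gleam changes on the adjacent regions, is exactly how the cited proof handles the mutation term, so your outline is sound.
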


Next, we talk about the volume and the Chern-Simons invariant of $M_c \setminus L_{\text{FSL}}$ at the complete hyperbolic structure. In the complete hyperbolic metric, since $M_c \setminus L_{\text{FSL}}$ is the union of $2c$ regular ideal octahedra, we have 
\begin{equation} \label{VolFSL}
\mathrm{Vol}(M_c \setminus L_{\text{FSL}}) = 2c v_8,
\end{equation} 
where $v_8$ is the volume of the regular ideal octahedron.

For the Chern-Simons invariant, in the case that the truncated tetrahedra $\Delta_1,\dots, \Delta_c$ are glued together along the triangles of truncation via orientation reversing maps, $M_c\setminus L_{\text{FSL}}$ is the ordinary double of the orientable handlebody, which admits an orientation reversing self-homeomorphism. Hence by \cite[Corollary 2.5]{MO}, \begin{equation*}
\mathrm {CS}(M_c \setminus L_{\text{FSL}}) = 0      \quad\quad\text{mod }\pi^2\mathbb Z
\end{equation*}
at the complete hyperbolic structure. In the general case, a fundamental shadow link complement $M_c\setminus L_{\text{FSL}}$ can be obtained from one from the previous case by doing a sequence of mutations along the thrice-punctured spheres coming from the double of the triangles of truncation. Therefore, by \cite[Theorem 2.4]{MR} that a mutation along an incompressible trice-punctured sphere in a hyperbolic three manifold changes the Chern-Simons invariant by $\frac{\pi^2}{2},$ we have
\begin{equation}\label{CSFSL}
\mathrm {CS}(M_c \setminus L_{\text{FSL}}) = \Big(\sum_{i=1}^n \frac{\iota_i}{2}\Big) \pi^2  \quad\quad\text{mod }\pi^2\mathbb Z.
\end{equation}

Together with Theorem \ref{Vol} and the construction of the hyperbolic cone structure, we see that Conjecture \ref{VC} is true for $(M_c,L_{\text{FSL}}).$ This was first proved by Costantino in \cite{C1} at the root of unity $q=e^{\frac{\pi \sqrt{-1}}{r}}.$

\subsection{Dilogarithm and quantum dilogarithm functions}

Let $\log:\mathbb C\setminus (-\infty, 0]\to\mathbb C$ be the standard logarithm function defined by
$$\log z=\log|z|+\sqrt{-1}\arg z$$
with $-\pi<\arg z<\pi.$
 
The dilogarithm function $\mathrm{Li}_2: \mathbb C\setminus (1,\infty)\to\mathbb C$ is defined by
$$\mathrm{Li}_2(z)=-\int_0^z\frac{\log (1-u)}{u}du$$
where the integral is along any path in $\mathbb C\setminus (1,\infty)$ connecting $0$ and $z,$ which is holomorphic in $\mathbb C\setminus [1,\infty)$ and continuous in $\mathbb C\setminus (1,\infty).$

The dilogarithm function satisfies the follow properties (see eg. Zagier\,\cite{Z}).
\begin{enumerate}[(1)]
\item \begin{equation}\label{Li2}
\mathrm{Li}_2\Big(\frac{1}{z}\Big)=-\mathrm{Li}_2(z)-\frac{\pi^2}{6}-\frac{1}{2}\big(\log(-z)\big)^2.
\end{equation} 
\item In the unit disk $\big\{z\in\mathbb C\,\big|\,|z|<1\big\},$ 
\begin{equation}\label{Li1}
\mathrm{Li}_2(z)=\sum_{n=1}^\infty\frac{z^n}{n^2}.
\end{equation}
\item On the unit circle $\big\{ z=e^{2\sqrt{-1}\theta}\,\big|\,0 \leqslant \theta\leqslant\pi\big\},$ 
\begin{equation}\label{dilogLob}
\mathrm{Li}_2(e^{2\sqrt{-1}\theta})=\frac{\pi^2}{6}+\theta(\theta-\pi)+2\sqrt{-1}\Lambda(\theta).
\end{equation}
\end{enumerate}
Here  $\Lambda:\mathbb R\to\mathbb R$  is the Lobachevsky function defined by
$$\Lambda(\theta)=-\int_0^\theta\log|2\sin t|dt,$$
which is an odd function of period $\pi.$ See eg. Thurston's notes\,\cite[Chapter 7]{T}.


The following variant of Faddeev's quantum dilogarithm functions\,\cite{F, FKV} will play a key role in the proof of the main result. 
Let $r\geqslant 3$ be an odd integer. Then the following contour integral
\begin{equation}
\varphi_r(z)=\frac{4\pi\sqrt{-1}}{r}\int_{\Omega}\frac{e^{(2z-\pi)x}}{4x \sinh (\pi x)\sinh (\frac{2\pi x}{r})}\ dx
\end{equation}
defines a holomorphic function on the domain $$\Big\{z\in \mathbb C \ \Big|\ -\frac{\pi}{r}<\mathrm{Re}z <\pi+\frac{\pi}{r}\Big\},$$  
  where the contour is
$$\Omega=\big(-\infty, -\epsilon\big]\cup \big\{z\in \mathbb C\ \big||z|=\epsilon, \mathrm{Im}z>0\big\}\cup \big[\epsilon,\infty\big),$$
for some $\epsilon\in(0,1).$
Note that the integrand has poles at $n\sqrt{-1},$ $n\in\mathbb Z,$ and the choice of  $\Omega$ is to avoid the pole at $0.$
\\

The function $\varphi_r(z)$ satisfies the following fundamental properties, whose proof can be found in \cite[Section 2.3]{WY}. 
\begin{lemma}
\begin{enumerate}[(1)]
\item For $z\in\mathbb C$ with  $0<\mathrm{Re}z<\pi,$
\begin{equation}\label{fund}
1-e^{2 \sqrt{-1}z}=e^{\frac{r}{4\pi\sqrt{-1}}\Big(\varphi_r\big(z-\frac{\pi}{r}\big)-\varphi_r\big(z+\frac{\pi}{r}\big)\Big)}.
 \end{equation}
 
 \item For $z\in\mathbb C$ with  $-\frac{\pi}{r}<\mathrm{Re}z<\frac{\pi}{r},$
 \begin{equation}\label{f2}
1+e^{r\sqrt{-1}z}=e^{\frac{r}{4\pi\sqrt{-1}}\Big(\varphi_r(z)-\varphi_r\big(z+\pi\big)\Big)}.
\end{equation}
\end{enumerate}
\end{lemma}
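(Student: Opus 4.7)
The two functional equations (\ref{fund}) and (\ref{f2}) admit a parallel proof, which I would organize around reducing both to a single master integral identity and then evaluating that integral by residues.

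The plan is to start by computing the differences directly from the contour-integral definition of $\varphi_r$. For (\ref{fund}), combining the two integrals gives
\[
\varphi_r\bigl(z-\tfrac{\pi}{r}\bigr)-\varphi_r\bigl(z+\tfrac{\pi}{r}\bigr) = \frac{4\pi\sqrt{-1}}{r}\int_\Omega \frac{e^{(2z-\pi)x}\bigl(e^{-2\pi x/r}-e^{2\pi x/r}\bigr)}{4x\sinh(\pi x)\sinh(2\pi x/r)}\,dx,
\]
and the elementary identity $e^{-2\pi x/r}-e^{2\pi x/r}=-2\sinh(2\pi x/r)$ cancels one of the $\sinh$ factors in the denominator. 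For (\ref{f2}), analogously,
\[
\varphi_r(z)-\varphi_r(z+\pi)=\frac{4\pi\sqrt{-1}}{r}\int_\Omega \frac{e^{(2z-\pi)x}(1-e^{2\pi x})}{4x\sinh(\pi x)\sinh(2\pi x/r)}\,dx,
\]
and $1-e^{2\pi x}=-2e^{\pi x}\sinh(\pi x)$ cancels $\sinh(\pi x)$, after which the substitution $y=2x/r$ rescales the integral to the same master form. Both are therefore reduced to proving
\[
I(w):=\int_\Omega \frac{-e^{wx}}{2x\sinh(\pi x)}\,dx=\log\bigl(1+e^{\sqrt{-1}w}\bigr)\qquad\text{for }-\pi<\mathrm{Re}(w)<\pi,
\]
where for (\ref{fund}) we take $w=2z-\pi$ (so $\log(1+e^{\sqrt{-1}w})=\log(1-e^{2\sqrt{-1}z})$), and for (\ref{f2}) we take $w=rz$ (so $\log(1+e^{\sqrt{-1}w})=\log(1+e^{r\sqrt{-1}z})$); in each case the restriction on $z$ corresponds exactly to $-\pi<\mathrm{Re}(w)<\pi$.

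To evaluate $I(w)$ I would close $\Omega$ by a large semicircle in the upper half-plane. The only singularities of the integrand inside the closed contour are simple poles at $x=n\sqrt{-1}$ for $n\geqslant 1$ (the double pole at $x=0$ is excluded by the upper detour of $\Omega$), and a direct computation gives
\[
\mathrm{Res}_{x=n\sqrt{-1}}\frac{-e^{wx}}{2x\sinh(\pi x)}=\frac{(-1)^{n+1}}{2\pi\sqrt{-1}}\cdot\frac{e^{n\sqrt{-1}w}}{n}.
\]
Summing and using the Taylor series $\log(1+y)=\sum_{n\geqslant 1}\frac{(-1)^{n+1}}{n}y^n$ yields $I(w)=\log(1+e^{\sqrt{-1}w})$. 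Substituting back into the two reductions and exponentiating give the stated identities.

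The main technical obstacle is justifying that the large semicircle contribution tends to zero. On $|x|=R$ in the upper half-plane one has $|e^{wx}/\sinh(\pi x)|\lesssim e^{R(\mathrm{Re}(w)\cos\theta-\mathrm{Im}(w)\sin\theta-\pi|\cos\theta|)}$, which decays uniformly only when $\mathrm{Im}(w)\geqslant 0$; I would handle this by taking radii $R_k=k+\tfrac12$ so the semicircle stays bounded distance from the poles of $1/\sinh(\pi x)$, giving a uniform lower bound on $|\sinh(\pi x)|$ that forces exponential decay in the strip $|\mathrm{Re}(w)|<\pi$. For $\mathrm{Im}(w)<0$, where the naive bound fails near $\theta=\pi/2$, the cleanest route is analytic continuation: verify by differentiation under the integral that $I(w)$ is holomorphic throughout the strip $-\pi<\mathrm{Re}(w)<\pi$, note that $\log(1+e^{\sqrt{-1}w})$ is also holomorphic there, and extend the equality from the upper half-strip (where the residue calculation is valid) to the whole strip by the identity theorem.
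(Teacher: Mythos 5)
Your proof is correct, and it follows essentially the same route as the source the paper cites for this lemma (the paper gives no proof here, deferring to \cite[Section 2.3]{WY}, where the argument is exactly this type of residue computation on Faddeev's contour integral): cancel one $\sinh$ factor in the difference of the defining integrals, reduce both identities to $\int_\Omega \frac{-e^{wx}}{2x\sinh(\pi x)}\,dx=\log(1+e^{\sqrt{-1}w})$ on the strip $|\mathrm{Re}(w)|<\pi$, and evaluate by summing residues at $x=n\sqrt{-1}$, $n\geqslant 1$. Your handling of the two delicate points — choosing radii $R_k=k+\tfrac12$ to bound $|\sinh(\pi x)|$ from below on the arcs, and extending from $\mathrm{Im}(w)>0$ to the whole strip by holomorphy of both sides — is sound (and any branch ambiguity in $\log(1+e^{\sqrt{-1}w})$ is harmless since the claimed identities are exponentiated).
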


Using (\ref{fund}) and (\ref{f2}), for $z\in\mathbb C$ with $\pi+\frac{2(n-1)\pi}{r}< \mathrm{Re}z< \pi+\frac{2n\pi}{r},$ we can define $\varphi_r(z)$  inductively by the relation
\begin{equation}\label{extension}
\prod_{k=1}^n\Big(1-e^{2 \sqrt{-1} \big(z-\frac{(2k-1)\pi}{r}\big)}\Big)=e^{\frac{r}{4\pi\sqrt{-1}}\Big(\varphi_r\big(z-\frac{2n\pi}{r}\big)-\varphi_r(z)\Big)},
\end{equation}
extending $\varphi_r(z)$ to a meromorphic function on $\mathbb C.$  The poles of $\varphi_r(z)$ have the form $(a+1)\pi+\frac{b\pi}{r}$ or $-a\pi-\frac{b\pi}{r}$ for all nonnegative integer $a$ and positive odd integer $b.$

Let $q=e^{\frac{2\pi\sqrt{-1}}{r}},$
and let $$(q)_n=\prod_{k=1}^n(1-q^{2k}).$$

\begin{lemma}\label{fact}
\begin{enumerate}[(1)]
\item For $0\leqslant n \leqslant r-2,$
\begin{equation}
(q)_n=e^{\frac{r}{4\pi\sqrt{-1}}\Big(\varphi_r\big(\frac{\pi}{r}\big)-\varphi_r\big(\frac{2\pi n}{r}+\frac{\pi}{r}\big)\Big)}.
\end{equation}
\item For $\frac{r-1}{2}\leqslant n \leqslant r-2,$
\begin{equation} \label{move}
(q)_n=2e^{\frac{r}{4\pi\sqrt{-1}}\Big(\varphi_r\big(\frac{\pi}{r}\big)-\varphi_r\big(\frac{2\pi n}{r}+\frac{\pi}{r}-\pi\big)\Big)}.
\end{equation}
\end{enumerate}
\end{lemma}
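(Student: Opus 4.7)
The plan is to derive both parts directly from the two functional equations \eqref{fund} and \eqref{f2} for $\varphi_r$, together with the meromorphic extension defined via \eqref{extension}.

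For part (1), I would set $z=\tfrac{2\pi k}{r}$ in \eqref{fund} for each $k=1,\dots,n$. The left-hand side becomes $1-e^{4\pi k\sqrt{-1}/r}=1-q^{2k}$, while the right-hand side becomes
$$\exp\!\Bigl(\tfrac{r}{4\pi\sqrt{-1}}\bigl(\varphi_r\bigl(\tfrac{(2k-1)\pi}{r}\bigr)-\varphi_r\bigl(\tfrac{(2k+1)\pi}{r}\bigr)\bigr)\Bigr).$$
Multiplying over $k=1,\dots,n$, the exponents telescope and collapse to $\varphi_r(\tfrac{\pi}{r})-\varphi_r(\tfrac{(2n+1)\pi}{r})$. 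Since $\tfrac{(2n+1)\pi}{r}=\tfrac{2\pi n}{r}+\tfrac{\pi}{r}$, this is exactly the asserted identity.

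For part (2), the strategy is to combine the formula from part (1) with a single well-chosen instance of \eqref{f2}. Set $z_0=\tfrac{2\pi n}{r}+\tfrac{\pi}{r}-\pi$, so that $z_0+\pi=\tfrac{2\pi n}{r}+\tfrac{\pi}{r}$. Then
$$r\sqrt{-1}\,z_0=\sqrt{-1}\,\pi\,(2n+1-r),$$
and the key observation is that since $r$ is odd and $n\geqslant \tfrac{r-1}{2}$, the integer $2n+1-r$ is even and nonnegative, forcing $e^{r\sqrt{-1}z_0}=1$ and hence $1+e^{r\sqrt{-1}z_0}=2$. Applying \eqref{f2} at $z_0$ therefore yields
$$2=\exp\!\Bigl(\tfrac{r}{4\pi\sqrt{-1}}\bigl(\varphi_r\bigl(\tfrac{2\pi n}{r}+\tfrac{\pi}{r}-\pi\bigr)-\varphi_r\bigl(\tfrac{2\pi n}{r}+\tfrac{\pi}{r}\bigr)\bigr)\Bigr).$$
Multiplying this identity into the formula from part (1) then shifts the argument of $\varphi_r$ by $-\pi$ and introduces the factor $2$, giving \eqref{move}.

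The only delicate point in either part is that \eqref{fund} is stated for $0<\mathrm{Re}(z)<\pi$ and \eqref{f2} for $-\tfrac{\pi}{r}<\mathrm{Re}(z)<\tfrac{\pi}{r}$, whereas for $k>\tfrac{r-1}{2}$ (in part (1)) and for $n>\tfrac{r-1}{2}$ (in part (2)) the relevant arguments lie outside these strips. The remedy is exactly the extension \eqref{extension} used to define $\varphi_r$ meromorphically on all of $\mathbb{C}$: by construction, \eqref{fund} is an iteration of \eqref{extension} (the $n=1$ case of \eqref{extension} is \eqref{fund} after a shift of $\tfrac{\pi}{r}$), so the identity persists for the extended $\varphi_r$; and \eqref{f2}, being an analytic identity between meromorphic functions, propagates by analytic continuation through the connected regions on which both sides are defined. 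Verifying that no poles of $\varphi_r$ obstruct these continuations at the specific arguments we use is the main (but routine) technical check; once it is granted the argument reduces to the telescoping computation and the parity observation above.
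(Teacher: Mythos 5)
Your proof is correct and is exactly the derivation the paper intends (the lemma is stated there without proof, as an immediate consequence of the functional equations (\ref{fund}), (\ref{f2}) and the extension (\ref{extension})): telescoping (\ref{fund}) at $z=\tfrac{2\pi k}{r}$ gives (1), and the parity observation $2n+1-r\in 2\mathbb{Z}_{\geqslant 0}$ together with (\ref{f2}) gives the factor $2$ and the shift by $\pi$ in (2). Your handling of the domain issue is also sound: the arguments $\tfrac{(2k+1)\pi}{r}$ and $\tfrac{(2n+1-r)\pi}{r}$ avoid the poles $(a+1)\pi+\tfrac{b\pi}{r}$ (which require $b$ odd, whereas here the corresponding $b$ would be even), so the identities persist on the extended domain by construction of (\ref{extension}) and by the identity theorem.
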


We consider (\ref{move}) because there are poles in $(\pi,2\pi),$ and to avoid the poles we move the variables to $(0,\pi)$ by subtracting $\pi.$

If we let 
$$\{n\}=q^{n}-q^{-n}$$
and let 
$$\{n\}!=\prod_{k=1}^n\{k\}\quad\text{and}\quad \{0\}!=0,$$
then
$$\{n\}!=(-1)^nq^{-\frac{n(n+1)}{2}}(q)_n.$$
As a consequence of Lemma \ref{fact}, we have

\begin{lemma}\label{factorial}
\begin{enumerate}[(1)]
\item For $0\leqslant n \leqslant r-2,$
\begin{equation}
\{n\}!=e^{\frac{r}{4\pi\sqrt{-1}}\Big(-2\pi\big(\frac{2\pi n}{r}\big)+\big(\frac{2\pi}{r}\big)^2(n^2+n)+\varphi_r\big(\frac{\pi}{r}\big)-\varphi_r\big(\frac{2\pi n}{r}+\frac{\pi}{r}\big)\Big)}.
\end{equation}
\item For $\frac{r-1}{2}\leqslant n \leqslant r-2,$
\begin{equation} 
\{n\}!=2e^{\frac{r}{4\pi\sqrt{-1}}\Big(-2\pi\big(\frac{2\pi n}{r}\big)+\big(\frac{2\pi }{r}\big)^2(n^2+n)+\varphi_r\big(\frac{\pi}{r}\big)-\varphi_r\big(\frac{2\pi n}{r}+\frac{\pi}{r}-\pi\big)\Big)}.
\end{equation}
\end{enumerate}
\end{lemma}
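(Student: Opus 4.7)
\medskip

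\noindent\textbf{Proof proposal.} The plan is to derive Lemma \ref{factorial} directly from Lemma \ref{fact}, using the elementary identity $\{n\}!=(-1)^n q^{-n(n+1)/2}(q)_n$ recorded just above the statement. The only content is to absorb the scalar prefactor $(-1)^n q^{-n(n+1)/2}$ into the exponential and to observe that it contributes precisely the two extra terms $-2\pi\big(\frac{2\pi n}{r}\big)$ and $\big(\frac{2\pi}{r}\big)^2(n^2+n)$ that appear in the target formulas.

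First I will rewrite the prefactor as $e^{\frac{r}{4\pi\sqrt{-1}}Z}$ for an explicit expression $Z$ in $n$ and $r$. Since $n\in\mathbb Z$, one has $(-1)^n=e^{\sqrt{-1}\pi n}$ and $q^{-n(n+1)/2}=e^{-\sqrt{-1}\pi n(n+1)/r}$, so
$$(-1)^n q^{-n(n+1)/2}=e^{\sqrt{-1}\pi n-\sqrt{-1}\pi n(n+1)/r}.$$
A direct computation shows that taking $Z=-2\pi\big(\frac{2\pi n}{r}\big)+\big(\frac{2\pi}{r}\big)^2(n^2+n)$ gives $\frac{r}{4\pi\sqrt{-1}}Z=\sqrt{-1}\pi n-\sqrt{-1}\pi n(n+1)/r$, so the two expressions agree on the nose, and not merely modulo $2\pi\sqrt{-1}$.

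Next I will combine this identification with Lemma \ref{fact}(1) when $0\leqslant n\leqslant r-2$: because exponents inside $\exp\!\big(\frac{r}{4\pi\sqrt{-1}}\,\cdot\big)$ simply add, the first formula of Lemma \ref{factorial} follows immediately. For the second formula, covering $\frac{r-1}{2}\leqslant n\leqslant r-2$, I will use Lemma \ref{fact}(2) in place of (1). This differs only by the shift $\frac{2\pi n}{r}+\frac{\pi}{r}\mapsto \frac{2\pi n}{r}+\frac{\pi}{r}-\pi$ inside $\varphi_r$, introduced in \eqref{move} to avoid the poles of $\varphi_r$ lying in $(\pi,2\pi)$, and by an overall factor of $2$. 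That factor of $2$ propagates directly to $\{n\}!$, producing the prefactor $2$ in part (2).

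No step constitutes a serious obstacle; the argument is essentially a rewriting. The one point that requires genuine care is checking that the prefactor $(-1)^n q^{-n(n+1)/2}$ matches the claimed exponential expression as an exact equality of complex numbers and not merely up to a root of unity, since the lemma is stated as an exact equality; the explicit computation of $\frac{r}{4\pi\sqrt{-1}}Z$ above confirms this.
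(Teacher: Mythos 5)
Your proposal is correct and follows exactly the route the paper takes: the paper derives Lemma \ref{factorial} from Lemma \ref{fact} via the identity $\{n\}!=(-1)^nq^{-n(n+1)/2}(q)_n$, and your explicit check that $\frac{r}{4\pi\sqrt{-1}}\big(-2\pi\big(\tfrac{2\pi n}{r}\big)+\big(\tfrac{2\pi}{r}\big)^2(n^2+n)\big)=\sqrt{-1}\pi n-\sqrt{-1}\pi n(n+1)/r$ is the only computation needed. The paper leaves this verification implicit; you have simply filled it in correctly.
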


The function $\varphi_r(z)$ and the dilogarithm function are closely related as follows.

\begin{lemma}\label{converge}  \begin{enumerate}[(1)]
\item For every $z$ with $0<\mathrm{Re}z<\pi,$ 
\begin{equation}\label{conv}
\varphi_r(z)=\mathrm{Li}_2(e^{2\sqrt{-1}z})+\frac{2\pi^2e^{2\sqrt{-1}z}}{3(1-e^{2\sqrt{-1}z})}\frac{1}{r^2}+O\Big(\frac{1}{r^4}\Big).
\end{equation}
\item For every $z$ with $0<\mathrm{Re}z<\pi,$ 
\begin{equation}\label{conv}
\varphi_r'(z)=-2\sqrt{-1}\log(1-e^{2\sqrt{-1}z})+O\Big(\frac{1}{r^2}\Big).
\end{equation}
\item \cite[Formula (8)(9)]{O2}
$$\varphi_r\Big(\frac{\pi}{r}\Big)=\mathrm{Li}_2(1)+\frac{2\pi\sqrt{-1}}{r}\log\Big(\frac{r}{2}\Big)-\frac{\pi^2}{r}+O\Big(\frac{1}{r^2}\Big).$$
\end{enumerate}\end{lemma}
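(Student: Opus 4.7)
The plan is to establish parts (1) and (2) by Taylor-expanding the factor $1/\sinh(2\pi x/r)$ inside the defining contour integral and then integrating term by term; part (3) is quoted directly from \cite{O2}.

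First, I would use the elementary expansion $1/\sinh u = 1/u - u/6 + O(u^3)$ with $u = 2\pi x/r$ to rewrite the integrand of $\varphi_r(z)$ as
\[
\frac{4\pi\sqrt{-1}}{r}\cdot\frac{e^{(2z-\pi)x}}{4x\sinh(\pi x)\sinh(2\pi x/r)} = \frac{\sqrt{-1}\,e^{(2z-\pi)x}}{2x^2\sinh(\pi x)} - \frac{\pi^2\sqrt{-1}\,e^{(2z-\pi)x}}{3r^2\sinh(\pi x)} + O\!\left(\frac{x^2}{r^4\sinh(\pi x)}\right).
\]
Integrating along $\Omega$, the leading integral $\int_\Omega\frac{\sqrt{-1}\,e^{(2z-\pi)x}}{2x^2\sinh(\pi x)}\,dx$ is a classical Binet-type contour representation of the dilogarithm and equals $\mathrm{Li}_2(e^{2\sqrt{-1}z})$ on the strip $0<\mathrm{Re}(z)<\pi$; this identity can be verified by checking that both sides share the $z$-derivative $-2\sqrt{-1}\log(1-e^{2\sqrt{-1}z})$ (obtained by closing the contour and summing the residues at $x = n\sqrt{-1}$) and agree at a convenient value such as $z=\pi/2$. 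The second integral $\int_\Omega\frac{e^{(2z-\pi)x}}{\sinh(\pi x)}\,dx$ evaluates by the same residue technique to $\frac{2\sqrt{-1}e^{2\sqrt{-1}z}}{1-e^{2\sqrt{-1}z}}$, which, when multiplied by the prefactor $-\pi^2\sqrt{-1}/(3r^2)$, produces exactly the claimed correction $\frac{2\pi^2 e^{2\sqrt{-1}z}}{3(1-e^{2\sqrt{-1}z})}\cdot\frac{1}{r^2}$.

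To make the error estimate rigorous, I would split $\Omega$ into its part with $|x|\leq r^{1/3}$ and its tail. On the bounded part the Taylor remainder of $1/\sinh$ is uniformly $O(x^5/r^5)$, and the weight $\int_\Omega |x|^k/|\sinh(\pi x)|\,|dx|$ is finite for every $k\geq 0$, so this region contributes $O(r^{-4})$. On the tail $|\sinh(\pi x)|^{-1}$ decays exponentially in $|x|$ while $1/\sinh(2\pi x/r)$ is bounded polynomially, and since $|\mathrm{Re}(2z-\pi)|<\pi$ on the strip in question the factor $e^{(2z-\pi)x}$ is dominated, so the tail contribution is $o(r^{-k})$ for every $k$. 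Part (2) is then obtained by differentiating the defining integral under the integral sign (justified by the same exponential decay) and repeating the above analysis one order lower: the leading term is $\int_\Omega\frac{\sqrt{-1}\,e^{(2z-\pi)x}}{x\sinh(\pi x)}\,dx=-2\sqrt{-1}\log(1-e^{2\sqrt{-1}z})$, again by residues, and the remaining terms are $O(r^{-2})$.

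The step I expect to be most delicate is the identification of the three classical contour integrals above with $\mathrm{Li}_2(e^{2\sqrt{-1}z})$, $\frac{2\sqrt{-1}e^{2\sqrt{-1}z}}{1-e^{2\sqrt{-1}z}}$, and $-2\sqrt{-1}\log(1-e^{2\sqrt{-1}z})$, together with the uniformity of the resulting error bound across the strip $0<\mathrm{Re}(z)<\pi$; the former is essentially standard in the theory of Faddeev's quantum dilogarithm, and the latter reduces to the tail control described above. Once these identities and tail estimates are in place, the rest of the argument is routine bookkeeping of the $1/\sinh$ expansion.
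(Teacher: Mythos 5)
Your proposal is correct and is essentially the standard argument (the one the paper defers to \cite[Section 2.3]{WY} and \cite{O2}): Taylor-expand $1/\sinh(2\pi x/r)$ inside the contour integral, identify the resulting $\Omega$-integrals with $\mathrm{Li}_2(e^{2\sqrt{-1}z})$, $\frac{2\sqrt{-1}e^{2\sqrt{-1}z}}{1-e^{2\sqrt{-1}z}}$ and $-2\sqrt{-1}\log(1-e^{2\sqrt{-1}z})$ by residues at $x=n\sqrt{-1}$, and control the remainder by splitting $\Omega$ at $|x|=r^{1/3}$; all three residue evaluations check out, including the sign and the factor in the $1/r^2$ correction. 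One harmless slip: the Taylor remainder of $1/\sinh u$ after the terms $1/u-u/6$ is $O(u^3)=O(x^3/r^3)$, not $O(x^5/r^5)$, but this still yields exactly the displayed integrand error $O\bigl(x^2/(r^4\sinh(\pi x))\bigr)$ and hence the claimed $O(1/r^4)$.
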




\section{Computation of the relative Reshetikhin-Turaev invariants}\label{comp}

The goal of this section is to compute the relative Reshetikhin-Turaev invariants of $(M,L).$  In Proposition \ref{computation}, we write the invariants as a sum of values of a fixed holomorphic function at the integer points. The holomorphic function comes from Faddeev's quantum dilogarithm function. Using the Poisson Summation Formula, we in Proposition \ref{Poisson} write the invariants as a sum of the Fourier coefficients of the holomorphic function, which is computed  in Propositions \ref{4.2}.

 Let $L_{\text{FSL}}=L_1\cup\dots\cup L_n$  be a fundamental shadow link in $M_c=\#^{c+1}(S^2\times S^1),$ and let $L'\subset S^3$ be the disjoint union of $c+1$ unknots with the $0$-framings by doing surgery along which we get $M_c.$ Let $(I,J)$ be a partition of $\{1,\dots,n\},$ 
 and let $(M,L)$ be the pair obtained from $(M_c,L_{\text{FSL}})$ by doing a change-of-pair operation $T_{(L_I;L^*_I)}$ as introduced in Section \ref{COP}, ie., $M=(M_c)_{L_I}$ and $L=L^*_I\cup L_J,$ where $L^*_I=\cup_{i\in I}L^*_i$ and $L^*_i$ is the framed unknot in $M_c\setminus L_{\text{FSL}}$ with the core curve isotopic to the meridian of the tubular neighborhood of $L_i.$  Let $\mathbf n_I$ be a coloring of $L^*_I$ and let $\mathbf m_J$ be a coloring of $L_J.$  Then Theorem \ref{main} can be rephrased as follows.

 \begin{theorem}\label{main2} For $i\in I,$ let
$$\theta_i=\Big|2\pi-\lim_{r\to\infty}\frac{4\pi n_i}{r}\Big|;$$
and for $j\in J,$ let 
$$\theta_j=\Big|2\pi-\lim_{r\to\infty}\frac{4\pi m_j}{r}\Big|.$$
Let $\boldsymbol\theta=(\theta_1,\dots,\theta_n)$ and let $M_{L_{\boldsymbol\theta}}$ be the hyperbolic cone manifold consisting of $M$ and a hyperbolic cone metric on $M$  with singular locus $L$ and cone angles $\boldsymbol\theta.$ Then there exists an $\epsilon>0$ such that if all the cone angles $\theta_k$ are less than $\epsilon,$ then as $r$ varies over all the odd integers

$$\lim_{r\to\infty} \frac{4\pi }{r}\log \mathrm{RT}_r(M,L,(\mathbf n_I,\mathbf m_J))=\mathrm{Vol}(M_{L_{\boldsymbol\theta}})+\sqrt{-1}\mathrm{CS}(M_{L_{\boldsymbol\theta}})\quad\quad\text{mod } \sqrt{-1}\pi^2\mathbb Z.$$
\end{theorem}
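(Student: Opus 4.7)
\medskip

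\noindent\textbf{Proof proposal.} The plan is to follow Ohtsuki's method, using the explicit formula for $\mathrm{RT}_r(M_c, L_{\text{FSL}}, \mathbf{m})$ of Proposition~\ref{FSL} together with the skein-theoretic interpretation of the change-of-pair operation as a partial discrete Fourier transform. The first step is to write
\begin{equation*}
\mathrm{RT}_r(M, L, (\mathbf{n}_I, \mathbf{m}_J)) \;=\; q^{\alpha} \mu_r^{|I|}\sum_{\mathbf{m}_I} \Big(\prod_{i\in I} \mathrm H(m_i,n_i)\Big)\, \mathrm{RT}_r(M_c, L_{\text{FSL}}, (\mathbf{m}_I, \mathbf{m}_J))
\end{equation*}
where $\alpha$ is a framing-dependent exponent, and then expand the right-hand side using Proposition~\ref{FSL}. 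Rewriting each quantum factorial appearing in the $\Delta$-coefficients and each $6j$-symbol via Lemma~\ref{factorial}, and rewriting $\mathrm H(m_i,n_i)$ analogously, I would express the summand as $e^{\frac{r}{4\pi\sqrt{-1}} V_r(\mathbf{x}_I)}$ evaluated at $\mathbf{x}_I = \frac{2\pi}{r}\mathbf{m}_I + \frac{\pi}{r}(1,\dots,1)$, where $V_r$ extends to a holomorphic function on an explicit polytope in $\mathbb{C}^{|I|}$. This yields the Proposition~\ref{computation} form: the invariant is a Riemann-sum-like expression $\sum_{\mathbf{m}_I} f_r(\mathbf{m}_I)$ for a fixed holomorphic function $f_r$.

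Next I would apply the Poisson Summation Formula (Proposition~\ref{Poisson}) to rewrite this as $\sum_{\mathbf{k}_I\in\mathbb{Z}^{|I|}} \widehat{f}_r(\mathbf{k}_I)$, where each Fourier coefficient is an integral of the holomorphic integrand against a character. A finite set of ``leading'' indices $\mathbf{k}_I$ (those for which the shifted function's critical point lies in the interior of the integration polytope) contribute the essential asymptotics; all others are non-leading. For each leading coefficient I would apply the saddle point method (Proposition~\ref{saddle}). Using Lemma~\ref{converge}, the leading-order exponent of the integrand becomes, modulo $\frac{1}{r^2}$-corrections, a sum $V(\mathbf{z}_I, \mathbf{m}_J)$ of dilogarithm expressions coming from the $6j$-factors and of the explicit dilogarithms produced by the partial DFT kernel $\mathrm H$. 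The critical point equations $\partial V / \partial z_i = 0$ will, via the identities~(\ref{Li2})--(\ref{dilogLob}), translate to the hyperbolic gluing equations for the cone manifold $M_{L_\theta}$.

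The heart of the argument is then Proposition~\ref{crit}: identifying the critical value $V(\mathbf{z}_I^*, \mathbf{m}_J)$ with the right-hand side of~(\ref{VCS}). Here I would use the quantum-$6j$-symbol/Neumann--Zagier relationship (Proposition~\ref{NeuZ}): the sum of the dilogarithm potentials of the building-block $6j$-symbols equals (up to elementary terms) the Neumann--Zagier potential $\Phi$ of $M\setminus L$ evaluated at the prescribed holonomies $\mathrm{H}(u_i) = \sqrt{-1}\theta_i$. The additional quadratic terms produced by the DFT kernel $\mathrm H(m_i,n_i)$ and by the framing contributions $q^{(p_i + \iota_i/2) m_i(m_i+2)/2}$ assemble precisely into the $-\sum \mathrm{H}(u_i)\mathrm{H}(v_i)/(4\sqrt{-1})$ and $\sum \theta_i \mathrm{H}(\gamma_i)/4$ terms of~(\ref{VCS}), using the relations~(\ref{m}) and~(\ref{l}) for the chosen meridian/longitude frame. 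Combined with~(\ref{VolFSL}) and~(\ref{CSFSL}) for the complete-structure base case, this gives $\mathrm{Vol}(M_{L_\theta}) + \sqrt{-1}\,\mathrm{CS}(M_{L_\theta})$ modulo $\sqrt{-1}\pi^2\mathbb{Z}$ as the critical value.

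Finally I would close the argument by bounding the error terms: the non-leading Fourier coefficients (Section~\ref{ot}) and the Gaussian tail of the saddle point expansion (Section~\ref{ee}) must both be shown to have real part of their exponential growth rate strictly less than $\mathrm{Vol}(M_{L_\theta})$. I expect this last step, together with the identification of the critical value with the Neumann--Zagier potential, to be the main obstacle. The difficulty is twofold: first, one must show that $\mathrm{Re}(V)$ restricted to the boundary of the integration polytope, or to the ``wrong'' Fourier modes, is strictly dominated by the leading critical value, which requires a delicate convexity-type analysis of the potential; second, one must verify that the hyperbolic cone structure with the prescribed angles actually exists and varies analytically in $\theta$, so that the critical point deforms smoothly from the complete cusped structure. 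This is exactly where the hypothesis that all $\theta_k$ are sufficiently small is used: it keeps us in the regime where the Hodgson--Kerckhoff local rigidity applies and where the saddle points stay uniformly away from the singular boundary of the domain of $\varphi_r$.
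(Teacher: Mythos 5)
Your proposal follows essentially the same route as the paper: rewrite the invariant as the partial discrete Fourier transform of the fundamental shadow link invariant, convert it into a sum of values of a holomorphic function built from the quantum dilogarithm, apply Poisson summation, identify the critical value of the leading ($\mathbf a=\mathbf 0$) Fourier coefficient with the Neumann--Zagier potential via Proposition~\ref{NeuZ}, and then bound the non-leading coefficients and the error term using concavity of the potential near the complete structure, which is exactly where the small-angle hypothesis enters. The only substantive point your outline omits is that expanding each quantum integer $[(m_i+1)(n_i+1)]$ produces $2^{|I|}$ exponential terms whose saddle points all have the same growth rate, so one must additionally verify (as in Corollary~\ref{5.8}) that their leading contributions do not cancel.
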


The goal of the rest of this paper is to prove Theorem \ref{main2}. 

Suppose $L^*_i$ has the framing $q_i$ and $L_i$ has the framing  $p_i$  for each $i\in I,$ and  $L_j$ has the framing $p_j$ for each $j\in J.$  Then the $r$-th relative Reshetikin-Turaev invariant of $M$ with $L$ colored by $(\mathbf n_I,\mathbf m_J)$ can be computed as
\begin{equation}\label{rt}
\begin{split}
\mathrm{RT}_r&(M,L,(\mathbf n_I,\mathbf m_J))\\
=&\bigg(\frac{2\sin\frac{2\pi}{r}}{\sqrt{r}}\bigg)^{|I|-c}e^{-\sigma(L'\cup L_I)\big(-\frac{3}{r}-\frac{r+1}{4}\big)\sqrt{-1}\pi}\prod_{i\in I}q^{\frac{q_in_i(n_i+2)}{2}}\prod_{j\in J}(-1)^{\frac{\iota_jm_j}{2}}q^{\big(p_j+\frac{\iota_j}{2}\big)\frac{m_j(m_j+2)}{2}}\\
&\Bigg(\sum_{\mathbf m_I}\prod_{i\in I}(-1)^{\frac{\iota_im_i}{2}}q^{\big(p_i+\frac{\iota_i}{2}\big)\frac{m_i(m_i+2)}{2}}[(m_i+1)(n_i+1)]\prod_{s=1}^c\bigg|\begin{matrix}
        m_{s_1} & m_{s_2} & m_{s_3} \\
        m_{s_4} & m_{s_5} & m_{s_6} 
      \end{matrix} \bigg|\Bigg),
\end{split}
\end{equation}
where the sum is over all multi-even integers $\mathbf m_I=(m_i)_{i\in I}$ in $\{0,2,\dots,r-3\},$ and $m_{s_1},\dots,m_{s_6}$ are the colors of the edges of the building block $\Delta_s$ inherited  from the colors on $L_{\text{FSL}}.$

 In the rest of this section, we aim to write $\mathrm{RT}_r(M,L,(\mathbf n_I;\mathbf m_J))$ into a sum of integrals using the Poisson Summation Formula. This requires writing the invariant into the sum of the values of a fixed holomorphic function. To this end, we look at the a single quantum $6j$-symbol first.

\begin{definition} An $r$-admissible $6$-tuple $(m_1,\dots,m_6)$  is of the \emph{hyperideal type} if for $\{i,j,k\}=\{1,2,3\},$ $\{1,5,6\},$ $\{2,4,6\}$ and $\{3,4,5\},$
\begin{enumerate}[(1)]
\item $0\leqslant m_i+m_j-m_k<r-2,$ and
\item $r-2\leqslant m_i+m_j+m_k\leqslant 2(r-2).$
\end{enumerate}
\end{definition}

\begin{definition} A $6$-tuple $(\alpha_1,\dots,\alpha_6)\in [0,2\pi]^6$ is of the \emph{hyperideal type} if for $\{i,j,k\}=\{1,2,3\},$ $\{1,5,6\},$ $\{2,4,6\}$ and $\{3,4,5\},$
\begin{enumerate}[(1)]
\item $0\leqslant \alpha_i+\alpha_j-\alpha_k\leqslant 2\pi,$ and
\item $2\pi\leqslant \alpha_i+\alpha_j+\alpha_k\leqslant 4\pi.$
\end{enumerate}
\end{definition}
We notice that the six numbers $|\alpha_1-\pi|,\dots,|\alpha_6-\pi|$ are the dihedral angles of an ideal or a hyperideal tetrahedron if and only if $(\alpha_1,\dots,\alpha_6)$ is of the hyperideal type.

As a consequence of Lemma \ref{factorial} we have

\begin{proposition}\label{6jqd} The quantum $6j$-symbol at the root of unity $q=e^{\frac{2\pi\sqrt{-1}}{r}}$ can be computed as 
$$\bigg|
\begin{matrix}
        m_1 & m_2 & m_3 \\
        m_4 & m_5 & m_6 
      \end{matrix} \bigg|=\frac{\{1\}}{2}\sum_{k=\max\{T_1,T_2,T_3,T_4\}}^{\min\{Q_1,Q_2,Q_3,r-2\}}e^{\frac{r}{4\pi\sqrt{-1}}U_r\big(\frac{2\pi m_1}{r},\dots,\frac{2\pi m_6}{r},\frac{2\pi k}{r}\big)},$$
 where $U_r$ is defined as follows. If $(m_1,\dots,m_6)$ is of the hyperideal type, then
\begin{equation}\label{term}
\begin{split}
U_r(\alpha_1,\dots,\alpha_6,\xi)=&\pi^2-\Big(\frac{2\pi}{r}\Big)^2+\frac{1}{2}\sum_{i=1}^4\sum_{j=1}^3(\eta_j-\tau_i)^2-\frac{1}{2}\sum_{i=1}^4\Big(\tau_i+\frac{2\pi}{r}-\pi\Big)^2\\
&+\Big(\xi+\frac{2\pi}{r}-\pi\Big)^2-\sum_{i=1}^4(\xi-\tau_i)^2-\sum_{j=1}^3(\eta_j-\xi)^2\\
&-2\varphi_r\Big(\frac{\pi}{r}\Big)-\frac{1}{2}\sum_{i=1}^4\sum_{j=1}^3\varphi_r\Big(\eta_j-\tau_i+\frac{\pi}{r}\Big)+\frac{1}{2}\sum_{i=1}^4\varphi_r\Big(\tau_i-\pi+\frac{3\pi}{r}\Big)\\
&-\varphi_r\Big(\xi-\pi+\frac{3\pi}{r}\Big)+\sum_{i=1}^4\varphi_r\Big(\xi-\tau_i+\frac{\pi}{r}\Big)+\sum_{j=1}^3\varphi_r\Big(\eta_j-\xi+\frac{\pi}{r}\Big),\\
\end{split}
\end{equation}
where $\tau_1=\frac{\alpha_1+\alpha_2+\alpha_3}{2},$ $\tau_2=\frac{\alpha_1+\alpha_5+\alpha_6}{2},$ $\tau_3=\frac{\alpha_2+\alpha_4+\alpha_6}{2}$ and $\tau_4=\frac{\alpha_3+\alpha_4+\alpha_5}{2},$ $\eta_1=\frac{\alpha_1+\alpha_2+\alpha_4+\alpha_5}{2},$ $\eta_2=\frac{\alpha_1+\alpha_3+\alpha_4+\alpha_6}{2}$ and $\eta_3=\frac{\alpha_2+\alpha_3+\alpha_5+\alpha_6}{2}.$
If $(m_1,\dots,m_6)$ is not of the hyperideal type, then $U_r$ will be changed according to Lemma \ref{factorial}.
\end{proposition}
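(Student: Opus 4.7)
The plan is a direct computation: apply Lemma \ref{factorial} to each of the twelve quantum factorials appearing in the definition of the $6j$-symbol and collect the resulting terms to match the expression for $U_r$.

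First, I write $[n]! = \{n\}!/\{1\}^n$ everywhere. Because $\sum_{i=1}^4 T_i = \sum_{j=1}^3 Q_j = \sum_{i=1}^6 m_i$, the net power of $\{1\}$ from the four $\Delta$'s is $+2$ and from the summand is $-1$, giving a total factor of $\{1\}$ in the prefactor. The hyperideal condition together with the parity constraint ($r$ odd, entries even) forces $T_i \geq (r-1)/2$ and $k \geq \max T_i \geq (r-1)/2$, while $Q_j - T_i$, $k - T_i$, and $Q_j - k$ are all strictly less than $(r-1)/2$. Hence I apply the second case of Lemma \ref{factorial} to $[T_i+1]!$ and $[k+1]!$ and the first case to the other seven factorials. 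Each use of the second case contributes a factor of $2$, yielding net $2\cdot(\tfrac{1}{\sqrt 2})^4 = \tfrac{1}{2}$ after accounting for the four square roots in the $\Delta$'s; combined with the $\{1\}$ above this gives the prefactor $\{1\}/2$.

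Next, I collect the $\varphi_r$-contributions. Each factorial contributes $+\varphi_r(\pi/r) - \varphi_r(\mathrm{arg})$ inside the $\exp(\frac{r}{4\pi\sqrt{-1}}\cdot)$, with sign dictated by numerator versus denominator and a factor of $1/2$ from the $\Delta$-square roots as appropriate. A simple count gives $4\cdot\tfrac{1}{2}(3-1) + (1-4-3) = -2$ as the total coefficient of $\varphi_r(\pi/r)$, matching $U_r$. The non-constant arguments read off directly from Lemma \ref{factorial} yield precisely the six groups $-\tfrac{1}{2}\sum_{i,j}\varphi_r(\eta_j - \tau_i + \pi/r)$, $+\tfrac{1}{2}\sum_i\varphi_r(\tau_i - \pi + 3\pi/r)$, $-\varphi_r(\xi - \pi + 3\pi/r)$, $+\sum_i\varphi_r(\xi - \tau_i + \pi/r)$, and $+\sum_j\varphi_r(\eta_j - \xi + \pi/r)$ appearing in $U_r$.

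For the polynomial parts, each factorial contributes $-2\pi\cdot (2\pi n/r) + (2\pi/r)^2(n^2+n)$ for the appropriate $n$. Completing the square writes this as the corresponding quadratic term of $U_r$ plus linear and constant corrections. Crucially, the prefactor $\sqrt{-1}^{-\sum m_i}$ contributes $\pi\sum\tau_i$ and the $(-1)^k$ in the summand contributes $-2\pi\xi$ in the $\exp(\frac{r}{4\pi\sqrt{-1}}\cdot)$ exponent; using $\sum\tau_i = \sum\eta_j = \sum \alpha_i$, a straightforward check shows the linear corrections in $\sum\alpha_i$ and in $\xi$ cancel exactly, and the remaining constant simplifies to $\pi^2 - (2\pi/r)^2$, completing the identification with $U_r$. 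The main obstacle is this last completion-of-squares bookkeeping: without the $(-1)^k$ contribution one is left with a stray $2\pi\xi$, and without $\sqrt{-1}^{-\sum m_i}$ one is left with a stray linear term in $\sum\alpha_i$; it is precisely these two prefactors that enforce the clean cancellation. When $(m_1,\dots,m_6)$ is not of hyperideal type, the same strategy applies using whichever case of Lemma \ref{factorial} is appropriate for each factorial individually, which shifts certain $\varphi_r$-arguments by $\pi$ and rescales by factors of $2$; this is the content of the final clause in the proposition.
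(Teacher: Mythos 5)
Your proposal is correct and is exactly the computation the paper intends: the paper states Proposition \ref{6jqd} simply "as a consequence of Lemma \ref{factorial}," and your term-by-term application of that lemma to the twelve quantum factorials — with the correct case selection ($T_i+1$ and $k+1$ exceed $\frac{r-1}{2}$ by hyperideality and parity, the other arguments do not), the correct tally of the $\{1\}$-powers, the factors of $2$, the $\varphi_r(\pi/r)$ coefficient, and the role of $\sqrt{-1}^{-\sum m_i}$ and $(-1)^k$ in the completion of squares — reproduces the stated formula. The only detail left implicit is why the upper summation limit may be replaced by $\min\{Q_1,Q_2,Q_3,r-2\}$ (terms with $k\geqslant r-1$ vanish because $[k+1]!$ contains the factor $[r]=0$), but this is minor.
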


As a consequence, we have

\begin{proposition}\label{computation}
$$\mathrm{RT}_r(M,L,(\mathbf n_I,\mathbf m_J))=\kappa_r\sum_{\mathbf m_I,\mathbf k}\Big(\sum_{\epsilon_I}g_r^{\epsilon_I}(\mathbf m_I,\mathbf k)\Big),$$
where 
$$\kappa_r=\frac{2^{|I|-2c}}{\{1\}^{|I|-c}}\bigg(\frac{\sin\frac{2\pi}{r}}{\sqrt{r}}\bigg)^{|I|-c}e^{\big(-\sigma(L'\cup L_I)(-\frac{3}{r}-\frac{r+1}{4})-\frac{r}{4}(\sum_{i\in I}q_i+\sum_{i=1}^np_i+2|I|)\big)\sqrt{-1}\pi},$$
$\epsilon_I=(\epsilon_i)_{i\in I}\in\{1,-1\}^{|I|}$ runs over all multi-signs, $\mathbf m_I=(m_i)_{i\in I}$ runs over all multi-even integers in $\{0,2,\dots,r-3\}$ so that for each $s\in\{1,\dots,c\}$ the triples $(m_{s_1},m_{s_2},m_{s_3}),$  $(m_{s_1},m_{s_5},m_{s_6}),$  $(m_{s_2},m_{s_4},m_{s_6})$ and  $(m_{s_3},m_{s_4},m_{s_5})$ are $r$-admissible, and $\mathbf k=(k_1,\dots,k_c)$ runs over all multi-integers with each $k_s$ lying in between $\max\{T_{s_i}\}$ and $\min\{Q_{s_j},r-2\},$ with
$$g_r^{\epsilon_I}(\mathbf m_I,\mathbf k)=e^{\frac{2\pi\sqrt{-1}}{r}\Big(\sum_{i\in I}q_in_i+\sum_{i=1}^n\big(p_i+\frac{\iota_i}{2}\big)m_i+\sum_{i\in I}\epsilon_i(m_i+n_i+1)\Big)+\frac{r}{4\pi\sqrt{-1}}\mathcal W_r^{\epsilon_I}(\frac{2\pi \mathbf m_I}{r},\frac{2\pi\mathbf k}{r})}$$
where $\frac{2\pi \mathbf m_I}{r}=\Big(\frac{2\pi m_i}{r}\Big)_{i\in I},$ $\frac{2\pi \mathbf k}{r}=\Big(\frac{2\pi k_1}{r},\dots,\frac{2\pi k_c}{r}\Big),$ and
\begin{equation*}
\begin{split}
\mathcal W_r^{\epsilon_I}(\boldsymbol{\alpha}_I,\boldsymbol{\xi})=&-\sum_{i\in I}q_i(\beta_i-\pi)^2-\sum_{j\in J}p_j(\alpha_j-\pi)^2\\
&-\sum_{i\in I}p_i(\alpha_i-\pi)^2-\sum_{i\in I}2\epsilon_i(\alpha_i-\pi)(\beta_i-\pi)\\
&-\sum_{i=1}^n\frac{\iota_i}{2}(\alpha_i-\pi)^2+\sum_{s=1}^c U_r(\alpha_{s_1},\dots,\alpha_{s_6},\xi_s)+\Big(\sum_{i=1}^n\frac{\iota_i}{2}\Big)\pi^2
\end{split}
\end{equation*}
with
$\beta_i=\frac{2\pi n_i}{r}$ for $i\in I$ and $\alpha_j=\frac{2\pi m_j}{r}$ for $j\in J.$
\end{proposition}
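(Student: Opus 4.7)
The proof is essentially a direct algebraic reorganization of (\ref{rt}): expand each of the $c$ quantum $6j$-symbols via Proposition \ref{6jqd} and decompose each quantum integer $[(m_i+1)(n_i+1)]$ using the identity $[N]=\{1\}^{-1}\sum_{\epsilon=\pm1}\epsilon q^{\epsilon N}$, then convert the resulting $q$-phases into the $(\alpha-\pi)^2$ form that appears in $\mathcal W_r^{\epsilon_I}$. No new geometric input is needed; the content is a careful bookkeeping of scalars, quadratic phases, and signs.

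More concretely, substituting Proposition \ref{6jqd} replaces each $6j$-symbol by $\tfrac{\{1\}}{2}$ times a sum over an auxiliary index $k_s$, whose summand is the exponential of $\tfrac{r}{4\pi\sqrt{-1}}U_r\!\bigl(\tfrac{2\pi m_{s_1}}{r},\dots,\tfrac{2\pi m_{s_6}}{r},\tfrac{2\pi k_s}{r}\bigr)$; this produces the $\mathbf k$-summation and the $\sum_{s=1}^c U_r$ contribution to $\mathcal W_r^{\epsilon_I}$, and also supplies the factor $(\{1\}/2)^c$ to the prefactor. Applying $[(m_i+1)(n_i+1)]=\{1\}^{-1}\sum_{\epsilon_i=\pm1}\epsilon_i q^{\epsilon_i(m_i+1)(n_i+1)}$ for each $i\in I$ introduces the sum over $\epsilon_I\in\{\pm1\}^I$ and supplies a further $\{1\}^{-|I|}$. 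Writing $\epsilon_i(m_i+1)(n_i+1)=\epsilon_i(m_in_i+m_i+n_i+1)$ and substituting $\alpha_i=2\pi m_i/r$, $\beta_i=2\pi n_i/r$, the cross term $\epsilon_i m_in_i$ converts to $\tfrac{r}{4\pi\sqrt{-1}}\bigl(-2\epsilon_i(\alpha_i-\pi)(\beta_i-\pi)\bigr)$ modulo linear and constant corrections in $m_i,n_i$, and the remaining $\epsilon_i(m_i+n_i+1)$ sits (after absorbing the corrections) as the linear factor $\tfrac{2\pi\sqrt{-1}}{r}\sum_{i\in I}\epsilon_i(m_i+n_i+1)$ displayed in $g_r^{\epsilon_I}$. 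Each framing phase $q^{pm(m+2)/2}$ is then converted by the elementary identity
\[
\tfrac{2\pi\sqrt{-1}}{r}\cdot\tfrac{pm(m+2)}{2}=\tfrac{r}{4\pi\sqrt{-1}}\bigl(-p(\alpha-\pi)^2\bigr)+\tfrac{rp\pi\sqrt{-1}}{4}+(\text{terms linear in }m),
\]
producing the $-p(\alpha-\pi)^2$ quadratics in $\mathcal W_r^{\epsilon_I}$ while the $m$-independent constants $\tfrac{rp\pi\sqrt{-1}}{4}$ combine to give the $-\tfrac{r}{4}\bigl(\sum_{i\in I}q_i+\sum_{i=1}^n p_i+2|I|\bigr)\sqrt{-1}\pi$ exponent in $\kappa_r$.

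Two remaining items finish the proof. First, the signs $(-1)^{\iota_j m_j/2}$ are absorbed into the quadratic $-\tfrac{\iota_j}{2}(\alpha_j-\pi)^2$: since $m_j$ is even, $(-1)^{\iota_j m_j/2}=e^{\pi\sqrt{-1}\iota_j m_j/2}$, and the linear-in-$m_j$ part of $\tfrac{r}{4\pi\sqrt{-1}}\cdot\bigl(-\tfrac{\iota_j}{2}(\alpha_j-\pi)^2\bigr)$ equals $-\tfrac{\pi\sqrt{-1}\iota_j m_j}{2}$, which exactly cancels the sign; the resulting $-\tfrac{\iota_j}{2}\pi^2$ constant is compensated by the explicit $\bigl(\sum_i\tfrac{\iota_i}{2}\bigr)\pi^2$ added at the end of $\mathcal W_r^{\epsilon_I}$. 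Second, the scalar prefactor $(\{1\}/2)^c\cdot\{1\}^{-|I|}\cdot\bigl(\tfrac{2\sin(2\pi/r)}{\sqrt r}\bigr)^{|I|-c}$ obtained by combining all three sources collapses, using $\{1\}=2\sqrt{-1}\sin(2\pi/r)$, to $\tfrac{2^{|I|-2c}}{\{1\}^{|I|-c}}\bigl(\tfrac{\sin(2\pi/r)}{\sqrt r}\bigr)^{|I|-c}$, matching the scalar in $\kappa_r$; the signature phase $-\sigma(L'\cup L_I)(-\tfrac{3}{r}-\tfrac{r+1}{4})\sqrt{-1}\pi$ is inherited unchanged from (\ref{rt}). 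The main obstacle is not conceptual but is the sheer amount of bookkeeping: quadratic, linear, and constant contributions are produced by five distinct sources (the framing phases, the sign decomposition of quantum integers, the $6j$-symbol expansion, the $(-1)^{\iota m/2}$ signs, and the Kirby-coloring prefactors), and verifying that the completion-of-square constants, the $(m+1)$-shifts in the quantum integer, and the $\alpha=2\pi m/r$ substitution all reconcile into the stated $\mathcal W_r^{\epsilon_I}$ and $\kappa_r$ requires patient cross-checking of each source.
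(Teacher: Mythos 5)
Your proposal is correct and follows essentially the same route as the paper: expand each quantum $6j$-symbol via Proposition \ref{6jqd}, split each $[(m_i+1)(n_i+1)]$ into a sum over $\epsilon_i=\pm1$, complete the square to convert the $q$-phases into the $(\alpha-\pi)^2$, $(\beta-\pi)^2$ and cross terms of $\mathcal W_r^{\epsilon_I}$ (the paper does this by shifting to $(m-\tfrac r2)(n-\tfrac r2)$, which is the same substitution), and collect the scalars into $\kappa_r$. The only blemish is a sign slip in your displayed framing identity (the constant should be $-\tfrac{rp\pi\sqrt{-1}}{4}$, consistent with the conclusion you then draw about $\kappa_r$), which does not affect the argument.
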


\begin{proof} For even integers $m$ and $n,$ we have
$$q^{\frac{n(n+2)}{2}}=\Big(e^{\frac{\pi\sqrt{-1}}{4}}\Big)^{-r}q^{\frac{1}{2}\big(n-\frac{r}{2}\big)^2+n},$$
$$(-1)^{\frac{n}{2}}q^{\frac{n(n+2)}{4}}=\Big(e^{\frac{\pi\sqrt{-1}}{8}}\Big)^{-r}q^{\frac{1}{4}\big(n-\frac{r}{2}\big)^2+\frac{n}{2}}=e^{\frac{r}{4\pi\sqrt{-1}}\big(\frac{\pi^2}{2}\big)}q^{\frac{1}{4}\big(n-\frac{r}{2}\big)^2+\frac{n}{2}},$$
$$q^{(m+1)(n+1)}=\Big(e^{\frac{\pi\sqrt{-1}}{2}}\Big)^{-r}q^{\big(m-\frac{r}{2}\big)\big(n-\frac{r}{2}\big)+m+n+1},$$
and
$$q^{-(m+1)(n+1)}=-\Big(e^{\frac{\pi\sqrt{-1}}{2}}\Big)^{-r}q^{-\big(m-\frac{r}{2}\big)\big(n-\frac{r}{2}\big)-m-n-1}.$$
As a consequence, we have
\begin{equation*}
\begin{split}
[(m_i+1)(n_i+1)]=&\frac{1}{q-q^{-1}}\Big(q^{(m_i+1)(n_i+1)}-q^{-(m_i+1)(n_i+1)}\bigg)\\
=&\frac{(e^{\frac{\pi\sqrt{-1}}{2}})^{-r}}{q-q^{-1}}\bigg(q^{\big((m_i-\frac{r}{2})(n_i-\frac{r}{2})+m_i+n_i+1\big)}+q^{-\big((m_i-\frac{r}{2})(n_i-\frac{r}{2})+m_i+n_i+1\big)}\bigg)\\
=&\frac{e^{-\frac{r\pi\sqrt{-1}}{2}}}{\{1\}}\sum_{\epsilon_i\in\{-1,1\}}q^{\epsilon_i\big((m_i-\frac{r}{2})(n_i-\frac{r}{2})+m_i+n_i+1\big)}\\
=&\frac{e^{-\frac{r\pi\sqrt{-1}}{2}}}{\{1\}}\sum_{\epsilon_i\in\{-1,1\}}e^{\epsilon_i\frac{2\sqrt{-1}\pi(a_i+b_i+1)}{r}+\frac{r}{4\pi\sqrt{-1}}\Big(-2\epsilon_i\big(\frac{2\pi a_i}{r}-\pi\big)\big(\frac{2\pi b_i}{r}-\pi\big)\Big)},\\
\end{split}
\end{equation*}
and hence
\begin{equation*}
\begin{split}\prod_{i\in I}&[(m_i+1)(n_i+1)]\\
=&\frac{e^{-\frac{r\pi\sqrt{-1}}{2}|I|}}{\{1\}^{|I|}}\sum_{\epsilon_I\in\{-1,1\}^{|I|}}e^{\sum_{i\in I}\epsilon_i\frac{2\sqrt{-1}\pi(m_i+n_i+1)}{r}+\frac{r}{4\pi\sqrt{-1}}\sum_{i\in I}\Big(-2\epsilon_i\big(\frac{2\pi m_i}{r}-\pi\big)\big(\frac{2\pi n_i}{r}-\pi\big)\Big)}\\
=&\frac{e^{-\frac{r\pi\sqrt{-1}}{2}|I|}}{\{1\}^{|I|}}\sum_{\epsilon_I\in\{-1,1\}^{|I|}}e^{\sum_{i\in I}\epsilon_i\sqrt{-1}(\alpha_i+\beta_i+\frac{2\pi}{r})+\frac{r}{4\pi\sqrt{-1}}\sum_{i\in I}\big(-2\epsilon_i(\alpha_i-\pi)(\beta_i-\pi)\big)}.\\
\end{split}
\end{equation*}

Then the result follows from (\ref{rt}) and Proposition \ref{6jqd}.
\end{proof}

We notice that the summation in Proposition \ref{computation} is finite, and to use the Poisson Summation Formula, we need an infinite sum over integral points. To this end, we consider the following regions and a bump function over them. 

Let $\beta_i=\frac{2\pi n_i}{r}$ for $i\in I,$ $\alpha_i=\frac{2\pi m_i}{r}$ for $i=1,\dots,n,$ $\xi_s=\frac{2\pi k_s}{r}$ for $s=1,\dots,c,$ $\tau_{s_i}=\frac{2\pi T_{s_i}}{r}$ for $i=1,\dots,4,$ and $\eta_{s_j}=\frac{2\pi Q_{s_j}}{r}$ for $j=1,2,3.$ For a fixed $(\alpha_j)_{j\in J},$ let
$$\mathrm {D_A}=\Big\{(\boldsymbol{\alpha}_I,\boldsymbol{\xi})\in\mathbb R^{|I|+c}\ \Big|\ (\alpha_{s_1},\dots,\alpha_{s_6}) \text{ is admissible, } \max\{\tau_{s_i}\}\leqslant \xi_s\leqslant \min\{\eta_{s_j}, 2\pi\}, s=1,\dots,c\Big\}.$$
and let
$$\mathrm {D_H}=\Big\{(\boldsymbol{\alpha}_I,\boldsymbol{\xi})\in\mathrm {D_A} \ \Big|\ (\alpha_{s_1},\dots,\alpha_{s_6}) \text{ is of the hyperideal type, } s=1,\dots, c \Big\}.$$
For a sufficiently small $\delta >0,$ let 
$$\mathrm {D_H^\delta}=\Big\{(\boldsymbol{\alpha}_I,\boldsymbol{\xi})\in\mathrm {D_H}\ \Big|\ d((\boldsymbol{\alpha}_I,\boldsymbol{\xi}), \partial\mathrm {D_H})>\delta \Big\},$$
where $d$ is the Euclidean distance on $\mathbb R^n.$
Let $\psi:\mathbb R^{|I|+c}\to\mathbb R$ be the $C^{\infty}$-smooth bump function supported on $(\mathrm{D_H}, \mathrm{D_H^\delta}),$ ie,  \begin{equation*}
\left \{\begin{array}{rl}
\psi(\boldsymbol{\alpha}_I,\boldsymbol{\xi})=1, & (\boldsymbol{\alpha}_I,\boldsymbol{\xi})\in \overline{\mathrm{D_H^\delta}}\\
0<\psi(\boldsymbol{\alpha}_I,\boldsymbol{\xi})<1, &  (\boldsymbol{\alpha}_I,\boldsymbol{\xi})\in \mathrm{D_H}\setminus \overline{\mathrm{D_H^\delta}}\\
\psi(\boldsymbol{\alpha}_I,\boldsymbol{\xi})=0, & (\boldsymbol{\alpha}_I,\boldsymbol{\xi})\notin \mathrm{D_H},\\
\end{array}\right.
\end{equation*}
and let 
$$f^{\epsilon_I}_r(\mathbf m_I,\mathbf k)=\psi\Big(\frac{2\pi \mathbf m_I}{r},\frac{2\pi \mathbf k}{r}\Big)g^{\epsilon_I}_r(\mathbf m_I,\mathbf k).$$

In Proposition \ref{computation}, $\mathbf m_I$ runs over  multi-even integers. On the other hand, to use the Poisson Summation Formula, we need a sum over all multi-integers. For this purpose, we for each $i\in I$ let $m_i=2m_i'$ and let $\mathbf m_I'=(m_i')_{i\in I}.$ Then by Proposition \ref{computation}, 
$$\mathrm{RT}_r(M,L,(\mathbf n_I,\mathbf m_J))=\kappa_r\sum_{(\mathbf m_I',\mathbf k)\in\mathbb Z^{|I|+c}}\Big(\sum_{\epsilon_I\in\{1,-1\}^{|I|}} f_r^{\epsilon_I}\big(2\mathbf m_I',\mathbf k\big)\Big)+\text{error term}.$$
Let $$f_r=\sum_{\epsilon_I\in\{1,-1\}^{|I|}} f_r^{\epsilon_I}.$$ Then
$$\mathrm{RT}_r(M,L,(\mathbf n_I,\mathbf m_J))=\kappa_r\sum_{(\mathbf m_I',\mathbf k)\in\mathbb Z^{|I|+c}}f_r\big(2\mathbf m_I',\mathbf k\big)+\text{error term}.$$

Since $f_r$ is $C^{\infty}$-smooth and equals zero out of $\mathrm{D_H},$ it is in the Schwartz space on $\mathbb R^{|I|+c}.$ Then by the Poisson Summation Formula (see e.g. \cite[Theorem 3.1]{SS}),
$$\sum_{(\mathbf m_I',\mathbf k)\in\mathbb Z^{|I|+c}}f_r\big(2\mathbf m_I',\mathbf k\big)=\sum_{(\mathbf a_I,\mathbf b)\in\mathbb Z^{|I|+c}}\widehat {f_r}(\mathbf a_I,\mathbf b),$$
where $\mathbf a_I=(a_i)_{i\in I}\in \mathbb Z^I,$ $\mathbf b=(b_1,\dots,b_c)\in\mathbb Z^c$ and  $\widehat f_r(\mathbf a_I,\mathbf b)$ is the $(\mathbf a_I,\mathbf b)$-th Fourier coefficient of $f_r$ defined by
\begin{equation*}
\begin{split}
\widehat {f_r}(\mathbf a_I,\mathbf b)=\int_{\mathbb R^{|I|+c}}&f_r\big(2\mathbf m_I',\mathbf k\big)e^{\sum_{i\in I}2\pi \sqrt{-1}a_im_i'+\sum_{s=1}^c2\pi \sqrt{-1}b_sk_s}d\mathbf m'_Id\mathbf k,
\end{split}
\end{equation*}
where $d\mathbf m'_Id\mathbf k=\prod_{i\in I}dm'_i\prod_{s=1}^cdk_s.$

By a change of variable, and by changing $2m_i'$ back to $m_i,$ the Fourier coefficients can be computed as
\begin{proposition}\label{4.2}
$$\widehat{f_r}(\mathbf a_I,\mathbf b)=\sum_{\epsilon_I\in\{1,-1\}^{|I|}} \widehat{f^{\epsilon_I}_r}(\mathbf a_I,\mathbf b)$$
with
\begin{equation*}
\begin{split}
\widehat{f^{\epsilon_I}_r}(\mathbf a_I,\mathbf b)=\frac{r^{|I|+c}}{2^{2|I|+c}\cdot\pi^{|I|+c}}&\int_{\mathrm{D_H}}\psi(\boldsymbol{\alpha}_I,\boldsymbol{\xi})e^{\big(\sum_{i\in I}q_i\beta_i+\sum_{i=1}^n\big(p_i+\frac{\iota_i}{2}\big)\alpha_i+\sum_{i\in I}\epsilon_i(\alpha_i+\beta_i+\frac{2\pi}{r})\big)\sqrt{-1}}\\ 
&\cdot e^{\frac{r}{4\pi \sqrt{-1}}\big(\mathcal W_r^{\epsilon_I}(\boldsymbol{\alpha}_I,\boldsymbol{\xi})-\sum_{i\in I}2\pi a_i\alpha_i-\sum_{s=1}^c4\pi b_s\xi_s\big)}d\boldsymbol{\alpha}_Id\boldsymbol{\xi},
\end{split}
\end{equation*}
where $d\boldsymbol{\alpha}_Id\boldsymbol{\xi}=\prod_{i\in I}d\alpha_i\prod_{s=1}^cd\xi_s,$ and $\mathcal W_r^{\epsilon_I}(\boldsymbol{\alpha}_I,\boldsymbol{\xi})$ is as defined in Proposition \ref{computation}.
\end{proposition}

\begin{proposition}\label{Poisson}
$$\mathrm{RT}_r(M,L,(\mathbf n_I,\mathbf m_J))=\kappa_r\sum_{(\mathbf a_I,\mathbf b)\in\mathbb Z^{|I|+c}}\widehat{ f_r}(\mathbf a_I,\mathbf b)+\text{error term}.$$
\end{proposition}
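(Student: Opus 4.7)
The plan is to start from the explicit formula in Proposition~\ref{computation}, repackage the finite admissible sum as the lattice sum of a Schwartz function, and then apply the classical Poisson Summation Formula.

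First I would substitute $m_i=2m_i'$ for each $i\in I$ in the formula of Proposition~\ref{computation}. This rewrites the sum over multi-even integers $\mathbf m_I$ as a sum over $\mathbf m_I'\in\mathbb Z^{|I|}$, with the admissibility constraints on $\mathbf m_I$ and the constraints on $\mathbf k$ cutting out a bounded lattice subset of $\mathbb Z^{|I|+c}$. Next I would replace $g_r^{\epsilon_I}(2\mathbf m_I',\mathbf k)$ by $f_r^{\epsilon_I}(2\mathbf m_I',\mathbf k)=\psi\bigl(\tfrac{4\pi\mathbf m_I'}{r},\tfrac{2\pi\mathbf k}{r}\bigr)g_r^{\epsilon_I}(2\mathbf m_I',\mathbf k)$ and extend the summation to all of $\mathbb Z^{|I|+c}$. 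By construction this replacement differs from the original finite sum by the prescribed error term, which is a finite sum of (i) the corrections $(1-\psi)g_r^{\epsilon_I}$ on the transition region in $\mathrm{D_H}\setminus\mathrm{D_H^\delta}$, and (ii) the admissible lattice points whose rescaling by $\tfrac{2\pi}{r}$ lies in $\mathrm{D_A}\setminus\mathrm{D_H}$, i.e., the non-hyperideal admissible terms, where $\psi$ vanishes outright.

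Since $\psi$ is $C^\infty$-smooth and supported inside the bounded set $\mathrm{D_H}$, the function $f_r$ is of compact support and hence lies in the Schwartz space on $\mathbb R^{|I|+c}$. The Poisson Summation Formula \cite[Theorem 3.1]{SS} then yields
$$\sum_{(\mathbf m_I',\mathbf k)\in\mathbb Z^{|I|+c}} f_r(2\mathbf m_I',\mathbf k)=\sum_{(\mathbf a_I,\mathbf b)\in\mathbb Z^{|I|+c}} \widehat{F_r}(\mathbf a_I,\mathbf b),$$
where $F_r(\mathbf m_I',\mathbf k)=f_r(2\mathbf m_I',\mathbf k)$. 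Performing the change of variables $\alpha_i=\tfrac{4\pi m_i'}{r}$ and $\xi_s=\tfrac{2\pi k_s}{r}$ converts the integral defining $\widehat{F_r}$ into the integral over $\mathrm{D_H}$; the Jacobian $\bigl(\tfrac{r}{4\pi}\bigr)^{|I|}\bigl(\tfrac{r}{2\pi}\bigr)^c=\tfrac{r^{|I|+c}}{2^{2|I|+c}\pi^{|I|+c}}$ produces the prefactor of Proposition~\ref{4.2}, and the exponential factors $e^{2\pi\sqrt{-1}a_im_i'}$ and $e^{2\pi\sqrt{-1}b_sk_s}$ become the terms $-2\pi a_i\alpha_i$ and $-4\pi b_s\xi_s$ inside the $\tfrac{r}{4\pi\sqrt{-1}}$-exponential. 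Multiplying by $\kappa_r$ and summing over $\epsilon_I$ then gives the claimed identity.

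The main obstacle does not lie in this proposition itself but in showing later that the error term is negligible compared with the leading Fourier coefficients as $r\to\infty$, which requires a uniform upper bound on $|g_r^{\epsilon_I}|$ over $\mathrm{D_A}\setminus\mathrm{D_H^\delta}$; this is deferred to Section~\ref{ee}. The present proposition is essentially a formal rearrangement that recasts $\mathrm{RT}_r$ in a form amenable to the saddle-point analysis of $\widehat{f_r}$ in Sections~\ref{leading} and~\ref{ot}.
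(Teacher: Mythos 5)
Your argument is correct and follows the paper's own route essentially verbatim: substitute $m_i=2m_i'$, insert the bump function $\psi$ to absorb the boundary and non-hyperideal contributions into the error term, apply the classical Poisson Summation Formula to the smooth compactly supported (hence Schwartz) function $f_r$, and recover the integral formula of Proposition \ref{4.2} by the change of variables with Jacobian $\tfrac{r^{|I|+c}}{2^{2|I|+c}\pi^{|I|+c}}$. Your explicit bookkeeping of the dilation (via $F_r(\mathbf m_I',\mathbf k)=f_r(2\mathbf m_I',\mathbf k)$) is if anything slightly cleaner than the paper's notation, and your closing remark that the substance lies in the later estimates of Sections \ref{ot} and \ref{ee} is exactly right.
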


We will estimate the leading Fourier coefficients, the non-leading Fourier coefficients and the error term respectively in Sections \ref{leading}, \ref{ot} and \ref{ee}, and prove Theorem \ref{main2} in Section \ref{pf}.


\section{Relationship with the Neumann-Zagier potential function}

The goal of this section is to show the relationship between $\mathcal W_r^{\epsilon_I}$ and the Neumann-Zagier potential function\,\cite{NZ} of the fundamental shadow link complement $M_c\setminus L_{\text{FSL}}.$  To this end, we need to first look at the function $U$ coming from a single $6j$-symbol, and to recall its relationship with the volume of a truncated hyperideal tetrahedron.

By Lemma \ref{converge}, $\mathcal W_r^{\epsilon_I}$ is approximated by the holomorphic function $\mathcal W^{\epsilon_I}$ defined below, which will play an important role later. (The approximation will be specified in the proof of Proposition \ref{critical}.) The function $\mathcal W^{\epsilon_I}$ is defined by
\begin{equation*}
\begin{split}
\mathcal W^{\epsilon_I}(\boldsymbol{\alpha}_I,\boldsymbol{\xi})=&-\sum_{i\in I}q_i(\beta_i-\pi)^2-\sum_{j\in J}p_j(\alpha_j-\pi)^2\\
&-\sum_{i\in I}p_i(\alpha_i-\pi)^2-\sum_{i\in I}2\epsilon_i(\alpha_i-\pi)(\beta_i-\pi)\\
&-\sum_{i=1}^n\frac{\iota_i}{2}(\alpha_i-\pi)^2+\sum_{s=1}^c U(\alpha_{s_1},\dots,\alpha_{s_6},\xi_s)+\Big(\sum_{i=1}^n\frac{\iota_i}{2}\Big)\pi^2
\end{split}
\end{equation*}
with $U$ defined by
\begin{equation}\label{term}
\begin{split}
U(\alpha_1,\dots,\alpha_6,\xi)=&\pi^2+\frac{1}{2}\sum_{i=1}^4\sum_{j=1}^3(\eta_j-\tau_i)^2-\frac{1}{2}\sum_{i=1}^4(\tau_i-\pi)^2\\
&+(\xi-\pi)^2-\sum_{i=1}^4(\xi-\tau_i)^2-\sum_{j=1}^3(\eta_j-\xi)^2\\
&-2\mathit{Li}_2(1)-\frac{1}{2}\sum_{i=1}^4\sum_{j=1}^3\mathit{Li}_2\big(e^{2i(\eta_j-\tau_i)}\big)+\frac{1}{2}\sum_{i=1}^4\mathit{Li}_2\big(e^{2i(\tau_i-\pi)}\big)\\
&-\mathit{Li}_2\big(e^{2i(\xi-\pi)}\big)+\sum_{i=1}^4\mathit{Li}_2\big(e^{2i(\xi-\tau_i)}\big)+\sum_{j=1}^3\mathit{Li}_2\big(e^{2i(\eta_j-\xi)}\big).\\
\end{split}
\end{equation}
We note that $U$ defines a holomorphic function on the region 
$$\mathrm{B_{H,\mathbb C}}=\big\{(\boldsymbol\alpha,\xi)\in\mathbb C^7\ |\ \mathrm{Re}(\boldsymbol\alpha)\text{ is of the hyperideal type, }\max\{\mathrm{Re}(\tau_i)\}\leqslant \mathrm{Re}(\xi)\leqslant \min\{\mathrm{Re}(\eta_j), 2\pi\}\big\}$$
 where $\boldsymbol\alpha=(\alpha_1,\dots,\alpha_6)$ and $\mathrm{Re}(\boldsymbol\alpha)=(\mathrm{Re}(\alpha_1),\dots,\mathrm{Re}(\alpha_6));$ and $\mathcal W^{\epsilon_I}$ is continuous on 
$$\mathrm{D_{H,\mathbb C}}=\big\{(\boldsymbol{\alpha}_I,\boldsymbol{\xi})\in\mathbb C^{|I|+c}\ \big|\ (\mathrm{Re}(\boldsymbol{\alpha}_I),\mathrm{Re}(\boldsymbol\xi))\in \mathrm{D_{H}}\big\}$$ and for any $\delta>0$ is analytic on 
$$\mathrm{D^\delta_{H,\mathbb C}}=\big\{(\boldsymbol{\alpha}_I,\boldsymbol{\xi})\in\mathbb C^{|I|+c}\ \big|\ (\mathrm{Re}(\boldsymbol{\alpha}_I),\mathrm{Re}(\boldsymbol\xi))\in \mathrm{D^\delta_{H}}\big\},$$ 
where $\mathrm{Re}(\boldsymbol{\alpha}_I)=(\mathrm{Re}(\alpha_i))_{i\in I}$ and $\mathrm{Re}(\boldsymbol\xi)=(\mathrm{Re}(\xi_1),\dots, \mathrm{Re}(\xi_c)).$

Let 
$$\mathrm{B_H}=\mathrm{B_{H,\mathbb C}}\cap \mathbb R^7.$$
Then by (\ref{dilogLob}), for $(\alpha_1,\dots,\alpha_6,\xi)\in\mathrm{B_H},$
\begin{equation}\label{UV} 
U(\alpha_1,\dots,\alpha_6,\xi)=2\pi^2+2\sqrt{-1} V(\alpha_1,\dots,\alpha_6,\xi)
\end{equation}
for $V:\mathrm{B_H}\to\mathbb R$ defined by 
\begin{equation}\label{V}
\begin{split}
V(\alpha_1,\dots,\alpha_6,\xi)=\,&\delta(\alpha_1,\alpha_2,\alpha_3)+\delta(\alpha_1,\alpha_5,\alpha_6)+\delta(\alpha_2,\alpha_4,\alpha_6)+\delta(\alpha_3,\alpha_4,\alpha_5)\\
&-\Lambda(\xi)+\sum_{i=1}^4\Lambda(\xi-\tau_i)+\sum_{j=1}^3\Lambda(\eta_j-\xi),
\end{split}
\end{equation}
where $\delta$ is defined by
$$\delta(x,y,z)=-\frac{1}{2}\Lambda\Big(\frac{x+y-z}{2}\Big)-\frac{1}{2}\Lambda\Big(\frac{y+z-x}{2}\Big)-\frac{1}{2}\Lambda\Big(\frac{z+x-y}{2}\Big)+\frac{1}{2}\Lambda\Big(\frac{x+y+z}{2}\Big).$$

A result of Costantino\,\cite{C1} shows that for each $\boldsymbol\alpha=(\alpha_1,\dots,\alpha_6)$ of the hyperideal type, there exists a unique $\xi(\boldsymbol\alpha)$ so that $(\boldsymbol\alpha,\xi(\boldsymbol\alpha))\in\mathrm{B_H}$ and 
\begin{equation}\label{criticalequation}
\frac{\partial V(\boldsymbol\alpha,\xi)}{\partial \xi}\Big|_{\xi=\xi(\boldsymbol\alpha)}=0.
\end{equation}
Indeed, he proves that for each $\boldsymbol\alpha,$ $V$ is strictly concave down in $\xi$ with derivatives $\pm\infty$ at the boundary points of the interval of $\xi,$ hence there is a unique critical point $\xi(\boldsymbol\alpha)$ at which $V$ achieves the absolute maximum. Moreover, by using the Murakami-Yano formula\,\cite{MYa,U} he shows that 
\begin{equation}\label{VV}
V(\boldsymbol\alpha,\xi(\boldsymbol\alpha))=\mathrm{Vol}(\Delta_{|\boldsymbol\alpha-\boldsymbol\pi|}),
\end{equation}
the volume of the ideal or the truncated hyperideal tetrahedron with dihedral angles $|\alpha_1-\pi|,\dots, |\alpha_6-\pi|.$ Let $l_1,\dots, l_6$ be the  lengths of the edges of $\Delta_{|\boldsymbol\alpha-\boldsymbol\pi|},$ and let $u_i=2\sqrt{-1}|\alpha_i-\pi|$ for $i\in\{1,\dots,6\}.$ 
Then by the Schl\"afi formula, we have
\begin{equation}\label{sch}
\frac{\partial U(\boldsymbol\alpha,\xi(\boldsymbol\alpha))}{\partial u_i}=-\frac{l_i}{2}.
\end{equation}

For $\boldsymbol\alpha=(\alpha_1,\dots,\alpha_6)\in\mathbb C^6$ such that $(\mathrm{Re}(\alpha_1),\dots,\mathrm{Re}(\alpha_6))$ is of the hyperideal type, we let $\xi(\boldsymbol\alpha)$ be such that
\begin{equation}\label{xia}
\frac{\partial U(\boldsymbol\alpha,\xi)}{\partial \xi}\Big|_{\xi=\xi(\boldsymbol\alpha)}=0.
\end{equation}
Following the idea of \cite{MY}, see also \cite{BY}, it is proved that $e^{-2\sqrt{-1}\xi(\boldsymbol\alpha)}$ satisfies a concrete quadratic equation. Therefore, for each such $\boldsymbol\alpha,$ there is at most one $\xi(\boldsymbol\alpha)$ such that $(\boldsymbol\alpha,\xi(\boldsymbol\alpha))\in \mathrm{B_{H,\mathbb C}}.$ At this point, we do not know whether $(\boldsymbol\alpha,\xi(\boldsymbol\alpha))\in \mathrm{B_{H,\mathbb C}}$ for all such $\boldsymbol\alpha,$ but in the next section we will show that it is the case if all $\mathrm{Re}(\alpha_1),\dots,\mathrm{Re}(\alpha_6)$ are sufficiently close to $\pi.$ 
\\

For $s\in\{1,\dots,c\},$ let $\boldsymbol{\alpha}_s=(\alpha_{s_1},\dots,\alpha_{s_6}).$ We define the following function
$$\mathcal U(\boldsymbol{\alpha}_I,\boldsymbol{\alpha}_J)=-\sum_{i=1}^n\frac{\iota_i}{2}(\alpha_i-\pi)^2+\sum_{s=1}^cU(\boldsymbol{\alpha}_s,\xi(\boldsymbol{\alpha}_s))+\Big(\sum_i^n\frac{\iota_i}{2}\Big)\pi^2
$$
for all $\boldsymbol{\alpha}_I$ such that $(\boldsymbol{\alpha}_s,\xi(\boldsymbol{\alpha}_s))\in \mathrm{B_{H,\mathbb C}}$ for all $s\in\{1,\dots,c\}.$

The next proposition shows that with an appropriate choice of the meridians and longitudes, the value of  $\mathcal U$ coincides with the value of the Neumann-Zagier potential function\,\cite{NZ} defined on a neighborhood of the complete structure in the deformation space of $M_c\setminus L_{\text{FSL}}.$

\begin{proposition}\label{NeuZ} For each component $T_i$ of the boundary of $M_c\setminus L_{\text{FSL}},$ choose the basis $(u_i,v_i)$ of $\pi_1(T_i)$ as in (\ref{m}) and (\ref{l}), and let $\Phi$ be the Neumann-Zagier potential function characterized by
\begin{equation}\label{char2}
\left \{\begin{array}{l}
\frac{\partial \Phi(\mathrm{H}(u_1),\dots,\mathrm{H}(u_n))}{\partial \mathrm{H}(u_i)}=\frac{\mathrm H(v_i)}{2},\\
\\
\Phi(0,\dots,0)=\sqrt{-1}\bigg(\mathrm{Vol}(M_c\setminus L_{\text{FSL}})+\sqrt{-1}\mathrm{CS}(M_c\setminus L_{\text{FSL}})\bigg)\quad\quad\text{mod }\pi^2\mathbb Z,
\end{array}\right.
\end{equation} 
where $ M_c\setminus L_{\text{FSL}}$ is with the complete hyperbolic metric. If $\mathrm H(u_i)=\pm 2\sqrt{-1}(\alpha_i-\pi)$ for each $i\in \{1,\dots,n\},$ then
$$\mathcal U(\boldsymbol{\alpha}_I,\boldsymbol{\alpha}_J)=2c\pi^2+\Phi(\mathrm H(u_1),\dots,\mathrm H(u_n)).$$
\end{proposition}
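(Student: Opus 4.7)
The plan is to prove Proposition \ref{NeuZ} by showing that $\mathcal U(\alpha_I,\alpha_J)-2c\pi^2$, viewed as a holomorphic function of $(\mathrm H(u_1),\dots,\mathrm H(u_n))$ via the linear change of variable $\mathrm H(u_i)=\pm 2\sqrt{-1}(\alpha_i-\pi)$, satisfies both defining properties of the Neumann-Zagier potential $\Phi$ in (\ref{char2}). Since $\Phi$ is uniquely characterized modulo $\pi^2\mathbb Z$ by these two properties, the identity will follow. Both sides are holomorphic in $\mathrm H(u_i)$ on a neighborhood of the complete structure (the right side by construction, the left side thanks to the discussion preceding the proposition of the analytic continuation of $\xi(\alpha)$ via its defining quadratic equation), so it suffices to verify the two conditions on the real locus of cone manifold structures.

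First I would verify the partial derivative condition $\partial\mathcal U/\partial\mathrm H(u_i)=\mathrm H(v_i)/2$. The critical point condition (\ref{xia}) guarantees that when differentiating $U(\alpha_s,\xi(\alpha_s))$ the term coming from $\partial\xi/\partial\alpha_{s_j}$ drops out. Combining the Schl\"afli formula (\ref{sch}), $\partial U/\partial u_{s_j}=-l_{s_j}/2$, with the linear relations $u_{s_j}=\pm 2\sqrt{-1}(\alpha_{s_j}-\pi)$ and $\partial\alpha_i/\partial\mathrm H(u_i)=\mp\sqrt{-1}/2$, the $\pm$ signs cancel and each edge $e_{s_j}\subset L_i$ contributes $-l_{s_j}/2$ to $\partial\mathcal U/\partial\mathrm H(u_i)$. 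Since $L_i\subset M_c$ lies on the fixed locus of the doubling involution, its length equals the sum of the edge lengths in the building blocks it passes through, so $l_i=\sum_{e_{s_j}\subset L_i}l_{s_j}$, giving a total Schl\"afli contribution of $-l_i/2$. The quadratic correction $-\frac{\iota_i}{2}(\alpha_i-\pi)^2$ chain-rules to $\iota_i\mathrm H(u_i)/4=\iota_i\sqrt{-1}\theta_i/4$, and combining with the formula $\mathrm H(v_i)=-l_i+\iota_i\sqrt{-1}\theta_i/2$ from (\ref{l}) yields the desired equation.

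Next I would check the initial condition at $\alpha_1=\dots=\alpha_n=\pi$, which corresponds to $\mathrm H(u_i)=0$ for all $i$. In this limit each truncated hyperideal tetrahedron $\Delta_{|\alpha-\pi|}$ degenerates to a regular ideal octahedron of volume $v_8$, so (\ref{UV}) and (\ref{VV}) give $U(\pi,\dots,\pi,\xi_0)=2\pi^2+2\sqrt{-1}v_8$ for each building block. Summing,
\[
\mathcal U(\pi,\dots,\pi)=2c\pi^2+2c\sqrt{-1}v_8+\Big(\sum_{i=1}^n\tfrac{\iota_i}{2}\Big)\pi^2,
\]
and substituting $\mathrm{Vol}(M_c\setminus L_{\text{FSL}})=2cv_8$ from (\ref{VolFSL}) and $\mathrm{CS}(M_c\setminus L_{\text{FSL}})=(\sum_i\iota_i/2)\pi^2\ \mathrm{mod}\ \pi^2\mathbb Z$ from (\ref{CSFSL}) together with $\Phi(0,\dots,0)=\sqrt{-1}\mathrm{Vol}-\mathrm{CS}$ from (\ref{char2}) gives
\[
\mathcal U(\pi,\dots,\pi)-2c\pi^2-\Phi(0,\dots,0)=\Big(\sum_{i=1}^n\iota_i\Big)\pi^2\in\pi^2\mathbb Z,
\]
establishing the initial condition modulo $\pi^2\mathbb Z$.

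The main obstacle I anticipate is the sign bookkeeping in the derivative step: one must verify that the $\pm$ choice in $\mathrm H(u_i)=\pm 2\sqrt{-1}(\alpha_i-\pi)$ cancels consistently between the Schl\"afli formula and the Jacobian $\partial\alpha_i/\partial\mathrm H(u_i)$, and that the preferred longitude $v_i$ chosen in (\ref{l}) with its $\iota_i$-twist correction matches exactly the correction $-\sum_i\frac{\iota_i}{2}(\alpha_i-\pi)^2$ built into the definition of $\mathcal U$. Once these identifications are pinned down, the argument is a direct combination of Costantino's volume formula, the classical Schl\"afli formula, and the decomposition of $M_c\setminus L_{\text{FSL}}$ into $2c$ regular ideal octahedra.
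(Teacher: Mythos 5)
Your proposal is correct and follows essentially the same route as the paper: verify the derivative condition via the edge decomposition $l_i=\sum l_{s_j}$, the Schl\"afli formula (\ref{sch}) and the $\iota_i$-correction term, check the value at the complete structure using the $2c$ regular ideal octahedra and (\ref{CSFSL}), and pass from the real cone-angle locus to the holomorphic statement by the identity theorem (the paper's Lemma \ref{MCV}). The only cosmetic difference is that the paper disposes of the $\pm$ sign ambiguity by invoking the evenness of $\Phi$ in its variables rather than tracking the sign cancellation directly.
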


\begin{proof} We assume that  $\mathrm H(u_i)= 2\sqrt{-1}(\alpha_i-\pi),$ and the case $\mathrm H(u_i)= -2\sqrt{-1}(\alpha_i-\pi)$ follows from the fact that $\Phi$ is even in its variables.

 From the construction of $L_{\text{FSL}}$ in Section \ref{fsl},  the component $L_i=\cup e_{s_j},$ where $e_{s_j}$ is the $j$-th edge of the building block $\Delta_s$ for $s$ coming from a subset $S$ of $\{1,\dots, c\}$ and $j$ coming from a subset of $\{1,\dots, 6\}$ depending on $s.$  Let $l_i$ be the lengths of $L_i$ and let $l_{s_j}$ be the length of $e_{s_j}$ for each $s_j.$ Then by (\ref{m}), (\ref{l}) and  (\ref{sch}), we have
\begin{equation}\label{phi1}
\begin{split}
\frac{\partial \mathcal U(\boldsymbol{\alpha}_I,\boldsymbol{\alpha}_J)}{\partial \mathrm H(u_i)}=&\sum_{s\in S}\frac{\partial U(\boldsymbol{\alpha}_s,\xi(\boldsymbol{\alpha}_s))}{\partial \mathrm H(u_i)}+\frac{\iota_i\mathrm H(u_i)}{4}\\
=&\sum_{s_j}-\frac{l_{s_j}}{2}+\frac{\iota_i\sqrt{-1}\theta_i}{4}=-\frac{l_i}{2}+\frac{\iota_i\sqrt{-1}\theta_i}{4}=\frac{\mathrm H(v_i)}{2}.
\end{split}
\end{equation}
This means $\frac{\partial \mathcal U}{\partial \mathrm H(u_i)}$ and $\frac{\partial \Phi}{\partial \mathrm H(u_i)}$ coincide for purely imaginary variables for each $i\in I.$ Then by Lemma \ref{MCV} below, $\frac{\partial \mathcal U}{\partial \mathrm H(u_i)}$ and $\frac{\partial \Phi}{\partial \mathrm H(u_i)}$ coincide, verifying the first equality of (\ref{char2}).

For the second equality of (\ref{char2}), we have 
$$\mathcal U(\pi,\dots,\pi)=2c\pi^2+2cv_8\sqrt{-1}+\Big(\sum_{i=1}^n \frac{\iota_i}{2}\Big) \pi^2,$$
where $v_8$ is the volume of the regular ideal octahedron. Then by (\ref{VolFSL}) and (\ref{CSFSL}),  we have
$$\mathcal U(\pi,\dots,\pi)=2c\pi^2+\Phi(0,\dots,0)\quad\quad\text{mod }\pi^2\mathbb Z.$$
\end{proof}

\begin{lemma}\label{MCV} Suppose $D$ is a domain of $\mathbb C^n$ and $F_1$ and $F_2$ are two holomorphic functions on $D.$  If $F_1$ and $F_2$ coincide on $D\cap (\sqrt{-1}\mathbb R)^n,$ then $F_1$ and $F_2$ coincide on $D.$
\end{lemma}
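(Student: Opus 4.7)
The plan is to reduce to a purely local statement and then apply standard identity-theorem arguments from several complex variables. Set $F=F_1-F_2$; then $F$ is holomorphic on the (connected) open set $D$ and vanishes on the (implicitly nonempty) intersection $D\cap (\sqrt{-1}\mathbb R)^n$, which is open in $(\sqrt{-1}\mathbb R)^n$ since $D$ is open in $\mathbb C^n$. The goal becomes to show $F\equiv 0$ on $D$.

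First I would show $F$ vanishes identically in a full complex neighborhood of any point $p=(\sqrt{-1}t_1,\dots,\sqrt{-1}t_n)\in D\cap (\sqrt{-1}\mathbb R)^n$. After the translation $w_k=z_k-\sqrt{-1}t_k$, the function $F$ admits a convergent power series expansion
\[
F(\sqrt{-1}t_1+w_1,\dots,\sqrt{-1}t_n+w_n)=\sum_{\alpha\in\mathbb Z^n_{\geqslant 0}}a_\alpha w^\alpha
\]
in some polydisc around $w=0$. For sufficiently small real $s_1,\dots,s_n$ the point $(\sqrt{-1}(t_1+s_1),\dots,\sqrt{-1}(t_n+s_n))$ lies in $D\cap(\sqrt{-1}\mathbb R)^n$, so plugging $w_k=\sqrt{-1}s_k$ into the expansion gives
\[
\sum_{\alpha}\,a_\alpha\,\sqrt{-1}^{\,|\alpha|}\,s^\alpha\equiv 0
\]
as a real-analytic function of $(s_1,\dots,s_n)$ on some open neighborhood of the origin in $\mathbb R^n$. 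By the identity theorem for real-analytic functions in several variables, every coefficient $a_\alpha\sqrt{-1}^{\,|\alpha|}$ must vanish, hence $a_\alpha=0$ for all $\alpha$, which shows $F\equiv 0$ in a complex polydisc around $p$.

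Finally, I would invoke the identity theorem for holomorphic functions of several complex variables: the set $\{z\in D:F\equiv 0\text{ on some neighborhood of }z\}$ is open by construction and closed in $D$ (since if $F$ vanishes on a neighborhood of points accumulating at $z_0\in D$, then all partial derivatives of $F$ vanish at $z_0$, so the Taylor series of $F$ at $z_0$ is zero and $F\equiv 0$ near $z_0$). Because $D$ is connected and this set is nonempty by the previous paragraph, it equals all of $D$, proving $F_1\equiv F_2$ on $D$.

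There is no substantive obstacle here; the only mildly subtle point is ensuring one uses the several-variable form of the identity theorem rather than slice-by-slice one-variable reasoning, which is handled cleanly by the power-series argument in the second paragraph.
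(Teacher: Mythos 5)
Your proof is correct, and it takes a genuinely different route from the paper's. The paper argues by induction on the number of variables: it freezes $(z_2,\dots,z_n)$ at purely imaginary values, applies the one-variable identity theorem to extend the equality to all complex $z_1,$ then reverses the roles and invokes the induction hypothesis. You instead argue locally: Taylor-expand $F=F_1-F_2$ at a point of $D\cap(\sqrt{-1}\mathbb R)^n,$ substitute purely imaginary increments to see that all coefficients $a_\alpha$ vanish, and then propagate the vanishing over the connected domain by the standard open-closed argument. Your version is somewhat more robust: the slice-by-slice induction in the paper tacitly assumes that each one-variable slice of $D$ is connected and meets $\sqrt{-1}\mathbb R$ in a set with an accumulation point, which holds for polydiscs centered on $(\sqrt{-1}\mathbb R)^n$ but not for an arbitrary domain, whereas your local expansion plus connectedness works for any domain $D$ with $D\cap(\sqrt{-1}\mathbb R)^n\neq\emptyset$ (and you correctly flag that nonemptiness must be assumed for the statement to have content). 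The paper's induction, on the other hand, is shorter to write and needs nothing beyond the one-variable identity theorem.
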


\begin{proof}  We use induction on $n.$ If $n=1,$ then the result follows from the Identity Theorem of a single variable analytic function. Now suppose the result is true for $n\leqslant k.$ For each fixed $(z_2,\dots,z_k)\in (\sqrt{-1}\mathbb R)^{k-1},$ by the assumption of the lemma, we have 
$F_1(z_1,z_2,\dots,z_k)=F_2(z_1,z_2,\dots,z_k)$ for any purely imaginary $z_1.$ Then by the single variable case $F_1(z_1,z_2,\dots,z_k)=F_2(z_1,z_2,\dots,z_k)$ for any complex $z_1.$ This equality can also be understood as for any fixed complex $z_1,$ $F_1(z_1,z_2,\dots,z_k)=F_2(z_1,z_2,\dots,z_k)$  for all purely imaginary $(z_2,\dots,z_k).$ Then by the induction hypothesis, we have $F_1(z_1,z_2,\dots,z_k)=F_2(z_1,z_2,\dots,z_k)$ for all $(z_2,\dots,z_k).$
\end{proof}


\section{Asymptotics}\label{Asy}

The goal of this section is to prove Theorem \ref{main2}. The main tool we use is Proposition \ref{saddle}, which is a generalization of the standard Saddle Point Approximation\,\cite{O}. For the readers' convenience, we include a proof of Proposition \ref{saddle} in Appendix A.

\begin{proposition}\label{saddle}
Let $D_{\mathbf z}$ be a region in $\mathbb C^n$ and let $D_{\mathbf a}$ be a region in $\mathbb R^k.$ Let $f(\mathbf z,\mathbf a)$ and $g(\mathbf z,\mathbf a)$ be complex valued functions on $D_{\mathbf z}\times D_{\mathbf a}$  which are holomorphic in $\mathbf z$ and smooth in $\mathbf a.$ For each positive integer $r,$ let $f_r(\mathbf z,\mathbf a)$ be a complex valued function on $D_{\mathbf z}\times D_{\mathbf a}$ holomorphic in $\mathbf z$ and smooth in $\mathbf a.$
For a fixed $\mathbf a\in D_{\mathbf a},$ let $f^{\mathbf a},$ $g^{\mathbf a}$ and $f_r^{\mathbf a}$ be the holomorphic functions  on $D_{\mathbf z}$ defined by
$f^{\mathbf a}(\mathbf z)=f(\mathbf z,\mathbf a),$ $g^{\mathbf a}(\mathbf z)=g(\mathbf z,\mathbf a)$ and $f_r^{\mathbf a}(\mathbf z)=f_r(\mathbf z,\mathbf a).$ Suppose $\{\mathbf a_r\}$ is a convergent sequence in $D_{\mathbf a}$ with $\lim_r\mathbf a_r=\mathbf a_0,$ $f_r^{\mathbf a_r}$ is of the form
$$ f_r^{\mathbf a_r}(\mathbf z) = f^{\mathbf a_r}(\mathbf z) + \frac{\upsilon_r(\mathbf z,\mathbf a_r)}{r^2},$$
$\{S_r\}$ is a sequence of embedded real $n$-dimensional closed disks in $D_{\mathbf z}$ sharing the same boundary and converging to an embedded $n$-dimensional disk $S_0$, and $\mathbf c_r$ is a point on $S_r$ such that $\{\mathbf c_r\}$ is convergent  in $D_{\mathbf z}$ with $\lim_r\mathbf c_r=\mathbf c_0.$ If for each $r$
\begin{enumerate}[(1)]
\item $\mathbf c_r$ is a critical point of $f^{\mathbf a_r}$ in $D_{\mathbf z},$
\item $\mathrm{Re}f^{\mathbf a_r}(\mathbf c_r) > \mathrm{Re}f^{\mathbf a_r}(\mathbf z)$ for all $\mathbf z \in S_r\setminus \{\mathbf c_r\},$
\item the domain $\{\mathbf z\in D_{\mathbf z}\ |\ \mathrm{Re} f^{\mathbf a_r}(\mathbf z) < \mathrm{Re} f^{\mathbf a_r}(\mathbf c_r)\}$ deformation retracts to $S_r\setminus\{\mathbf c_r\},$
\item $|g^{\mathbf a_r}(\mathbf c_r)|$ is bounded from below by a positive constant independent of $r,$
\item $|\upsilon_r(\mathbf z, \mathbf a_r)|$ is bounded from above by a constant independent of $r$ on $D_{\mathbf z},$ and
\item  the Hessian matrix $\mathrm{Hess}(f^{\mathbf a_0})$ of $f^{\mathbf a_0}$ at $\mathbf c_0$ is non-singular,
\end{enumerate}
then
\begin{equation*}
\begin{split}
 \int_{S_r} g^{\mathbf a_r}(\mathbf z) e^{rf_r^{\mathbf a_r}(\mathbf z)} d\mathbf z= \Big(\frac{2\pi}{r}\Big)^{\frac{n}{2}}\frac{g^{\mathbf a_r}(\mathbf c_r)}{\sqrt{-\det\mathrm{Hess}(f^{\mathbf a_r})(\mathbf c_r)}} e^{rf^{\mathbf a_r}(\mathbf c_r)} \Big( 1 + O \Big( \frac{1}{r} \Big) \Big).
 \end{split}
 \end{equation*}
\end{proposition}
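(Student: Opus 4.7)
My plan is to reduce the statement to the classical complex saddle point method, where the crux is to establish uniformity in $r$ given that both the integrand and the contour $S_r$ vary with $r$. Condition (6) together with the convergence $\mathbf a_r \to \mathbf a_0$ and $\mathbf c_r \to \mathbf c_0$ will supply the uniform control needed throughout.

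First I would absorb the perturbation. Since $|\upsilon_r(\mathbf z,\mathbf a_r)| \leq C$ uniformly on $D_{\mathbf z}$ by condition (5),
$$e^{r f_r^{\mathbf a_r}(\mathbf z)} = e^{r f^{\mathbf a_r}(\mathbf z)} \cdot e^{\upsilon_r(\mathbf z,\mathbf a_r)/r} = e^{r f^{\mathbf a_r}(\mathbf z)}\bigl(1 + O(1/r)\bigr)$$
uniformly in $\mathbf z$, so it suffices to prove the statement with $f_r^{\mathbf a_r}$ replaced by $f^{\mathbf a_r}$. Next, condition (6) and the smoothness of $f$ in $\mathbf a$ imply that $A_r := \mathrm{Hess}(f^{\mathbf a_r})(\mathbf c_r)$ is uniformly nondegenerate for $r$ large, with $\|A_r^{-1}\|$ bounded independent of $r$. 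A parametric complex Morse lemma then produces, for each large $r$, a biholomorphism $\Phi_r$ from a fixed neighborhood $V$ of $0 \in \mathbb C^n$ (whose size does not depend on $r$) onto a neighborhood of $\mathbf c_r$ satisfying $\Phi_r(0)=\mathbf c_r$, $d\Phi_r(0)=I$, and
$$f^{\mathbf a_r}(\Phi_r(\mathbf w)) - f^{\mathbf a_r}(\mathbf c_r) = \tfrac{1}{2}\mathbf w^T A_r \mathbf w.$$

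Second I would localize. By conditions (1) and (2) together with compactness, there exist $\delta>0$ and open sets $U_r\subset S_r$ containing $\mathbf c_r$, both uniform in $r$, such that $\mathrm{Re}\,f^{\mathbf a_r}(\mathbf z) \leq \mathrm{Re}\,f^{\mathbf a_r}(\mathbf c_r)-\delta$ for all $\mathbf z \in S_r \setminus U_r$. The contribution from $S_r \setminus U_r$ is bounded by $\mathrm{Vol}(S_r)\cdot \sup|g^{\mathbf a_r}|\cdot e^{r(\mathrm{Re}\,f^{\mathbf a_r}(\mathbf c_r)-\delta)}$, which is exponentially dominated by the main term. On $U_r$, pulling back by $\Phi_r$, the integral becomes
$$e^{r f^{\mathbf a_r}(\mathbf c_r)} \int_{\Phi_r^{-1}(U_r)} g^{\mathbf a_r}(\Phi_r(\mathbf w)) \, e^{r \mathbf w^T A_r \mathbf w/2} \det d\Phi_r(\mathbf w)\, d\mathbf w,$$
where condition (2) guarantees that $\Phi_r^{-1}(S_r\cap U_r)$ is a valid descent manifold for the quadratic form $\mathbf w^T A_r \mathbf w/2$ (if not, Cauchy's theorem applied to the holomorphic integrand deforms it to the standard descent contour with only exponentially small error). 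Taylor-expanding $g^{\mathbf a_r}(\Phi_r(\mathbf w))\det d\Phi_r(\mathbf w)$ around $\mathbf w=0$ and evaluating the resulting Gaussian integrals yields
$$\Bigl(\frac{2\pi}{r}\Bigr)^{n/2} \frac{g^{\mathbf a_r}(\mathbf c_r)}{\sqrt{-\det A_r}}\, e^{r f^{\mathbf a_r}(\mathbf c_r)}\bigl(1 + O(1/r)\bigr),$$
with the $O(1/r)$ uniform in $r$ by condition (4) and the convergence $\mathbf c_r\to\mathbf c_0$.

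The main obstacle is the parametric Morse lemma with uniform estimates: one needs both the size of $V$ and the bounds on derivatives of $\Phi_r$ to be independent of $r$. Condition (6) is essential, as it provides quantitative nondegeneracy of $A_r$ for all large $r$, after which standard implicit function theorem arguments applied uniformly to the family $\{f^{\mathbf a_r}\}$ complete the construction. A secondary subtlety is that $\Phi_r^{-1}(S_r\cap U_r)$ need not literally coincide with the standard descent contour of $\mathbf w^T A_r \mathbf w$, but condition (2) together with holomorphy of the integrand lets one bridge the two via Cauchy's theorem at the cost of only exponentially small error.
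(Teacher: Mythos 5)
Your overall strategy coincides with the paper's: a parametric complex Morse lemma whose uniformity in $r$ is supplied by condition (6) and the convergence of $\mathbf a_r$ and $\mathbf c_r$, localization near $\mathbf c_r$ using conditions (1)--(2) and compactness, a contour deformation to the descent contour justified by Cauchy's theorem, and termwise Gaussian integration of the Taylor expansion of the amplitude.

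The one step that does not hold up as written is the very first one. From the uniform pointwise estimate $e^{rf_r^{\mathbf a_r}(\mathbf z)}=e^{rf^{\mathbf a_r}(\mathbf z)}\bigl(1+O(1/r)\bigr)$ you conclude that the perturbation may be discarded before any contour analysis. But pulling the $O(1/r)$ outside the integral requires $\int_{S_r}|g^{\mathbf a_r}(\mathbf z)|\,e^{r\,\mathrm{Re}f^{\mathbf a_r}(\mathbf z)}\,|d\mathbf z|=O\bigl(r^{-n/2}\bigr)e^{r\,\mathrm{Re}f^{\mathbf a_r}(\mathbf c_r)}$, and hypothesis (2) only says that $\mathrm{Re}f^{\mathbf a_r}$ has a \emph{strict} maximum on $S_r$ at $\mathbf c_r$; the restriction of $\mathrm{Re}f^{\mathbf a_r}$ to $S_r$ may have a degenerate maximum there (the tangent space of $S_r$ can meet the null cone of the real Hessian), in which case the absolute-value integral over the original contour can be of strictly larger order than $r^{-n/2}e^{r\,\mathrm{Re}f^{\mathbf a_r}(\mathbf c_r)}$, and it cannot be improved by deformation because $|g\,e^{rf}|$ is not holomorphic. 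The fix --- and this is how the paper arranges the argument --- is to keep the factor $e^{\upsilon_r(\mathbf z,\mathbf a_r)/r}$, which is still holomorphic in $\mathbf z$, through the Morse change of variables and the contour deformation, and to absorb it only on the final descent contour, where the phase is the exact nondegenerate quadratic and the error term is bounded by $\tfrac{C}{r}\int e^{-r\|\mathbf Z\|^2}d\mathbf Z=O\bigl(r^{-n/2-1}\bigr)$. With that reordering your argument is essentially the paper's.
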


In the rest of this paper, we assume that 	$\theta_1,\dots,\theta_n$ are sufficiently close to $0,$ or equivalently, $\{\beta_i\}_{i\in I}$ and $\{\alpha_j\}_{j\in J}$ are sufficiently close to $\pi.$ In the special case that $\beta_i=\alpha_j=\pi$ for all $i\in I$ and $j\in J,$ by solving equation (\ref{criticalequation}) for $(\alpha_1,\dots,\alpha_6)=(\pi,\dots,\pi),$ we have $\xi(\pi,\dots,\pi)=\frac{7\pi}{4}.$ For $\delta>0,$ we denote by $\mathrm{D_{\delta,\mathbb C}}$  the $L^1$ $\delta$-neighborhood  of $\big(\pi,\dots,\pi,\frac{7\pi}{4},\dots, \frac{7\pi}{4}\big)$ in $\mathbb C^{|I|+c},$  that is 
$$\mathrm{D_{\delta,\mathbb C}}=\Big\{(\boldsymbol{\alpha}_I,\boldsymbol{\xi})\in \mathbb C^{|I|+c}\ \Big|\ d_{L^1}\Big((\boldsymbol{\alpha}_I,\boldsymbol{\xi}),\Big(\pi,\dots,\pi,\frac{7\pi}{4},\dots, \frac{7\pi}{4}\Big)\Big)<\delta\Big\},$$
where $d_{L^1}$ is the real $L^1$-norm on $\mathbb C^n$ defined by
$$d_{L^1}(\mathbf x,\mathbf y)=\max_{i\in\{1,\dots,n\}}\{|\mathrm {Re}(x_i)-\mathrm{Re}(y_i)|, |\mathrm {Im}(x_i)-\mathrm{Im}(y_i)| \},$$
where $\mathbf x=(x_1,\dots,x_n)$ and $\mathbf y=(y_1,\dots,y_n).$ We will also consider the region 
$$\mathrm{D_{\delta}}=\mathrm{D_{\delta,\mathbb C}}\cap \mathbb R^{|I|+c}.$$

\subsection{Critical points and critical values of $\mathcal W^{\epsilon_I}$}

Suppose $\{\beta_i\}_{i\in I}$ and $\{\alpha_j\}_{j\in J}$ are sufficiently close to $\pi.$ For $i\in I,$ let $\theta_i=2|\beta_i-\pi|,$ and let $\mu_i=1$ if $\beta_i\geqslant\pi$ and let $\mu_i=-1$ if $\beta_i\leqslant \pi$ so that $\mu_i\theta_i=2(\beta_i-\pi).$  

\begin{proposition}\label{crit} For each $i\in I,$ let $\mathrm H(u_i)$ be the logarithmic holonomy of $u_i$ of the hyperbolic cone manifold $M_{L_{\boldsymbol\theta}}$ and let
\begin{equation}\label{alpha}
\alpha^*_i=\pi+\frac{\epsilon_i\mu_i\sqrt{-1}}{2}\mathrm H(u_i).
\end{equation}
For $s\in\{1,\dots,c\},$ let $\xi^*_s=\xi(\alpha^*_{s_1},\dots,\alpha^*_{s_6})$ be as defined in (\ref{xia}).
 Then $\mathcal W^{\epsilon_I}$ has a critical point 
$$\mathbf z^{\epsilon_I}=\Big(\big(\alpha^*_i\big)_{i\in I}, \big(\xi^*_s\big)_{s=1}^c\Big)$$
in $\mathrm{D_{\delta,\mathbb C}}$ with critical value $$2c\pi^2+\sqrt{-1}\Big(\mathrm{Vol}(M_{L_{\boldsymbol\theta}})+\sqrt{-1}\mathrm{CS}(M_{L_{\boldsymbol\theta}})\Big).$$
\end{proposition}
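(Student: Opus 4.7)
The plan is to verify the critical point condition and then compute the critical value, relying on Proposition \ref{NeuZ} (identifying $\mathcal U$ with $2c\pi^2 + \Phi$ for the Neumann--Zagier potential $\Phi$) and on (\ref{VCS}) expressing $\mathrm{Vol}+\sqrt{-1}\mathrm{CS}$ in terms of $\Phi$ and the boundary holonomies.

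For the critical point, the equations $\partial \mathcal W^{\epsilon_I}/\partial \xi_s = 0$ reduce by (\ref{xia}) to $\xi_s = \xi(\alpha_{s_1},\dots,\alpha_{s_6})$, which is automatic at $\xi^*_s$ by definition. After substituting $\xi_s = \xi(\alpha_s)$, the envelope theorem gives, for $i\in I$,
\[
\frac{\partial \mathcal W^{\epsilon_I}}{\partial \alpha_i} = -2p_i(\alpha_i-\pi) - 2\epsilon_i(\beta_i-\pi) + \frac{\partial \mathcal U}{\partial \alpha_i}.
\]
Parametrizing $\mathrm H(u_i) = -2\epsilon_i\mu_i\sqrt{-1}(\alpha_i-\pi)$, Proposition \ref{NeuZ} and the chain rule yield $\partial \mathcal U/\partial \alpha_i = -\epsilon_i\mu_i\sqrt{-1}\mathrm H(v_i)$. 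Substituting $\alpha^*_i-\pi = \tfrac{\epsilon_i\mu_i\sqrt{-1}}{2}\mathrm H(u_i)$ and $\beta_i-\pi = \mu_i\theta_i/2$, and dividing by $-\epsilon_i\mu_i$, reduces the critical equation to the Dehn filling equation $p_i\mathrm H(u_i)+\mathrm H(v_i)=\sqrt{-1}\theta_i$ of (\ref{DF}) for $L^*_i$ in $M$, which holds by construction of the cone metric.

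For the critical value, using $(\alpha^*_i-\pi)^2 = -\mathrm H(u_i)^2/4$, $(\beta_i-\pi)^2=\theta_i^2/4$, and $\mathcal U=2c\pi^2+\Phi$ gives
\[
\mathcal W^{\epsilon_I}(z^{\epsilon_I}) = 2c\pi^2 + \Phi - \sum_{i\in I}\frac{q_i\theta_i^2}{4} - \sum_{j\in J}\frac{p_j\theta_j^2}{4} + \sum_{i\in I}\frac{p_i\mathrm H(u_i)^2}{4} - \sum_{i\in I}\frac{\sqrt{-1}\theta_i\mathrm H(u_i)}{2}.
\]
To match this against (\ref{VCS}), we read off the framing curves $\gamma_i$ from $L$: for $i\in I$, the tubular neighborhood of $L^*_i$ in $M$ has meridian $\mu^*_i=p_iu_i+v_i$, and with the longitude $\lambda^*_i=-u_i$ the framing-$q_i$ parallel copy is $\gamma_i=(q_ip_i-1)u_i+q_iv_i$; for $j\in J$, $\gamma_j=p_ju_j+v_j$. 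A direct term-by-term calculation, using $\mathrm H(v_i)=\sqrt{-1}\theta_i-p_i\mathrm H(u_i)$ for $i\in I$ and $\mathrm H(u_j)=\sqrt{-1}\theta_j$ for $j\in J$, shows that the four framing-dependent sums above equal $-\sum_{i=1}^n\mathrm H(u_i)\mathrm H(v_i)/4 + \sqrt{-1}\sum_{i=1}^n\theta_i\mathrm H(\gamma_i)/4$, which by (\ref{VCS}) gives the claimed critical value modulo $\sqrt{-1}\pi^2\mathbb Z$. The membership $z^{\epsilon_I}\in \mathrm{D_{\delta,\mathbb C}}$ is then clear since $\alpha^*_i-\pi=O(\theta_i)$ and $\xi^*_s$ depends continuously on $\alpha^*$ near $(\pi,\dots,\pi)$, where $\xi=7\pi/4$, as long as the $\theta_k$ are sufficiently small.

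The main subtlety will be in tracking the signs $\epsilon_i\mu_i$ (whose squares are $1$ but whose placement matters when invoking Proposition \ref{NeuZ} via the evenness of $\Phi$) and in correctly identifying the framing longitudes $\gamma_i$, so that the framing-quadratic corrections in $\mathcal W^{\epsilon_I}$ align with the $\theta_i\mathrm H(\gamma_i)$ terms appearing in (\ref{VCS}).
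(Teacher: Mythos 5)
Your proposal is correct and follows essentially the same route as the paper's proof: the $\xi_s$-equations hold by the definition of $\xi(\alpha)$, the $\alpha_i$-equations reduce via the chain rule and Proposition \ref{NeuZ} to the $(p_i,1)$-Dehn filling equation, and the critical value is assembled from Proposition \ref{NeuZ}, the framing curves $\gamma_i=-u_i+q_i(p_iu_i+v_i)$ for $i\in I$ and $\gamma_j=p_ju_j+v_j$ for $j\in J$, and formula (\ref{VCS}). Your term-by-term identification of the framing-quadratic corrections with $-\sum\mathrm H(u_i)\mathrm H(v_i)/4+\sqrt{-1}\sum\theta_i\mathrm H(\gamma_i)/4$ matches the paper's computations (\ref{CV2}) and (\ref{CV3}) exactly.
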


\begin{proof} For $s\in\{1,\dots,c\},$ let $\boldsymbol{\alpha}_s=(\alpha_{s_1},\dots,\alpha_{s_6})$ and let $\boldsymbol{\alpha}^*_s=(\alpha^*_{s_1},\dots,\alpha^*_{s_6}).$

If all $\theta_i$'s are sufficiently close to $0,$ then the cone metric is sufficiently close to the complete metric. As a consequence, $\mathrm H(u_i)$ is sufficiently close to $0$ for each $i,$ and hence $\alpha_i$ is sufficiently close to $\pi.$ Then by the continuity of $\xi(\boldsymbol{\alpha}_s),$ $\mathbf z^{\epsilon_I}\in \mathrm{D_{\delta,\mathbb C}}$ for sufficiently small $\theta_1,\dots,\theta_n.$

First, for each $s\in\{1,\dots,c\},$ we have 
\begin{equation}\label{c1}
\frac{\partial \mathcal W^{\epsilon_I}}{\partial \xi_s}\Big|_{\mathbf z^{\epsilon_I}}=\frac{\partial U(\boldsymbol{\alpha}^*_s,\xi_s)}{\partial \xi_s}\Big|_{\xi^*_s}=0.
\end{equation}

Then by the Chain Rule, for each $s\in\{1,\dots,c\}$ and $i\in I,$ 
$$\frac{\partial U(\boldsymbol{\alpha}_s,\xi(\boldsymbol{\alpha}_s))}{\partial \alpha_i}\Big|_{\boldsymbol{\alpha}^*_s}=\frac{\partial U(\boldsymbol{\alpha}_s,\xi_s)}{\partial \alpha_i}\Big|_{(\boldsymbol{\alpha}^*_s,\xi^*_s)}+\frac{\partial U(\boldsymbol{\alpha}_s,\xi_s)}{\partial \xi_s}\Big|_{(\boldsymbol{\alpha}^*_s,\xi^*_s)}\cdot\frac{\partial \xi(\boldsymbol{\alpha}_s)}{\partial \alpha_i}\Big|_{\boldsymbol{\alpha}^*_s}=\frac{\partial U(\boldsymbol{\alpha}_s,\xi_s)}{\partial \alpha_i}\Big|_{(\boldsymbol{\alpha}^*_s,\xi^*_s)},$$
hence
$$\frac{\partial \big(\sum_{s=1}^c U(\boldsymbol{\alpha}_s,\xi_s)\big)}{\partial \alpha_i}\Big|_{\mathbf z^{\epsilon_I}}=\frac{\partial \mathcal U}{\partial \alpha_i}\Big|_{\boldsymbol{\alpha}^*_I}=-\epsilon_i\mu_i\sqrt{-1}\mathrm H(v_i),$$
where $\boldsymbol{\alpha}^*_I=(\alpha^*_i)_{i\in I}$ and the last equation comes from (\ref{phi1}) and (\ref{alpha}).
As a consequence, for each $i\in I,$ we have
\begin{equation}\label{c2}
\begin{split}
\frac{\partial \mathcal W^{\epsilon_I}}{\partial \alpha_i}\Big|_{\mathbf z^{\epsilon_I}}=&-2p_i(\alpha^*_i-\pi)-2\epsilon_i(\beta_i-\pi) + \frac{\partial \mathcal U}{\partial \alpha_i}\Big|_{\boldsymbol{\alpha}^*_I}\\
=&-\epsilon_i\mu_i\sqrt{-1}\Big(p_i\mathrm H(u_i)-\sqrt{-1}\theta_i+\mathrm H(v_i)\Big)=0,
\end{split}
\end{equation}
where the last equality comes from the $(p_i,1)$-Dehn filling equation (\ref{DF}) with the cone angle $\theta_i.$ Equations (\ref{c1}) and (\ref{c2}) show that $\mathbf z^{\epsilon_I}$ is a critical point of $\mathcal W^{\epsilon_I}.$

To compute the critical value, by Proposition \ref{NeuZ},  we first have
\begin{equation}\label{CV1}
\mathcal U(\boldsymbol{\alpha}^*_I,\boldsymbol{\alpha}_J)=2c\pi^2+\Phi(\mathrm H(u_1),\dots, \mathrm H(u_n)).
\end{equation}

For each $i\in I,$ let $\gamma_i=-u_i+q_i(p_iu_i+v_i)$ so that it is the curve on the boundary of a tubular neighborhood of $L^*_i$ that is isotopic to $L^*_i$ given by the framing $q_i$ of $L^*_i$ and with the orientation so that $(p_iu_i+v_i)\cdot \gamma_i=1.$ Then we have 
$\theta_i=2\mu_i(\beta_i-\pi),$ $\mathrm H(u_i)=-2\epsilon_i\mu_i\sqrt{-1}(\alpha^*_i-\pi),$ 
\begin{equation*}
\begin{split}
\mathrm H(v_i)=&\sqrt{-1}\theta_i-p_i\mathrm H(u_i)=2\mu_i\sqrt{-1}(\beta_i-\pi)+2p_i\epsilon_i\mu_i\sqrt{-1}(\alpha^*_i-\pi)
\end{split}
\end{equation*}
 and 
 \begin{equation*}
 \begin{split}
 \mathrm H(\gamma_i)=&-\mathrm H(u_i)+q_i(p_i\mathrm H(u_i)+\mathrm H(v_i))=2\epsilon_i\mu_i\sqrt{-1}(\alpha^*_i-\pi)+2q_i\mu_i\sqrt{-1}(\beta_i-\pi).
 \end{split}
 \end{equation*}
As a consequence, we have
\begin{equation}\label{CV2}
\begin{split}
&-\sum_{i\in I}\frac{\mathrm H(u_i)\mathrm H(v_i)}{4}+\sum_{i\in I}\frac{\sqrt{-1}\theta_i\mathrm H(\gamma_i)}{4}\\
=&-\sum_{i\in I} q_i(\beta_i-\pi)^2-\sum_{i\in I} p_i(\alpha^*_i-\pi)^2-\sum_{i\in I} 2\epsilon_i(\alpha^*_i-\pi)(\beta_i-\pi).
\end{split}
\end{equation}

For each $j\in J,$ let $\gamma_j=p_ju_j+v_j$ so that it is the curve on the boundary of a tubular neighborhood of $L_j$ that is isotopic to $L_j$ given by the framing $p_j$ of $L_j$ and with the orientation so that $u_j\cdot \gamma_j=1.$  Then we have 
$\theta_j=2|\alpha_j-\pi|,$ $\mathrm H(u_j)=2\sqrt{-1}|\alpha_j-\pi|$ and $\mathrm H(\gamma_j)=p_j\mathrm H(u_j)+\mathrm H(v_j).$ 
As a consequence, we have
\begin{equation}\label{CV3}
\begin{split}
-\sum_{j\in J}\frac{\mathrm H(u_j)\mathrm H(v_j)}{4}+\sum_{j\in J}\frac{\sqrt{-1}\theta_j\mathrm H(\gamma_j)}{4}=&-\sum_{j\in J}\frac{\mathrm H(u_j)\mathrm H(v_j)}{4}+\sum_{j\in J}\frac{\mathrm H(u_j)\big(p_j\mathrm H(u_j)+\mathrm H(v_j)\big)}{4}\\
=&\sum_{j\in J}\frac{p_j\mathrm H(u_j)^2}{4}=\sum_{j\in J}-p_j(\alpha_j-\pi)^2.
\end{split}
\end{equation}
Putting (\ref{CV1}), (\ref{CV2}), (\ref{CV3}) and (\ref{VCS}) together ,  we have
\begin{equation*}
\begin{split}
\mathcal W^{\epsilon_I}&(\mathbf z^{\epsilon_I})\\
=&\,\mathcal U(\boldsymbol{\alpha}_I,\boldsymbol{\alpha}_J)-\sum_{i\in I}q_i(\beta_i-\pi)^2-\sum_{i\in I}p_i(\alpha^*_i-\pi)^2-\sum_{j\in j}p_j(\alpha_j-\pi)^2-\sum_{i\in I}2\epsilon_i(\alpha^*_i-\pi)(\beta_i-\pi)\\
=&2c\pi^2+\Phi(\mathrm H(u_1),\dots, \mathrm H(u_n))-\sum_{i=1}^n\frac{\mathrm H(u_i)\mathrm H(v_i)}{4}+\sum_{i=1}^n\frac{\sqrt{-1}\theta_i\mathrm H(\gamma_i)}{4}\\
=&2c\pi^2+\sqrt{-1}\Big(\mathrm{Vol}(M_{L_{\boldsymbol\theta}})+\sqrt{-1}\mathrm{CS}(M_{L_{\boldsymbol\theta}})\Big).
\end{split}
\end{equation*}
\end{proof}


\subsection{ Convexity of $\mathcal W^{\epsilon_I}$}

\begin{proposition}\label{convexity} 
There exists a $\delta_0>0$ such that if all $\{\alpha_j\}_{j\in J}$ are in $(\pi-\delta_0,\pi+\delta_0),$ then for any $\epsilon_I,$ $\mathrm{Im}\mathcal W^{\epsilon_I}(\boldsymbol{\alpha}_I,\boldsymbol{\xi})$ is strictly concave down in $\{\mathrm{Re}(\alpha_i)\}_{i\in I}$ and $\{\mathrm{Re}(\xi_s)\}_{s=1}^c,$ and is strictly concave up in $\{\mathrm{Im}(\alpha_i)\}_{i\in I}$ and $\{\mathrm{Im}(\xi_s)\}_{s=1}^c$  on $\mathrm{D_{\delta_0,\mathbb C}}.$
\end{proposition}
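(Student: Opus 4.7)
The plan is to reduce the assertion, via the Cauchy-Riemann equations and continuity, to a Hessian computation at a single real point, and then to carry out that computation explicitly.

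First, since $\mathcal{W}^{\epsilon_I}$ is holomorphic, writing each complex variable as $z=x+\sqrt{-1}\,y$ the Cauchy-Riemann equations give $\partial^2\mathrm{Im}\,\mathcal{W}^{\epsilon_I}/\partial x_j\partial x_k=-\partial^2\mathrm{Im}\,\mathcal{W}^{\epsilon_I}/\partial y_j\partial y_k$, so the Hessian in the real parts is minus the Hessian in the imaginary parts, and strict concavity in real parts is equivalent to strict convexity in imaginary parts. These second partials are continuous on $\mathrm{D_{\delta_0,\mathbb C}}$ and smooth in the parameters $\alpha_J$, so it suffices to establish strict negative definiteness of the real-part Hessian at the single point $P_0:=(\pi,\dots,\pi,7\pi/4,\dots,7\pi/4)$ with $\alpha_J=(\pi,\dots,\pi)$; the required $\delta_0$ is then produced by continuity.

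Second, I would reduce $\mathrm{Im}\,\mathcal{W}^{\epsilon_I}$ on the real slice to a sum of the volume-type functions $V$. All summands in the definition of $\mathcal{W}^{\epsilon_I}$ other than $\sum_s U(\alpha_s,\xi_s)$ are real-valued when their arguments are real, and by (\ref{UV}) one has $\mathrm{Im}\,U(\alpha_s,\xi_s)=2V(\alpha_s,\xi_s)$. Hence on the real slice, $\mathrm{Im}\,\mathcal{W}^{\epsilon_I}=2\sum_{s=1}^c V(\alpha_{s_1},\dots,\alpha_{s_6},\xi_s)$, and the task reduces to showing that the Hessian of this sum with respect to $(\alpha_I,\xi_1,\dots,\xi_c)$ is negative definite at $P_0$.

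Third, I would compute the $7\times 7$ Hessian of a single $V(\alpha_1,\dots,\alpha_6,\xi)$ at $(\pi,\dots,\pi,7\pi/4)$ directly. Using $\Lambda''(\theta)=-\cot\theta$ together with $\cot(\pi/2)=\cot(3\pi/2)=0$, $\cot(\pi/4)=1$ and $\cot(7\pi/4)=-1$, the four $\delta$-contributions vanish entirely at this point. The combinatorial facts that each $\alpha_k$ lies in exactly two of the $\tau_i$ and exactly two of the $\eta_j$, and that each distinct pair $(\alpha_k,\alpha_l)$ lies jointly in exactly two of the $\tau_i$ and $\eta_j$ taken together, then yield $H_{\xi\xi}=-8$, $H_{\xi,\alpha_k}=2$ for every $k$, and $H_{\alpha\alpha}=-(I_6+J_6)/2$, where $J_6$ denotes the $6\times 6$ all-ones matrix. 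The Schur complement with respect to $\xi$ is $H_{\alpha\alpha}-H_{\alpha\xi}H_{\xi\xi}^{-1}H_{\xi\alpha}=-(I_6+J_6)/2+J_6/2=-I_6/2$, which is strictly negative definite; together with $H_{\xi\xi}<0$ this makes the full Hessian of $V$ negative definite at the symmetric point.

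Fourth, summing over building blocks and restricting to the active variables: the Hessian of $\sum_s V_s$ at $P_0$ is the sum of the single-block Hessians embedded into the global variable space $(\alpha_I,\xi_1,\dots,\xi_c)$. Each block Hessian is negative definite by Step~3, each $\xi_s$ occurs in only one block, and every $\alpha_i$ with $i\in I$ is an edge of at least one building block, so the vanishing of every block restriction forces the global vector to be zero; hence the sum is strictly negative definite on the full variable space, and restricting to $(\alpha_I,\xi)$ with $\alpha_J$ treated as parameters only removes rows and columns, giving a principal submatrix that is still negative definite. I expect the main obstacle to be the single-block calculation in Step~3: once the adjacency combinatorics of a tetrahedron's triangular faces ($\tau_i$) and quadrilateral sections ($\eta_j$) is organized correctly, the matrix collapses to the clean form above, but arriving at that simplification requires careful bookkeeping and relies crucially on the specific cancellations at the arguments $\pi/2,3\pi/2,\pi/4,7\pi/4$.
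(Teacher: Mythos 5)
Your proposal is correct and follows essentially the same route as the paper: restrict to the real slice where $\mathrm{Im}\mathcal W^{\epsilon_I}=2\sum_s V$, verify negative definiteness of the Hessian at the symmetric point $(\pi,\dots,\pi,\frac{7\pi}{4},\dots,\frac{7\pi}{4})$, extend by continuity, and invoke holomorphicity for the statement about the imaginary parts. You additionally supply the Schur-complement computation that the paper leaves as ``a direct computation'' (your entries for the Hessian of $V$ are the consistent ones; the paper's listed values carry an inconsistent factor of $2$ that does not affect definiteness), and your Step 4 on embedding the block Hessians is a careful version of what the paper asserts implicitly.
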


\begin{proof} We first consider the special case $\{\alpha_i\}_{i\in I}$ and $\{\xi_s\}_{s=1}^c$ are real.  In this case, 
$$\mathrm{Im}\mathcal W^{\epsilon_I}(\boldsymbol{\alpha}_I,\boldsymbol{\xi})=\sum_{s=1}^c2V(\alpha_{s_1},\dots,\alpha_{s_6},\xi_s)$$
with $V$ defined in (\ref{V}).

At $\big(\pi,\dots,\pi,\frac{7\pi}{4}\big),$ we have $\frac{\partial ^2 V}{\partial \alpha_{s_i}^2} =-2$ for $s_i\in I\cap\{s_1,\dots,s_6\},$ $\frac{\partial ^2V}{\partial \alpha_{s_i}\alpha_{s_j}} =-1$ for $s_i\neq s_j$ in $I\cap \{s_1,\dots,s_6\},$ $\frac{\partial ^2 V}{\partial \alpha_{s_i}\xi_s} =2$ for $s_i\in I\cap\{s_1,\dots,s_6\}$ and  $\frac{\partial ^2 V}{\partial \xi_s^2} =-8.$ Then a direct computation shows that,  at $\big(\pi,\dots,\pi,\frac{7\pi}{4}\big),$ the Hessian matrix 
of $V$ in $\{\mathrm{Re}(\alpha_i)\}_{i\in I\cap \{s_1,\dots,s_6\}}$ and $\mathrm{Re}(\xi_s)$ is negative definite. As a consequence, the Hessian matrix 
of $\mathrm{Im}\mathcal W^{\epsilon_I}$ in $\{\mathrm{Re}(\alpha_i)\}_{i\in I}$ and $\{\mathrm{Re}(\xi_s)\}_{s=1}^c$ is negative definite at $\big(\pi,\dots,\pi,\frac{7\pi}{4},\dots,\frac{7\pi}{4} \big).$

Then by the continuity, there exists a sufficiently small $\delta_0>0$ such that for all $\{\alpha_j\}_{j\in J}$ in $(\pi-\delta_0,\pi+\delta_0)$ and $(\boldsymbol{\alpha}_I,\boldsymbol{\xi})\in \mathrm{D_{\delta_0,\mathbb C}},$ the Hessian matrix of $\mathrm{Im}\mathcal W^{\epsilon_I}$ with respect to $\{\mathrm{Re}(\alpha_i)\}_{i\in I}$ and $\{\mathrm{Re}(\xi_s)\}_{s=1}^c$ is still negative definite, implying that $\mathrm{Im}\mathcal W^{\epsilon_I}$ is strictly concave down in $\{\mathrm{Re}(\alpha_i)\}_{i\in I}$ and  $\{\mathrm{Re}(\xi_s)\}_{s=1}^c$ on $\mathrm{D_{\delta_0,\mathbb C}}.$ Since $\mathcal W^{\epsilon_I}$ is holomorphic, $\mathrm{Im}\mathcal W^{\epsilon_I}$ is strictly concave up in $\{\mathrm{Im}(\alpha_i)\}_{i\in I}$ and $\{\mathrm{Im}(\xi_s)\}_{s=1}^c$ on $\mathrm{D_{\delta_0,\mathbb C}}.$
\end{proof}

\begin{proposition}\label{nonsingular} If all $\{\alpha_j\}_{j\in J}$ are in $(\pi-\delta_0,\pi+\delta_0),$ then
the Hessian matrix $\mathrm{Hess}\mathcal W^{\epsilon_I}$ of $\mathcal W^{\epsilon_I}$ with respect to $\{\alpha_i\}_{i\in I}$ and $\{\xi_s\}_{s=1}^c$ is non-singular on $\mathrm{D_{\delta_0,\mathbb C}}.$
\end{proposition}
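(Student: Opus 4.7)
The plan is to deduce Proposition \ref{nonsingular} directly from Proposition \ref{convexity}, by combining two ingredients: the holomorphicity of $\mathcal W^{\epsilon_I}$, which forces its Hessian to be complex symmetric, and a standard linear-algebra fact that a complex symmetric matrix whose imaginary part is sign-definite must be non-singular.

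First I would observe that $\mathcal W^{\epsilon_I}$ is holomorphic on $\mathrm{D_{\delta_0,\mathbb C}}$ in the variables $(\alpha_I,\xi)$: the only non-polynomial contributions come from the dilogarithm terms inside each $U(\alpha_s,\xi_s)$, and these are analytic on $\mathrm{D_{\delta_0,\mathbb C}}$ provided $\delta_0$ is small enough that the arguments of the dilogarithms stay in the domain of analyticity of $\mathrm{Li}_2$. Consequently the complex Hessian $M = \mathrm{Hess}\,\mathcal W^{\epsilon_I}$ is a complex symmetric matrix, by the equality of mixed partial derivatives for holomorphic functions. Decomposing $\mathcal W^{\epsilon_I} = u + \sqrt{-1}\,v$ into real and imaginary parts, and writing each variable as $z_k = x_k + \sqrt{-1}\,y_k$, one has
\begin{equation*}
\frac{\partial^2 \mathcal W^{\epsilon_I}}{\partial z_k \partial z_l} \;=\; \frac{\partial^2 u}{\partial x_k \partial x_l} \;+\; \sqrt{-1}\,\frac{\partial^2 v}{\partial x_k \partial x_l},
\end{equation*}
so $\mathrm{Im}(M)$ is precisely the real Hessian of $\mathrm{Im}\,\mathcal W^{\epsilon_I}$ in the real parts $\{\mathrm{Re}(\alpha_i)\}_{i\in I}\cup\{\mathrm{Re}(\xi_s)\}_{s=1}^c$. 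By Proposition \ref{convexity}, this matrix is negative definite on $\mathrm{D_{\delta_0,\mathbb C}}$.

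Then I would invoke the following elementary fact: if $M = A + \sqrt{-1}\,B$ is a complex symmetric matrix with $A$ and $B$ real symmetric and $B$ definite, then $M$ is non-singular. Indeed, suppose $Mv = 0$ for some $v\in\mathbb C^{|I|+c}$. Then $\bar v^{\,T} M v = 0$; since $A$ and $B$ are real symmetric, each of $\bar v^{\,T} A v$ and $\bar v^{\,T} B v$ equals its own complex conjugate and is therefore real, so both must vanish individually. The definiteness of $B$ then forces $v=0$. Applied to $M = \mathrm{Hess}\,\mathcal W^{\epsilon_I}$ at every point of $\mathrm{D_{\delta_0,\mathbb C}}$, this yields the claimed non-singularity.

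I do not anticipate any real obstacle in this proof: the analytic work (strict concavity of $\mathrm{Im}\,\mathcal W^{\epsilon_I}$ in the real directions) has already been carried out in Proposition \ref{convexity}, and the remaining content is the linear-algebra bridge from real definiteness to complex invertibility sketched above. The only cosmetic care needed is in verifying holomorphicity of $\mathcal W^{\epsilon_I}$ on $\mathrm{D_{\delta_0,\mathbb C}}$, which follows immediately from the construction of $U$ and a shrinking of $\delta_0$ if necessary.
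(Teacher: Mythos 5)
Your proof is correct and follows essentially the same route as the paper: the paper likewise deduces non-singularity directly from the definiteness established in Proposition \ref{convexity}, citing a lemma of London (reference [L]) on complex matrices with definite real part in place of your inline two-line argument. Your identification of the definite block as the \emph{imaginary} part of $\mathrm{Hess}\,\mathcal W^{\epsilon_I}$ (equal to the real Hessian of $\mathrm{Im}\,\mathcal W^{\epsilon_I}$ in the real coordinates) is in fact the careful version of what the paper's proof loosely calls its ``real part.''
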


\begin{proof} By Proposition \ref{convexity}, the real part of the $\mathrm{Hess}\mathcal W^{\epsilon_I}$ is negative definite. Then by \cite[Lemma]{L}, it is nonsingular.
\end{proof}


\subsection{Asymptotics of the leading Fourier coefficients}\label{leading}

\begin{proposition}\label{critical}
Suppose $\{\beta_i\}_{i\in I}$ and $\{\alpha_j\}_{j\in J}$ are in $(\pi-\epsilon,\pi+\epsilon)$ for a sufficiently small $\epsilon>0.$ For $\epsilon_I\in\{1,-1\}^{|I|},$ let  $\mathbf z^{\epsilon_I}$ be the critical point of $\mathcal W^{\epsilon_I}$ described in Proposition \ref{crit}. Then
$$\widehat{f^{\epsilon_I}_r}(0,\dots,0)=\frac{C^{\epsilon_I}(\mathbf z^{\epsilon_I})}{\sqrt{-\det\mathrm{Hess}\Big(\frac{\mathcal W^{\epsilon_I}(\mathbf z^{\epsilon_I})}{4\pi\sqrt{-1}}\Big)}}e^{\frac{r}{4\pi}\big(\mathrm{Vol}(M_{L_{\boldsymbol\theta}})+\sqrt{-1}\mathrm{CS}(M_{L_{\boldsymbol\theta}})\big)}\Big( 1 + O \Big( \frac{1}{r} \Big) \Big)$$
where each $C^{\epsilon_I}(\mathbf z^{\epsilon_I})$ depends continuously on $\{\beta_i
\}_{i\in I}$ and $\{\alpha_j\}_{j\in J};$ and when $\beta_i=\alpha_j=\pi,$
$$C^{\epsilon_I}(\mathbf z^{\epsilon_I})=(-1)^{\sum_{i\in I}q_i+\sum_{i=1}^n\big(p_i+\frac{\iota_i}{2}\big)+c}\frac{r^{\frac{|I|-c}{2}}}{2^{\frac{3|I|+c}{2}}\pi^{\frac{|I|+c}{2}}}.$$
\end{proposition}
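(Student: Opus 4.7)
The strategy is to apply the Saddle Point Method (Proposition \ref{saddle}) to the integral representation of $\widehat{f_r^{\epsilon_I}}(0,\dots,0)$ provided by Proposition \ref{4.2}. Setting $(\mathbf a_I,\mathbf b)=0$ kills the linear terms in $\alpha_I,\xi$ inside the exponent, so the integrand takes the form $g^{\epsilon_I}(\alpha_I,\xi)\,\exp\bigl(r\,f_r^{\epsilon_I}(\alpha_I,\xi)\bigr)$, where $f_r^{\epsilon_I}=\mathcal W_r^{\epsilon_I}/(4\pi\sqrt{-1})$ and $g^{\epsilon_I}$ is the smooth, compactly supported amplitude obtained by bundling the bump function $\psi$ with the uniformly bounded phase prefactor $e^{\sqrt{-1}(\cdots)}$ from Proposition \ref{4.2}. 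By Lemma \ref{converge}, $f_r^{\epsilon_I}=f^{\epsilon_I}+O(1/r^2)$ uniformly on the region of integration, where $f^{\epsilon_I}=\mathcal W^{\epsilon_I}/(4\pi\sqrt{-1})$; this puts us exactly in the hypothesis of Proposition \ref{saddle}.

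The critical-point hypotheses are already in place. Proposition \ref{crit} gives the critical point $z^{\epsilon_I}\in\mathrm{D_{\delta,\mathbb C}}$ and the critical value
$$f^{\epsilon_I}(z^{\epsilon_I}) \;=\; -\frac{c\pi\sqrt{-1}}{2} + \frac{1}{4\pi}\bigl(\mathrm{Vol}(M_{L_\theta})+\sqrt{-1}\,\mathrm{CS}(M_{L_\theta})\bigr),$$
Proposition \ref{nonsingular} gives non-singularity of the Hessian, and $|g^{\epsilon_I}(z^{\epsilon_I})|$ is uniformly bounded below because $\psi(z^{\epsilon_I})=1$ for sufficiently small cone angles. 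For the contour, the natural choice is the imaginary translate $S_r=\mathrm{D_H}+\sqrt{-1}\,\mathrm{Im}(z^{\epsilon_I})$, which passes through $z^{\epsilon_I}$; vanishing of $\psi$ near $\partial\mathrm{D_H}$ justifies the shift via Cauchy's theorem. On $S_r$ one has $\mathrm{Re}\,f^{\epsilon_I}=\frac{1}{4\pi}\mathrm{Im}\,\mathcal W^{\epsilon_I}$ as a function of the real coordinates, and Proposition \ref{convexity} gives strict concavity, hence a strict local maximum at $z^{\epsilon_I}$.

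The main obstacle is promoting this local maximum to a global one on $S_r$, since Proposition \ref{convexity} only supplies concavity inside the small neighborhood $\mathrm{D_{\delta_0,\mathbb C}}$. Outside this neighborhood the argument must be made by hand: using the explicit Lobachevsky-function formula \eqref{V} and the identification $V(\alpha,\xi(\alpha))=\mathrm{Vol}(\Delta_{|\alpha-\pi|})$ from \eqref{VV}, one shows that $\mathrm{Im}\,\mathcal W^{\epsilon_I}$ stays strictly below its value at $z^{\epsilon_I}$ on the compact set $\overline{\mathrm{D_H}}\setminus\mathrm{D_{\delta_0}}$ for the limiting configuration $(\pi,\dots,\pi,\tfrac{7\pi}{4},\dots,\tfrac{7\pi}{4})$, and then a continuity-and-compactness argument propagates this strict inequality to all sufficiently small cone angles. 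This is the technically delicate step and will likely require splitting $\overline{\mathrm{D_H}}\setminus\mathrm{D_{\delta_0}}$ into subregions governed by different Lobachevsky asymptotics.

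Once the contour is in place, Proposition \ref{saddle} yields
$$\widehat{f_r^{\epsilon_I}}(0,\dots,0) \;=\; \frac{r^{|I|+c}}{2^{2|I|+c}\pi^{|I|+c}}\,\Bigl(\frac{2\pi}{r}\Bigr)^{\!(|I|+c)/2}\,\frac{g^{\epsilon_I}(z^{\epsilon_I})}{\sqrt{-\det\mathrm{Hess}(f^{\epsilon_I})(z^{\epsilon_I})}}\,e^{rf^{\epsilon_I}(z^{\epsilon_I})}\bigl(1+O(1/r)\bigr).$$
Factoring $e^{rf^{\epsilon_I}(z^{\epsilon_I})}=e^{-rc\pi\sqrt{-1}/2}\,e^{r(\mathrm{Vol}+\sqrt{-1}\,\mathrm{CS})/(4\pi)}$ and absorbing the sign $e^{-rc\pi\sqrt{-1}/2}$ and the amplitude $g^{\epsilon_I}(z^{\epsilon_I})$ together with the explicit prefactor into a single coefficient gives the claimed form, with $C^{\epsilon_I}(z^{\epsilon_I})$ continuous in $\{\beta_i\}_{i\in I},\{\alpha_j\}_{j\in J}$ by continuous dependence of $z^{\epsilon_I}$ on the cone angles. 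Specializing to $\beta_i=\alpha_j=\pi$ gives $z^{\epsilon_I}=(\pi,\dots,\pi,\tfrac{7\pi}{4},\dots,\tfrac{7\pi}{4})$, the phase $g^{\epsilon_I}(z^{\epsilon_I})$ collapses (to leading order) to $(-1)^{\sum_{i\in I}q_i+\sum_{i=1}^n(p_i+\iota_i/2)}$, and careful bookkeeping of the remaining powers of $r$, $2$, and $\pi$, combined with the sign $e^{-rc\pi\sqrt{-1}/2}$ (for odd $r$), reproduces the asserted closed form for $C^{\epsilon_I}(z^{\epsilon_I})$.
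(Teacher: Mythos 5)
Your overall architecture — integral representation from Proposition \ref{4.2}, critical point and value from Proposition \ref{crit}, nonsingular Hessian from Proposition \ref{nonsingular}, local concavity from Proposition \ref{convexity}, a global-maximum argument on the compact complement via the volume-maximality of the regular ideal octahedron plus continuity, then Proposition \ref{saddle} — is the paper's. But there are two genuine gaps.

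First, and most seriously, the claim that $f_r^{\epsilon_I}=f^{\epsilon_I}+O(1/r^2)$ "by Lemma \ref{converge}" is false, and the error propagates into the constant $C^{\epsilon_I}$. The function $\mathcal W_r^{\epsilon_I}$ involves $\varphi_r\big(\frac{\pi}{r}\big)$, whose expansion contains the term $\frac{2\pi\sqrt{-1}}{r}\log\big(\frac{r}{2}\big)$, and $\varphi_r$ evaluated at arguments shifted by $\frac{\pi}{r},\frac{3\pi}{r},\frac{2\pi}{r}$, which contribute $\varphi_r'(z)\frac{k\pi}{r}$ at order $\frac{1}{r}$. Hence $\mathcal W_r^{\epsilon_I}=\mathcal W^{\epsilon_I}-\frac{4c\pi\sqrt{-1}}{r}\log\big(\frac{r}{2}\big)+\frac{4\pi\sqrt{-1}\,\kappa(\alpha_I,\xi)}{r}+O\big(\frac{1}{r^2}\big)$ for an explicit function $\kappa$. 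After multiplication by $\frac{r}{4\pi\sqrt{-1}}$ in the exponent these terms contribute the multiplicative factors $\big(\frac{r}{2}\big)^{-c}$ and $e^{\kappa(\alpha_I,\xi)}$, which are $O(1)$ but not $1+O(1/r)$; they must be extracted and absorbed into the amplitude $g$ before Proposition \ref{saddle} (whose hypothesis is precisely an $O(1/r^2)$ discrepancy in the phase) can be invoked. If you discard them as you propose, the resulting power of $r$ in $C^{\epsilon_I}$ is $r^{\frac{|I|+c}{2}}$ rather than the correct $r^{\frac{|I|-c}{2}}$, so the "careful bookkeeping" in your last step cannot reproduce the stated closed form along the route you describe.

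Second, the contour shift $S_r=\mathrm{D_H}+\sqrt{-1}\,\mathrm{Im}(z^{\epsilon_I})$ cannot be justified "via Cauchy's theorem" using the vanishing of $\psi$ near $\partial\mathrm{D_H}$: the bump function $\psi$ is not holomorphic on the transition region where $0<\psi<1$, so the integrand is not analytic there and no deformation is permitted. The correct procedure (the paper's) is to deform only inside the small neighborhood $\mathrm{D_{\delta_0}}$ where $\psi\equiv1$ and $\mathcal W^{\epsilon_I}$ is analytic, replacing it by a surface $S^{\epsilon_I}_{\text{top}}\cup S^{\epsilon_I}_{\text{side}}$ with the same boundary, and to bound the integral over $\mathrm{D_H}\setminus\mathrm{D_{\delta_0}}$ directly by the strict inequality $\mathrm{Im}\,\mathcal W^{\epsilon_I}(z^{\epsilon_I})>\max_{\mathrm{D_H}\setminus\mathrm{D_{\delta_0}}}\mathrm{Im}\,\mathcal W^{\epsilon_I}$; one then also needs the concavity-up statement of Proposition \ref{convexity} in the imaginary directions to control $\mathrm{Im}\,\mathcal W^{\epsilon_I}$ on the side surface, a step your outline omits.
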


 For the proof of Proposition \ref{critical}, we need the following lemma.

\begin{lemma}\label{absm} For each $\epsilon_I\in\{1,-1\}^{|I|}$ and any fixed $\{\alpha_j\}_{j\in J},$ 
$$\max_{\mathrm{D_H}}\mathrm{Im}\mathcal W^{\epsilon_I} \leqslant \mathrm{Im}\mathcal W^{\epsilon_I}\Big(\pi,\dots,\pi,\frac{7\pi}{4},\dots,\frac{7\pi}{4}\Big)=2cv_8$$
where $v_8$ is the volume of the regular ideal octahedron, and the equality holds if and only if $\alpha_1=\dots=\alpha_n=\pi$ and $\xi_1=\dots=\xi_c=\frac{7\pi}{4}.$
\end{lemma}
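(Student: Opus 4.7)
The plan is to reduce the lemma to two known maximum principles, one controlling the variable $\xi$ and one controlling the variable $\alpha$.

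First I would observe that on the real slice $\mathrm{D_H}$, only the quantum $6j$-symbol terms contribute to $\mathrm{Im}\,\mathcal W^{\epsilon_I}$: every other summand in the defining formula for $\mathcal W^{\epsilon_I}$ is a polynomial in the real quantities $\alpha_i-\pi,$ $\beta_i-\pi,$ $\alpha_j-\pi$ with real coefficients, and the constant $(\sum \iota_i/2)\pi^2$ is real. Combined with (\ref{UV}) this gives
\[\mathrm{Im}\,\mathcal W^{\epsilon_I}(\alpha_I,\xi)=2\sum_{s=1}^c V(\alpha_{s_1},\dots,\alpha_{s_6},\xi_s)\]
on $\mathrm{D_H}$, so the lemma reduces to the assertion $\sum_s V(\alpha_s,\xi_s)\leqslant cv_8$, with equality exactly at the stated point.

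Next, for each $s$ I would apply Costantino's strict concavity result for $V(\alpha_s,\cdot)$ cited immediately after (\ref{V}): the $\xi_s$-derivative vanishes at a unique interior point $\xi(\alpha_s)$, where $V$ attains its absolute maximum, and by (\ref{VV}) that maximum value equals $\mathrm{Vol}(\Delta_{|\alpha_s-\pi|})$. Hence $V(\alpha_s,\xi_s)\leqslant \mathrm{Vol}(\Delta_{|\alpha_s-\pi|})$, strictly unless $\xi_s=\xi(\alpha_s)$. The remaining task is to show that a truncated hyperideal tetrahedron $\Delta_\theta$ with dihedral angles $\theta\in[0,\pi]^6$ satisfies $\mathrm{Vol}(\Delta_\theta)\leqslant v_8$, with equality only when $\theta=0$. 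For this I would invoke the Schl\"afli formula $dV=-\tfrac12\sum l_i\,d\theta_i$: since the edge lengths $l_i$ between truncation triangles are strictly positive, $V$ is strictly decreasing in each $\theta_i$, so the maximum on $[0,\pi]^6$ occurs precisely at $\theta=0$. That the value there equals $v_8$ is consistent with (\ref{VolFSL}): at the complete structure all $\alpha_i=\pi$, and the $c$ doubled building blocks must sum to $2cv_8$, forcing each block to contribute $v_8$ at zero dihedral angles. Assembling the three steps yields the inequality; for the equality conditions, $\alpha_i=\pi$ for all $i$ forces every block's dihedral angles to vanish, and then one checks $\xi_s=\xi(\pi,\dots,\pi)=7\pi/4$ by a direct calculation using $\Lambda'(\theta)=-\log|2\sin\theta|$ (the critical equation reduces to $|\tan\xi_s|=1$, and the admissibility range $[3\pi/2,2\pi]$ selects the unique root $7\pi/4$).

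The main obstacle I anticipate is the maximum-volume step: while Schl\"afli gives strict monotonicity and hence localizes the maximum at $\theta=0$, rigorously identifying the value there as $v_8$ requires either an explicit computation from a volume formula (Murakami--Yano or Ushijima) or the indirect comparison with (\ref{VolFSL}) sketched above. All of the other steps are fairly mechanical consequences of results already recorded in the excerpt.
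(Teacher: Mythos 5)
Your proposal is correct and follows essentially the same route as the paper: reduce $\mathrm{Im}\,\mathcal W^{\epsilon_I}$ to $2\sum_s V$ on $\mathrm{D_H}$, maximize over $\xi_s$ via Costantino's strict concavity and the Murakami--Yano/Ushijima identification $V(\alpha,\xi(\alpha))=\mathrm{Vol}(\Delta_{|\alpha-\pi|})$, and then use the maximality of the regular ideal octahedron's volume, with the same two-case analysis for equality. The only difference is that you supply a Schl\"afli-formula justification for the step that the paper simply cites as a known fact (that $\Delta_{(0,\dots,0)}$ maximizes volume among truncated hyperideal tetrahedra), which is a harmless and correct elaboration.
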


\begin{proof} On $\mathrm{D_H},$ we have 
$$\mathrm{Im}\mathcal W^{\epsilon_I}(\boldsymbol{\alpha}_I,\boldsymbol{\xi})=\sum_{s=1}^c2V(\alpha_{s_1},\dots,\alpha_{s_6},\xi_s)$$ 
for $V$ defined in (\ref{V}). Then the result is a consequence of the result of Costantino\,\cite{C1} and the Murakami-Yano formula\,\cite{MY} (see Ushijima\,\cite{U} for the case of hyperideal tetrahedra). Indeed, by \cite{C1}, for a fixed $\boldsymbol\alpha=(\alpha_1,\dots,\alpha_6)$ of the hyperideal type, the function $f(\xi)$ defined by $f(\xi)=V(\boldsymbol\alpha,\xi)$ is strictly concave down and the unique maximum point $\xi(\boldsymbol\alpha)$ exists and lies in $(\max\{\tau_i\},\min\{\eta_j,2\pi\}),$ ie, $(\boldsymbol\alpha,\xi(\boldsymbol\alpha))\in\mathrm{B_H}.$ Then by \cite{U}, $V(\boldsymbol\alpha,\xi(\boldsymbol\alpha))=\mathrm{Vol}(\Delta_{|\boldsymbol\alpha-\boldsymbol\pi|}),$ the volume of the truncated hyperideal tetrahedron $\Delta_{|\boldsymbol\alpha-\boldsymbol\pi|}$ with dihedral angles $|\alpha_1-\pi|,\dots, |\alpha_6-\pi|.$ Since $\xi(\pi,\dots,\pi)=\frac{7\pi}{4}$ and the regular ideal cctahedron $\Delta_{(0,\dots,0)}$ has the maximum volume among all the truncated hyperideal tetrahedra, $V\big(\pi,\dots,\pi,\frac{7\pi}{4}\big)=v_8=\mathrm{Vol}(\Delta_{(0,\dots,0)})\geqslant \mathrm{Vol}(\Delta_{|\boldsymbol\alpha-\boldsymbol\pi|})=V(\boldsymbol\alpha,\xi(\boldsymbol\alpha))\geqslant V(\boldsymbol\alpha,\xi)$ for any $(\boldsymbol\alpha,\xi)\in \mathrm{B_H}.$ 

For the equality part, suppose $(\boldsymbol{\alpha}_I,\boldsymbol{\xi})\neq \big(\pi,\dots,\pi,\frac{7\pi}{4},\dots,\frac{7\pi}{4}\big).$ If $(\alpha_{s_1},\dots,\alpha_{s_6})\neq (\pi,\dots,\pi)$ for some $s\in\{1,\dots,c\},$ then 
$\mathrm{Im}\mathcal W^{\epsilon_I}(\boldsymbol{\alpha}_I,\boldsymbol{\xi})\leqslant 2\mathrm{Vol}(\Delta_{|\boldsymbol\alpha-\boldsymbol\pi|})+2(c-1)v_8<2cv_8.$ If $\boldsymbol{\alpha}_I=(\pi,\dots,\pi)$ but $\xi_s\neq \frac{7\pi}{4}$ for some $s\in\{1,\dots,c\},$ then the strict concavity of $f(\xi)$ implies that $\mathrm{Im}\mathcal W^{\epsilon_I}(\pi,\dots,\pi,\xi)< \mathrm{Im}\mathcal W^{\epsilon_I}\big(\pi,\dots,\pi,\frac{7\pi}{4},\dots,\frac{7\pi}{4}\big)=2cv_8.$
\end{proof}

\begin{proof}[Proof of Proposition \ref{critical}] Let $\delta_0>0$ be as in Proposition \ref{convexity}. By Proposition \ref{convexity}, Proposition \ref{absm} and the compactness of $\mathrm{D_H}\setminus\mathrm{D_{\delta_0}},$
$$2cv_8>\max_{\mathrm{D_H}\setminus\mathrm{D_{\delta_0}}} \mathrm{Im}\mathcal W^{\epsilon_I}.$$
By Proposition \ref{crit} and continuity, if $\{\beta_i\}_{i\in I}$ and $\{\alpha_j\}_{j\in J}$ are sufficiently close to $\pi,$ then the critical point $\mathbf z^{\epsilon_I}$ of $\mathcal W^{\epsilon_I}$  as in Proposition \ref{crit} lies in $\mathrm{D_{\delta_0,\mathbb C}},$ and $\mathrm{Im}\mathcal W^{\epsilon_I}(\mathbf z^{\epsilon_I})=\mathrm{Vol}(M_{L_{\boldsymbol\theta}})$ is sufficiently close to $2cv_8$ so that
 $$\mathrm{Im}\mathcal W^{\epsilon_I}(\mathbf z^{\epsilon_I})>\max_{\mathrm{D_H}\setminus\mathrm{D_{\delta_0}}} \mathrm{Im}\mathcal W^{\epsilon_I}.$$
 
Therefore, we only need to estimate  the integral on $\mathrm{D_{\delta_0}}.$ By analyticity, the integral remains unchanged  if we deform the domain of integral from $\mathrm{D_{\delta_0}}$ to a different surface that shares the same boundary with $\mathrm{D_{\delta_0}}.$ Now we define such a new surface $S^{\epsilon_I}$ as drawn in Figure \ref{surface}, over which the integral is easier to estimate. I.e., $S^{\epsilon_I}=S^{\epsilon_I}_{\text{top}}\cup S^{\epsilon_I}_{\text{side}}$ in $\overline{\mathrm{D_{\delta_0,\mathbb C}}},$ where
$$S^{\epsilon_I}_{\text{top}}=\{ (\boldsymbol{\alpha}_I,\boldsymbol{\xi})\in \mathrm{D_{\delta_0,\mathbb C}}\ |\ (\mathrm{Im}(\boldsymbol{\alpha}_I),\mathrm{Im}(\boldsymbol\xi))=\mathrm{Im}(\mathbf z^{\epsilon_I})\}$$
and
$$S^{\epsilon_I}_{\text{side}}=\{ (\boldsymbol{\alpha}_I,\boldsymbol{\xi})+t\sqrt{-1}\mathrm{Im}(\mathbf z^{\epsilon_I})\ |\ (\boldsymbol{\alpha}_I,\boldsymbol{\xi})\in\partial \mathrm{D_{\delta_0}}, t\in[0,1]\}.$$

\begin{figure}[htbp]
\centering
\includegraphics[scale=0.3]{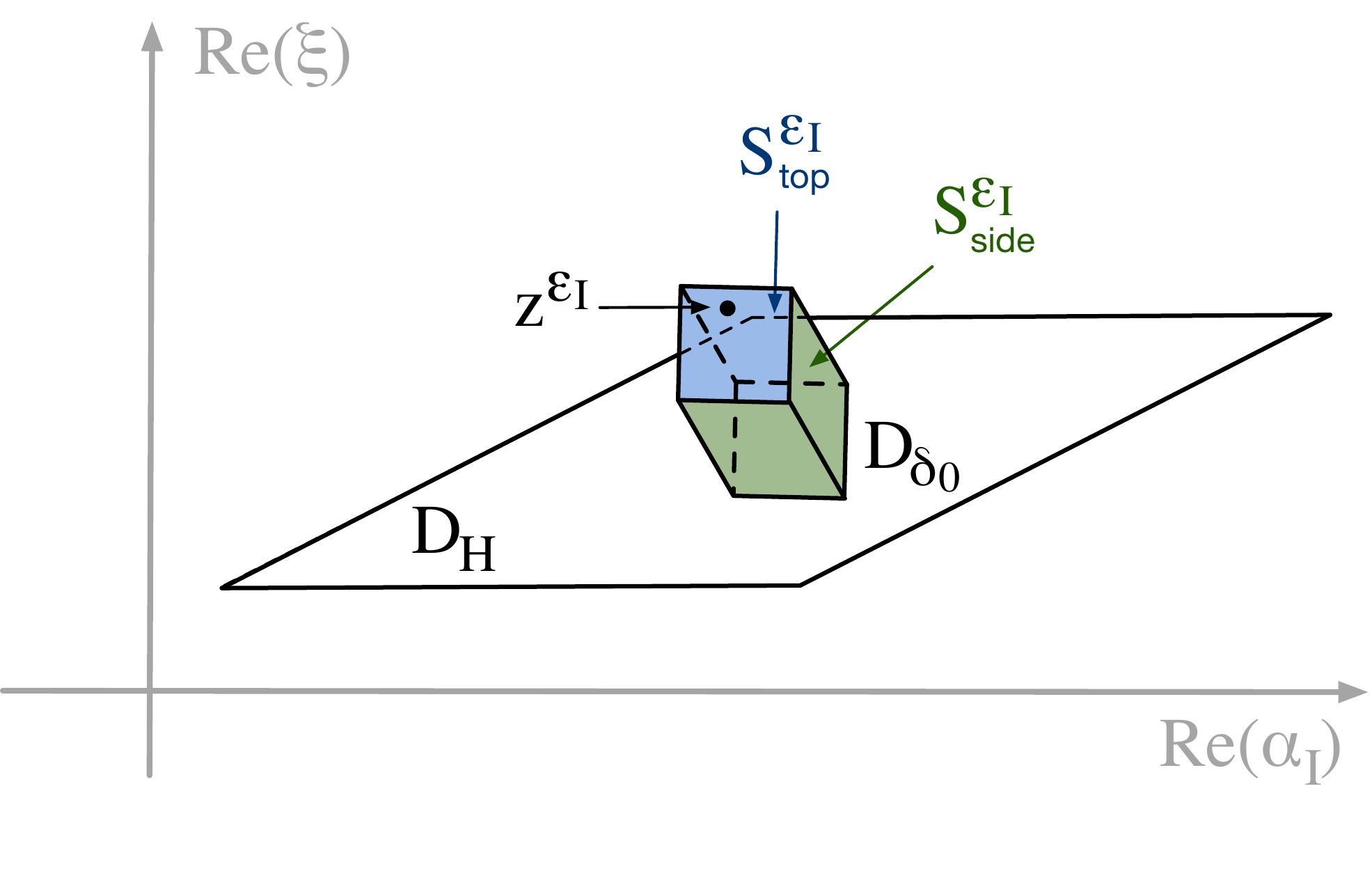}
\caption{The deformed surface $S^{\epsilon_I},$ where $S^{\epsilon_I}_{\text{top}}$ is the ``top" of the rectangular cylinder colored in blue, $S^{\epsilon_I}_{\text{side}}$ is the ``side" of the rectangular cylinder colored in green, and $D_{\delta_0}$ is the ``bottom" of the rectangular cylinder.}
\label{surface} 
\end{figure}

By Proposition \ref{convexity}, $\mathrm{Im}\mathcal W^{\epsilon_I}$  is concave down on $S^{\epsilon_I}_{\text{top}}.$ Since $\mathbf z^{\epsilon_I}$ is the critical points of $\mathrm{Im}\mathcal W^{\epsilon_I},$ it is the only absolute maximum on $S^{\epsilon_I}_{\text{top}}.$

On the side $S^{\epsilon_I}_{\text{side}},$ for each $(\boldsymbol{\alpha}_I,\boldsymbol{\xi})\in \partial \mathrm{D_{\delta_0}},$ we consider the function 
$$g^{\epsilon_I}_{(\boldsymbol{\alpha}_I,\boldsymbol{\xi})}(t)= \mathrm{Im}\mathcal W^{\epsilon_I}\big((\boldsymbol{\alpha}_I,\boldsymbol{\xi})+t\sqrt{-1}\mathrm{Im}(\mathbf z^{\epsilon_I})\big)$$
on $[0,1].$ By Proposition \ref{convexity}, $g^{\epsilon_I}_{(\boldsymbol{\alpha}_I,\boldsymbol{\xi})}(t)$ is concave up for any $(\boldsymbol{\alpha}_I,\boldsymbol{\xi})\in \partial \mathrm{D_{\delta_0}}.$ As a consequence, $g^{\epsilon_I}_{(\boldsymbol{\alpha}_I,\boldsymbol{\xi})}(t)\leqslant \max\{g^{\epsilon_I}_{(\boldsymbol{\alpha}_I,\boldsymbol{\xi})}(0), g^{\epsilon_I}_{(\boldsymbol{\alpha}_I,\boldsymbol{\xi})}(1)\}.$ Now by the previous two steps, since $(\boldsymbol{\alpha}_I,\boldsymbol{\xi})\in \partial \mathrm{D_{\delta_0}},$ 
$$g^{\epsilon_I}_{(\boldsymbol{\alpha}_I,\boldsymbol{\xi})}(0)= \mathrm{Im}\mathcal W^{\epsilon_I}(\boldsymbol{\alpha}_I,\boldsymbol{\xi})<\mathrm{Im}\mathcal W^{\epsilon_I}(\mathbf z^{\epsilon_I});$$
and since $(\boldsymbol{\alpha}_I,\boldsymbol{\xi})+\sqrt{-1} \mathrm{Im}(\mathbf z^{\epsilon_I})\in S^{\epsilon_I}_{\text{top}},$
$$g^{\epsilon_I}_{(\boldsymbol{\alpha}_I,\boldsymbol{\xi})}(1)= \mathrm{Im}\mathcal W^{\epsilon_I}\big((\boldsymbol{\alpha}_I,\boldsymbol{\xi})+\sqrt{-1}\mathrm{Im}(\mathbf z^{\epsilon_I}))<\mathrm{Im}\mathcal W^{\epsilon_I}(\mathbf z^{\epsilon_I}\big).$$ 
As a consequence,
 $$\mathrm{Im}\mathcal W^{\epsilon_I}(\mathbf z^{\epsilon_I})>\max_{S^{\epsilon_I}_{\text{side}}} \mathrm{Im}\mathcal W^{\epsilon_I}.$$

Therefore,  $\mathbf z^{\epsilon_I}$ is the unique maximum point of $\mathrm{Im}\mathcal W^{\epsilon_I}$ on $S^{\epsilon_I}\cup\big( \mathrm{D_H}\setminus\mathrm{D_{\delta_0}}\big),$ and $\mathcal W^{\epsilon_I}$ has critical value $2c\pi^2+\sqrt{-1}\big(\mathrm{Vol}(M_{L_{\boldsymbol\theta}})+\sqrt{-1}\mathrm{CS}(M_{L_{\boldsymbol\theta}})\big)$ at $\mathbf z^{\epsilon_I}.$

By Proposition \ref{nonsingular}, $\det\mathrm{Hess}\mathcal W^{\epsilon_I}(\mathbf z^{\epsilon_I})\neq 0.$

Next, we prove that the domain
$$\big\{(\boldsymbol\alpha_I,\boldsymbol \xi)\in \overline{\mathrm D_{\delta_0,\mathbb C}}\ \big|\ \mathrm{Im}\mathcal W^{\epsilon_I}(\boldsymbol\alpha_I,\boldsymbol \xi)<\mathrm{Im}\mathcal W^{\epsilon_I}(\mathbf z^{\epsilon_I})\big\} $$
deformation retracts to $S_{\text{top}}^{\epsilon_I}\setminus\{\mathbf z^{\epsilon_I}\}.$ To see this, for each $\mathbf x \in \mathrm D_{\delta_0},$ let 
$$P_{\mathbf x}=\big\{(\boldsymbol\alpha_I,\boldsymbol\xi)\in \mathrm D_{\delta_0,\mathbb C}\ \big|\ \mathrm{Re}(\boldsymbol\alpha_I,\boldsymbol \xi)=\mathbf x\big\}$$
and 
$$B_{\mathbf x}=\big\{(\boldsymbol\alpha_I,\boldsymbol\xi)\in P_{\mathbf x}\ \big|\ \mathrm{Im}\mathcal W^{\epsilon_I}(\boldsymbol\alpha_I,\boldsymbol \xi)<\mathrm{Im}\mathcal W^{\epsilon_I}(\mathbf z^{\epsilon_I})\big\}.$$ Then by Proposition \ref{convexity} that $\mathrm{Im}\mathcal W^{\epsilon_I}$ is concave up in $\mathrm{Im}(\boldsymbol\alpha_I,\boldsymbol \xi),$  $B_{\mathrm{Re}(\mathbf z^{\epsilon_I})}=\emptyset,$ and $B_{\mathbf x}$ is a non-empty convex subset of $P_{\mathbf x}$ for $\mathbf x\neq \mathrm{Re}(\mathbf z^{\epsilon_I});$  and by the fact that $\mathbf z^{\epsilon_I}$ is the unique maximum point of $\mathrm{Im}\mathcal W^{\epsilon_I}$ on $S^{\epsilon_I},$ $\mathbf x+\sqrt{-1}\mathrm{Im}(\mathbf z^{\epsilon_I})\in B_{\mathbf z}$ for $\mathbf x \neq \mathrm{Re}(\mathbf z^{\epsilon_I}).$ 
 As a consequence, $B_{\mathbf x}$ deformation retracts to $\mathbf x+\sqrt{-1}\mathrm{Im}(\mathbf z^{\epsilon_I})$ which induces the desired deformation retraction of $\big\{(\boldsymbol\alpha_I,\boldsymbol \xi)\in \overline{\mathrm D_{\delta_0,\mathbb C}}\ \big|\ \mathrm{Im}\mathcal W^{\epsilon_I}(\boldsymbol\alpha_I,\boldsymbol \xi)<\mathrm{Im}\mathcal W^{\epsilon_I}(\mathbf z^{\epsilon_I})\big\} $ to $S_{\text{top}}^{\epsilon_I}\setminus\{\mathbf z^{\epsilon_I}\}.$

 Finally,  we estimate the difference between $\mathcal W^{\epsilon_I}_r$ and $\mathcal W^{\epsilon_I}.$ By Lemma \ref{converge}, (3), we have
 $$\varphi_r\Big(\frac{\pi}{r}\Big)=\mathrm{Li}_2(1)+\frac{2\pi\sqrt{-1}}{r}\log\Big(\frac{r}{2}\Big)-\frac{\pi^2}{r}+O\Big(\frac{1}{r^2}\Big);$$
and for $z$ with $0<\mathrm{Re z}<\pi$ we have 
 $$\varphi_r\Big(z+\frac{k\pi}{r}\Big)=\varphi_r(z)+\varphi'_r(z)\frac{k\pi}{r}+O\Big(\frac{1}{r^2}\Big).$$
 Then by Lemma \ref{converge}, in $\big\{(\boldsymbol{\alpha}_I,\boldsymbol{\xi})\in \overline{\mathrm{D_{\delta_0,\mathbb C}}}\ \big|\ |\mathrm{Im}(\alpha_i)| < L\text{ for } i\in I, |\mathrm{Im}(\xi_s)| < L\text{ for }s\in\{1,\dots,c\}\big\}$ for some $L>0,$
 $${\mathcal W}^{\epsilon_I}_r(\boldsymbol{\alpha}_I,\boldsymbol{\xi})=\mathcal W^{\epsilon_I}(\boldsymbol{\alpha}_I,\boldsymbol{\xi})-\frac{4c\pi\sqrt{-1}}{r}\log\Big(\frac{r}{2}\Big)+\frac{4\pi \sqrt{-1}\kappa(\boldsymbol{\alpha}_I,\boldsymbol{\xi})}{r}+\frac{\nu_r(\boldsymbol{\alpha}_I,\boldsymbol{\xi})}{r^2},$$
with
 \begin{equation*}
 \begin{split}
&\kappa(\boldsymbol{\alpha}_I,\boldsymbol{\xi})\\
=&\sum_{s=1}^c\Big(\frac{1}{2}\sum_{i=1}^4 \sqrt{-1}\tau_{s_i}- \sqrt{-1}\xi_s-\sqrt{-1}\pi-\frac{\sqrt{-1}\pi}{2}\\
&+\frac{1}{4}\sum_{i=1}^4\sum_{j=1}^3\log\big(1-e^{2\sqrt{-1}(\eta_{s_j}-\tau_{s_i})}\big)-\frac{3}{4}\sum_{i=1}^4\log\big(1-e^{2\sqrt{-1}(\tau_{s_i}-\pi)}\big)\\
&+\frac{3}{2}\log\big(1-e^{2\sqrt{-1}(\xi_s-\pi)}\big)-\frac{1}{2}\sum_{i=1}^4\log\big(1-e^{2\sqrt{-1}(\xi_s-\tau_{s_i})}\big)-\frac{1}{2}\sum_{j=1}^3\log\big(1-e^{2\sqrt{-1}(\eta_{s_j}-\xi_s)}\big)\Big);
 \end{split}
 \end{equation*}
 and by the compactness of $ \overline{\mathrm{D_{\delta_0,\mathbb C}}},$ $|\nu_r(\boldsymbol{\alpha}_I,\boldsymbol{\xi})|$ is bounded from above by a constant independent of $r.$
 Then
  \begin{equation*}
 \begin{split}
&e^{\big(\sum_{i\in I}q_i\beta_i+\sum_{i=1}^n\big(p_i+\frac{\iota_i}{2}\big)\alpha_i+\sum_{i\in I}\epsilon_i(\alpha_i+\beta_i+\frac{2\pi}{r})\big)\sqrt{-1}+\frac{r}{4\pi \sqrt{-1}}{\mathcal W}^{\epsilon_I}_r(\boldsymbol{\alpha}_I,\boldsymbol{\xi})}\\
=&\Big(\frac{r}{2}\Big)^{-c}e^{\big(\sum_{i\in I}q_i\beta_i+\sum_{i=1}^n\big(p_i+\frac{\iota_i}{2}\big)\alpha_i+\sum_{i\in I}\epsilon_i(\alpha_i+\beta_i)\big)\sqrt{-1}+\kappa(\boldsymbol{\alpha}_I,\boldsymbol{\xi})}\cdot e^{\frac{r}{4\pi \sqrt{-1}}\big(\mathcal W^{\epsilon_I}(\boldsymbol{\alpha}_I,\boldsymbol{\xi})+\frac{\nu_r(\boldsymbol{\alpha}_I,\boldsymbol{\xi})-\sum_{i\in I}\epsilon_i 8\pi^2}{r^2}\big)}.
 \end{split}
 \end{equation*}

Now we  apply Proposition \ref{saddle} to conclude the result. Let $D_{\mathbf z}$ be the region 
$$\big\{(\boldsymbol{\alpha}_I,\boldsymbol{\xi})\in \overline{\mathrm{D_{\delta_0,\mathbb C}}}\ \big|\ |\mathrm{Im}(\alpha_i)| < L\text{ for } i\in I, |\mathrm{Im}(\xi_s)| < L\text{ for }s\in\{1,\dots,c\}\big\}$$ for some $L>0.$ Let $\mathbf a_r=((\beta_i)_{i\in I},(\alpha_j)_{j\in J})$ (recall that $\beta_i=\frac{2\pi n_i}{r}$ and $\alpha_j=\frac{2\pi m_j}{r}$ depends on $r$),
$f^{\mathbf a_r}(\boldsymbol{\alpha}_I,\boldsymbol{\xi})=\frac{\mathcal W^{\epsilon_I}(\boldsymbol{\alpha}_I,\boldsymbol{\xi})}{4\pi\sqrt{-1}},$ $g^{\mathbf a_r}(\boldsymbol{\alpha}_I,\boldsymbol{\xi})=\psi(\boldsymbol{\alpha}_I,\boldsymbol{\xi})e^{(\sum_{i\in I}q_i\beta_i+\sum_{i=1}^n\big(p_i+\frac{\iota_i}{2}\big)\alpha_i+\sum_{i\in I}\epsilon_i(\alpha_i+\beta_i))\sqrt{-1}+\kappa(\boldsymbol{\alpha}_I,\boldsymbol{\xi})},$ $f_r^{\mathbf a_r}(\boldsymbol{\alpha}_I,\boldsymbol{\xi})=\frac{{\mathcal W}_r^{\epsilon_I}(\boldsymbol{\alpha}_I,\boldsymbol{\xi})}{4\pi\sqrt{-1}}-\frac{c}{r}\log\big(\frac{r}{2}\big),$ $\upsilon_r(\boldsymbol{\alpha}_I,\boldsymbol{\xi})=\nu_r(\boldsymbol{\alpha}_I,\boldsymbol{\xi})-\sum_{i\in I}\epsilon_i 8\pi^2,$  $S_r=S^{\epsilon_I}$ and $\mathbf z^{\epsilon_I}$ is the critical point of $f^{\mathbf a_r}$ in $D_{\mathbf z}.$ Then all the conditions of Proposition \ref{saddle} are satisfied with and the result follows. 

When $\beta_i=\alpha_j=\pi,$ a direct computation shows that 
\begin{equation*}
\begin{split}
C^{\epsilon_I}(\mathbf z^{\epsilon_I})=&\frac{r^{|I|+c}}{2^{2|I|+c}\pi^{|I|+c}}\Big(\frac{2\pi}{r}\Big)^{
\frac{|I|+c}{2}}\Big(\frac{r}{2}\Big)^{-c}g\Big(\pi,\dots,\pi,\frac{7\pi}{4},\dots,\frac{7\pi}{4}\Big)\\
=&(-1)^{\sum_{i\in I}q_i+\sum_{i=1}^n\big(p_i+\frac{\iota_i}{2}\big)+c}\frac{r^{\frac{|I|-c}{2}}}{2^{\frac{3|I|+c}{2}}\pi^{\frac{|I|+c}{2}}}.
\end{split}
\end{equation*}
\end{proof}

\begin{corollary}\label{5.8} If $\epsilon>0$ is sufficiently small and all $\{\beta_i\}_{i\in I}$ and $\{\alpha_j\}_{j\in J}$ are in $(\pi-\epsilon,\pi+\epsilon),$ then 
$$\sum_{\epsilon_I\in\{1,-1\}^{|I|}}\frac{C^{\epsilon_I}(\mathbf z^{\epsilon_I})}{\sqrt{-\det\mathrm{Hess}\Big(\frac{\mathcal W^{\epsilon_I}(\mathbf z^{\epsilon_I})}{4\pi\sqrt{-1}}\Big)}}\neq 0.$$
\end{corollary}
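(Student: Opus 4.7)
The plan is to evaluate the sum at the degenerate base point $\beta_i = \alpha_j = \pi$ (for all $i\in I$, $j\in J$), exhibit that every summand is the same nonzero complex number there, and then conclude by continuity in a neighborhood. This works because at the base point the $2^{|I|}$ critical points collapse to one, and the Hessian is visibly insensitive to the signs $\epsilon_I$.

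First I would verify that at $\beta_i = \alpha_j = \pi$ the cone angles $\theta_k$ all vanish, so $\mathrm{H}(u_i) = 0$ by (\ref{m}) and formula (\ref{alpha}) forces $\alpha_i^* = \pi$ for every $i\in I$ independently of $\epsilon_i$; then $\xi_s^* = \xi(\pi,\dots,\pi) = 7\pi/4$ for every $s$. Thus all $2^{|I|}$ critical points $z^{\epsilon_I}$ coincide at $z_0 = (\pi,\dots,\pi,7\pi/4,\dots,7\pi/4)$.

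Next I would observe that the $\epsilon_I$-dependence of $\mathcal W^{\epsilon_I}$ lives entirely in the term $-\sum_{i\in I} 2\epsilon_i(\alpha_i-\pi)(\beta_i-\pi)$, which is linear in each $\alpha_i$ (the $\beta_i$ being parameters). All of its second partials with respect to the variables $(\alpha_I,\xi)$ vanish identically, so the Hessian $\mathrm{Hess}(\mathcal W^{\epsilon_I}/(4\pi\sqrt{-1}))$, viewed as a matrix-valued function of $(\alpha_I,\xi)$, is independent of $\epsilon_I$. Combined with the previous step, the quantity $-\det\mathrm{Hess}(\mathcal W^{\epsilon_I}/(4\pi\sqrt{-1}))(z^{\epsilon_I})$ is the same complex number $D_0$ for every $\epsilon_I$ at the base point, and $D_0 \neq 0$ by Proposition \ref{nonsingular}. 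Proposition \ref{critical} simultaneously gives an explicit value of $C^{\epsilon_I}(z^{\epsilon_I})$ at the base point which is manifestly independent of $\epsilon_I$ and nonzero; call it $C_0$.

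The last piece is the choice of square-root branch. At the base point the critical points, the values $\mathcal W^{\epsilon_I}(z^{\epsilon_I})$, and the Hessians all agree across $\epsilon_I$, so the steepest-descent contour prescribed by the Saddle Point Method in Proposition \ref{saddle} is the same, and consequently the branch of $\sqrt{-\det\mathrm{Hess}(\cdots)}$ is the same. Therefore at the base point the sum equals $2^{|I|} C_0/\sqrt{D_0} \neq 0$. Since $z^{\epsilon_I}$, $C^{\epsilon_I}(z^{\epsilon_I})$ and the Hessian vary continuously with $\{\beta_i\}$ and $\{\alpha_j\}$, with the Hessian remaining non-singular throughout a small neighborhood by Proposition \ref{nonsingular}, the branch of the square root can be chosen to vary continuously; the sum is then a continuous $\mathbb{C}$-valued function of the parameters and remains nonzero on a neighborhood of $(\pi,\dots,\pi)$. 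The only mildly delicate point is this consistent choice of square-root branch across the family, but it does not appear to cause any actual obstruction since $D_0 \neq 0$ provides a local continuous choice.
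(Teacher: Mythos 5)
Your proposal is correct and follows essentially the same route as the paper: evaluate at the degenerate point $\beta_i=\alpha_j=\pi$, where all the critical points $z^{\epsilon_I}$ collapse to $\big(\pi,\dots,\pi,\frac{7\pi}{4},\dots,\frac{7\pi}{4}\big)$ and all the summands coincide (the paper notes that the functions $\mathcal W^{\epsilon_I}$ themselves become identical there, since the $\epsilon_I$-dependent term vanishes when $\beta_i=\pi$), so the sum is $2^{|I|}$ times a nonzero quantity, and then conclude by continuity. Your extra care about the consistent choice of square-root branch is a reasonable refinement of the same argument.
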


\begin{proof} If $\beta_i=\alpha_j=\pi$ for all $i\in I$ and $j\in J,$ then all $\mathbf z^{\epsilon_I}=\big(\pi,\dots,\pi,\frac{7\pi}{4},\dots,\frac{7\pi}{4}\big)$ and all $\mathcal W^{\epsilon_I}$ are the same functions. As a consequence, all the $C^{\epsilon_I}(\mathbf z^{\epsilon_I})$'s and all Hessian determinants $\det\mathrm{Hess}\Big(\frac{\mathcal W^{\epsilon_I}(\mathbf z^{\epsilon_I})}{4\pi\sqrt{-1}}\Big)$'s are the same at this point, imply that the sum is not equal to zero. Then by continuity, if $\epsilon$ is small enough, then the sum remains none zero.
\end{proof}

\begin{remark} In \cite{WY3}, we proved that all $C^{\epsilon_I}(\mathbf z^{\epsilon_I})$'s and all $\det\mathrm{Hess}\Big(\frac{\mathcal W^{\epsilon_I}(\mathbf z^{\epsilon_I})}{4\pi\sqrt{-1}}\Big)$'s are always the same for any given $\{\beta_i\}_{i\in I}$ and $\{\alpha_j\}_{j\in J},$ and related them to the adjoint twisted Reidemeister torsion of $M_{L_{\boldsymbol\theta}}.$ 
\end{remark}

\subsection{Estimate of the other Fourier coefficients}\label{ot}

\begin{proposition}\label{other} Suppose $\{\beta_i\}_{i\in I}$ and $\{\alpha_j\}_{j\in J}$ are in $(\pi-\epsilon,\pi+\epsilon)$ for a sufficiently small $\epsilon>0.$  If $(\mathbf a_I,\mathbf b)\neq(0,\dots,0),$ then
$$\Big|\widehat{f^{\epsilon_I}_r}(\mathbf a_I,\mathbf b)\Big|<O\Big(e^{\frac{r}{4\pi}\big(\mathrm{Vol}(M_{L_{\boldsymbol\theta}})-\epsilon'\big)}\Big)$$
for some $\epsilon'>0.$
\end{proposition}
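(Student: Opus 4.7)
The plan is to mirror the contour deformation used in the proof of Proposition \ref{critical}, but with a small additional imaginary shift tailored to $(\mathbf a_I,\mathbf b)$. The extra factor in the integrand compared to the leading case $(\mathbf a_I,\mathbf b)=(0,\dots,0)$ is $e^{\frac{r}{4\pi\sqrt{-1}}\left(-2\pi\sum_{i\in I}a_i\alpha_i-4\pi\sum_{s=1}^{c}b_s\xi_s\right)}$, whose modulus equals $1$ when $(\alpha_I,\xi)$ is real. Hence any improvement over the leading-order bound must come from pushing the contour off the real axis in an imaginary direction that pairs favorably with $(\mathbf a_I,\mathbf b)$.

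As in the proof of Proposition \ref{critical}, I would first split the integration domain into $\mathrm{D_{\delta_0}}$ and its complement. On $\mathrm{D_H}\setminus\mathrm{D_{\delta_0}}$ the $(\mathbf a_I,\mathbf b)$-factor has modulus $1$, so the integrand is controlled exactly as in the leading case: by Lemma \ref{absm} the maximum of $\mathrm{Im}\,\mathcal W^{\epsilon_I}$ there is some $2cv_8-\epsilon_0$ with $\epsilon_0>0$, and since the cone angles are assumed small enough that $\mathrm{Vol}(M_{L_\theta})$ is within $\epsilon_0/2$ of $2cv_8$, this part contributes $O\!\bigl(e^{\frac{r}{4\pi}(\mathrm{Vol}(M_{L_\theta})-\epsilon')}\bigr)$.

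On $\mathrm{D_{\delta_0}}$, where $\psi\equiv 1$, the integrand is holomorphic in $(\alpha_I,\xi)$, so I would replace the surface $S^{\epsilon_I}=S^{\epsilon_I}_{\mathrm{top}}\cup S^{\epsilon_I}_{\mathrm{side}}$ of Proposition \ref{critical} by its translate $S^{\epsilon_I}+\sqrt{-1}\,y_0$, where $y_0=\eta\,(\mathrm{sgn}(a_i))_{i\in I}\oplus\eta\,(\mathrm{sgn}(b_s))_{s=1}^{c}$ for a small $\eta>0$ to be chosen (with the convention $\mathrm{sgn}(0)=0$). Since $z^{\epsilon_I}$ is a critical point of $\mathcal W^{\epsilon_I}$, the Cauchy--Riemann equations force $\partial_y\mathrm{Im}\,\mathcal W^{\epsilon_I}(z^{\epsilon_I})=0$, so
\[
\mathrm{Im}\,\mathcal W^{\epsilon_I}(z^{\epsilon_I}+\sqrt{-1}y_0)=\mathrm{Vol}(M_{L_\theta})+O(\eta^2).
\]
Combining this with the concavity-down of $\mathrm{Im}\,\mathcal W^{\epsilon_I}$ in the real directions (Proposition \ref{convexity}), the maximum over the shifted top $S^{\epsilon_I}_{\mathrm{top}}+\sqrt{-1}y_0$ of $\mathrm{Im}\bigl(\mathcal W^{\epsilon_I}-2\pi\sum a_i\alpha_i-4\pi\sum b_s\xi_s\bigr)$ is bounded by
\[
\mathrm{Vol}(M_{L_\theta})+O(\eta^2)-\eta\Bigl(2\pi\sum_{i\in I}|a_i|+4\pi\sum_{s=1}^{c}|b_s|\Bigr).
\]
Since $(\mathbf a_I,\mathbf b)\neq(0,\dots,0)$ the linear term in $\eta$ is strictly negative, so for $\eta$ small enough the right-hand side drops below $\mathrm{Vol}(M_{L_\theta})-\epsilon'$ for some $\epsilon'>0$. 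The shifted side $S^{\epsilon_I}_{\mathrm{side}}+\sqrt{-1}y_0$ is handled by the same concavity-up-in-$t$ argument used in Proposition \ref{critical}: on each vertical segment $\mathrm{Im}$ of the integrand's exponent is concave up in $t$, so its maximum is attained at one of the two endpoints, which fall under cases already estimated.

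The main obstacle I anticipate is bookkeeping: one must check that the shifted surface still lies inside the holomorphicity region $\mathrm{D_{H,\mathbb C}^{\delta}}$ (which constrains $\eta$ uniformly in $(\mathbf a_I,\mathbf b)$), and that the approximation $\mathcal W_r^{\epsilon_I}=\mathcal W^{\epsilon_I}+O(1/r^2)$ recorded in the proof of Proposition \ref{critical} does not destroy the strict inequality when $\mathcal W_r^{\epsilon_I}$ is substituted for $\mathcal W^{\epsilon_I}$. Both are manageable for $\eta$ small and $r$ large. For summability of the resulting bound over $(\mathbf a_I,\mathbf b)\in\mathbb Z^{|I|+c}\setminus\{0\}$ in Section \ref{pf}, one additionally notes that the smoothness of $\psi$ allows repeated integration by parts against the rapidly oscillating $(\mathbf a_I,\mathbf b)$-factor, yielding arbitrary polynomial decay in $\|(\mathbf a_I,\mathbf b)\|$.
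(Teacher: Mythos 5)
Your overall strategy (split off $\mathrm{D_H}\setminus\mathrm{D_{\delta_0}}$ by compactness, then tilt the contour inside the neighborhood of the critical point in an imaginary direction adapted to the sign of $(\mathbf a_I,\mathbf b)$) is the same as the paper's, but the mechanism you use for the key inequality on the top face is different. The paper shifts only \emph{one} coordinate (say $\alpha_1$ with $a_1\neq 0$) by the full radius $\delta_1$ of the neighborhood, and instead of Taylor-expanding at the critical point it uses a \emph{uniform first-order} bound: since $D\mathcal W^{\epsilon_I}$ vanishes at $\big(\pi,\dots,\pi,\tfrac{7\pi}{4},\dots,\tfrac{7\pi}{4}\big)$, every directional derivative of $\mathrm{Im}\,\mathcal W^{\epsilon_I}$ on $\mathrm{D_{\delta_1,\mathbb C}}$ is at most $\tfrac{2\pi-\epsilon''}{2\sqrt{2|I|+2c}}$, so by the Mean Value Theorem the loss in $\mathrm{Im}\,\mathcal W^{\epsilon_I}$ across the whole shifted top is at most $(2\pi-\epsilon'')\delta_1$, strictly beaten by the gain $2\pi|a_1|\delta_1\geq 2\pi\delta_1$. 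Your version (small shift $\eta$ in all active coordinates, loss $O(\eta^2)$ from the vanishing gradient at $z^{\epsilon_I}$, gain linear in $\eta$) is the more standard stationary-phase tilt and is viable; your explicit linear-in-$\|(\mathbf a_I,\mathbf b)\|$ gain also makes summability transparent, whereas the paper leaves that implicit.

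There is, however, one concrete error in your estimate on the shifted top. You base the top at height $\mathrm{Im}(z^{\epsilon_I})+y_0$, but $\mathrm{Im}(z^{\epsilon_I})\neq 0$ in general (the critical point is real only at the complete structure), so $\mathrm{Im}\big(-2\pi\sum a_i\alpha_i-4\pi\sum b_s\xi_s\big)$ there equals $-2\pi\sum a_i\,\mathrm{Im}(z^{\epsilon_I})_{\alpha_i}-4\pi\sum b_s\,\mathrm{Im}(z^{\epsilon_I})_{\xi_s}-\eta\big(2\pi\sum|a_i|+4\pi\sum|b_s|\big)$; your displayed bound silently drops the first two sums. These omitted terms can be \emph{positive} and are linear in $\|(\mathbf a_I,\mathbf b)\|_1$ with coefficient $\max|\mathrm{Im}(z^{\epsilon_I})|$, so they can cancel your claimed gain whenever $\max|\mathrm{Im}(z^{\epsilon_I})|\geq\eta$. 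The fix is to order the quantifiers correctly: choose $\eta$ first (small enough that the uniform $O(\eta^2)$ constant is beaten by, say, $\pi\eta$), and then invoke the hypothesis that the cone angles are sufficiently small to guarantee $\max|\mathrm{Im}(z^{\epsilon_I})|<\eta/2$, which restores a net gain of at least $\pi\eta\|(\mathbf a_I,\mathbf b)\|_1$. Two smaller points to tidy: justifying that the maximum of $\mathrm{Im}\,\mathcal W^{\epsilon_I}$ over the shifted top is $\mathrm{Vol}(M_{L_\theta})+O(\eta^2)$ needs an envelope/implicit-function argument (the maximizer of the concave restriction moves by $O(\eta)$), not concavity alone; and a literal translate of $S^{\epsilon_I}$ no longer shares its boundary with $\mathrm{D_{\delta_0}}$, so the side piece must run from the real boundary $\partial\mathrm{D_{\delta_0}}$ all the way up to the shifted top for the deformation to be legitimate — with the $t$-convexity-plus-linear argument for the side then going through as you describe.
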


\begin{proof} Recall that if $\beta_i=\alpha_j=\pi$ for all $i\in I$ and $j\in J,$ then the total derivative
$$D\mathcal W^{\epsilon_I}\Big(\pi,\dots,\pi,\frac{7\pi}{4},\dots,\frac{7\pi}{4}\Big)=(0,\dots,0).$$
Hence by continuity, all the partial derivatives near this critical point are sufficiently small. To be precise, there exists a $\delta_1>0$ and an $\epsilon>0$ such that if $\{\beta_i\}_{i\in I}$ and $\{\alpha_j\}_{j\in J}$ are in $(\pi-\epsilon,\pi+\epsilon),$ then for all $(\boldsymbol{\alpha}_I,\boldsymbol{\xi})\in D_{\delta_1,\mathbb C}$ and for any unit vector $\mathbf u=((u_i)_{i\in I},(w_s)_{s=1}^c)\in \mathbb R^{|I|+c},$ the directional derivatives 
$$|D_{\mathbf u}\mathrm{Im}\mathcal W^{\epsilon_I}(\boldsymbol{\alpha}_I,\boldsymbol{\xi})|=\Big|\sum_{i\in I}u_i\frac{\partial \mathrm{Im}\mathcal W^{\epsilon_I}}{\partial \mathrm{Im}(\alpha_i)}+\sum_{s=1}^cw_s\frac{\partial \mathrm{Im}\mathcal W^{\epsilon_I}}{\partial \mathrm{Im}(\xi_s)}\Big|<\frac{2\pi-\epsilon''}{2\sqrt{2|I|+2c}}$$
for some $\epsilon''>0.$

On $\mathrm{D_H},$ we have 
\begin{equation*}
\begin{split}
 &\mathrm{Im}\Big(\mathcal W^{\epsilon_I}(\boldsymbol{\alpha}_I,\boldsymbol{\xi})-\sum_{i\in I}2\pi a_i\alpha_i-\sum_{s=1}^c4\pi b_s\xi_s\Big)=\mathrm{Im}\mathcal W^{\epsilon_I}(\boldsymbol{\alpha}_I,\boldsymbol{\xi}).
\end{split}
\end{equation*}
Then by Lemma \ref{convexity}, Proposition \ref{absm} and the compactness of $\mathrm{D_H}\setminus\mathrm{D_{\delta_1}},$
$$2cv_8>\max_{\mathrm{D_H}\setminus\mathrm{D_{\delta_1}}} \mathrm{Im}\Big(\mathcal W^{\epsilon_I}(\boldsymbol{\alpha}_I,\boldsymbol{\xi})-\sum_{i\in I}2\pi a_i\alpha_i-\sum_{s=1}^c4\pi b_s\xi_s\Big)+\epsilon'''$$
for some $\epsilon'''>0.$
By Proposition \ref{crit} and continuity, if $\{\beta_i\}_{i\in I}$ and $\{\alpha_j\}_{j\in J}$ are sufficiently close to $\pi,$ then the critical point $\mathbf z^{\epsilon_I}$ of $\mathcal W^{\epsilon_I}$  as in Proposition \ref{crit} lies in $\mathrm{D_{\delta_1,\mathbb C}},$ and $\mathrm{Im}\mathcal W^{\epsilon_I}(\mathbf z^{\epsilon_I})=\mathrm{Vol}(M_{L_{\boldsymbol\theta}})$ is sufficiently close to $2cv_8$ so that
\begin{equation}\label{b}
\mathrm{Im}\mathcal W^{\epsilon_I}(\mathbf z^{\epsilon_I})>\max_{\mathrm{D_H}\setminus\mathrm{D_{\delta_1}}} \mathrm{Im}\Big(\mathcal W^{\epsilon_I}(\boldsymbol{\alpha}_I,\boldsymbol{\xi})-\sum_{i\in I}2\pi a_i\alpha_i-\sum_{s=1}^c4\pi b_s\xi_s\Big)+\epsilon'''.
\end{equation}
 
Therefore, we only need to estimate  the integral on $\mathrm{D_{\delta_1}}.$ Again, by analyticity, the integral remains unchanged if we deform $\mathrm{D_{\delta_1}}$ to a different surface sharing the same boundary, over which the integral is easier to estimate. 

If $(\mathbf a_I,\mathbf b)\neq (0,\dots,0),$ then there is at least one of $\{a_i\}_{i\in I}$ or $\{b_s\}_{s=1}^c$ that is nonzero. Without loss of generality, assume that $a_1\neq 0.$

If $a_1>0,$ then consider the surface $S^+=S^+_{\text{top}}\cup S^+_{\text{side}}$ in $\overline{\mathrm{D_{\delta_1,\mathbb C}}}$ where
$$S^+_{\text{top}}=\{ (\boldsymbol{\alpha}_I,\boldsymbol{\xi})\in \mathrm{D_{\delta_1,\mathbb C}}\ |\ (\mathrm{Im}(\boldsymbol{\alpha}_I),\mathrm{Im}(\boldsymbol\xi))=(\delta_1,0,\dots,0)\}$$
and
$$S^+_{\text{side}}=\{ (\boldsymbol{\alpha}_I,\boldsymbol{\xi})+(t\sqrt{-1}\delta_1,0,\dots,0)\ |\ (\boldsymbol{\alpha}_I,\boldsymbol{\xi})\in\partial \mathrm{D_{\delta_1}},t\in[0,1]\}.$$      
On the top, for any $(\boldsymbol{\alpha}_I,\boldsymbol{\xi})\in S^+_{\text{top}},$ by the Mean Value Theorem, 
\begin{equation*}
\begin{split}
\big|\mathrm{Im}\mathcal W^{\epsilon_I}(\mathbf z^{\epsilon_I})-\mathrm{Im}\mathcal W^{\epsilon_I}(\boldsymbol{\alpha}_I,\boldsymbol{\xi})\big|
=&\big|D_{\mathbf u}\mathrm{Im}\mathcal W^{\epsilon_I}(z)\big|\cdot\big\|\mathbf z^{\epsilon_I}-(\boldsymbol{\alpha}_I,\boldsymbol{\xi})\big\|\\
<&\frac{2\pi-\epsilon''}{2\sqrt{2|I|+2c}}\cdot2\sqrt{2|I|+2c} \delta_1\\
=&2\pi\delta_1-\epsilon''\delta_1,
\end{split}
\end{equation*}
where $z$ is some point on the line segment connecting $\mathbf z^{\epsilon_I}$ and $(\boldsymbol{\alpha}_I,\boldsymbol{\xi}),$ $\mathbf u=\frac{\mathbf z^{\epsilon_I}-(\boldsymbol{\alpha}_I,\boldsymbol{\xi})}{\|\mathbf z^{\epsilon_I}-(\boldsymbol{\alpha}_I,\boldsymbol{\xi})\|}$ and $2\sqrt{2|I|+2c} \delta_1$ is the diameter of $\mathrm{D_{\delta_1,\mathbb C}}.$
Then
\begin{equation*}
\begin{split}
\mathrm{Im}\Big(\mathcal W^{\epsilon_I}(\boldsymbol{\alpha}_I,\boldsymbol{\xi})-\sum_{i\in I}2\pi a_i\alpha_i-\sum_{s=1}^c4\pi b_s\xi_s\Big)=&\mathrm{Im}\mathcal W^{\epsilon_I}(\boldsymbol{\alpha}_I,\boldsymbol{\xi})-2\pi a_1\delta_1\\
<&\mathrm{Im}\mathcal W^{\epsilon_I}(\mathbf z^{\epsilon_I})+2\pi\delta_1-\epsilon''\delta_1-2\pi\delta_1\\
=&\mathrm{Im}\mathcal W^{\epsilon_I}(\mathbf z^{\epsilon_I})-\epsilon'' \delta_1.
\end{split}
\end{equation*}

On the side, for any point $(\boldsymbol{\alpha}_I,\boldsymbol{\xi})+(t\sqrt{-1}\delta_1,0,\dots,0)\in S^+_{\text{side}},$ by the Mean Value Theorem again, we have
$$\big|\mathrm{Im}\mathcal W^{\epsilon_I}\big((\boldsymbol{\alpha}_I,\boldsymbol{\xi})+(t\sqrt{-1}\delta_1,0,\dots,0)\big)-\mathrm{Im}\mathcal W^{\epsilon_I}(\boldsymbol{\alpha}_I,\boldsymbol{\xi})\big|<\frac{2\pi-\epsilon''}{2\sqrt{2|I|+2c}} t\delta_1.$$
Then 
\begin{equation*}
\begin{split}
\mathrm{Im}\mathcal W^{\epsilon_I}\big((\boldsymbol{\alpha}_I,\boldsymbol{\xi})+(t\sqrt{-1}\delta_1,0,\dots,0)\big)-2\pi a_1 t\delta_1<&\mathrm{Im}\mathcal W^{\epsilon_I}(\boldsymbol{\alpha}_I,\boldsymbol{\xi})+\frac{2\pi-\epsilon''}{2\sqrt{2|I|+2c}} t\delta_1-2\pi t\delta_1\\
<&\mathrm{Im}\mathcal W^{\epsilon_I}(\boldsymbol{\alpha}_I,\boldsymbol{\xi})\\
<&\mathrm{Im}\mathcal W^{\epsilon_I}(\mathbf z^{\epsilon_I})-\epsilon''',
\end{split}
\end{equation*}
where the last inequality comes from that $(\boldsymbol{\alpha}_I,\boldsymbol{\xi})\in \partial \mathrm{D_{\delta_1}}\subset \mathrm{D_H}\setminus\mathrm{D_{\delta_1}}$ and (\ref{b}).

Now let $\epsilon'=\min\{\epsilon''\delta_1,\epsilon'''\},$ then on $S^+\cup \big(\mathrm{D_H}\setminus\mathrm{D_{\delta_1}}\big),$ 
$$\mathrm{Im}\Big(\mathcal W^{\epsilon_I}(\boldsymbol{\alpha}_I,\boldsymbol{\xi})-\sum_{i\in I}2\pi a_i\alpha_i-\sum_{s=1}^c4\pi b_s\xi_s\Big)<\mathrm{Im}\mathcal W^{\epsilon_I}(\mathbf z^{\epsilon_I})-\epsilon',$$
and the result follows.

If $a_1<0,$ then we consider the surface $S^-=S^-_{\text{top}}\cup S^-_{\text{side}}$ in $\overline{\mathrm{D_{\delta_1,\mathbb C}}}$ where
$$S^-_{\text{top}}=\{ (\boldsymbol{\alpha}_I,\boldsymbol{\xi})\in \mathrm{D_{\delta_1,\mathbb C}}\ |\ (\mathrm{Im}(\boldsymbol{\alpha}_I),\mathrm{Im}(\boldsymbol\xi))=(-\delta_1,0,\dots,0)\}$$
and
$$S^-_{\text{side}}=\{ (\boldsymbol{\alpha}_I,\boldsymbol{\xi})-(t\sqrt{-1}\delta_1,0,\dots,0)\ |\ (\boldsymbol{\alpha}_I,\boldsymbol{\xi})\in\partial \mathrm{D_{\delta_1}},t\in[0,1]\}.$$      
Then the same estimate as in the previous case proves that on
$S^-\cup \big(\mathrm{D_H}\setminus\mathrm{D_{\delta_1}}\big),$ 
$$\mathrm{Im}\Big(\mathcal W^{\epsilon_I}(\boldsymbol{\alpha}_I,\boldsymbol{\xi})-\sum_{i\in I}2\pi a_i\alpha_i-\sum_{s=1}^c4\pi b_s\xi_s\Big)<\mathrm{Im}\mathcal W^{\epsilon_I}(\mathbf z^{\epsilon_I})-\epsilon',$$
from which the result follows.
\end{proof}


\subsection{Estimate of the error term}\label{ee}

The goal of this section is to estimate the error term in Proposition \ref{Poisson}.

\begin{proposition}\label{error} Suppose $\{\alpha_j\}_{j\in J}$ are in $(\pi-\epsilon,\pi+\epsilon
)$ for a sufficiently small $\epsilon>0.$ Then
the error term in Proposition \ref{Poisson} is less than $O\big(e^{\frac{r}{4\pi}(\mathrm{Vol}(M_{L_{\boldsymbol\theta}})-\epsilon')}\big)$
for some $\epsilon'>0.$
\end{proposition}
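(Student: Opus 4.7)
The plan is to decompose the error term into two parts and bound each separately. By Proposition \ref{computation} together with the construction of $f_r$, the error term in Proposition \ref{Poisson} equals
$$\kappa_r \sum_{\epsilon_I \in \{1,-1\}^I} \sum_{(\mathbf m_I, \mathbf k)\text{ admissible}} \Bigl(1 - \psi\bigl(\tfrac{2\pi \mathbf m_I}{r}, \tfrac{2\pi\mathbf k}{r}\bigr)\Bigr) g_r^{\epsilon_I}(\mathbf m_I, \mathbf k).$$
Each summand vanishes on $\mathrm{D_H^\delta}$, takes values in $[0,1]\cdot g_r^{\epsilon_I}$ on $\mathrm{D_H}\setminus \mathrm{D_H^\delta}$, and equals $g_r^{\epsilon_I}$ on admissible non-hyperideal tuples. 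I therefore split the error into (a) the contribution from admissible hyperideal tuples in $\mathrm{D_H}\setminus \mathrm{D_H^\delta}$, and (b) the contribution from admissible non-hyperideal tuples. In either case the count of lattice points is $O(r^{|I|+c})$, which will be absorbed into the exponential estimates.

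For part (a), the argument parallels the one in Proposition \ref{critical} but uses the real-point behavior instead of a deformed contour. On real points of $\mathrm{D_H}$, Lemma \ref{absm} shows that $\mathrm{Im}\mathcal W^{\epsilon_I}$ attains its unique maximum $2cv_8$ at $\big(\pi,\dots,\pi,\frac{7\pi}{4},\dots,\frac{7\pi}{4}\big) \in \mathrm{D_H^\delta}$. By compactness of $\mathrm{D_H}\setminus \mathrm{D_H^\delta}$, there is $\epsilon_1 > 0$ such that $\mathrm{Im}\mathcal W^{\epsilon_I} \leqslant 2cv_8 - \epsilon_1$ on this set. Using Lemma \ref{converge} to compare $\mathcal W_r^{\epsilon_I}$ with $\mathcal W^{\epsilon_I}$ (the correction is $O(\log r/r)$ uniformly), one obtains $|g_r^{\epsilon_I}| \leqslant O(e^{\frac{r}{4\pi}(2cv_8 - \epsilon_1/2)})$ pointwise. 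Since by Proposition \ref{NeuZ} and \eqref{VolFSL} one has $\mathrm{Vol}(M_{L_\theta}) \to 2cv_8$ as the cone angles $\theta \to 0$, for sufficiently small $\theta$ we get $\mathrm{Vol}(M_{L_\theta}) > 2cv_8 - \epsilon_1/4$, and the bound becomes $O(e^{\frac{r}{4\pi}(\mathrm{Vol}(M_{L_\theta}) - \epsilon_1/4)})$.

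For part (b), the analysis is more delicate because when the 6-tuple is admissible but not of hyperideal type, Lemma \ref{factorial}(1) (rather than (2)) governs the relevant factorials, producing a modified function $\widetilde{\mathcal W}_r^{\epsilon_I}$ in which the $\pi$-shifts and factor of $2$ appearing in \eqref{term} are altered. I would show, via the Costantino-type asymptotic for $6j$-symbols at admissible non-hyperideal tuples (proved at $q=e^{2\pi\sqrt{-1}/r}$ in the appendix of \cite{BDKY}), that $\frac{2\pi}{r}\log|6j|$ admits a uniform bound $v_8 - \epsilon_0$ for some $\epsilon_0>0$ on each non-hyperideal regime, because the corresponding limiting tetrahedron is degenerate or flat and hence of volume strictly less than $v_8$. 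Summing across the $c$ building blocks of $L_{\text{FSL}}$ and combining with the Gaussian framing terms yields $|g_r^{\epsilon_I}| \leqslant O(e^{\frac{r}{4\pi}(2cv_8 - \epsilon_2)})$ for some $\epsilon_2 > 0$ on the non-hyperideal admissible locus; arguing as in part (a) finishes this case. Setting $\epsilon' = \frac{1}{8}\min(\epsilon_1,\epsilon_2)$ and multiplying by the polynomial count of lattice points completes the proof.

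The main obstacle will be part (b): rigorously classifying the admissible non-hyperideal regimes (depending on which of the triangle sums $\frac{m_i+m_j+m_k}{2}$ fall below $\frac{r-2}{2}$) and extracting a uniform strict upper bound $v_8 - \epsilon_0$ for each. This is where the estimate on quantum $6j$-symbols off the hyperideal type becomes essential, and where invoking (or re-proving) the $q = e^{2\pi\sqrt{-1}/r}$ version of Costantino's bound is unavoidable.
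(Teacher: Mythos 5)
Your overall strategy coincides with the paper's: isolate the lattice points where $\psi\neq 1$, show the exponential growth rate of $g_r^{\epsilon_I}$ there is bounded by a quantity strictly below $2cv_8$, and then use continuity in the cone angles to replace $2cv_8$ by $\mathrm{Vol}(M_{L_\theta})$. The execution differs in two places, one of which is a genuine gap. First, in part (a) you pass from $\mathcal W^{\epsilon_I}_r$ to $\mathcal W^{\epsilon_I}$ via Lemma \ref{converge} and claim the correction is $O(\log r/r)$ \emph{uniformly} on $\mathrm{D_H}\setminus\mathrm{D_H^\delta}$; but that region contains points arbitrarily close to $\partial\mathrm{D_H}$, where arguments such as $\eta_j-\tau_i$ and $\xi-\tau_i$ approach $0$ and the error term in Lemma \ref{converge} (which carries a factor $\frac{1}{1-e^{2\sqrt{-1}z}}$) blows up, so the comparison is not uniform there. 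The paper avoids this by using Lemma \ref{est} instead, which gives $\log|\{n\}!|=-\frac{r}{2\pi}\Lambda\big(\frac{2n\pi}{r}\big)+O(\log r)$ uniformly for all $0<n<r$, and hence the pointwise bound $\frac{4\pi}{r}\log|g_r^{\epsilon_I}(\mathbf m_I,\mathbf k)|\leqslant \sum_{s=1}^c 2V(\alpha_{s_1},\dots,\alpha_{s_6},\xi_s)+O\big(\frac{\log r}{r}\big)$ on the \emph{entire} admissible lattice, hyperideal or not.

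Second, once that uniform bound is in hand, your part (b) is unnecessarily elaborate: there is no need to classify the non-hyperideal regimes or develop separate $6j$-symbol asymptotics there, because the only remaining input is a statement about the real continuous function $V$, namely that $M_{\alpha_J}=\max\big\{\sum_{s}2V\ \big|\ (\alpha_I,\xi)\in\partial\mathrm{D_H}\cup(\mathrm{D_A}\setminus\mathrm{D_H})\big\}<2cv_8$. The paper takes this directly from \cite[Sections 3 \& 4]{BDKY} rather than reproving it; you correctly identified this as the crux and that the BDKY-type bound is unavoidable, but your proposed route (per-regime limiting degenerate tetrahedra) would be substantially more work than citing the existing result. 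With Lemma \ref{est} substituted for Lemma \ref{converge} in part (a) and the BDKY maximum cited in part (b), your argument closes.
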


For the proof we need the following estimate, which first appeared in \cite[Proposition 8.2]{GL} for $q=e^{\frac{\pi \sqrt{-1}}{r}},$ and for the root $q=e^{\frac{2\pi \sqrt{-1}}{r}}$ in \cite[Proposition 4.1]{DK}.

\begin{lemma}\label{est}
 For any integer $0<n<r$ and at $q=e^{\frac{2\pi \sqrt{-1}}{r}},$
 $$ \log\left|\{n\}!\right|=-\frac{r}{2\pi}\Lambda\left(\frac{2n\pi}{r}\right)+O\left(\log(r)\right).$$
\end{lemma}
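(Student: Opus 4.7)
The error term in Proposition \ref{Poisson} is the discrepancy between the admissibility-restricted sum in Proposition \ref{computation} and $\kappa_r\sum_{(\mathbf m_I',\mathbf k)\in\mathbb Z^{|I|+c}} f_r(2\mathbf m_I',\mathbf k)$. Since $f_r=\psi g_r$ with $\psi$ supported on $\mathrm{D_H}$, and since admissibility is strictly weaker than the hyperideal condition defining $\mathrm{D_H}$, this discrepancy splits into two pieces: (A) admissible lattice points whose $6$-tuple is not of hyperideal type in at least one building block (where $f_r=0$, so the full $g_r^{\epsilon_I}$ contributes to the error), and (B) lattice points in $\mathrm{D_H}\setminus\mathrm{D_H^\delta}$ (where $(1-\psi)g_r^{\epsilon_I}$ contributes). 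The plan is to bound each piece by $O(e^{\frac{r}{4\pi}(\mathrm{Vol}(M_{L_\theta})-\epsilon')})$ separately.

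For piece (B), Lemma \ref{converge} and the computation at the end of the proof of Proposition \ref{critical} give $\mathcal W_r^{\epsilon_I}=\mathcal W^{\epsilon_I}-\tfrac{4c\pi\sqrt{-1}}{r}\log(r/2)+O(1/r)$ on $\overline{\mathrm{D_H}}$, so
$$|g_r^{\epsilon_I}(\mathbf m_I,\mathbf k)|\leq C\,r^{c}\exp\Big(\tfrac{r}{4\pi}\,\mathrm{Im}\,\mathcal W^{\epsilon_I}(\tfrac{2\pi\mathbf m_I}{r},\tfrac{2\pi\mathbf k}{r})\Big).$$
The argument in Proposition \ref{critical} (with Lemma \ref{absm}) already establishes that for sufficiently small cone angles, $\mathrm{Im}\,\mathcal W^{\epsilon_I}(z^{\epsilon_I})=\mathrm{Vol}(M_{L_\theta})>\max_{\overline{\mathrm{D_H}\setminus\mathrm{D_H^\delta}}}\mathrm{Im}\,\mathcal W^{\epsilon_I}$, say by $2\eta_B>0$. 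Summing over the $O(r^{|I|+c})$ lattice points in this region and absorbing the polynomial factor into a slightly smaller $\eta_B$ produces the required bound for piece (B).

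For piece (A), the analytic function $\mathcal W^{\epsilon_I}$ no longer controls $g_r^{\epsilon_I}$, so I plan to estimate directly using Lemma \ref{est}. Expanding the quantum $6j$-symbol as the alternating $k$-sum of ratios of quantum factorials, applying $\log|\{n\}!|=-\tfrac{r}{2\pi}\Lambda(\tfrac{2\pi n}{r})+O(\log r)$ termwise, and bounding the alternating sum by its size times its largest term, one obtains
$$|g_r^{\epsilon_I}(\mathbf m_I,\mathbf k)|\leq r^N\exp\Big(\tfrac{r}{2\pi}\,\widetilde V(\tfrac{2\pi\mathbf m_I}{r},\tfrac{2\pi\mathbf k}{r})\Big),$$
where $\widetilde V$ is the natural Lobachevsky-function extension of $\sum_{s=1}^c V(\alpha_{s_1},\dots,\alpha_{s_6},\xi_s)$ to the closed admissible region $\overline{\mathrm{D_A}}$. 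The key geometric input is that $\widetilde V\leq cv_8$ on $\overline{\mathrm{D_A}}$, with equality attained only at the all-$\pi$ configuration (which lies strictly inside the hyperideal region); this follows from the Murakami--Yano formula (\ref{VV}) together with the fact that the regular ideal octahedron uniquely maximizes volume among truncated hyperideal tetrahedra. Consequently, on the closed admissible-non-hyperideal region, $\widetilde V\leq cv_8-\eta_A$ for some $\eta_A>0$, and the argument closes as in piece (B).

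Combining the two bounds and choosing $\epsilon$ small enough that $\mathrm{Vol}(M_{L_\theta})>2cv_8-2\min(\eta_A,\eta_B)$, we conclude that the error term is $O(e^{\frac{r}{4\pi}(\mathrm{Vol}(M_{L_\theta})-\epsilon')})$ for some $\epsilon'>0$. The main obstacle is piece (A): one must extend $V$ unambiguously across the hyperideal boundary into the admissible region (handling the factor of $2$ appearing in (\ref{move}) when factorial arguments cross $\pi$, and the losses caused by applying the triangle inequality to the alternating $k$-sum), and then establish the strict inequality $\widetilde V<cv_8$ off the all-$\pi$ point, which amounts to a quantitative form of the uniqueness of the regular ideal octahedron as the volume maximizer among possibly degenerate truncated hyperideal tetrahedra.
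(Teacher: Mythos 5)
Your proposal does not prove the statement at hand. The statement is Lemma \ref{est}, the asymptotic estimate $\log|\{n\}!|=-\frac{r}{2\pi}\Lambda\big(\frac{2n\pi}{r}\big)+O(\log r)$ for the quantum factorial. What you have written is instead a proof sketch of Proposition \ref{error} (the bound on the error term in Proposition \ref{Poisson}), and in piece (A) of your argument you explicitly \emph{invoke} Lemma \ref{est} as a known input ("applying $\log|\{n\}!|=-\tfrac{r}{2\pi}\Lambda(\tfrac{2\pi n}{r})+O(\log r)$ termwise"). As a proof of Lemma \ref{est} this is circular; as written it establishes nothing about $\{n\}!$ itself.

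For the record, the paper does not prove Lemma \ref{est} either: it cites \cite[Proposition 8.2]{GL} for the root $q=e^{\frac{\pi\sqrt{-1}}{r}}$ and \cite[Proposition 4.1]{DK} for $q=e^{\frac{2\pi\sqrt{-1}}{r}}$. If you want to supply a proof, the direct route is to write $|\{k\}|=|q^k-q^{-k}|=\big|2\sin\frac{2\pi k}{r}\big|$, so that
$$\log|\{n\}!|=\sum_{k=1}^n\log\Big|2\sin\frac{2\pi k}{r}\Big|,$$
and compare this Riemann sum (with mesh $\frac{2\pi}{r}$) to $\frac{r}{2\pi}\int_0^{2\pi n/r}\log|2\sin t|\,dt=-\frac{r}{2\pi}\Lambda\big(\frac{2\pi n}{r}\big)$; the discrepancy is $O(\log r)$ once one handles the logarithmic singularities of the integrand near $t=0$ and $t=\pi$ (each contributes at most $O(\log r)$ since the closest lattice point is at distance $\geqslant\frac{2\pi}{r}$ from a zero of $\sin$ when $0<n<r$ and $r$ is odd). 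Alternatively one can route the argument through Lemma \ref{factorial}, Lemma \ref{converge} and identity (\ref{dilogLob}), but then uniformity of the $\varphi_r\to\mathrm{Li}_2$ approximation up to the boundary of the strip must be addressed. Either way, the content of Lemma \ref{est} is this elementary sum-versus-integral estimate, not the saddle-point geometry of $\mathcal W^{\epsilon_I}$ that your proposal discusses.
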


\begin{proof}[Proof of Proposition \ref{error} ]
For a fixed $\boldsymbol{\alpha}_J=(\alpha_j)_{j\in J},$ let 
\begin{equation*}
M_{\boldsymbol{\alpha}_J}=\max\Big\{\sum_{s=1}^c2V(\alpha_{s_1},\dots,\alpha_{s_6},\xi_s)\ \Big|\ (\boldsymbol{\alpha}_I,\boldsymbol{\xi})\in\partial \mathrm {D_H}\cup\big(\mathrm {D_A}\setminus \mathrm{D_H}\big)\Big\}
\end{equation*}
where $V$ is as defined in (\ref{V}). Then by \cite[Sections 3 \& 4]{BDKY}, 
$$M_{\boldsymbol{\alpha}_J}<2cv_8;$$
and by continuity, if $\epsilon$ is sufficiently small, then $\mathrm{Vol}(M_{L_{\boldsymbol\theta}})$ is sufficiently close to $2cv_8,$ which is the volume of the fundamental shadow link complement in the complete hyperbolic metric where all the cone angles are zero.
Therefore, if $\epsilon$ is sufficiently small, then
$$M_{\boldsymbol{\alpha}_J}<\mathrm{Vol}(M_{L_{\boldsymbol\theta}})$$
for all $\{\beta_i\}_{i\in I}$ and $\{\alpha_j\}_{j\in J}$ in $(\pi-\epsilon,\pi+\epsilon).$

Now by Lemma \ref{est} and the continuity, for $\epsilon'=\frac{\mathrm{Vol}(M_{L_{\boldsymbol\theta}})-M_{\boldsymbol{\alpha}_J}}{2},$ we can choose a sufficiently small $\delta>0$ so that if $\big(\frac{2\pi \mathbf m_I}{r},\frac{2\pi \mathbf k}{r}\big)\notin \mathrm{D_H^\delta},$ then
$$\Big|g_r^{\epsilon_I}(\mathbf m_I, \mathbf k)\Big|<O\Big(e^{\frac{r}{4\pi}(M_{\boldsymbol{\alpha}_J}+\epsilon')}\Big)=O\Big(e^{\frac{r}{4\pi}(\mathrm{Vol}(M_{L_{\boldsymbol\theta}})-\epsilon')}\Big).$$
Let $\psi$ be the bump function supported on $(\mathrm{D_H}, \mathrm{D_H^{\delta}}).$ Then the error term in Proposition \ref{Poisson} is less than $O\big(e^{\frac{r}{4\pi}(\mathrm{Vol}(M_{L_{\boldsymbol\theta}})-\epsilon')}\big).$
\end{proof}


\subsection{Proof of Theorem \ref{main2}}\label{pf}

\begin{proof}[Proof of Theorem \ref{main2}] Let $\epsilon>0$ be sufficiently small so that the conditions of Propositions \ref{critical}, \ref{other} and  \ref{error} and of Corollary \ref{5.8} are satisfied, and suppose $\{\beta_i\}_{i\in I}$ and $\{\alpha_j\}_{j\in J}$ are all in $(\pi-\epsilon, \pi+\epsilon).$ 
By Propositions \ref{4.2}, \ref{Poisson}, \ref{critical}, \ref{other} and  \ref{error}, we have
\begin{equation*}
\begin{split}
&\mathrm{RT}_r(M,L,(\mathbf n_I,\mathbf m_J))\\
=&\kappa_r\Big(\sum_{\epsilon_I\in\{1,-1\}^{|I|}}\widehat{ f_r^{\epsilon_I}}(0,\dots,0)\Big)\Big(1+O\big(e^{\frac{r}{2\pi}{(-\epsilon')}}\big)\Big)\\
=&\kappa_r\bigg( \sum_{\epsilon_I\in\{1,-1\}^{|I|}}\frac{C^{\epsilon_I}(\mathbf z^{\epsilon_I})}{\sqrt{-\det\mathrm{Hess}\Big(\frac{\mathcal W^{\epsilon_I}(\mathbf z^{\epsilon_I})}{4\pi\sqrt{-1}}\Big)}}\bigg)  e^{\frac{r}{4\pi\sqrt{-1}}\Big(2c\pi^2+\sqrt{-1}\big(\mathrm{Vol}(M_{L_{\boldsymbol\theta}})+\sqrt{-1}\mathrm{CS}(M_{L_{\boldsymbol\theta}})\big)\Big)}\Big(1 + O \Big( \frac{1}{r} \Big) \Big).
\end{split}
\end{equation*}
By Proposition \ref{computation}, we have
$$\lim_{r\to\infty} \frac{4\pi}{r}\log\kappa_r=\Big(\sigma(L'\cap L_I)-\sum_{i\in I}q_i-\sum_{i=1}^np_i-2|I|\Big)\sqrt{-1}\pi^2;$$
and by Corollary \ref{5.8}, we have
 $$\sum_{\epsilon_I\in\{1,-1\}^{|I|}}\frac{C^{\epsilon_I}(\mathbf z^{\epsilon_I})}{\sqrt{-\det\mathrm{Hess}\Big(\frac{\mathcal W^{\epsilon_I}(\mathbf z^{\epsilon_I})}{4\pi\sqrt{-1}}\Big)}}\neq 0,$$
and hence
 $$\lim_{r\to\infty} \frac{4\pi}{r}\log\Big(\sum_{\epsilon_I\in\{1,-1\}^{|I|}}\frac{C^{\epsilon_I}(\mathbf z^{\epsilon_I})}{\sqrt{-\det\mathrm{Hess}\Big(\frac{\mathcal W^{\epsilon_I}(\mathbf z^{\epsilon_I})}{4\pi\sqrt{-1}}\Big)}}\Big)=0.$$
 Therefore,
\begin{equation*}
\begin{split}
&\lim_{r\to\infty} \frac{4\pi}{r}\log\mathrm{RT}_r(M,L,(\mathbf n_I,\mathbf m_J))\\
=&\Big(\sigma(L'\cap L_I)-\sum_{i\in I}q_i-\sum_{i=1}^np_i-2|I|\Big)\sqrt{-1}\pi^2-2c\sqrt{-1}\pi^2+\mathrm{Vol}(M_{L_{\boldsymbol\theta}})+\sqrt{-1}\mathrm{CS}(M_{L_{\boldsymbol\theta}})\\
=&\mathrm{Vol}(M_{L_{\boldsymbol\theta}})+\sqrt{-1}\mathrm{CS}(M_{L_{\boldsymbol\theta}})\quad\quad\text{mod }\sqrt{-1}\pi^2\mathbb Z,
\end{split}
\end{equation*}
which completes the proof.
\end{proof}



\section{Some concrete examples}

The goal of this section is to show that several important families of links have complements homeomorphic to fundamental shadow link complements $M_c\setminus L_{\text{FSL}},$ and are obtained from $(M_c,L_{\text{FSL}})$ by doing a change-of-pair operation, including the twisted octahedral fully augmented links considered by Purcell\,\cite{P} and van der Veen\,\cite{V}, and the family $\mathcal U$ considered by Kumar\,\cite[Theorem 4.1]{Ku}. As a consequence, Conjecture \ref{VC},  the Volume Conjecture for the Turaev-Viro invariants\,\cite{CY} and the Generalized Volume Conjecture\,\cite{MY, G} hold, and the answer to \cite[Question 1.7]{DKY} is positive, for these families of examples. See Theorems  \ref{Cor1}, \ref{Cor} and \ref{Cor2}. It is worth mentioning that both the family of the twisted octahedral fully augmented links and the family $\mathcal U$ are universal families in the sense that every link in $S^3$ is a sublink of a member of these families.

We first look at the twisted octahedral fully augmented links. Following the construction in \cite{V}, we start with a trivalent graph $T$ that is homomorphic to the $1$-skeleton of a Euclidean tetrahedron as shown on the left of Figure \ref{TM}, and apply a sequence of the triangle move as show on the right of Figure \ref{TM} to get a trivalent graph $G.$

\begin{figure}[htbp]
\centering
\includegraphics[scale=0.3]{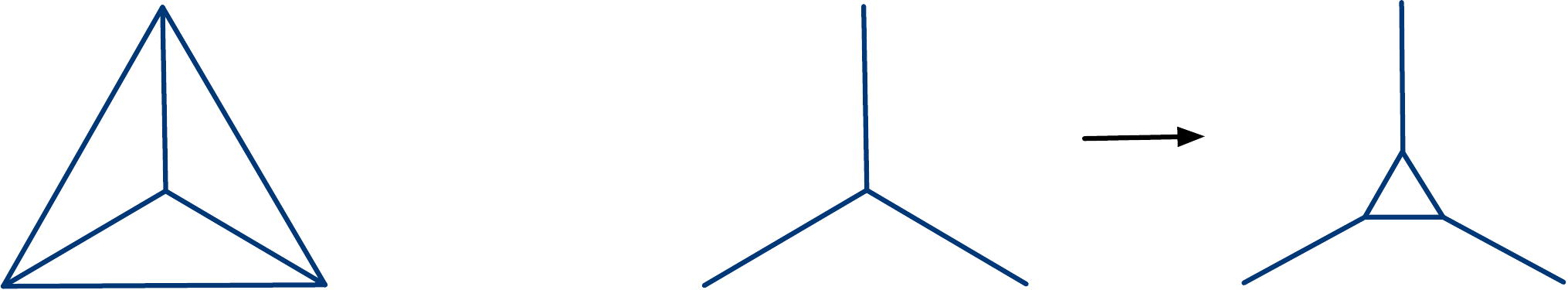}
\caption{ }
\label{TM}
\end{figure}

Then we color some edges of $G$ by red in a way that each vertex is adjacent to exactly one red edge. Such a coloring of the red edges always exists because for the initial graph $T$ we can color any pair of the opposite edges by red, and then for each triangle move, we color the new edge opposite to the red edge in the old graph by red. See Figure \ref{red}.

\begin{figure}[htbp]
\centering
\includegraphics[scale=0.3]{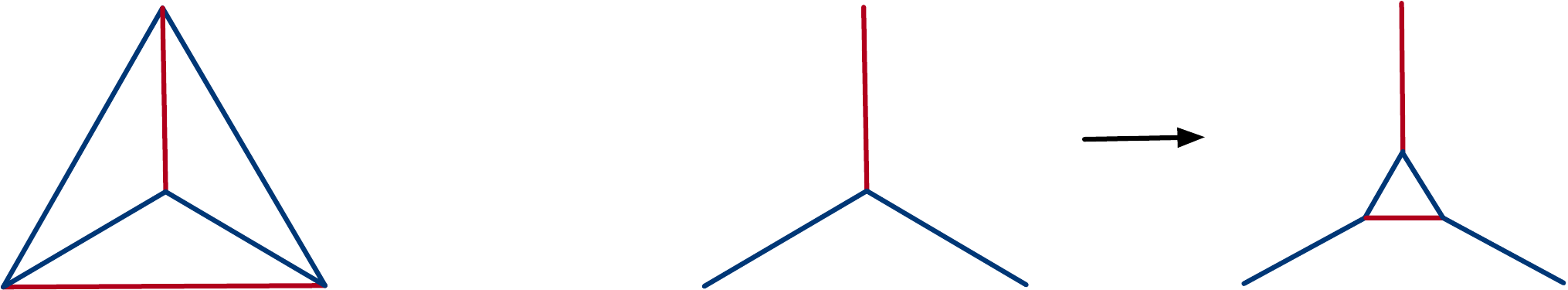}
\caption{ }
\label{red}
\end{figure}

For each choice of the red edges of $G,$ we can construct a link in $S^3$ as follows. As shown in Figure \ref{Change}, we first circulate each red edge by a trivial loop (the belt). 
\begin{figure}[htbp]
\centering
\includegraphics[scale=0.3]{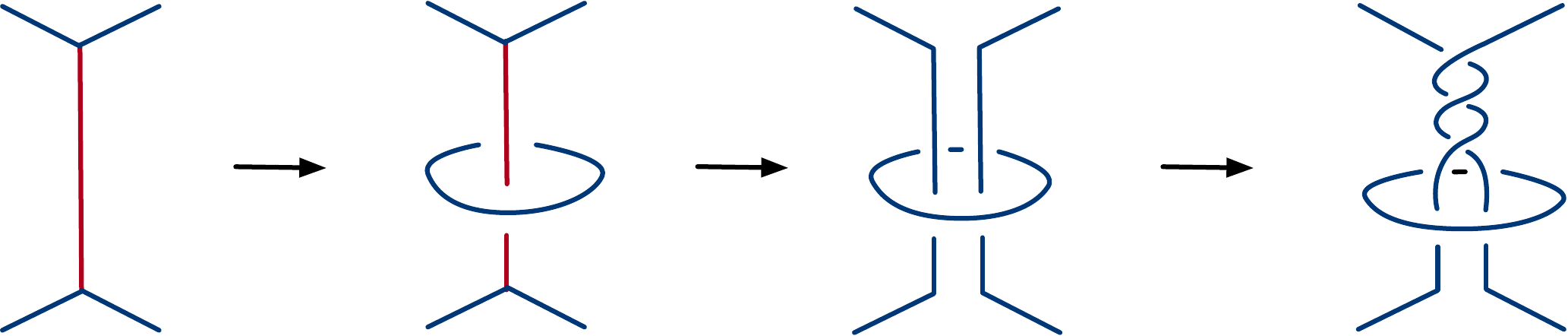}
\caption{ }
\label{Change}
\end{figure}
Then we replace each red edge by a pair of arcs parallel to it and connect the ends of the two arcs respectively to the edges of $G$ that are originally adjacent to the red edge. Finally, we do a certain number of (possibly half) twists to each pair of parallel arcs. In this way, we get a twisted octahedral fully augmented link in $S^3.$ See Figure \ref{Eg} for a concrete example starting from the trivalent graph $T.$

\begin{figure}[htbp]
\centering
\includegraphics[scale=0.3]{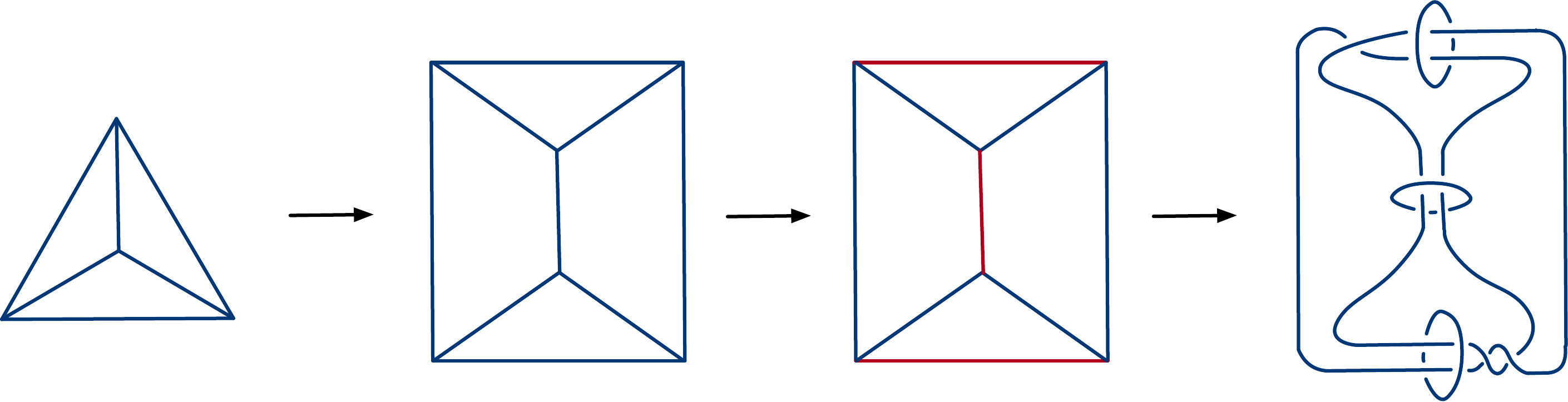}
\caption{ }
\label{Eg}
\end{figure}

The twisted octahedral fully augmented links were also described in \cite{P} using the dual nerve of the graph. Namely, we start with the graph $T$ and consider its dual nerve, which is a graph $T^*$ homeomorphic to $T$ with triangular faces. After doing a sequence of the central subdivisions of the faces, we get a graph $G^*$ with triangular faces. Then we choose a collection of red edges such that each triangular face contains exactly one red edge. Finally we consider the dual graph $G$ of $G^*$ and color the dual edge of the red edges of $G^*$ by red, and change the red  as shown in Figure \ref{Change}. In this way, we obtain a link in $S^3.$ Since the triangle move is dual to the central subdivision (see Figure \ref{dual}), the links constructed in this way are exactly the twisted octahedral fully augmented links.

\begin{figure}[htbp]
\centering
\includegraphics[scale=0.3]{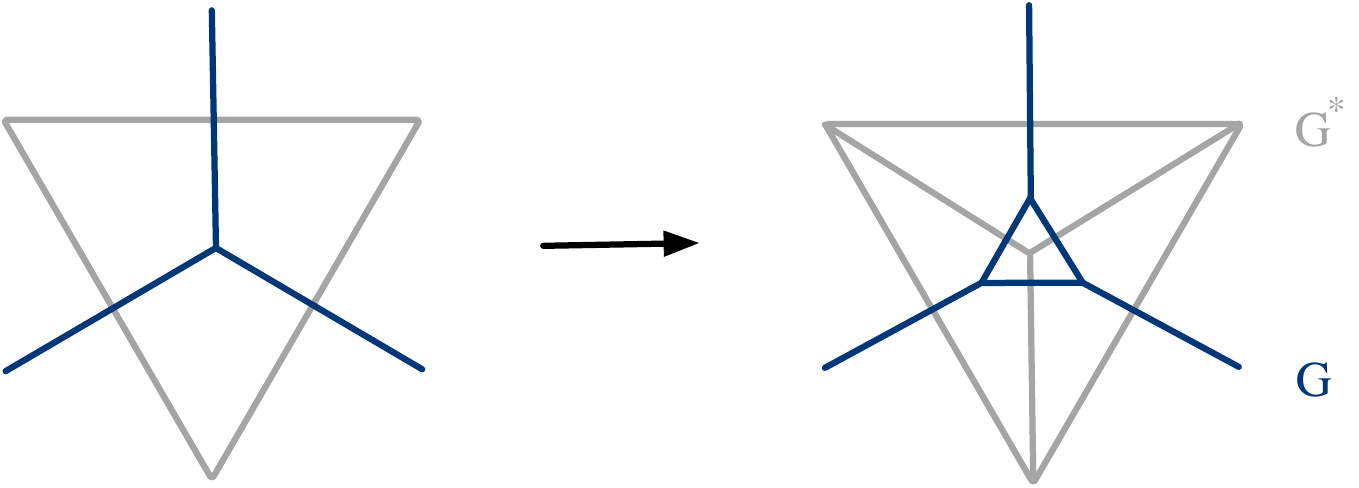}
\caption{ }
\label{dual}
\end{figure}

In \cite{V}, van der Veen proved that the complement of a twisted octahedral fully augmented link is homeomorphic to some fundamental shadow link complement. Together with the result of \cite{BDKY}, we have 

\begin{theorem}[\cite{V,BDKY}]\label{Cor1} Suppose $L$ is a framed twisted octahedral fully augmented link  in $S^3.$ Then as $r$ varies over all odd integers,
$$\lim_{r\to\infty}\frac{2\pi}{r}\log \mathrm{TV}_r{(S^3\setminus L)}=\mathrm{Vol}(S^3\setminus L).$$
\end{theorem}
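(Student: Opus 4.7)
My plan is to combine two already-established inputs: a purely topological identification of link complements due to van der Veen, and the Volume Conjecture for Turaev--Viro invariants of fundamental shadow link complements proved in \cite{BDKY}. The theorem will then follow by a single reduction.

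First, I invoke \cite{V}: for any twisted octahedral fully augmented link $L\subset S^3$ built by the procedure described above, there exist an integer $c\geqslant 1$ and a fundamental shadow link $L_{\text{FSL}}\subset M_c=\#^{c+1}(S^2\times S^1)$ such that $S^3\setminus L$ is homeomorphic to $M_c\setminus L_{\text{FSL}}.$ Since $\mathrm{TV}_r$ depends only on the homeomorphism type of the underlying $3$-manifold, this gives $\mathrm{TV}_r(S^3\setminus L)=\mathrm{TV}_r(M_c\setminus L_{\text{FSL}}),$ and by Mostow rigidity together with (\ref{VolFSL}) we get $\mathrm{Vol}(S^3\setminus L)=\mathrm{Vol}(M_c\setminus L_{\text{FSL}})=2cv_8.$ The theorem therefore reduces to
$$\lim_{r\to\infty}\frac{2\pi}{r}\log\mathrm{TV}_r(M_c\setminus L_{\text{FSL}})=2cv_8,$$
which is established in \cite{BDKY}.

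For transparency, the reduced statement is proved in \cite{BDKY} as follows: Proposition \ref{TVRT} rewrites $\mathrm{TV}_r(M_c\setminus L_{\text{FSL}})$ as a positive multiple of $\sum_{\mathbf{m}}|\mathrm{RT}_r(M_c,L_{\text{FSL}},\mathbf{m})|^2,$ Proposition \ref{FSL} expands each summand as a product of squared moduli of quantum $6j$-symbols times unit-modulus phase factors, and Theorem \ref{Vol} identifies the exponential growth rate of each $|6j|$ with the volume of the associated truncated hyperideal tetrahedron. The main analytic point, which I would expect to be the principal obstacle were one to redo the argument from scratch, is showing that the total exponential rate of the sum is attained at colorings for which each truncated hyperideal tetrahedron degenerates to a regular ideal octahedron of volume $v_8,$ yielding the total rate $2cv_8.$ This rests on an upper bound $V(\alpha,\xi)\leqslant v_8$ on the entire admissible domain with strict inequality away from the maximizer, exactly in the spirit of the bounds used to prove Proposition \ref{error}; because the number of summands grows only polynomially in $r,$ the sum contributes only subexponential corrections beyond the maximum. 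Combining this analytic input from \cite{BDKY} with the topological identification from \cite{V} yields the claimed limit.
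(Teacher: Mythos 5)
Your proposal is correct and follows exactly the route the paper takes: it deduces the theorem by combining van der Veen's homeomorphism $S^3\setminus L\cong M_c\setminus L_{\text{FSL}}$ from \cite{V} with the Turaev--Viro volume conjecture for fundamental shadow link complements established in \cite{BDKY}, which is precisely how the paper justifies Theorem \ref{Cor1}. Your additional sketch of the internals of \cite{BDKY} (via Proposition \ref{TVRT}, Proposition \ref{FSL}, Theorem \ref{Vol}, and the sharp upper bound $V(\alpha,\xi)\leqslant v_8$) is accurate and consistent with the machinery the paper itself cites.
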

\bigskip

The main observation of this section is the following Proposition \ref{TOFAL}, which is a refinement of the result of \cite{V}.

\begin{proposition}\label{TOFAL} Let $L$ be a framed twisted octahedral fully augmented link. Then $(S^3,L)$ is obtained from $(M_c,L_{\text{FSL}})$ by doing a change-of-pair operation, where $M_c=\#^{c+1}S^2\times S^1$ for some positive integer $c$ and  $L_{\text{FSL}}\subset M_c$ is a fundamental shadow link. As a consequence of Proposition \ref{equal}, 
$S^3\setminus L$ is homeomorphic to $M_c\setminus L_{\text{FSL}}.$
\end{proposition}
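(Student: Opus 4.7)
The plan is to make van der Veen's construction of the homeomorphism $S^3 \setminus L \cong M_c \setminus L_{\text{FSL}}$ sufficiently explicit to read off the required change-of-pair operation. Since Proposition \ref{equal} already shows that change-of-pair preserves complements, the parenthetical ``As a consequence'' in the statement is automatic once we exhibit the partition $(I,J)$ and the corresponding surgery.

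First, starting from the trivalent graph $G$ (with red edges) used in the construction of $L$, I would build a fundamental shadow link $L_{\text{FSL}} \subset M_c$ directly, with one truncated-tetrahedron building block $\Delta_s$ per vertex of $G$, one link component per edge of $G$, and gluings of the triangles of truncation dictated by the incidences in $G$. Taking $c$ to be the number of vertices of $G$ then yields $M_c = \#^{c+1}(S^2\times S^1)$ and a distinguished fundamental shadow link indexed by edges of $G$; this is essentially the recipe of \cite{V} reread backwards as an explicit model of $S^3\setminus L$.

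Second, I would take $J$ to index the red edges of $G$ and $I$ to index the non-red edges. Under the identification above, the components $L_J \subset L_{\text{FSL}}$ should correspond directly to the belt components of $L$ (which, by the construction, are unknotted loops encircling the red edges of $G$), while the non-red edges $L_I$ are exactly the components that will be traded for their meridians $L_I^*$ under the change-of-pair. It remains to check that $(M_c)_{L_I} \cong S^3$ via integer Dehn surgery and that the resulting meridians $L_I^*$, equipped with the framings prescribed by the number of (possibly half-)twists on each red-edge replacement in $L$, realize the twisted-arc components of $L$. Because the non-red edges of $G$ naturally give a system of curves whose integer surgeries cancel the $c+1$ handle summands of $M_c$, this reduction to $S^3$ is believable in principle, and the cores of the filled solid tori are by definition the $L_I^*$.

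The main obstacle is the last verification: checking that the Dehn-filling slopes are integer and match the twisting data of $L$. This is a Kirby-calculus bookkeeping problem in which one traces through van der Veen's gluing instructions to read off, for each non-red edge $L_i\subset L_{\text{FSL}}$, the image of its meridian-longitude basis in the corresponding tubular neighborhood inside $S^3$; the twist-region framing on the paired arcs replacing the red edges of $G$ is then seen to supply precisely an integer surgery coefficient $q_i \in \mathbb Z$. Once this matching is established, the change-of-pair operation $T_{(L_I; L_I^*)}$ with framings $q_i$ on $L_I^*$ sends $(M_c, L_{\text{FSL}})$ to $(S^3, L)$, proving the proposition; the homeomorphism of complements then drops out of Proposition \ref{equal}.
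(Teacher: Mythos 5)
There is a genuine error in your identification of which components get traded. You keep the red-edge components of $L_{\text{FSL}}$ and claim they ``correspond directly to the belt components of $L$,'' then perform the change-of-pair on the non-red-edge components. This is backwards. A belt of $L$ is a small unknot \emph{encircling} a red edge, so in $S^3$ the belt and the closed-up red edge form a Hopf link: the belt is the \emph{meridian} of the red-edge component, not that component itself. Consequently the change-of-pair must be performed precisely on the red-edge components of $L_{\text{FSL}}$ (one per belt), trading each for its meridian, which is what produces the belts of $L$; the remaining components of $L_{\text{FSL}}$ are the ones identified directly with the non-belt components of $L$. Your version would leave the red edges as link components and never produce the belts at all. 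Relatedly, $L_{\text{FSL}}$ does not have ``one link component per edge of $G$'': a non-belt component of $L$ traverses several edges of $G$, so the corresponding component of $L_{\text{FSL}}$ is a closed loop made of several edges of the building blocks; only the red edges close up individually (their two endpoints are identified when the truncation triangles at their ends are glued). Your surgery-count heuristic also fails: there are $2(c+1)$ non-red edges but only $c+1$ handles to kill, and the non-red edges are not individually closed curves.

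The mechanism the paper uses, which your outline does not reach, is the following. Removing tubular neighborhoods of the belts from $S^3$ leaves a genus-$(c+1)$ handlebody, which is exactly the truncated polyhedron $P_G$ with the two truncation triangles at the ends of each red edge identified (orientation-reversingly for integral twists, orientation-preservingly for an extra half twist); $0$-surgery on the belts doubles this handlebody, giving $M_c=\#^{c+1}(S^2\times S^1)$ and turning the meridians of the belts together with the non-belt components into the edge-link $L_{\text{FSL}}$. The inverse direction is then the change-of-pair along the red-edge components, with the framing $-p_i$ placed on the red-edge component (not on its meridian) so that the combined $0$- and $(-p_i)$-framed surgeries amount to a $\frac{1}{p_i}$-surgery, i.e.\ $p_i$ full twists on the strands through the belt; this is verified by explicit handle slides. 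Your proposal defers exactly these verifications (``it remains to check,'' ``believable in principle''), and with the partition chosen as you have it they cannot be carried out.
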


\begin{proof} 
 Suppose $L=L_1\cup \dots \cup L_n.$ Let $I$ be the subset of $\{1,\dots,n\}$ such that $\{L_i\}_{i\in I}$ are the belt components of $L,$ and let $L_I=\cup_{i\in I}L_i.$ Let $J=\{1,\dots,n\}\setminus I,$ and let $L_J=\cup_{j\in J}L_j.$ Recall that each $L_i$ corresponds to an red edge of the graph $G$ which splits into two parallel arcs with a number of twits. 
Later we will prove the result in two steps. In Step 1 we consider the case that there is no twist or only a half-twists for each $i\in I.$ In Step 2 we consider the general case.
 
 We first observe that $L_J$ lies in a tubular neighborhood of the $1$-skeleton of a truncated polyhedron $P_G$ in $S^3$ obtained by gluing truncated tetrahedra together along the triangles of truncation. Indeed, we can regard the initial trivalent graph $T$ as the set of edges of a truncated tetrahedron (see Figure \ref{Tetra}), and regard a triangular move as attaching another truncated tetrahedron along the triangle of truncation (see Figure \ref{Tetra2}).
 
\begin{figure}[htbp]
\centering
\includegraphics[scale=0.15]{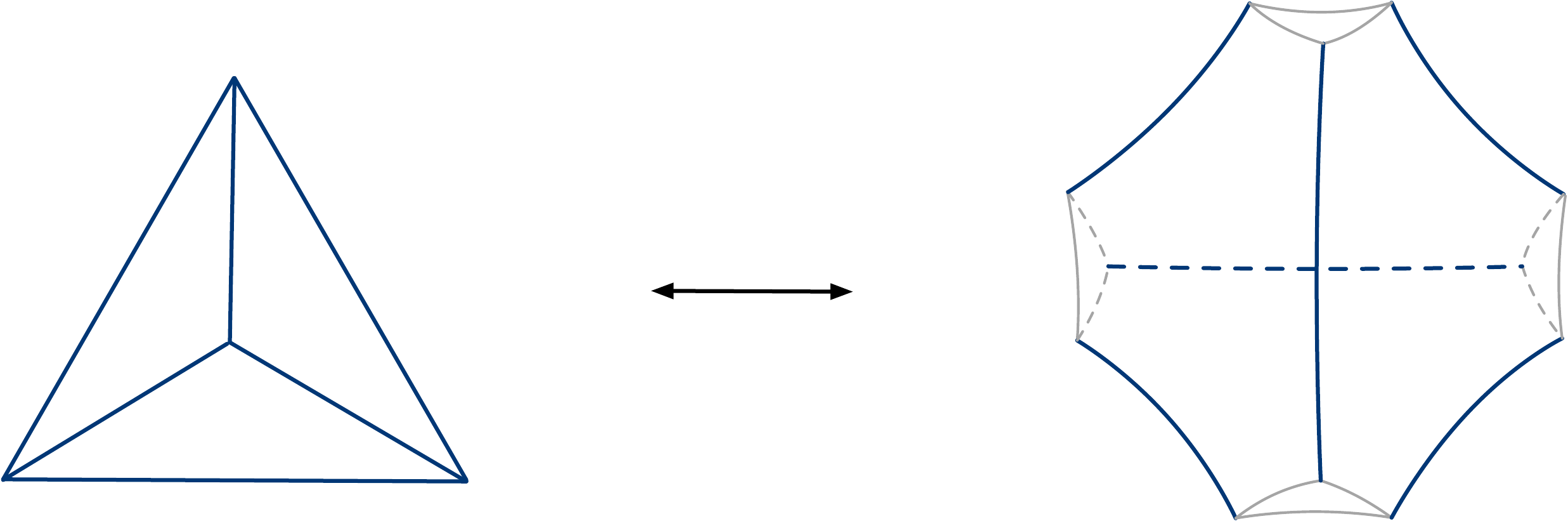}
\caption{In a truncated polyhedron, we only consider the intersection of two faces as an edge, and do not consider the intersection of a face and a triangle of truncation as an edge.}
\label{Tetra}
\end{figure}

\begin{figure}[htbp]
\centering
\includegraphics[scale=0.25]{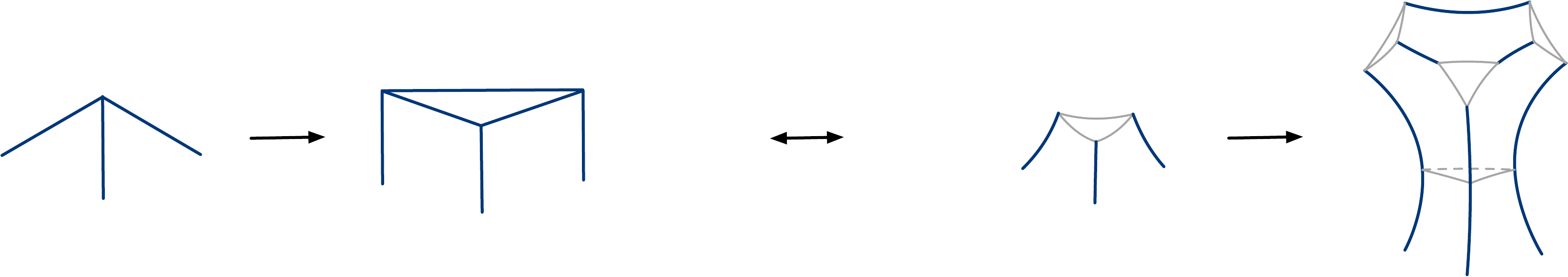}
\caption{ }
\label{Tetra2}
\end{figure}

In this way, we obtain a truncated polyhedron $P_G$ in $\mathbb R^3\subset S^3,$ and the edges of the graph $G$ correspond to the edges of $P_G.$  As shown in Figure \ref{split}, the two parallel arcs from splitting the red edge can be drawn in a tubular neighborhood of the red arc. 
\begin{figure}[htbp]
\centering
\includegraphics[scale=0.25]{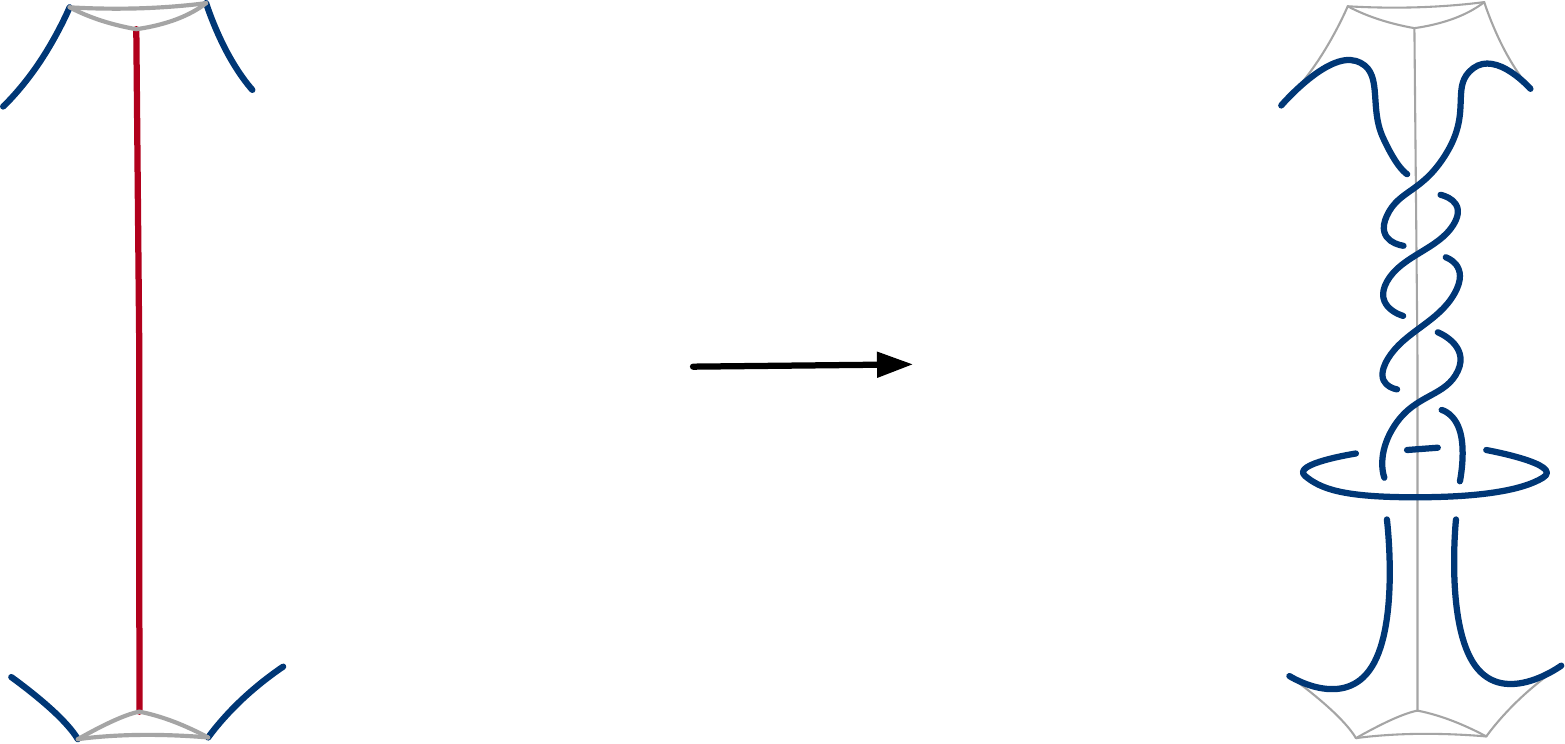}
\caption{ }
\label{split}
\end{figure}
See Figure \ref{Example} for a concrete example.

\begin{figure}[htbp]
\centering
\includegraphics[scale=0.25]{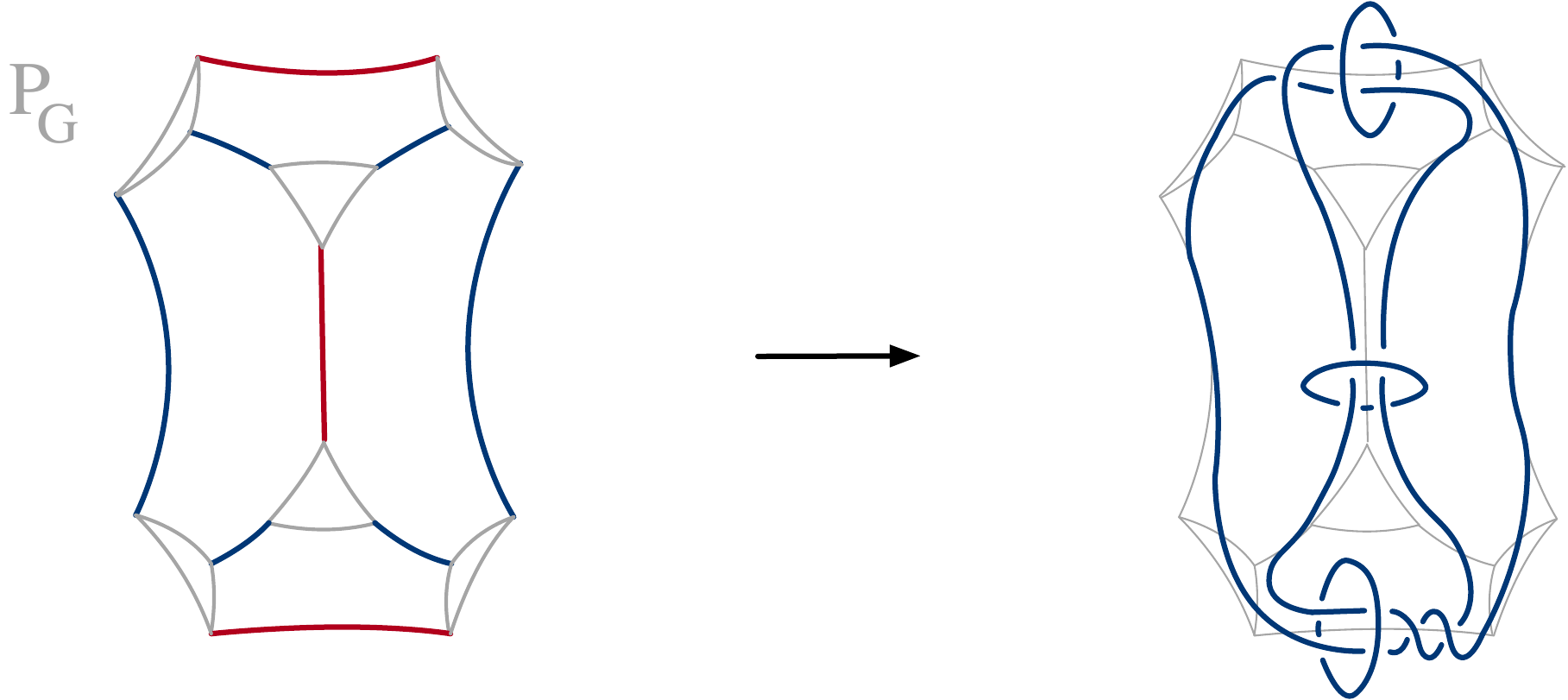}
\caption{ }
\label{Example}
\end{figure}

{\bf Step 1.} Suppose in the construction of $L$ there is no twist or only a half twist for each pair of arcs from splitting the red edges.  For each $i\in I,$ we let $L'_i$ be $L_i$ with the $0$-framing which is possibly different from the original framing of $L_i,$ and let $L_i^*$ be the framed trivial loop around $L_i$ with the $0$-framing. Let $L'_I=\cup_{i\in I} L'_i.$ We first claim that by doing the change-of-pair operation $T(L'_I;L_I^*)$ to the pair $(S^3,L)$ we get the pair $(M_{|I|},L_{\text{FSL}}),$ where $M_{|I|}=\#^{|I|}(S^2\times S^1)$ and $L_{\text{FSL}}$ is a fundamental shadow link in $M_{|I|}.$ See Figure \ref{T1}.
\begin{figure}[htbp]
\centering
\includegraphics[scale=0.35]{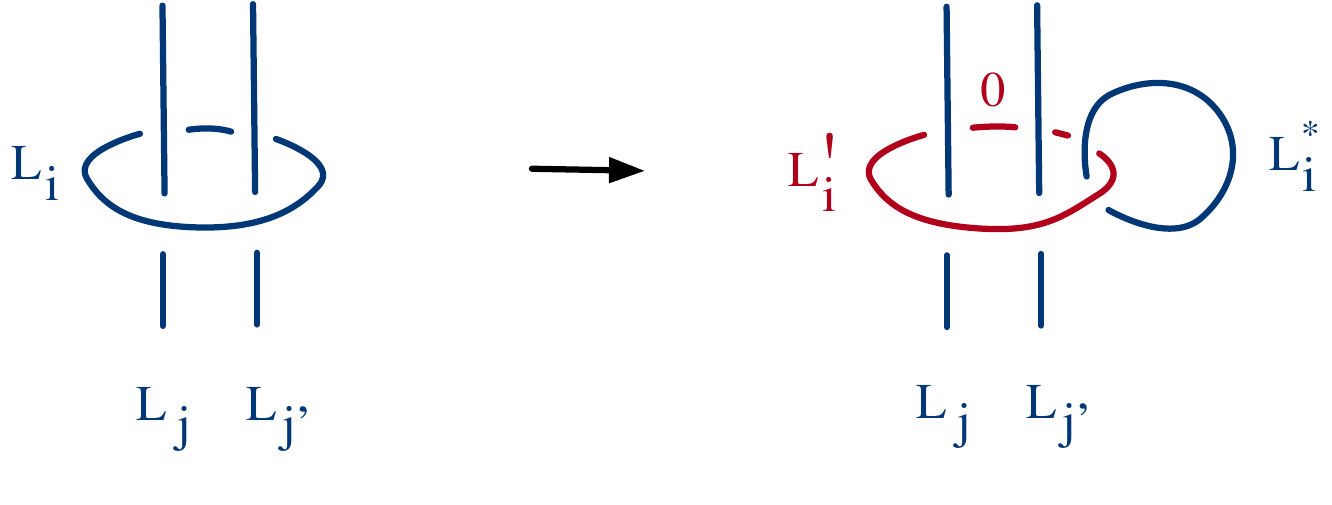}
\caption{ }
\label{T1}
\end{figure}

Indeed, let $N_I=\cup_{i\in I}N(L_i)$ be the union of the tubular neighborhoods of the belts $\{L_i\}_{i\in I}.$ Then $S^3\setminus N_I$ is a handlebody  $H_{|I|}$ of genus $|I|,$  $L_I^*\cup L_J$ is a link in $H_{|I|},$ and doing $0$-surgeries along $\{L_i\}_{i\in I}$ is the same as taking the double of $H_{|I|},$ which is homeomorphic to $M_{|I|}.$ On the other hand, the handlebody $H_{|I|}$ can also be considered as obtained from the polyhedron $P_G$ by gluing the two triangles of truncation at the end of each red edge via the orientation reversing affine homeomorphism identifying the two end points of the red edge.

 If there is no twist for all pairs of arcs from splitting the red edges, then the link $L_I^*\cup L_J$ corresponds to the link $L_{\text{FSL}}$ consisting of the union of the edges of $P_G.$ From the construction in Section \ref{fsl}, $L_{\text{FSL}}$ in the double of $H_{|I|}$ is a fundamental shadow link in $M_{|I|}.$ See Figure \ref{HB} for a concrete example.
\begin{figure}[htbp]
\centering
\includegraphics[scale=0.25]{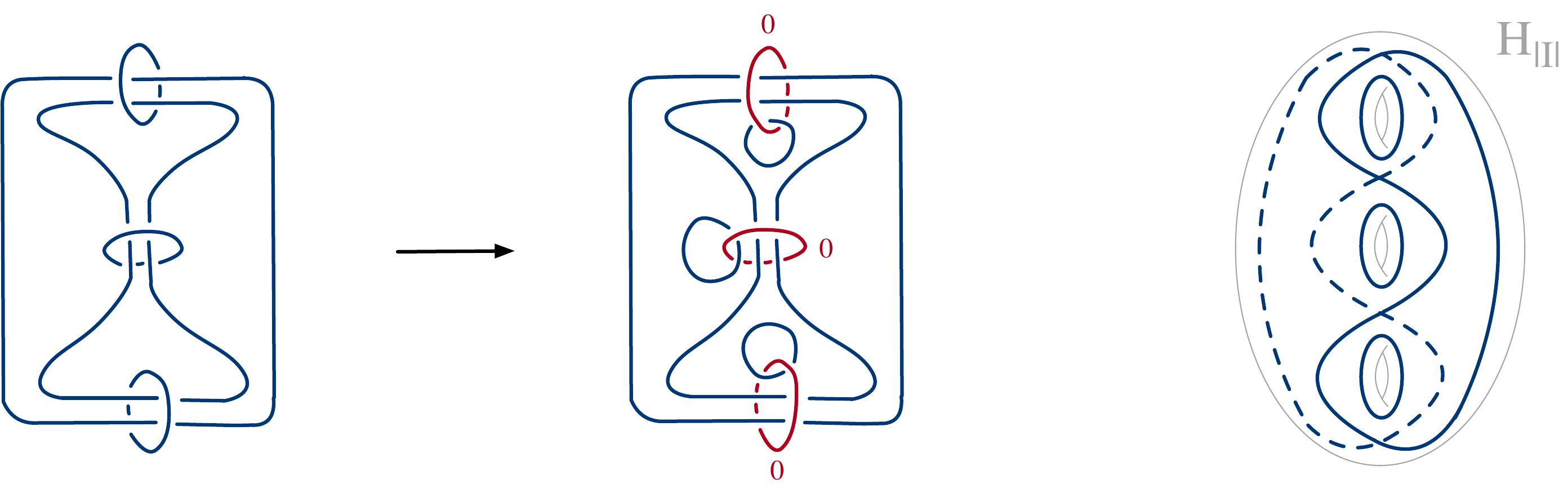}
\caption{ }
\label{HB}
\end{figure}

If there is a half twist on the pair of arcs from splitting red edge $e_i$ circulated by  $L_i,$ for some $i\in I,$ then we glue the two triangles of truncation at the end of $e_i$  via the orientation preserving affine homeomorphism identifying the two end points $e_i.$ In this way, we obtain a non-orientable handlebody $H'_{|I|}$ whose orientable double is homeomorphic to $M_{|I|},$ and the link $L_I^*\cup L_J$ corresponds to the link $L_{\text{FSL}}$ consisting of the union of the edges of $P_G,$ which is a fundamental shadow link in $M_{|I|}.$ 
\\

For each $i\in I,$ we let $L^{**}_i$ be the framed trivial loop around $L^*_i$ with the same framing as that of $L_i.$ Then we claim that $(S^3,L)$ can be obtained from $(M_{|I|},L_{\text{FSL}})$ by doing the change-of-pair operation $T(L^*_I,L^{**}_I),$ proving the result in Case 1. See Figure \ref{T2}.
\begin{figure}[htbp]
\centering
\includegraphics[scale=0.35]{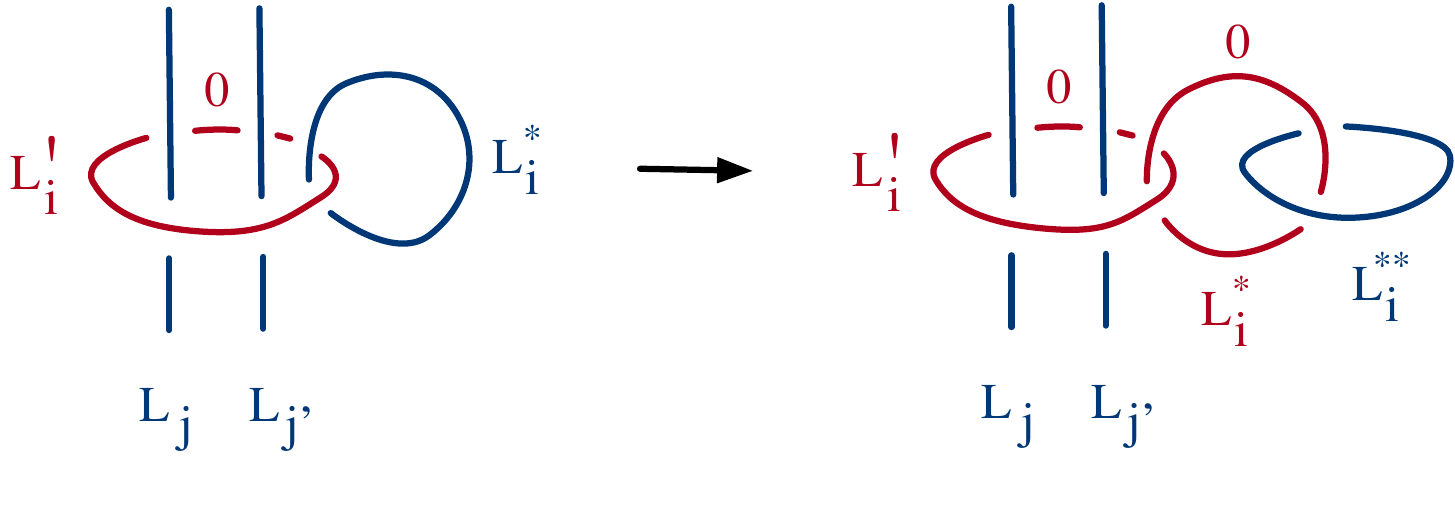}
\caption{ }
\label{T2}
\end{figure}

This could be seen by the following the second Kirby Moves (handle slides) which do not change the pair. For each $i\in I,$ we first slide $L^{**}_i$ over $L'_i$ as in Figure \ref{HS1} to get a framed trivial loop isotopic to $L_i.$ 
\begin{figure}[htbp]
\centering
\includegraphics[scale=0.35]{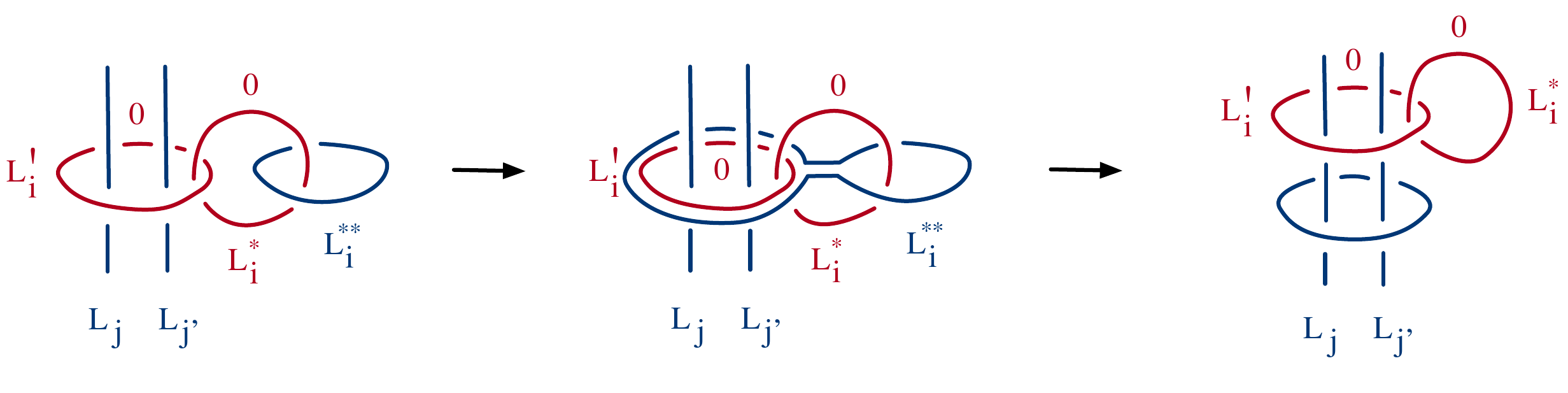}
\caption{ }
\label{HS1}
\end{figure}
Suppose $L_j$ and $L_{j'}$ (with $j$ possibly the same as $j'$) are the components of $L$ circulated by $L_i.$  Then we slide $L_j$ and $L_{j'}$ over $L_i^*$ as in Figure \ref{HS2} so that $L'_i\cup L^*_i$ is a Hopf link with $0$-framings unlinked with the rest of $L.$ Doing these operations for each $i\in I,$ we get the original link $L$ in the $3$-manifold obtained from $S^3$ by doing a surgery along the disjoint union of $|I|$ Hopf links with $0$-framings, which is still $S^3.$
\begin{figure}[htbp]
\centering
\includegraphics[scale=0.35]{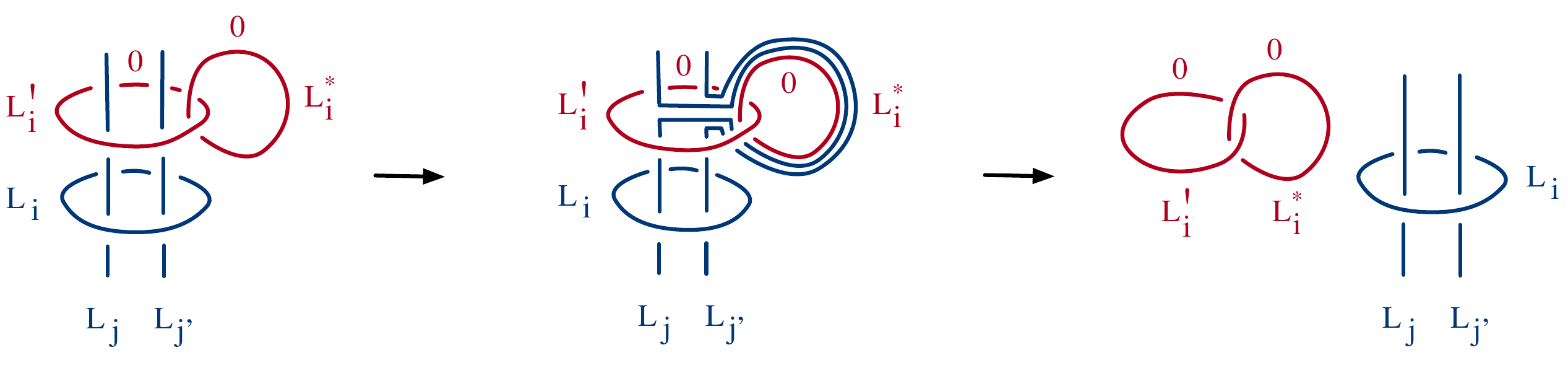}
\caption{ }
\label{HS2}
\end{figure}

{\bf Step 2.} Suppose in the construction of $L,$ the pair of arcs from splitting the red edge circulated by $L_i$ is twisted $p_i$ times or $p_i$ and a half times. Let $(M_{|I|},L_{\text{FSL}})$ be the fundamental shadow link constructed in Step 1. For each $i\in I,$ let ${L^*_i}'$ be $L^*_i$ with the $(-p_i)$-framing, and let ${L^*_I}'=\cup_{i\in I}{L^*_i}'.$ Still let $L^{**}_i$ be the framed trivial loop around $L^*_i$ with the same framing as that of $L_i.$ We claim that $(S^3,L)$ can be obtained from $(M_{|I|},L_{\text{FSL}})$ by doing the change-of-pair operation $T({L^*_I}',L^{**}_I),$ proving the result in Case 2. 

Similar to Case 1, we first slide $L^{**}_i$ over $L'_i$ as in Figure \ref{HS3} to get a framed trivial loop isotopic to $L_i.$ 
\begin{figure}[htbp]
\centering
\includegraphics[scale=0.35]{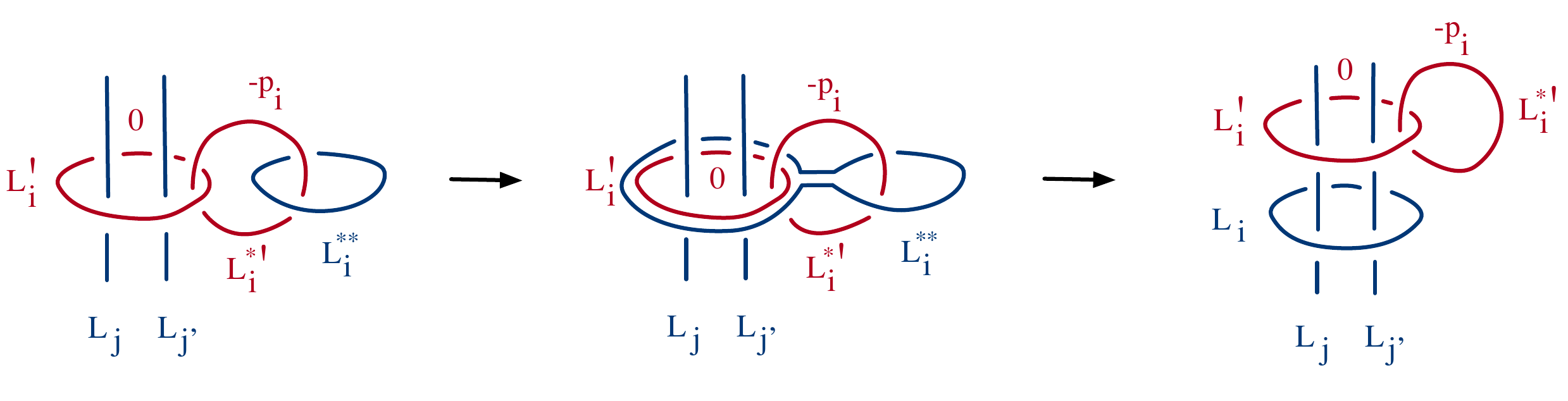}
\caption{ }
\label{HS3}
\end{figure}
By e.g. \cite[p. 273]{R}, since ${L^*_i}'$ is a trivial loop around $L_i'$ and
$$\frac{1}{p_i}=0-\frac{1}{-p_i},$$ 
doing the surgery along the union of $L'_i$ and ${L^*_i}'$ respectively with framings $0$ and $(-p_i)$ is equivalent to doing a $\frac{1}{p_i}$-surgery along a loop isotopic to $L_i,$ which is the same as doing $p_i$ full twists along the strip circulated by $L_i.$ See Figure \ref{HS4}. This completes the proof.
\begin{figure}[htbp]
\centering
\includegraphics[scale=0.35]{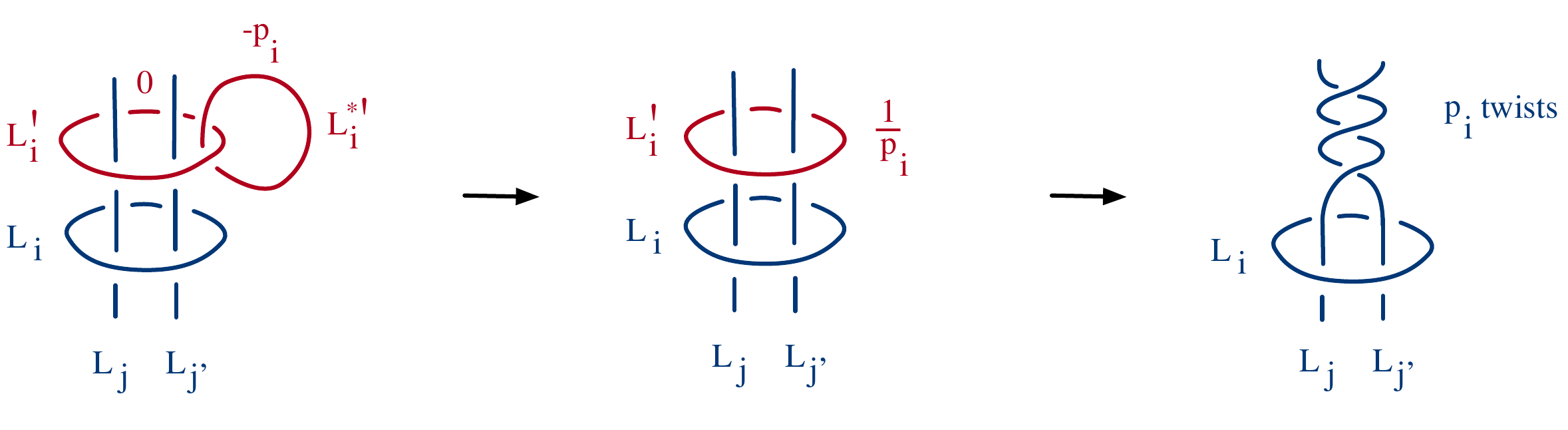}
\caption{ }
\label{HS4}
\end{figure}
\end{proof}

By Theorem \ref{main2}, Propositions \ref{TOFAL} and the relationship between the Resherikhin-Turaev invarints and the colored Jones polynomials that 
$$\mathrm{RT}_r(S^3, L, \mathbf m)=\mu_r\mathrm J_{\mathbf m, L}(q^2),$$
 we have

\begin{theorem}\label{Cor} Suppose $L$ is a framed twisted octahedral fully augmented  with $n$ components. For a sequence $\mathbf m^{(r)}=(m_1^{(r)},\dots,m_n^{(r)})$ of colorings of $L,$ let 
$$\theta_k=\Big|2\pi-\lim_{r\to\infty}\frac{4\pi m_k^{(r)}}{r}\Big|,$$
and let $\boldsymbol\theta=(\theta_1,\dots,\theta_n).$ If all the $\theta_i$'s are sufficiently small, then 
$$\lim_{r\to\infty}\frac{4\pi}{r}\log \mathrm{J}_{L,\mathbf m^{(r)}}\big(e^{\frac{4\pi i}{r}}\big)=\mathrm{Vol}(S^3_{L_{\boldsymbol\theta}})+\sqrt{-1}\mathrm{CS}(S^3_{L_{\boldsymbol\theta}})\quad\quad\text{mod } \sqrt{-1}\pi^2\mathbb Z,$$
where $r$ varies over all odd integers.
\end{theorem}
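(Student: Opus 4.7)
The plan is to derive Theorem \ref{Cor} directly from Theorem \ref{main2} by way of Proposition \ref{TOFAL}, with only a trivial normalization step to pass from the relative Reshetikhin-Turaev invariant to the colored Jones polynomial.

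First I would invoke Proposition \ref{TOFAL}: for a framed twisted octahedral fully augmented link $L = L_I \cup L_J \subset S^3$, with $L_I$ the collection of belt components and $L_J$ the components arising from splitting the red edges of the underlying trivalent graph, the pair $(S^3, L)$ is obtained from a fundamental shadow link pair $(M_c, L_{\text{FSL}})$ by a change-of-pair operation of the form $T({L^*_I}'; L^{**}_I)$. Under this identification, the belt components $\{L_i\}_{i \in I}$ play the role of the new meridians $L^{**}_I$ of the change-of-pair (i.e.\ the ``$L^*_I$'' of Section \ref{COP}), while $L_J$ corresponds to the unchanged components of $L_{\text{FSL}}$. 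Hence a coloring $\mathbf m^{(r)} = (m_1^{(r)}, \dots, m_n^{(r)})$ of $L$ decomposes exactly as the datum $(\mathbf n_I, \mathbf m_J)$ of Theorem \ref{main2}, and the cone angles $\theta_k$ defined in Theorem \ref{Cor} coincide with those appearing in Theorem \ref{main2}.

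Assuming all the $\theta_k$ are sufficiently small, Theorem \ref{main2} then gives
$$\lim_{r\to\infty} \frac{4\pi}{r} \log \mathrm{RT}_r(S^3, L, \mathbf m^{(r)}) = \mathrm{Vol}(S^3_{L_\theta}) + \sqrt{-1}\,\mathrm{CS}(S^3_{L_\theta}) \quad \text{mod } \sqrt{-1}\pi^2\mathbb Z.$$
To convert this into the desired statement, I would use the identity $\mathrm{RT}_r(S^3, L, \mathbf m) = \mu_r\,\mathrm J_{L, \mathbf m}(q^2)$ recorded at the end of Section \ref{RRT}, together with the fact that $\mu_r = \frac{2\sin(2\pi/r)}{\sqrt r}$ satisfies $|\log \mu_r| = O(\log r)$, so that $\frac{4\pi}{r} \log \mu_r \to 0$ and
$$\lim_{r\to\infty} \frac{4\pi}{r} \log \mathrm J_{L, \mathbf m^{(r)}}\bigl(e^{4\pi\sqrt{-1}/r}\bigr) = \lim_{r\to\infty} \frac{4\pi}{r} \log \mathrm{RT}_r(S^3, L, \mathbf m^{(r)}).$$

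Since the heavy lifting has already been done in Theorem \ref{main2} and Proposition \ref{TOFAL}, there is no serious obstacle here; the proof is essentially bookkeeping. The only point requiring a moment of care is to trace through the handle slides of Figures \ref{HS1}--\ref{HS4} appearing in the proof of Proposition \ref{TOFAL}, to verify that a color placed on a belt component $L_i$ of $L \subset S^3$ really corresponds, under the change-of-pair, to a color on the new meridian $L^{**}_i$ in the $(M_c, L_{\text{FSL}})$ picture, so that the cone-angle prescription of Theorem \ref{main2} matches verbatim that of Theorem \ref{Cor} without any reindexing.
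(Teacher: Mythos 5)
Your proposal is correct and follows exactly the route the paper takes: the paper deduces Theorem \ref{Cor} immediately from Theorem \ref{main2}, Proposition \ref{TOFAL}, and the identity $\mathrm{RT}_r(S^3,L,\mathbf m)=\mu_r\mathrm J_{\mathbf m,L}(q^2)$, with the $\mu_r$ factor vanishing in the exponential growth rate just as you note. Your additional bookkeeping about matching the belt components with the new components of the change-of-pair is a fair elaboration of what the paper leaves implicit.
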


Next, we consider the universal family $\mathcal U$ of links in $S^3$ considered by Kumar\,\cite{Ku} whose complements are homeomorphic to some fundamental shadow link complements. Together with the result of \cite{BDKY}, he proved that Chen-Yang's Volume Conjecture for the Turaev-Viro invariants is true for the complements of these families of links. The way he found the fundamental shadow links is essentially by doing a change-of-pair operation along the belt components of the links. Then by the same argument in Step 1 of the proof of Proposition \ref{TOFAL}, we have

\begin{proposition}\label{Kumar}
Let $L$ be a framed link in $\mathcal U$ of \cite{Ku}. Then $(S^3,L)$ is obtained from $(M_c,L_{\text{FSL}})$ by doing a change-of-pair operation. 
\end{proposition}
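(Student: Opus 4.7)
The plan is to adapt Step 1 (and, when necessary, Step 2) of the proof of Proposition \ref{TOFAL} to Kumar's family $\mathcal U$. First I would recall from \cite[Theorem 4.1]{Ku} the explicit description of a link $L\in\mathcal U$: each such $L$ decomposes as $L=L_I\cup L_J$, where $\{L_i\}_{i\in I}$ is a distinguished collection of ``belt'' components, each a trivial unknot encircling a pair of parallel strands of the remaining sublink $L_J$. Kumar's proof that $S^3\setminus L$ is homeomorphic to a fundamental shadow link complement proceeds by exhibiting $S^3\setminus N(L_I)$ as a handlebody $H_{|I|}$ built from a truncated polyhedron $P$ by gluing up the triangles of truncation along the ends of certain edges, with $L_J$ sitting as the one-skeleton of $P$.

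Next, for each $i\in I$, let $L_i^*$ be the framed unknot in $S^3\setminus L$ whose core is isotopic to the meridian of $L_i$, equipped with the $0$-framing in the untwisted case; in the general case, where the pair of strands encircled by $L_i$ carries $p_i$ (or $p_i$ and a half) twists, I would equip $L_i^*$ with the $(-p_i)$-framing and equip $L_i$ itself with the $0$-framing $L_i'$, exactly as in Step 2 of Proposition \ref{TOFAL}. Setting $L_I^*=\cup_{i\in I}L_i^*$, the change-of-pair operation $T_{(L_I';L_I^*)}$ performs a $0$-surgery along $L_I'$, which amounts to taking the orientable double of the handlebody $H_{|I|}$. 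The resulting $3$-manifold is $M_c=\#^{c+1}(S^2\times S^1)$ for the $c$ determined by the number of truncated tetrahedra assembled into $P$, and the image of $L_I^*\cup L_J$ in this double is precisely the fundamental shadow link $L_{\text{FSL}}$ associated to $P$ in Section \ref{fsl}.

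Inverting this operation via a second change-of-pair operation $T_{(L_I^*;L_I^{**})}$, where $L_i^{**}$ is the framed trivial loop around $L_i^*$ carrying the original framing of $L_i$, then exhibits $(S^3,L)$ as obtained from $(M_c,L_{\text{FSL}})$ by a change-of-pair operation, as required. The handle-slide verification showing that these two operations are mutually inverse is the same sequence of second Kirby moves used in Step 1 of Proposition \ref{TOFAL}, together with the identity $\tfrac{1}{p_i}=0-\tfrac{1}{-p_i}$ from \cite[p.~273]{R} to account for the twists.

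Since the topological content has already been carried out in full for the twisted octahedral fully augmented links, the argument here is essentially a transcription. The only real work is bookkeeping: reading off from \cite{Ku} the precise belt/arc decomposition for links in $\mathcal U$ and matching the framings and twists to those tracked in Steps 1 and 2 of Proposition \ref{TOFAL}. I do not anticipate any new geometric or topological obstacle beyond what is already resolved there.
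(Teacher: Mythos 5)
Your proposal is correct and follows essentially the same route as the paper, which simply observes that Kumar's construction amounts to a change-of-pair operation along the belt components and then invokes Step 1 of the proof of Proposition \ref{TOFAL}. Your additional invocation of Step 2 to handle twists is harmless extra care but not needed for the paper's argument.
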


As a consequence of Theorem \ref{main2} and Proposition \ref{Kumar}, we have

\begin{theorem}\label{Cor2} Suppose $L$ is a framed link of $n$ components which is a member of $\mathcal U$ of \cite{Ku}. For a sequence $\mathbf m^{(r)}=(m_1^{(r)},\dots,m_n^{(r)})$ of colorings of $L,$ let 
$$\theta_k=\Big|2\pi-\lim_{r\to\infty}\frac{4\pi m_k^{(r)}}{r}\Big|,$$
 and let $\boldsymbol\theta=(\theta_1,\dots,\theta_n).$ If all the $\theta_i$'s are sufficiently small, then 
$$\lim_{r\to\infty}\frac{4\pi}{r}\log \mathrm{J}_{L,\mathbf m^{(r)}}\big(e^{\frac{4\pi i}{r}}\big)=\mathrm{Vol}(S^3_{L_{\boldsymbol\theta}})+\sqrt{-1}\mathrm{CS}(S^3_{L_{\boldsymbol\theta}})\quad\quad\text{mod } \sqrt{-1}\pi^2\mathbb Z,$$
where $r$ varies over all odd integers.
\end{theorem}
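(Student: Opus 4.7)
The plan is to combine the three ingredients that have been assembled earlier in the paper: Proposition \ref{Kumar} (which exhibits every $L\in\mathcal U$ as arising from a fundamental shadow link via a change-of-pair operation), Theorem \ref{main2} (the Volume Conjecture for such pairs under small cone angles), and the identification $\mathrm{RT}_r(S^3,L,\mathbf m)=\mu_r\,\mathrm J_{\mathbf m,L}(q^2)$ recorded at the end of Section \ref{RRT}. Since the ambient manifold here is $S^3$ and the theorem concerns the colored Jones polynomial, essentially no new analysis is required.

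First I would invoke Proposition \ref{Kumar} to write $(S^3,L)$ as the result of applying a change-of-pair operation $T_{(L_I;L^*_I)}$ to a pair $(M_c,L_{\text{FSL}})$ for some $c$, so that $S^3\setminus L$ is homeomorphic to the fundamental shadow link complement $M_c\setminus L_{\text{FSL}}$ and $L$ inherits a well-defined identification with the components of $L^*_I\cup L_J$. Next I would apply Theorem \ref{main2} directly to this pair, noting that its hypothesis is exactly that the cone angles $\theta_k$ are sufficiently small, which is also the hypothesis of Theorem \ref{Cor2}. This gives
\[
\lim_{r\to\infty} \frac{4\pi }{r}\log \mathrm{RT}_r(S^3,L,\mathbf m^{(r)})=\mathrm{Vol}(S^3_{L_\theta})+\sqrt{-1}\,\mathrm{CS}(S^3_{L_\theta})\quad\text{mod } \sqrt{-1}\pi^2\mathbb Z.
\]

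To pass from the relative Reshetikhin-Turaev invariant to the colored Jones polynomial, I would use the formula $\mathrm{RT}_r(S^3,L,\mathbf m^{(r)})=\mu_r\,\mathrm J_{L,\mathbf m^{(r)}}(q^2)$ with $q=e^{\frac{2\pi\sqrt{-1}}{r}}$, so $q^2=e^{\frac{4\pi\sqrt{-1}}{r}}$. Since $\mu_r=\frac{2\sin(2\pi/r)}{\sqrt r}$ satisfies $|\mu_r|=O(r^{-3/2})$, we have $\frac{4\pi}{r}\log|\mu_r|\to 0$, hence
\[
\lim_{r\to\infty}\frac{4\pi}{r}\log\mathrm J_{L,\mathbf m^{(r)}}\bigl(e^{\frac{4\pi\sqrt{-1}}{r}}\bigr)=\lim_{r\to\infty}\frac{4\pi}{r}\log \mathrm{RT}_r(S^3,L,\mathbf m^{(r)})
\]
modulo $\sqrt{-1}\pi^2\mathbb Z$, and the conclusion follows.

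There is no genuine obstacle here: the hard analytic work has already been done in Theorem \ref{main2}, and the topological identification has been carried out in Proposition \ref{Kumar}. The only minor point to verify is that the identification of components used by Kumar to realise $(S^3,L)$ via a change-of-pair operation sends the coloring $\mathbf m^{(r)}$ of $L$ to the combined coloring $(\mathbf n_I,\mathbf m_J)$ appearing in Theorem \ref{main2} in a way that preserves the limiting cone angles $\theta_k$; this is immediate from the formulae $\theta_i=|2\pi-\lim 4\pi n_i/r|$ and $\theta_j=|2\pi-\lim 4\pi m_j/r|$, since both ultimately encode the same limiting values $4\pi m_k^{(r)}/r$ attached to the components of the common link complement.
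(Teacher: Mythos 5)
Your proposal is correct and follows exactly the route the paper takes: the paper derives Theorem \ref{Cor2} by combining Proposition \ref{Kumar} with Theorem \ref{main2} and the identity $\mathrm{RT}_r(S^3,L,\mathbf m)=\mu_r\,\mathrm J_{\mathbf m,L}(q^2)$, just as you do. Your added remark that $\frac{4\pi}{r}\log\mu_r\to 0$ makes explicit a step the paper leaves implicit, but it is the same argument.
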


\begin{remark} The proof of Proposition \ref{TOFAL} essentially provides an algorithm of constructing a fundamental shadow link from a given twisted octahedral fully augmented link or a given element of $\mathcal U.$  In \cite{V}, van der Veen for each link $L$ in $S^3$ provided an algorithm of constructing a twisted octahedral fully augmented link $L'$ such that $S^3\setminus L$ is obtained from $S^3\setminus L'$ by $\frac{1}{0}$-filling suitable boundary components. In \cite{Ku}, Kumar for each link $L$ in $S^3$ considered as the closure of a braid provided an algorithm of constructing a link $L'$ in $\mathcal U$ such that $S^3\setminus L$ is obtained from $S^3\setminus L'$ by $\frac{1}{0}$-filling suitable boundary components. Therefore, together with Propositions \ref{TOFAL} and \ref{Kumar}, we for each $L$ in $S^3$ have two algorithms of constructing a fundamental shadow link $L_{\text{FSL}}$ in $M_c$ such that $S^3\setminus L$ is obtained from $M_c\setminus L_{\text{FSL}}$ by filling suitable boundary components. It is an interesting question to know whether the universal families of the twisted octahedral fully augmented links and $\mathcal U$ are actually the same family. 
\end{remark}


\appendix
\section{A proof of Proposition \ref{saddle}}

The goal of this appendix is to prove Proposition \ref{saddle}. We need the following two Lemmas whose proofs are included at the end of the appendix. 
\begin{lemma}\label{estimate}
For any $\epsilon>0,$ there exists a $\delta > 0$ such that\\
(1) 
$$ \int_{-\epsilon}^\epsilon e^{-r z^2} dz = \sqrt{\frac{\pi}{r}} + O(e^{-\delta r}),$$

and\\
(2)
$$ \int_{-\epsilon}^\epsilon z^2 e^{-r z^2} dz = \frac{1}{2}\sqrt{\frac{\pi}{r^3}} + O(e^{-\delta r}).$$
\end{lemma}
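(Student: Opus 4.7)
The plan is to reduce both estimates to well-known Gaussian integrals on the whole real line and then show that the tails outside $[-\epsilon,\epsilon]$ are exponentially small in $r$.

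First I would recall the classical identities
$$\int_{-\infty}^{\infty} e^{-u^2}\,du=\sqrt{\pi}\qquad\text{and}\qquad \int_{-\infty}^{\infty} u^2 e^{-u^2}\,du=\frac{\sqrt{\pi}}{2}.$$
Substituting $u=\sqrt{r}\,z$ in each integral yields
$$\int_{-\infty}^{\infty} e^{-rz^2}\,dz=\sqrt{\frac{\pi}{r}}\qquad\text{and}\qquad \int_{-\infty}^{\infty} z^2 e^{-rz^2}\,dz=\frac{1}{2}\sqrt{\frac{\pi}{r^3}}.$$
Thus (1) and (2) reduce respectively to showing that
$$2\int_{\epsilon}^{\infty} e^{-rz^2}\,dz = O(e^{-\delta r})\qquad\text{and}\qquad 2\int_{\epsilon}^{\infty} z^2 e^{-rz^2}\,dz = O(e^{-\delta r})$$
for some $\delta>0$ depending on $\epsilon$.

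For the tail estimates, I would use the elementary splitting $rz^2=(r-1)z^2+z^2\ge (r-1)\epsilon^2+z^2$ for $z\ge\epsilon$, which gives
$$\int_{\epsilon}^{\infty} e^{-rz^2}\,dz \le e^{-(r-1)\epsilon^2}\int_{\epsilon}^{\infty} e^{-z^2}\,dz \le C_1\, e^{-(r-1)\epsilon^2},$$
and similarly
$$\int_{\epsilon}^{\infty} z^2 e^{-rz^2}\,dz \le e^{-(r-1)\epsilon^2}\int_{\epsilon}^{\infty} z^2 e^{-z^2}\,dz \le C_2\, e^{-(r-1)\epsilon^2},$$
where $C_1,C_2$ are finite constants independent of $r$. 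Choosing any $\delta\in(0,\epsilon^2)$ — for concreteness $\delta=\epsilon^2/2$ — absorbs the $e^{\epsilon^2}$ factor and gives the claimed bound $O(e^{-\delta r})$.

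Combining the global integrals with these tail bounds yields both identities (1) and (2). There is no real obstacle here beyond choosing $\delta$ so that the constants work out; the argument is entirely routine and uses nothing beyond the standard Gaussian integral and monotonicity of $e^{-rz^2}$ on $[\epsilon,\infty)$.
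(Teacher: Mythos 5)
Your argument is correct. For part (1) it is essentially the paper's proof: both write the integral as the full-line Gaussian $\sqrt{\pi/r}$ minus two tails and show the tails are exponentially small; the paper bounds the tail via $z^2\geqslant \epsilon z$ on $[\epsilon,\infty)$, giving $\int_\epsilon^\infty e^{-rz^2}dz\leqslant e^{-r\epsilon^2}/(r\epsilon)$, while you split off $e^{-(r-1)\epsilon^2}$ and keep a convergent $r$-independent integral — both are fine and yield the same $\delta$ up to constants. For part (2) your route genuinely differs: you invoke the full-line second-moment identity $\int_{-\infty}^{\infty}z^2e^{-rz^2}dz=\tfrac12\sqrt{\pi/r^3}$ and then bound the tail of $z^2e^{-rz^2}$ directly, whereas the paper instead integrates by parts on $[-\epsilon,\epsilon]$, writing $\int_{-\epsilon}^{\epsilon}e^{-rz^2}dz=ze^{-rz^2}\big|_{-\epsilon}^{\epsilon}+2r\int_{-\epsilon}^{\epsilon}z^2e^{-rz^2}dz$ and solving for the second moment using part (1). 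The paper's trick needs no further tail estimate and no second Gaussian identity, deriving the constant $\tfrac12\sqrt{\pi/r^3}$ automatically from (1); your version needs one more standard integral and one more tail bound but is arguably more transparent. Either way the statement is fully proved, and your choice $\delta=\epsilon^2/2$ correctly absorbs the $e^{\epsilon^2}$ factor.
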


\begin{lemma}\label{Taylor} Let $D_{\mathbf z}$ be a region in $\mathbb C^n$ containing the origin $\mathbf 0$ and let $g^{\mathbf a}$ a family of complex valued functions on $D_{\mathbf z}$ smoothly parametrized by $\mathbf a$ in a region $D_{\mathbf a}$ of $\mathbb R^k.$  Then there exist families of functions $h_1^{\mathbf a},\dots, h_n^{\mathbf a},$ and $k_1^{\mathbf a},\dots,k_n^{\mathbf a}$  such that 
\begin{enumerate}[(1)]
\item all of $h_i^{\mathbf a}$'s and $k_i^{\mathbf a}$'s are smoothly parametrized by $\mathbf a$ in $D_{\mathbf a},$
\item for each $\mathbf a\in D_{\mathbf a},$  $h_i^{\mathbf a}$ has variables $z_{i+1},\dots,z_n$ and is holomorphic in them, 
\item for each $\mathbf a\in D_{\mathbf a},$  $k_i^{\mathbf a}$ has variables  $z_i,\dots,z_n$ and is holomorphic in them, and 
\item $$g^{\mathbf a}( z_1,\dots, z_n)=g^{\mathbf a}(\mathbf 0)+\sum_{i=1}^nh_i^{\mathbf a}(z_{i+1},\dots, z_n)z_i+\sum_{i=1}^nk_i^{\mathbf a}(z_i,\dots,z_n)z_i^2.$$
\end{enumerate}

\end{lemma}

\begin{lemma}\label{OPFCML} (Complex Morse Lemma) Let $D_{\mathbf z}$ be a region in $\mathbb C^n,$ let $D_{\mathbf a}$ be a region in $\mathbb R^k,$ and let $f: D_{\mathbf z} \times D_{\mathbf a} \to \mathbb C$ be a complex valued function that is holomorphic in $\mathbf z\in D_{\mathbf z}$ and smooth in $\mathbf a\in D_{\mathbf a}.$ For $\mathbf a\in D_{\mathbf a},$ let $f^{\mathbf a}: D_{\mathbf z}\to\mathbb C$ be the function defined by $f^{\mathbf a}(\mathbf z)=f(\mathbf z,\mathbf a).$ Suppose for each $\mathbf a\in D_{\mathbf a},$ $f^{\mathbf a}$ has a non-degenerate critical point $c_{\mathbf a}$ which smoothly depends on $\mathbf a.$ Then for each $\mathbf a_0\in D_{\mathbf a},$ there exists an open set $V \subset \mathbb C^n$ containing $\mathbf 0,$ an open set $A \subset D_{\mathbf a}$ containing $\mathbf a_0,$ and a smooth function $\psi: V \times A \to   D_{\mathbf z}$ such that, if we denote $\psi^{\mathbf a}(\mathbf Z) = \psi(\mathbf Z, \mathbf a),$ then for each $\mathbf a\in D_{\mathbf a},$ $\mathbf z = \psi^{\mathbf a}(\mathbf Z)$ is a holomorphic change of variable on $V$ such that 
$$\psi^{\mathbf a}(\mathbf 0) = \mathbf c_{\mathbf a},$$
$$ f^{\mathbf a}(\psi^{\mathbf a}(\mathbf Z)) = f^{\mathbf a}(\mathbf c_{\mathbf a}) - Z_1^2 - \dots - Z_n^2,$$
and
$$ \det \Big( D (\psi^{\mathbf a})(\mathbf 0)  \Big)
= \frac{2^{\frac{n}{2}}}{\sqrt{- \det \mathrm{Hess}(f^{\mathbf a})(\mathbf c_{\mathbf a})}}.$$
\end{lemma}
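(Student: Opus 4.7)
I plan to prove this parametrized complex Morse lemma by combining Taylor's integral remainder, a holomorphic matrix square root obtained from the functional calculus, and the holomorphic implicit function theorem with smooth dependence on parameters.

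First I would center everything at the critical point by setting $g(\mathbf{w},\mathbf{a}) := f(\mathbf{c}_{\mathbf{a}}+\mathbf{w},\mathbf{a})-f(\mathbf{c}_{\mathbf{a}},\mathbf{a})$, which is holomorphic in $\mathbf{w}$, smooth in $\mathbf{a}$, and vanishes together with its $\mathbf{w}$-gradient at $\mathbf{w}=\mathbf{0}$. Taylor's theorem with integral remainder then gives
$$g(\mathbf{w},\mathbf{a})=\sum_{i,j=1}^n h_{ij}(\mathbf{w},\mathbf{a})\,w_iw_j,\qquad h_{ij}(\mathbf{w},\mathbf{a})=\int_0^1(1-t)\,\partial^2_{ij}g(t\mathbf{w},\mathbf{a})\,dt,$$
and the matrix $H(\mathbf{w},\mathbf{a}):=(h_{ij}(\mathbf{w},\mathbf{a}))$ is symmetric, holomorphic in $\mathbf{w}$, smooth in $\mathbf{a}$, with $H(\mathbf{0},\mathbf{a})=\tfrac{1}{2}\mathrm{Hess}(f^{\mathbf{a}})(\mathbf{c}_{\mathbf{a}})$, which is invertible by hypothesis.

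The core step is to extract a symmetric holomorphic square root $M(\mathbf{w},\mathbf{a})$ of $-H(\mathbf{w},\mathbf{a})$. Since $-H(\mathbf{0},\mathbf{a}_0)$ is invertible, its spectrum is compactly contained in $\mathbb{C}\setminus\{0\}$; I would fix a contour $\Gamma$ enclosing this spectrum together with a branch of $\sqrt{\lambda}$ analytic on a neighborhood of $\Gamma$, and define via the Dunford--Riesz functional calculus
$$M(\mathbf{w},\mathbf{a}):=\frac{1}{2\pi\sqrt{-1}}\oint_{\Gamma}\sqrt{\lambda}\,\bigl(\lambda I+H(\mathbf{w},\mathbf{a})\bigr)^{-1}\,d\lambda.$$
For $(\mathbf{w},\mathbf{a})$ in a sufficiently small neighborhood of $(\mathbf{0},\mathbf{a}_0)$, the spectrum of $-H(\mathbf{w},\mathbf{a})$ remains inside $\Gamma$ by continuity, so $M$ is well defined, is holomorphic in $\mathbf{w}$ and smooth in $\mathbf{a}$, is symmetric because each resolvent $(\lambda I+H)^{-1}$ is symmetric, and satisfies $M^2=-H$ by the standard multiplicativity of the functional calculus. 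Setting $Z_i(\mathbf{w},\mathbf{a})=\sum_j M_{ij}(\mathbf{w},\mathbf{a})\,w_j$, a direct computation then yields
$$\sum_i Z_i^2=\mathbf{w}^TM^TM\,\mathbf{w}=\mathbf{w}^TM^2\,\mathbf{w}=-\mathbf{w}^TH\,\mathbf{w}=-g(\mathbf{w},\mathbf{a}).$$

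Since the Jacobian of $\mathbf{w}\mapsto\mathbf{Z}$ at $\mathbf{w}=\mathbf{0}$ equals $M(\mathbf{0},\mathbf{a})$, which is invertible, the holomorphic implicit function theorem with smooth parameter dependence provides a local inverse $\psi^{\mathbf{a}}(\mathbf{Z})$ on neighborhoods $V\ni\mathbf{0}$ in $\mathbb{C}^n$ and $A\ni\mathbf{a}_0$ in $D_{\mathbf{a}}$; composing with the translation by $\mathbf{c}_{\mathbf{a}}$ produces the desired $\psi$, and the identity $f^{\mathbf{a}}(\psi^{\mathbf{a}}(\mathbf{Z}))=f^{\mathbf{a}}(\mathbf{c}_{\mathbf{a}})-Z_1^2-\cdots-Z_n^2$ is immediate. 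For the Jacobian formula, differentiating this identity twice at $\mathbf{Z}=\mathbf{0}$ and using $\nabla f^{\mathbf{a}}(\mathbf{c}_{\mathbf{a}})=0$ gives $(D\psi^{\mathbf{a}}(\mathbf{0}))^T\,\mathrm{Hess}(f^{\mathbf{a}})(\mathbf{c}_{\mathbf{a}})\,D\psi^{\mathbf{a}}(\mathbf{0})=-2I$; taking determinants yields $(\det D\psi^{\mathbf{a}}(\mathbf{0}))^2\det\mathrm{Hess}(f^{\mathbf{a}})(\mathbf{c}_{\mathbf{a}})=(-2)^n$, from which the stated value $2^{n/2}/\sqrt{-\det\mathrm{Hess}(f^{\mathbf{a}})(\mathbf{c}_{\mathbf{a}})}$ follows once the matching branch of the square root is fixed.

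The main obstacle is the construction of the matrix square root with joint holomorphic dependence on $\mathbf{w}$ and smooth dependence on $\mathbf{a}$. The delicate point is the uniform choice of a single contour $\Gamma$ and a single branch of $\sqrt{\lambda}$ that simultaneously enclose the spectrum of $-H(\mathbf{w},\mathbf{a})$ for all $(\mathbf{w},\mathbf{a})$ in a fixed neighborhood of $(\mathbf{0},\mathbf{a}_0)$; this is arranged by invoking the openness of invertibility together with the continuous dependence of the spectrum on the matrix entries. Once $\Gamma$ is fixed, the holomorphicity of the Cauchy integral in $\mathbf{w}$ and its smoothness in $\mathbf{a}$ are automatic from differentiation under the integral sign, and the remainder of the argument is a routine application of the holomorphic implicit function theorem.
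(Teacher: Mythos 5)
Your proposal is correct, and it reaches the normal form by a genuinely different mechanism than the paper. The paper follows the classical Morse-lemma induction: after the same Taylor expansion $f^{\mathbf a}(\mathbf z)=f^{\mathbf a}(\mathbf 0)+\sum_{i,j}z_iz_jh^{\mathbf a}_{ij}(\mathbf z)$ with $(h^{\mathbf a}_{ij})$ symmetric and $\det(h^{\mathbf a}_{ij}(\mathbf 0))\neq 0$, it completes the square one variable at a time, reordering coordinates at each stage to secure a nonvanishing pivot $H^{\mathbf a}_{m,mm}(\mathbf 0)\neq 0$, dividing by it, and invoking the inverse function theorem (with the parameter $\mathbf a$ carried along) at every step, shrinking the neighborhoods $V_m\times A_m$ as it goes. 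You instead diagonalize the whole quadratic form in one shot by producing a symmetric matrix square root $M$ of $-H$ via the Dunford--Riesz functional calculus and setting $\mathbf Z=M\mathbf w$; symmetry of each resolvent $(\lambda I+H)^{-1}$ gives $M^T=M$, which is exactly what makes $\mathbf Z^T\mathbf Z=\mathbf w^TM^2\mathbf w=-\mathbf w^TH\mathbf w$ work, and a single application of the inverse function theorem finishes the construction. Your route is shorter and avoids both the induction and the variable reordering, at the cost of importing the holomorphic functional calculus and the (correctly flagged) care needed to fix one contour and one branch of $\sqrt{\lambda}$ uniformly on a neighborhood of $(\mathbf 0,\mathbf a_0)$; the paper's route is more elementary, using only scalar square roots of the nonvanishing pivots. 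Both arguments derive the Jacobian determinant identically, from $(D\psi^{\mathbf a}(\mathbf 0))^T\,\mathrm{Hess}(f^{\mathbf a})(\mathbf c_{\mathbf a})\,D\psi^{\mathbf a}(\mathbf 0)=-2I,$ and both leave the same harmless sign ambiguity in the final square root.
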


\begin{proof}[Proof of Proposition \ref{saddle}] We write  $\mathbf z=(z_1,\dots, z_n) \in \mathbb C^n,$ $\mathbf Z = (Z_1,\dots, Z_n) \in \mathbb C^n,$ $\mathbf W = (W_1,\dots, W_n) \in \mathbb C^n,$ $d\mathbf z=dz_1\dots dz_n$ and $\mathbf 0=(0,\dots,0) \in \mathbb C^n.$

We first consider the special case $\mathbf c_r=\mathbf 0,$ $S_r=[-\epsilon,\epsilon]^n\subset \mathbb R^n\subset \mathbb C^n,$
and 
$$f^{\mathbf a_r}(\mathbf z)=-\sum_{i=1}^nz_i^2$$
for each $r.$ In this case, let
$$\sigma^{\mathbf a_r}_{ r}(\mathbf z)=\upsilon_{r}(\mathbf z, \mathbf a_r)\int_0^1e^{\frac{\upsilon_{r}(\mathbf z, \mathbf a_r)}{r}s}ds.$$
Then we can write 
$$e^{\frac{\upsilon_{r}(\mathbf z, \mathbf a_r)}{r}}=1+\frac{\sigma_{r}^{\mathbf a_r}(\mathbf z)}{r},$$
and 
\begin{equation}\label{A1}
g^{\mathbf a_r}(\mathbf z)e^{rf^{\mathbf a_r}_{r}(\mathbf z)}=g^{\mathbf a_r}(\mathbf z)e^{rf^{\mathbf a_r}(\mathbf z)}+\frac{1}{r}g^{\mathbf a_r}(\mathbf z)\sigma^{\mathbf a_r}_{r}(\mathbf z)e^{rf^{\mathbf a_r}(\mathbf z)}.
\end{equation}
Since $|\upsilon_{r}(\mathbf z, \mathbf a_r)|<M$ for some $M>0$ independent of $r,$ 
$$|\sigma^{\mathbf a_r}_{r}(\mathbf z)|<M\int_0^1e^{\frac{M}{r}s}ds=M\bigg(\frac{e^{\frac{M}{r}}-1}{\frac{M}{r}}\bigg)<2M.$$
Since $\{\mathbf a_r\}$ is convergent and $g$ is smooth in $\mathbf a,$ if $M$ is big enough, then $|g^{\mathbf a_r}(\mathbf z)|<M$ for all $z \in S_r=[-\epsilon,\epsilon]^n$ for $r$ large enough. By Lemma \ref{estimate} (1), we have
\begin{equation}\label{A2}
\begin{split}
\Big|\int_{S_r}\frac{1}{r}g^{\mathbf a_r} (\mathbf z)\sigma^{\mathbf a_r}_r(\mathbf z)e^{rf^{\mathbf a_r}(\mathbf z)}d\mathbf z\Big|&<\frac{2M^2}{r}\int_{S_r}e^{rf^{\mathbf a_r}(\mathbf z)}d\mathbf z\\
&=\frac{2M^2}{r}\Big(\frac{\pi}{r}\Big)^{\frac{n}{2}}+O(e^{-\delta r})=O\Big(\frac{1}{\sqrt{r^{n+2}}}\Big).
\end{split}
\end{equation}

By Lemma \ref{Taylor}, we have
$$g^{\mathbf a_r}( z_1,\dots, z_n)=g^{\mathbf a_r}(\mathbf 0)+\sum_{i=1}^nh_i^{\mathbf a_r}(z_{i+1},\dots, z_n)z_i+\sum_{i=1}^nk_i^{\mathbf a_r}(z_i,\dots,z_n)z_i^2$$
for some  holomorphic functions $\{h^{\mathbf a_r}_{i}\}$ and $\{k^{\mathbf a_r}_i\},$ $i\in\{1,\dots,n\}.$ 
Then by Lemma \ref{estimate} (1), we have
\begin{equation}\label{A3}
\int_{S_r} g^{\mathbf a_r}(\mathbf 0)e^{rf^{\mathbf a_r}(\mathbf z)}d\mathbf z=g^{\mathbf a_r}(\mathbf 0)\Big(\frac{\pi}{r}\Big)^{\frac{n}{2}}+O\Big(\frac{1}{r}\Big).
\end{equation}
Since each $z_ie^{-rz_i^2}$ is odd, we have
$$\int_{-\epsilon}^\epsilon z_ie^{-rz_i^2}dz_i=0.$$
As a consequence, for each $i,$ we have 
\begin{equation}\label{A4}
\begin{split}
&\int_{[-\epsilon,\epsilon]^n} h_i^{\mathbf a_r}(z_{i+1},\dots,z_n)z_ie^{rf^{\mathbf a_r}(\mathbf z)}d\mathbf z\\
=&\int_{[-\epsilon,\epsilon]^{n-1}}h_i^{\mathbf a_r}(z_{i+1},\dots,z_n)e^{-r\sum_{j\neq i}z_j^2}\prod_{j\neq i}dz_j\cdot \int_{-\epsilon}^\epsilon z_ie^{-rz_i^2}dz_i
=0.
\end{split}
\end{equation}
Since $\{\mathbf a_r\}$ is convergent and $k^{\mathbf a}_{i}$ is smooth in $\mathbf a,$  if $M$ is big enough, then for $r$ large enough, $|k^{\mathbf a_r}_i(\mathbf z)|<M$ for all $\mathbf z \in S_r,$ $i\in\{1,\dots, n\}.$ By Lemma \ref{estimate} we have for each $i\in\{1,\dots, n\}$ 
\begin{equation}\label{A6}
\begin{split}
\Big|\int_{S_r} k^{\mathbf a_r}_{i}(\mathbf z)z_i^2e^{rf^{\mathbf a_r}(\mathbf z)}d\mathbf z\Big|<M\Big(\int_{-\epsilon}^\epsilon z_i^2e^{-rz_i^2}dz_i\Big)\prod_{j\neq i}\Big( \int_{-\epsilon}^\epsilon e^{-rz_j^2}dz_j\Big) = O \Big(\frac{1}{\sqrt{r^{n+2}}}\Big).
\end{split}
\end{equation}

Putting (\ref{A3}), (\ref{A4}) and (\ref{A6}) together, we have the result for this special case.
\\

For the general case, by assumption (6) of Proposition \ref{saddle}, for $\mathbf a$ sufficiently close to $\mathbf a_0,$  $f^{\mathbf a}$ has a unique non-degenerate critical point $\mathbf c_{\mathbf a}$ in a sufficiently small neighborhood of $\mathbf c_0.$ Then we can apply Lemma \ref{OPFCML} to the function $f$ and $\mathbf a_0.$ Let $V, A$ and $\psi$ respectively be the two open sets and the change of variable function as described in Lemma \ref{OPFCML}. For $r$ sufficiently large, let 
$$U_r=\psi^{\mathbf a_r}\Big(\prod_{i=1}^n\big\{Z_i\in\mathbb C\ \big|\ -\epsilon<\mathrm{Re}(Z_i)<\epsilon, -\epsilon<\mathrm{Im}(Z_i)<\epsilon \big\}\Big)$$
for some sufficiently small $\epsilon >0 .$
Let $\mathrm{Vol}(S_r\setminus U_r)$ be the Euclidean volume of $S_r\setminus U_r.$ By the compactness of $S_r\setminus U_r$ and by assumptions (2), (4) and (5) of Proposition \ref{saddle}, there exist constants $M>0$ and $\delta>0$ independent of $r$ such that 
$|g^{\mathbf a_r}(\mathbf z)|<M,$  $\mathrm{Vol}(S_r\setminus U_r)<M$
and
\begin{equation}\label{A7'}
\mathrm{Re} f_r^{\mathbf a_r}(\mathbf z)<\mathrm{Re} f^{\mathbf a_r}(\mathbf c_r)-\delta
\end{equation}
on $S_r\setminus U$ for $r$ large enough. Then
\begin{equation}\label{A7}
\Big|\int_{S_r\setminus U}g^{\mathbf a_r}(\mathbf z)e^{rf_r^{\mathbf a_r}(\mathbf z)}d\mathbf z\Big|< M^2\Big(e^{r(\mathrm{Re}f^{\mathbf a_r}(\mathbf z)(\mathbf c_r)-\delta)}\Big) 
= O\Big(e^{r(\mathrm{Re}f^{\mathbf a_r}(\mathbf z)(\mathbf c_r)-\delta)}\Big) .
\end{equation}

In Figure \ref{sad} below, the shaded region is where $\mathrm{Re}(-\sum_{i=1}^nZ_i^2)<0.$ For each $\mathbf a_r,$ in $\overline{(\psi^{\mathbf a_r})^{-1}(U_r)}$ there is a homotopy $H_{r}$ from $\overline{(\psi^{\mathbf a_r})^{-1}(S_r\cap U_r)}$ to $[-\epsilon,\epsilon]^n\subset \mathbb R^n$ defined by ``pushing everything down'' to the real part.  This is where we use condition (3). Let $S_r'=H_r(\partial (\psi^{\mathbf a_r})^{-1}(S_r\cap U_r)\times [0,1]).$ Then $\overline{(\psi^{\mathbf a_r})^{-1}(S_r\cap U_r)}$ is homotopic to $S_r'\cup [-\epsilon,\epsilon]^n.$
\begin{figure}[htbp]
\centering
\includegraphics[scale=0.5]{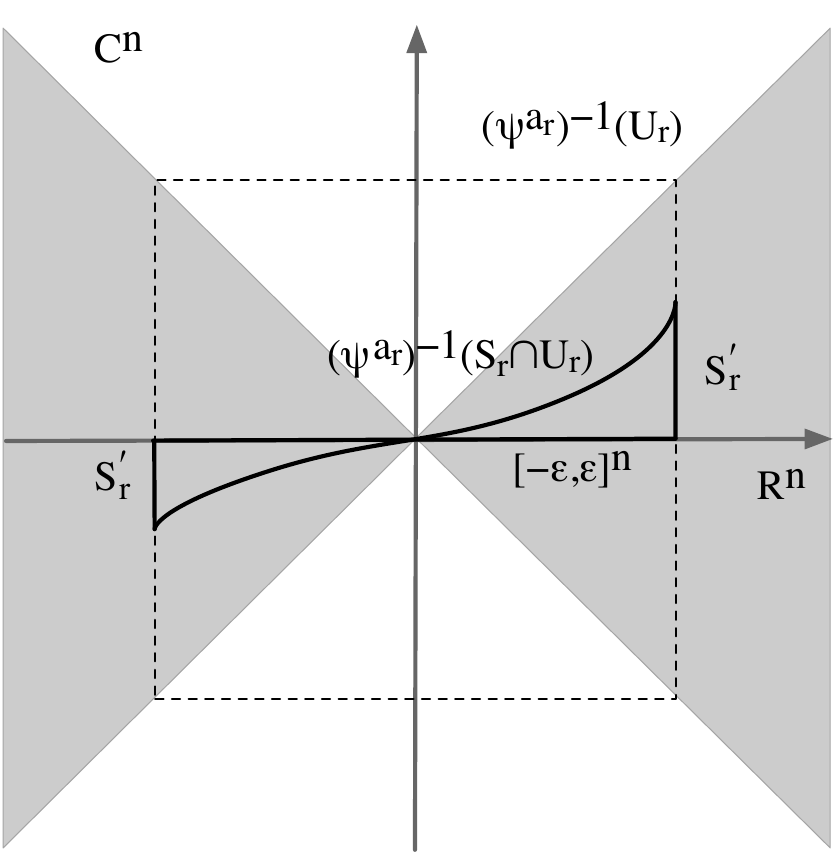}
\caption{}
\label{sad}
\end{figure}
Then by analyticity, 
\begin{align}\label{A8}
&\int_{S_r\cap U}g^{\mathbf a_r}(\mathbf z)e^{rf_r^{\mathbf a_r}(\mathbf z)}d\mathbf z \notag\\
=&\int_{(\psi^{\mathbf a_r})^{-1}(S_r\cap U)}g^{\mathbf a_r}(\psi^{\mathbf a_r}(\mathbf Z))\det \mathrm D(\psi^{\mathbf a_r}(\mathbf Z))e^{rf_r^{\mathbf a_r}(\psi^{\mathbf a_r}(\mathbf Z))}d\mathbf Z \notag\\
=&\int_{S_r'}g^{\mathbf a_r}(\psi^{\mathbf a_r}(\mathbf Z))\det \mathrm D(\psi^{\mathbf a_r}(\mathbf Z))e^{rf_r^{\mathbf a_r}(\psi^{\mathbf a_r}(\mathbf Z))}d\mathbf Z 
 +\int_{[-\epsilon,\epsilon]^n}g^{\mathbf a_r}(\psi(\mathbf Z))\det \mathrm D(\psi^{\mathbf a_r}(\mathbf Z))e^{rf_r^{\mathbf a_r}(\psi^{\mathbf a_r}(\mathbf Z))}d\mathbf Z.
\end{align}
Since $\psi^{\mathbf a_r}(S')\subset S_r\setminus U_r,$ by (\ref{A7'})
\begin{equation}\label{A9}
\int_{S_r'}g^{\mathbf a_r}(\psi^{\mathbf a_r}(\mathbf Z))\det \mathrm D(\psi^{\mathbf a_r}(\mathbf Z))e^{rf_r^{\mathbf a_r}(\psi^{\mathbf a_r}(\mathbf Z))}d\mathbf Z
= \int_{\psi^{\mathbf a_r}(S_r')} g^{\mathbf a_r}(\mathbf z)e^{rf_r^{\mathbf a_r}(\mathbf z)} d\mathbf z
= O\Big(e^{r(\mathrm{Re}f^{\mathbf a_r}(\mathbf c_r)-\delta)}\Big);
\end{equation}
and by the special case
\begin{equation*}
\begin{split}
&\int_{[-\epsilon,\epsilon]^n}g^{\mathbf a_r}(\psi^{\mathbf a_r}(\mathbf Z))\det \mathrm D(\psi^{\mathbf a_r}(\mathbf Z))e^{rf_r^{\mathbf a_r}(\psi^{\mathbf a_r}(\mathbf Z))}d\mathbf Z\\
=&e^{rf^{\mathbf a_r}(\mathbf c_r)}\int_{[-\epsilon,\epsilon]^n}g^{\mathbf a_r}(\psi^{\mathbf a_r}(\mathbf Z))\det \mathrm D(\psi^{\mathbf a_r}(\mathbf Z))e^{r\big(-\sum_{i=1}^nZ_i^2 + \frac{\upsilon_{ r}(\mathbf z,\mathbf a_r)}{r^2} \big)}d\mathbf Z\\
=&e^{rf^{\mathbf a_r}(\mathbf c_r)}g^{\mathbf a_r}(\mathbf \psi^{\mathbf a_r}(\mathbf 0))\det\mathrm D(\psi^{\mathbf a_r}(\mathbf 0))\Big(\frac{\pi}{r}\Big)^{\frac{n}{2}}\Big(1+O\Big(\frac{1}{r}\Big)\Big)\\
=&\Big(\frac{2\pi}{r}\Big)^{\frac{n}{2}}\frac{g^{\mathbf a_r}(\mathbf c_r)}{\sqrt{-\det\mathrm{Hess}(f^{\mathbf a_r})(\mathbf c_r)}}  e^{rf^{\mathbf a_r}(\mathbf c_r)}\Big( 1 + O \Big( \frac{1}{r} \Big) \Big).
\end{split}
\end{equation*}
Together with (\ref{A7}), (\ref{A8}) and (\ref{A9}), we have the result.
\end{proof}

\begin{proof}[Proof of Lemma \ref{estimate}] For (1), we  have
$$\int_{-\epsilon}^\epsilon e^{-rz^2}dz=\int_{-\infty}^\infty e^{-rz^2}dz-\int_{-\infty}^{-\epsilon} e^{-rz^2}dz-\int_{\epsilon}^\infty e^{-rz^2}dz,$$
where the first term 
$$\int_{-\infty}^\infty e^{-rz^2}dz=\sqrt{\frac{\pi}{r}}$$
is a Gaussian integral, and the other two terms
$$\int_{-\infty}^{-\epsilon} e^{-rz^2}dz=\int_{\epsilon}^\infty e^{-rz^2}dz\leqslant \int_{\epsilon}^\infty e^{-r\epsilon z}dz=\frac{e^{-r\epsilon^2}}{r\epsilon}=O(e^{-\delta r}).$$

For (2), by integration by parts, we have
$$\int_{-\epsilon}^\epsilon e^{-rz^2}dz=ze^{-rz^2}\Big|_{-\epsilon}^\epsilon+2r\int_{-\epsilon}^\epsilon z^2e^{-rz^2}dz,$$
hence by (1)
$$\int_{-\epsilon}^\epsilon z^2e^{-rz^2}dz=\frac{1}{2r}\Big(\int_{-\epsilon}^\epsilon e^{-rz^2}dz-2\epsilon e^{-r\epsilon^2}\Big)=\frac{1}{2}\sqrt{\frac{\pi}{r^3}}+O(e^{-\delta r}).$$
\end{proof}

\begin{proof}[Proof of Lemma \ref{Taylor}] We use induction on $n.$ For $n=1,$ if $z_1\neq 0,$ then we can write 
$$g^{\mathbf a}(z_1)=g^{\mathbf a}(0)+\frac{dg^{\mathbf a}}{dz_1}(0)z_1+\bigg(\frac{g^{\mathbf a}(z_1)-g^{\mathbf a}(0)-\frac{dg^{\mathbf a}}{dz_1}(0)z_1}{z_1^2}\bigg)z_1^2,$$
and let 
$$h_1^{\mathbf a}=g^{\mathbf a}(0)$$
and
$$k_1^{\mathbf a}(z_1)=\frac{g^{\mathbf a}(z_1)-g^{\mathbf a}(0)-\frac{dg^{\mathbf a}}{dz_1}(0)z_1}{z_1^2}.$$ By computing the Laurent expansion of $k_1^{\mathbf a}(z_1),$ one sees $z_1=0$ is a removable singularity, and $k_1^{\mathbf a}(z_1)$ extends  as a holomorphic function. From the formulas, we also see that $h_1^{\mathbf a}$ and $k_1^{\mathbf a}$ smoothly depend on $\mathbf a.$  This proves the case $n=1.$ 

Now assume that the result holds when $n=l.$ For $n=l+1,$ if $z_1\neq 0,$ then we have 
\begin{equation*}
\begin{split}
g^{\mathbf a}(z_1,\dots,z_{l+1})=&g^{\mathbf z}(0,z_2,\dots,z_{l+1})+\frac{\partial g^{\mathbf a}}{\partial z_1}(0,z_2,\dots,z_{l+1})z_1\\
&+\bigg(\frac{g^{\mathbf a}(z_1,\dots,z_{l+1})-g^{\mathbf a}(0,z_2,\dots,z_{l+1})-\frac{\partial g^{\mathbf a}}{\partial z_1}(0,z_2,\dots,z_{l+1})z_1}{z_1^2}\bigg)z_1^2,
\end{split}
\end{equation*}
and let 
$$h_1^{\mathbf a}(z_2,\dots,z_{l+1})=\frac{\partial g^{\mathbf a}}{\partial z_1}(0,z_2,\dots,z_{l+1})$$
and
$$k_1^{\mathbf a}(z_1,\dots,z_{l+1})=\frac{g^{\mathbf a}(z_1,\dots,z_{l+1})-g^{\mathbf a}(0,z_2,\dots,z_{l+1})-\frac{\partial g^{\mathbf a}}{\partial z_1}(0,z_2,\dots,z_{l+1})z_1}{z_1^2}.$$
By computing the Laurent expansion again, one can see that  $k_1^{\mathbf a}$ holomorphically extends to $z_1=0;$ and from the formulas, $h_1^{\mathbf a}$ and $k_1^{\mathbf a}$ smoothly depend on $\mathbf a.$  Since $g^{\mathbf a}(0,z_2,\dots, z_{l+1})$ has $l$ variables, by the induction assumption, 
$$g^{\mathbf a}(0,z_2,\dots,z_{l+1})=g^{\mathbf a}(\mathbf 0)+\sum_{i=2}^{l+1}h_i^{\mathbf a}(z_{i+1},\dots,z_{l+1})z_i+\sum_{i=2}^{l+1}k_i^{\mathbf a}(z_i,\dots,z_{l+1})z_i^2$$
for holomorphic functions $\{h_i^{\mathbf a}\}$ and $\{k_i^{\mathbf a}\}$ smoothly depending on $\mathbf a.$ As a consequence, we have 
$$g^{\mathbf a}(z_1,z_2,\dots,z_{l+1})=g^{\mathbf a}(\mathbf 0)+\sum_{i=1}^{l+1}h_i^{\mathbf a}(z_{i+1},\dots,z_{l+1})z_i+\sum_{i=1}^{l+1}k_i^{\mathbf a}(z_i,\dots,z_{l+1})z_i^2.$$
This completes the proof.
\end{proof}

\begin{proof}[Proof of Lemma \ref{OPFCML}] By doing the linear transformation $(\mathbf z,\mathbf a)\mapsto (\mathbf z+\mathbf c_{\mathbf a},\mathbf a),$ we may assume that $\mathbf c_{\mathbf a} = \mathbf 0$  for all $\mathbf a\in D_{\mathbf a}.$ Then by the Taylor Theorem, for each $\mathbf a\in D_{\mathbf a}$ and $\mathbf z \in D_{\mathbf z},$ we can write 
$$ f^{\mathbf a} (\mathbf z) = f^{\mathbf a}(\mathbf 0)+ \sum_{i=1}^n z_i b^{\mathbf a}_{i}(\mathbf z) $$
for some holomorphic functions $b^{\mathbf a}_{i},$ $i =1,\dots, n.$
Since $\mathbf 0$ is a critical point of $f^{\mathbf a},$ we have
$$ b^{\mathbf a}_{i}(\mathbf 0) = \frac{\partial}{\partial z_i} f^{\mathbf a} (\mathbf 0) = 0.$$
As a result, by Taylor theorem again, we can write
$$  f^{\mathbf a} (\mathbf z) 
= f^{\mathbf a}(\mathbf 0)+ \sum_{i=1}^n z_i b^{\mathbf a}_{i}(\mathbf z) 
=  f^{\mathbf a}(\mathbf 0)+\sum_{i, j=1}^n z_i z_j h^{\mathbf a}_{ij}(\mathbf z)$$
for some holomorphic functions $h^{\mathbf a}_{ij},$ $i , j =1,\dots, n.$
Since
$$ \sum_{i, j=1}^n z_i z_j h^{\mathbf a}_{ij}(\mathbf z)
=
\sum_{i, j=1}^n z_i z_j \Big(\frac{h^{\mathbf a}_{ij}(\mathbf z) + h^{\mathbf a}_{ji}(\mathbf z)}{2}\Big),
$$
we may assume that $h^{\mathbf a}_{ij}$ is symmetric in $i$ and $j.$ 
Since $\mathbf 0$ is a non-degenerate critical point of $f^{\mathbf a},$ and $$ \frac{\partial^2}{\partial z_i z_j} f^{\mathbf a} (\mathbf 0) = 2h^{\mathbf a}_{ij}(\mathbf 0),$$ we have $\det (h^{\mathbf a}_{ij}(\mathbf 0)) \neq 0.$

Next, suppose for some $m$ with $0 \leqslant m \leqslant n,$  there exist an open set $V_m \subset \mathbb C^n$ containing $\mathbf 0,$ an open set $A_m \subset D_{\mathbf a}$ containing $\mathbf a_0,$ and a smooth function $\psi_m: V_m \times A_m \to \mathbb C^n$ such that, if we denote $\psi^{\mathbf a}_m(\mathbf Z) = \psi_m(\mathbf Z, \mathbf a),$ then $\psi^{\mathbf a}_m$ gives a holomorphic change of variable with
$$ f^{\mathbf a} (\psi^{\mathbf a}_m(\mathbf Z) ) = f^{\mathbf a}(\mathbf 0)- Z_1^2 - \dots - Z_{m-1}^2
+ \sum_{i,j = m}^n Z_i Z_j H^{\mathbf a}_{m, ij} (\mathbf Z) ,$$
where $H^{\mathbf a}_{m, ij} (\mathbf Z) $ is holomorphic in $\mathbf Z$ and symmetric in $i$ and $j.$ Based on this, we are going to find an open set $V_{m+1}$ of $\mathbb C^n$ containing $\mathbf 0,$ an open set $A_{m+1} \subset A_m$ containing $\mathbf a_0,$ and a smooth function $\psi_{m+1}: V_{m+1} \times A_{m+1} \to \mathbb C^n$ such that, if we denote $\psi^{\mathbf a}_{m+1}(\mathbf Z) = \psi_{m+1}(\mathbf Z, \mathbf a),$ then $\psi^{\mathbf a}_{m+1}$ gives a holomorphic change of variable with
$$ f^{\mathbf a} (\psi^{\mathbf a}_{m+1} (\mathbf Z) ) = f^{\mathbf a}(\mathbf 0)- Z_1^2 - \dots - Z_{m}^2
+ \sum_{i,j = m+1}^n Z_i Z_j H^{\mathbf a}_{m+1, ij} (\mathbf Z) $$ 
for some holomorphic functions $H^{\mathbf a}_{m+1, ij} (\mathbf Z)$ that are symmetric in $i$ and $j.$ 

To do so, we by the Chain Rule have
$$ \frac{\partial^2 f^{\mathbf a}}{\partial Z_i Z_j} (\psi^{\mathbf a}_m(\mathbf 0))
=
(D \psi^{\mathbf a}_m (\mathbf 0))^T
\Big(\frac{\partial^2 f^{\mathbf a}}{\partial z_i z_j} (\mathbf 0) \Big)
(D \psi^{\mathbf a}_m (\mathbf 0)) ,
$$
where $D \psi^{\mathbf a}_m (\mathbf 0)$ is the Jacobian matrix of $\psi^{\mathbf a}_m$ at $\mathbf 0.$ Thus, we have
$$2^{m-1}\det(2H^{\mathbf a}_{m, ij}(\mathbf 0))=\det \Big(\frac{\partial^2 f^{\mathbf a} }{\partial Z_i Z_j} (\psi^{\mathbf a}_m(\mathbf 0))  \Big)\neq 0,
$$
implying that $(H^{\mathbf a}_{m, ij}(\mathbf 0))$ is a $(n-m+1)\times(n-m+1)$ non-singular matrix. Therefore, there exists $k \geqslant m$ such that $H^{\mathbf a}_{m, km}(\mathbf 0)\neq 0.$ Reordering the variables if necessary, we may assume that $H^{\mathbf a}_{m, mm}(\mathbf 0) \neq 0.$ By continuity of $H^{\mathbf a}_{m, mm}(\mathbf Z)$ in $\mathbf Z$ and $\mathbf a,$ there exists an open set $V'_m \subset V_m$ containing $\mathbf 0$ and an open set $A'_m \subset A_m$ containing $\mathbf a_0$ such that $H^{\mathbf a}_{m, mm}(\mathbf 0) \neq 0$ for all $(\mathbf Z, \mathbf a) \in V'_m \times A'_m.$ Then we can let 
$$ \widetilde{H}^{\mathbf a}_{m, ij}(\mathbf Z) = \frac{H^{\mathbf a}_{m, ij}(\mathbf Z)}{H^{\mathbf a}_{m, mm}(\mathbf Z)}$$
and have
\begin{align*}
f^{\mathbf a}(\psi^{\mathbf a}_m (\mathbf Z)) =& f^{\mathbf a}(\mathbf 0) -Z_1^2 - \dots - Z_{m-1}^2
+ \sum_{i,j = m}^n Z_i Z_j H^{\mathbf a}_{m, ij} (\mathbf Z) \\
=& f^{\mathbf a}(\mathbf 0) -Z_1^2 - \dots -Z_{m-1}^2
+ H^{\mathbf a}_{m, mm}(\mathbf Z)\sum_{i,j = m}^n Z_i Z_j \widetilde H^{\mathbf a}_{m, ij}(\mathbf Z) \\
=& f^{\mathbf a}(\mathbf 0) -Z_1^2 - \dots -Z_{m-1}^2 
+ H^{\mathbf a}_{m,mm}(\mathbf Z)\Big( Z_m + \sum_{j=m+1}^n Z_j \widetilde H^{\mathbf a}_{m, mj} (\mathbf Z) \Big)^2 
 \\
& -   H^{\mathbf a}_{m,mm}(\mathbf Z)\Big[\Big( \sum_{j=m+1}^n Z_j \widetilde H^{\mathbf a}_{m, mj} (\mathbf Z) \Big)^2
+ \sum_{i,j = m+1}^n Z_i Z_j \widetilde H^{\mathbf a}_{m, ij}(\mathbf Z) \Big].
\end{align*}
Define $\mathbf W =\mathbf W(\mathbf{Z})$ by
$$W_l = Z_l$$
for  $l \neq m,$ and
$$ W_m =  \sqrt{-H^{\mathbf a}_{m,mm}(\mathbf Z)} \Big( Z_m + \sum_{j=m+1}^n Z_j \widetilde H^{\mathbf a}_{m, mj} (\mathbf Z) \Big).$$
We note that $$\frac{\partial W_l}{\partial Z_k} (\mathbf 0)
= \delta_{l,k}$$
 for $l\neq m,$ and
$$
\frac{\partial W_m}{\partial Z_k} (\mathbf 0)
= \sqrt{-H^{\mathbf a}_{mm}(\mathbf 0)} \Big( \delta_{m,k} + \sum_{j=m+1}^n \delta_{j,k} \widetilde H^{\mathbf a}_{m, mj} (\mathbf 0) \Big).$$
Then the Jacobian matrix $DW(\mathbf 0)$ is an upper triangular matrix with the $(m,m)$-th entry $\sqrt{-H^{\mathbf a}_{m,mm}(\mathbf 0)}\neq 0$ and all the other diagonal entries $1,$ hence $\det DW(\mathbf 0)\neq 0.$

Now consider the map $G: V'_m \times A'_m \to \mathbb C^n \times \mathbb R^k$ defined by
$$ G(\mathbf Z, \mathbf a) = (\mathbf W(\mathbf Z), \mathbf a).$$
Then the Jacobian matrix $DG (\mathbf 0, \mathbf a_0)$ is of the form
$$  
DG(\mathbf 0, \mathbf a_0)
= 
\begin{pmatrix}
DW(\mathbf 0) & * \\
0 & I_k
\end{pmatrix},
$$
where $I_k$ is the $k\times k$ identity matrix. 
Moreover, $\det(DG(\mathbf 0, \mathbf a_0)) = \det(DW(\mathbf 0)) \neq 0.$ Thus, by the Inverse Function Theorem, there exists an open set $V''_m\subset V'_m$ and an open subset with compact closure $A_{m+1} \subset A'_m$ containing $\mathbf a_0$ such that $G: V''_{m} \times A_{m+1} \to \mathbb C^n \times A_m$ is a diffeomorphism to its image. By slightly shrinking $A_{m+1}$ if necessary, $G(V''_{m} \times A_{m+1})$ contains an open subset of the form $V_{m+1}\times A_{m+1}.$ For each $\mathbf a\in A_{m+1},$ let $\psi^{\mathbf a}_{m+1} =  \psi^{\mathbf a}_m \circ \mathbf W^{-1}:V_{m+1}\to D_{\mathbf z}.$ Then we have
\begin{align*}
f^{\mathbf a}( \psi^{\mathbf a}_{m+1} (\mathbf{W}))
=  f^{\mathbf a}(\mathbf 0)-W_1^2 - \dots -W_{m}^2
+ \sum_{i,j = m+1}^n W_i W_j H^{\mathbf a}_{m+1, ij} (\mathbf W)
\end{align*}
for some holomorphic functions $H^{\mathbf a}_{m+1, ij} (\mathbf Z)$ that are symmetric in $i$ and $j.$ 

Inductively doing the above procedure on $m,$ and letting $V=V_{n},$ $A=A_{n}$ and $\psi^{\mathbf a}=\psi^{\mathbf a}_n,$ we prove the result. 
Moreover, by  the Chain Rule, we have
$$ \mathrm{Hess}( f^{\mathbf a} \circ \psi^{\mathbf a})(\mathbf 0))
=
(D \psi^{\mathbf a} (\mathbf 0))^T
\Big(\mathrm{Hess} (f^{\mathbf a}) (\mathbf 0) \Big)
(D \psi^{\mathbf a} (\mathbf 0)).
$$
Since $\mathrm{Hess}( f^{\mathbf a} \circ \psi^{\mathbf a})(\mathbf 0)$ is equal to the negative of the  $n\times n$ identity matrix, by taking the determinant on both sides, we get
$$ \det \Big( D (\psi^{\mathbf a})(\mathbf 0)  \Big)
= \frac{2^{\frac{n}{2}}}{\sqrt{- \det \mathrm{Hess}(f^{\mathbf a})(\mathbf 0)}}.$$
\end{proof}


\noindent
Ka Ho Wong\\
Department of Mathematics\\  Texas A\&M University\\
College Station, TX 77843, USA\\
(daydreamkaho@tamu.edu)
\\

\noindent
Tian Yang\\
Department of Mathematics\\  Texas A\&M University\\
College Station, TX 77843, USA\\
(tianyang@math.tamu.edu)

\end{document}